\newtheorem{thm}{Theorem}[section]
\newtheorem{cor}[thm]{Corollary}
\newtheorem{lem}[thm]{Lemma}
\newtheorem{defn}[thm]{Definition}
\newtheorem{rem}[thm]{Remark}
\numberwithin{equation}{section}
\def\be{\begin{equation}}
	\def\ee{\end{equation}}
\def\bse{\begin{subequations}}
	\def\ese{\end{subequations}}
\def\bge{\begin{eqnarray}}
	\def\bgee{\begin{eqnarray*}}
		\def\ege{\end{eqnarray}}
	\def\egee{\end{eqnarray*}}
\renewcommand{\theequation}{\arabic{section}.\arabic{equation}}
\begin{document}
	
\title{Wellposedness of a Nonlinear Parabolic-Dispersive Coupled System Modelling MEMS}

\author{%%%% Author details
Heiko Gimperlein\thanks{Leopold-Franzens-Universit\"{a}t Innsbruck, Engineering Mathematics, Technikerstra\ss e 13, 6020 Innsbruck, Austria} \thanks{Department of Mathematical, Physical and Computer Sciences,
University of Parma, 43124, Parma, Italy} \and  Runan He\thanks{Institut f\"{u}r Mathematik, Martin-Luther-Universit\"{a}t Halle-Wittenberg, 06120 Halle (Saale), Germany} \and  Andrew A.~Lacey\thanks{Maxwell Institute for Mathematical Sciences and Department of Mathematics, Heriot-Watt University, Edinburgh, EH14 4AS, United Kingdom}}
\date{}
\maketitle

\providecommand{\keywords}[1]{{\noindent \textit{Key words:}} #1}
\providecommand{\msc}[1]{{\textit{MSC classes:}} #1}

\abstract{\noindent In this paper we study the local wellposedness of the solution to a non-linear parabolic-dispersive coupled system which models a Micro-Electro-Mechanical System (MEMS). A simple electrostatically actuated MEMS capacitor device has two parallel plates separated by a gas-filled thin gap.  The nonlinear parabolic-dispersive coupled system modelling the device consists of a quasilinear parabolic equation for the gas pressure and a semilinear plate equation for gap width. We show the local-in-time existence of strict solutions for the system, by combining a local-in-time existence result for the dispersive equation, H\"{o}lder continuous dependence of its solution on that of the parabolic equation, and then local-in-time existence for a resulting abstract parabolic problem. Semigroup approaches are vital for both main parts of the problem.\\
}

\keywords{parabolic-dispersive coupled system; local wellposedness; MEMS; semigroup theory; solid-plate thin-film-flow interactions.}\\
\msc{35M33 (primary), 35G61, 35D30, 74F10 (secondary)}
\vskip 0.8cm

\section{Introduction}

In this article, we study finite-time existence, uniqueness and regularity of the solution to the following nonlinear parabolic-dispersive coupled system, which models an idealized electrostatically actuated MEMS device, accounting for elasticity of the plate:
\begin{subequations}\label{cp2}
\begin{equation}\label{cp2-1-1}
\frac{\partial\left(wu\right)}{\partial t}=\nabla\cdot\left(w^3u\nabla u\right)\quad x\in\Omega,\ t\geq 0;
\end{equation}
\begin{equation}\label{cp2-1-2}
\frac{\partial^2w}{\partial t^2}=\Delta w-\Delta^2w-\frac{\beta_F}{w^2}+\beta_p(u-1),\quad x\in\Omega,\ t\geq 0;
\end{equation}
\begin{equation}\label{cp2-2}
u(x,0)=u_0(x),\ w(x,0)=w_0(x),\ \frac{\partial w}{\partial t}(x,0)=v_0(x),\quad x\in {\Omega};
\end{equation}
\begin{equation}\label{cp2-3}
u(x,t)=\theta_1, \ w(x,t)=\theta_2, \ \Delta w(x,t)=0,\quad x\in\partial\Omega,\ t\geq 0.
\end{equation}
\end{subequations}
The unknown functions $u(x,t)$ and $w(x,t)$ correspond, respectively, to gas pressure and gap width, $\Omega\subset\mathds{R}^n$ is a bounded and open region with smooth boundary $\partial\Omega$, $n=1,\ 2$; $\beta_F$, $\beta_p$, $\theta_1$, $\theta_2>0$ are given constants; $u_0=u_0(x)$, $v_0=v_0(x)$ and $w_0=w_0(x)$ are given functions. We shall prove the following wellposedness result which applies for short time:

\begin{thm}\label{coupled system}
Let $\alpha \in (0,\frac{1}{2})$, $u_0 \in H^{2+\alpha}(\Omega)$, $v_0 \in H^2(\Omega)$ and $w_0 \in H^4(\Omega)$, compatible with the boundary conditions and such that $u_0$, $w_0>0$. The initial-boundary value problem  \eqref{cp2} admits a unique strict solution $(u, w)$ {on a time interval $[0, T)$}  and
\[u\in C^{\alpha+1}\left([0, {T}); L^2(\Omega)\right)\cap C^\alpha\left([0, {T}); H^2(\Omega)\right),\]
\[w\in C^{2}\left([0, {T}); L^2(\Omega)\right)\cap C^{1}([0, {T}); H^2(\Omega))\cap C\left([0, {T}); H^4(\Omega)\right).\]
\end{thm}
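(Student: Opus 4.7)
The plan is to decouple the system by treating it as an abstract quasilinear parabolic equation for $u$ alone, in which the gap width $w$ is determined by $u$ through a solution operator $\mathcal{S}$ associated to the plate equation. I would carry this out in three stages corresponding to those highlighted in the abstract: (i) for a prescribed $u$ solve the semilinear dispersive equation and obtain $w=\mathcal{S}(u)$; (ii) verify H\"older continuous dependence of $\mathcal{S}$ on $u$ in appropriate function spaces; (iii) substitute $w=\mathcal{S}(u)$ into the quasilinear parabolic equation and apply abstract parabolic semigroup theory to solve for $u$.

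For stage (i), after shifting by $\theta_2$ to homogenise the boundary data, I would work with the operator $A=\Delta^2-\Delta$ on $X=L^2(\Omega)$ with domain $\{v\in H^4(\Omega): v|_{\partial\Omega}=0,\ \Delta v|_{\partial\Omega}=0\}$. Then $A$ is self-adjoint and strictly positive, and the abstract wave-type equation $\partial_t^2w+Aw=f$ is governed by the cosine family generated by $-A$, equivalently by a $C_0$-group on $D(A^{1/2})\times X$. Given $u\in C^{\alpha}([0,T);H^2(\Omega))$, the semilinear source $-\beta_F/w^2+\beta_p(u-1)$ is locally Lipschitz on balls where $w$ remains bounded away from zero, and a contraction mapping argument in $C([0,T_1);D(A))\cap C^1([0,T_1);D(A^{1/2}))\cap C^2([0,T_1);X)$ produces a unique strict solution on a short time interval. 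Continuity from the strictly positive $w_0$ guarantees $w>0$ for small times.

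For stage (ii), I would take two source functions $u_1,u_2$ and compare the corresponding $w_1,w_2$. Writing the difference equation and applying Duhamel together with Gr\"onwall yields an estimate of the form
\[\|w_1-w_2\|_{C^2([0,T_2);X)\cap C^1([0,T_2);D(A^{1/2}))\cap C([0,T_2);D(A))}\le C\|u_1-u_2\|_{C^{\alpha}([0,T_2);H^2(\Omega))},\]
using that $w\mapsto -\beta_F/w^2$ is Lipschitz on sets uniformly bounded away from zero. In particular, the composition $u\mapsto \mathcal{S}(u)^3 u$ and $u\mapsto \mathcal{S}(u)^{-1}\partial_t\mathcal{S}(u)$ inherit H\"older continuity in time with exponent $\alpha$, which is precisely the regularity needed for Lunardi's continuous-interpolation framework.

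For stage (iii), expanding $\partial_t(wu)=w\partial_tu+u\partial_tw$ and dividing by $w>0$ casts \eqref{cp2-1-1} as
\[\partial_tu=\frac{1}{w}\nabla\cdot(w^3u\nabla u)-\frac{u}{w}\partial_tw.\]
Substituting $w=\mathcal{S}(u)$ yields an abstract quasilinear Cauchy problem $\partial_tu=\mathcal{A}(u)u+F(u)$, $u(0)=u_0$, on $X=L^2(\Omega)$, where the second-order elliptic operator $\mathcal{A}(u)v=\mathcal{S}(u)^{-1}\nabla\cdot(\mathcal{S}(u)^3 u\nabla v)$ has H\"older-continuous coefficients in time by stage (ii). Standard results on abstract quasilinear parabolic problems in interpolation spaces then give a unique strict solution $u\in C^{1+\alpha}([0,T_\ast);L^2(\Omega))\cap C^{\alpha}([0,T_\ast);H^2(\Omega))$ for some $T_\ast>0$, and propagating this regularity back through $\mathcal{S}$ recovers the stated class for $w$. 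The \emph{main obstacle} is the interlocking of function-space regularity between the two subproblems: one has to choose time-regularity exponents so that $\mathcal{A}(u)$ has just enough H\"older regularity in $t$ for abstract parabolic theory, while preserving positivity $\mathcal{S}(u)>0$ and ellipticity $w^3u>0$ uniformly on a common short time interval. Shrinking the lifespan and restricting to balls in the working spaces should resolve both concerns simultaneously.
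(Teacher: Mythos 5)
Your three–stage decoupling is the same one the paper uses, and stage (i) is essentially equivalent: the cosine family generated by $-A$ with $A=\Delta^2-\Delta$ on $\{v\in H^4:\ v|_{\partial\Omega}=\Delta v|_{\partial\Omega}=0\}$, realised as a $C_0$-group on $D(A^{1/2})\times L^2=H^2_*\times L^2$, is precisely the paper's skew-adjoint first-order system $\mathcal A=\begin{pmatrix}0&\mathbb A\\1&0\end{pmatrix}$ on $L^2\times H^2_*$ obtained via Stone's theorem, up to the ordering of the two components. Your stage (ii) estimate is also in the same spirit as Theorem~\ref{t1}, although you claim Lipschitz continuity all the way into $C^2\cap C^1\cap C([0,T);D(\mathcal A))$; the paper needs, and proves, only Lipschitz continuity into $C([0,T];L^2\times H^2)$ together with separate H\"older-in-time estimates (Corollary~\ref{Holdercontinuity}) and H\"older estimates for the Fr\'echet derivative $W'(u)$ (Corollaries~\ref{Lip-Frechet-W} and~\ref{Holder-Frechet-W-I-Cor}), which is what is actually consumed in stage (iii).

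The substantive deviation, and the place where your proposal is underspecified, is your invocation of ``standard results on abstract quasilinear parabolic problems'' for $\partial_t u=\mathcal A(u)u+F(u)$. The classical quasilinear frameworks (Amann; Lunardi, Ch.~8) require $v\mapsto\mathcal A(v)$ to be a pointwise map from an open subset of the state (or an interpolation) space into operators. Here $\mathcal A(u)$ at time $t$ involves $\mathcal S(u)(t)$ and $\partial_t\mathcal S(u)(t)$, which depend on the whole past trajectory $u|_{[0,t]}$ through the semigroup representation of the plate equation --- not on $u(t)$ alone. So the coupling produces a Volterra-type functional coefficient, and the standard quasilinear theorems do not apply verbatim. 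The paper avoids the issue entirely by never invoking nonautonomous theory: it freezes the linearisation at $t=0$ to obtain the autonomous sectorial operator $\mathcal P^*$ (Lemma~\ref{linearization-elliptic}, Corollary~\ref{generatoroflinearization}), rewrites the equation as $\tilde u'=\mathcal P^*\tilde u+(F(\tilde u)-\mathcal P^*\tilde u)$, and runs a bespoke Banach fixed point for $\Gamma$ in $C^\alpha([0,T];H^2\cap H^1_0)$, with Lunardi--Sinestrari maximal regularity for the autonomous linear problem (Theorem~\ref{linear-parabolic-equation}) and the carefully quantified H\"older estimates of Lemma~\ref{RHSMax} governing the perturbation term. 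If you follow your route, you must replace the appeal to ``standard results'' by an explicit Picard iteration in a H\"older space and supply Lemma~\ref{RHSMax}-type bounds on $F'(\tilde u)q-\mathcal P^*q$ yourself; you should also state the compatibility condition precisely ($u_0\in H^{2+\sigma}$ with $\tfrac1{w_0}\nabla\cdot(w_0^3u_0\nabla u_0)\in D_{\mathcal P^*}(\alpha,\infty)$ and $\alpha<\sigma/2$) rather than only requiring $u_0>0$, since this compatibility is what puts $\tilde u'(0)$ in the right interpolation space for maximal regularity to apply.
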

\begin{rem}\label{4thRem}
	a) Global-in-time solutions are not expected for general initial data, because quenching singularities with $\displaystyle\inf_{x\in\Omega} w(x,t)\to 0$ may develop as $t\to {T}$, for a finite time ${T}<\infty$ (see \cite{gimperlein2022quenching}). \\
	b) Even for smooth data the Sobolev regularity of $w$ is limited because the right hand side in \eqref{cp2-1-2} does not vanish at the boundary $\partial\Omega$: Indeed, using Fourier series one can explicitly solve the linear dispersive equation
	\be\label{Fourier-LHP}
	\frac{\partial^2 w}{\partial t^2}+\frac{\partial^4 w}{\partial x^4}=f(x,t),\quad (x, t)\in (0,1)\times[0, \infty);
	\ee
	with homogeneous initial conditions $w(x, 0) = \frac{\partial w}{\partial t}(x, 0)=0$ ($x \in (0,1)$) and homogenous boundary conditions $w(x, t)=0$,
	$\frac{\partial^2 w}{\partial x^2}(x, t)=0$ ($x\in \{0,1\}$, $t \in (0,\infty)$). For $f=1$ one finds that the solution $w(t)\in H^{\frac{9}{2}-\epsilon}(0, 1)$
	for every $\epsilon>0$ and $t\in[0,\infty)$, but $w(t)\not \in H^{\frac{9}{2}}(0, 1)$. We therefore do not expect higher integer-order Sobolev regularity for $w$ in Theorem \ref{coupled system}.
\end{rem}
\begin{figure}[t]
	\begin{center}
		{\includegraphics[width=0.5\textwidth]{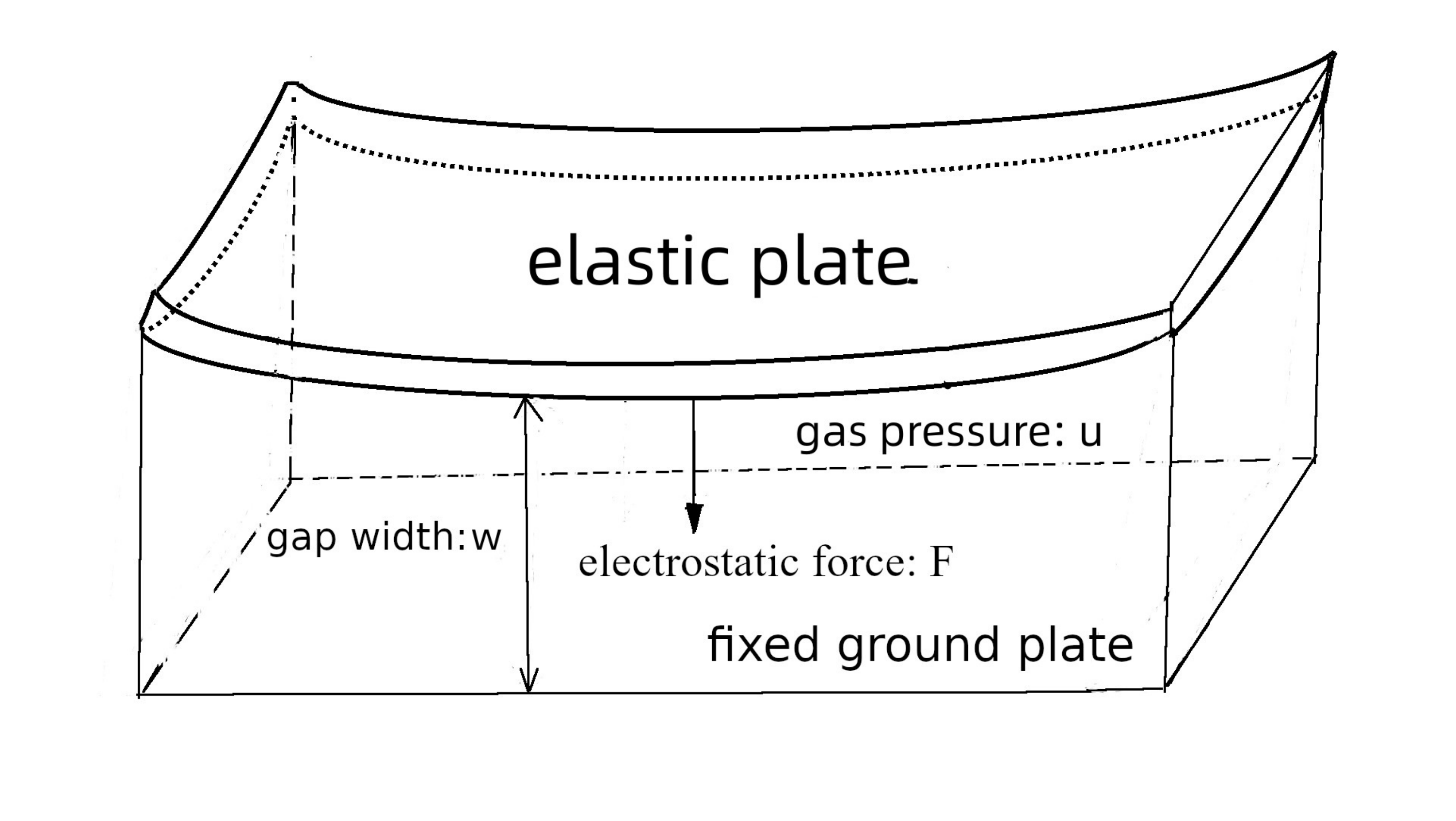}}
		\vspace{-0.6cm}\caption{Idealized electrostatically actuated MEMS capacitor.}
		\label{fig0}
	\end{center}
\vspace{-0.4cm}
\end{figure}
Our proof of Theorem \ref{coupled system} relies on the techniques for quasilinear parabolic equations developed, for example, by Amann, Arendt, Lunardi and Sinestrari \cite{AH, AW, LA0, LA1, LS1, LS2,  SE}. Semigroup methods of this kind have become a powerful tool for MEMS-related models defined by a single equation or by an elliptic-parabolic coupled system, see the recent survey \cite{LW}. We here combine such parabolic techniques for the quasilinear Reynolds' equation \eqref{cp2-1-1} with semigroup techniques for the semilinear fourth-order equation \eqref{cp2-1-2}.\\

{In the recent work \cite{gimperlein2023wellposedness} the authors showed the wellposedness of a related, simpler model in which \eqref{cp2-1-1} is replaced by a linear, elliptic equation for the gas pressure $u$. Physically, the simpler model in \cite{gimperlein2023wellposedness} replaces the dynamics of $u$ by a quasi-static approximation which can apply in
 limiting cases; here we consider the more accurate, fuller model to
give a good representation of the physical system outlined in Figure~\ref{fig0}.} {Technically, the analysis of both coupled systems relies on a delicate combination of the techniques available for the constituent equations. In the recent work \cite{gimperlein2023wellposedness} the well-known analysis of linear, elliptic equations allowed us to reduce the simpler model to a perturbed semilinear dispersive equation for the gap width $w$, which is studied using strongly continuous semigroup techniques  for such equations. For the realistic model \eqref{cp2-1-1}-\eqref{cp2-3} considered here,  similarly complete information is not available for the quasilinear, degenerately parabolic equation \eqref{cp2-1-1}. Nevertheless, by refining the analysis of \eqref{cp2-1-2}, we here reduce the coupled system to an abstract quasilinear, degenerately parabolic equation for the gas pressure $u$, for which we are able to show wellposedness.  Analytic semigroup techniques allow us to study this quasilinear, degenerately parabolic equation. }\\

{The model \eqref{cp2} gives the behaviour of a basic electrically actuated MEMS (Micro-Electro-Mechanical Systems) capacitor (see, e.g.,  \cite{JS}). This device contains two conducting plates which are close and parallel to each other when the device is uncharged and at equilibrium. We take, more generally, a fixed potential difference to be applied; this potential difference acts across the plates and the MEMS device forms a capacitor. The two plates lie inside a sealed box also containing a gas, with pressure substantially below atmospheric but not a perfect vacuum. The gas gives a small resistance to the motion of the upper,  plate, which is taken to be flexible but pinned around its edges. The other, lower, plate is assumed perfectly rigid and flat. See Figure \ref{fig0}. Breakdown of the device can occur through a pull-in instability, when the two plates touch, the physical phenomenon described by quenching.
	
	Eqn.~\eqref{cp2-1-1} is a compressible form of the standard Reynolds' equation for the pressure, $u$, in the gap between the plates, where the local gap width is $w$ (see, for example, {\cite{OO}}), and the gas is assumed to behave ideally and isothermally, so that its density can be taken to be proportional to pressure; {this  contrasts with the incompressible, liquid-like representation in \cite{gimperlein2023wellposedness}.}

	The upper electrode of the capacitor moves as a thin elastic plate, so that it behaves according to a dynamic plate equation balancing  the inertial term on the left-hand side of {\eqref{cp2-1-2}} with: 
\begin{itemize}
\item tension terms applied across the plate leading to the usual Laplacian (the first term on the right); 
\item a biharmonic term modelling linear elasticity (the second term on the right) {\cite{HKO}}; 
\item an electrostatic force attracting the upper plate towards the lower (the third term on the right) -- strength of this force per unit area is given by the local electric field strength times the surface charge density, the latter itself being proportional to the former, while this, the field strength, is inversely proportional to the gap width $w$; 
\item net upward gas pressure acting on the plate -- pressure in the gap acting up and constant ambient pressure acting down  (the final term on the right). 
\end{itemize}
For more details see {\cite{BP}, \cite{JS}}.
	
	The terms in the equations have been scaled to obtain unit coefficients in {\eqref{cp2-1-1}}. Without loss of generality, we have taken, for simplicity,  various coefficients of terms in {\eqref{cp2-1-2}} also to be one. This does not affect our analysis of the problem.}

In another, forthcoming paper, we study the limiting case where the movement of the upper plate is dominated by its tension, so that elastic effects are negligible. This approximation leads to a wave equation for the gap width $w$, instead of the dispersive equation \eqref{cp2-1-2}.

We review various previous models for electrostatic MEMS devices, some taking the form of a single equation others a coupled system. We also review literature which studies these models, both numerically and analytically, to obtain qualitative behaviour.

As the elastic plate in a MEMS device is fabricated at a micro-scale, the electrostatic force becomes relatively large so that it is the key force causing the bending of the plate when the device operates. The electrostatic force is inversely proportional to the square of the gap width between the two plates(see  \cite{EGG}, \cite{BP}, Sec.~3.4 of \cite{JS}), so that the distributed transverse load is the electrostatic force per area is $F_d$, 
\begin{equation}\label{ElecForce}
	F_d=-\frac{\beta_F}{w^2},\quad\text{where}\ \beta_F\ \text{is an electrostatic coefficient}.
\end{equation}
Hence, the static deflection of charged elastic plates in electrostatic actuators can be represented by a nonlinear elliptic equation
\begin{equation}\label{BendingEqn3}
	-\Delta w+\beta_e\Delta^2 w=-\frac{\beta_F}{w^2}.
\end{equation}
Lin et al. \cite{LY} study the existence, construction, approximation, and behaviour of classical and singular solutions to equation \eqref{BendingEqn3}. Other such problems can be found in references  \cite{EG}, \cite{VM}. 

From Chapter 12 in book \cite{EGG}, there is a value $\beta^*\in(0, \infty)$  such that for $0<\beta_F<\beta^*$ there exists at least one weak solution to \eqref{BendingEqn3}, while no solution exists for $\beta_F>\beta^*$.

To more fully model the behaviour of the plate, we consider the momentum of the plate as it is deformed, the elastic nature of plate, damping forces, and the electrostatic force between two plates and get an equation of motion
\begin{equation}\label{MotEqn}
	\epsilon^2\frac{\partial^2w}{\partial t^2}+\frac{\partial w}{\partial t}-\Delta w+\beta_e\Delta^2 w=-\frac{\beta_F}{w^2}.
\end{equation}
Here the gap width  $w=w(x, t)$ depends on time $t$ and point $x$ on the surface of movable plate, $\frac{\partial w}{\partial t}$ is a damping term,
\[\epsilon^2 =\frac{\text{inertial\ coefficient}}{\text{damping\ coefficient}},\]
$\beta_e$ accounts for the relative importance of tension and flexural rigidity in the elastic plate,  $\beta_F$ is proportional to the square of the applied voltage.

Considering the equation defined on a bounded domain of $\mathbb{R}^n$, $1\leq n\leq 3$, Guo, \cite{Guo}, finds that when a voltage -- represented here mathematically by $\beta_F$ -- is applied, the elastic plate deflects towards the ground plate, and quenching may occur when $\beta_F$ exceeds the critical value $\beta^*$ for the time-independent problem \eqref{BendingEqn3}. %,  \textcolor{red}{ creating the so-called pull-in instability  (also known as touchdown, corresponding to quenching in mathematics), where the elastic plate collapses onto the rigid plate.} 
Guo \cite{Guo} shows that there exists a $\beta_{F_{1}}\in(0,\ \beta^*]$ such that for $0\leq \beta_F<\beta_{F_{1}}$, the solution of an initial boundary value problem for \eqref{MotEqn} globally exists. Under some further technical hypotheses, in this case the solution exponentially converges to a regular steady state. For $\beta_F > \beta^*$, the solution quenches at finite time.

%When the damping terms dominate, i.e. $\epsilon^2\ll1$, then \eqref{MotEqn} reduces to the parabolic problem
%\begin{equation}\label{PMotEqn}
%	\frac{\partial w}{\partial t}-\Delta w+\beta_e\Delta^2 w=-\frac{\beta_F}{w^2},
%\end{equation}
%The model \eqref{PMotEqn} was derived in \cite{BP} from a narrow-gap asymptotic analysis, and the touchdown behaviour of a solution to \eqref{PMotEqn} is studied in the work \cite{LJS} of Lindsay et al. Finite-time touchdown  happens at some special points of the domain, in which the corresponding solutions of \eqref{PMotEqn} are finite but  derivatives diverge at the touchdown points. The set of touchdown points can be predicted by combining the asymptotic analysis with the geometry of the elastic plate. The paper \cite{LJS} also gives some examples of these sets of the points corresponding to various geometries of the plates, and more precisely,  there exists a set of points at which touchdown occurs simultaneously for circular domains with radially symmetric initial value. %Other such problems can be found in Roidos et al. \cite{RS1, RS}. 
%{\color{red}\it [Do we really want a this??? Some sort of definitions of ``quench'', ``pull-in'' and ``touch-down''?]\footnote{\textcolor{red}{I have made some comments above--RH}}}

Recent publications only study the compressible version \eqref{cp2-1-2} of the standard Reynolds' equation numerically, and this can be seen in the works \cite{BY, Bl, St, SRBUCP}. In particular, Bao et al.~\cite{BYSF} study the squeeze film damping with small amplitude deflections and linearize the nonlinear Reynolds' equation (i.e.~\eqref{cp2-1-2}) around the equilibrium position. The resulting equation is regarded as a form of the heat equation and it is possible to find  analytical solutions for this.

See  the survey article \cite{LW} for a discussion of a wider class of models arising in the description of MEMS. {We are not aware of any rigorous results for MEMS models which take into account both the dynamics of the gas and the elastodynamics of the plate.}\\

%There are some works about a coupled system modelling MEMS, for example, Escher et al.\cite{ELW} show that the local well-posedness of the elliptic-parabolic coupled system modelling the dynamics of membrane displacement and electric potential by using the maximal regularity estimates and a perturbation method (see Lunardi's works \cite{LA0, LA1, LA2}). Escher et al.\cite{ELW} also prove the global existence of solutions to the coupled system when the voltage is small. Other elliptic-parabolic coupled systems can be found in \cite{CL} and \cite{DD}.

The plan of the paper is as follows: In Section \ref{Sec:prerequisite}, we introduce notation, the relevant function spaces and some of their basic properties. We also introduce the mild solution and strict solution for the general evolution equation and their existence results, and show some Lipschitz continuity estimates. In Section \ref{4th-order problem}, we use a solution strategy for the system \eqref{cp2-1-1}, \eqref{cp2-1-2} based on decoupling the equations for the {gap-width} $w$ and the pressure $u$. We first consider the semilinear fourth-order equation \eqref{cp2-1-2} for the deflection $w$ with an arbitrarily given pressure $u$ and use semigroup techniques for \eqref{cp2-1-2} to show that the local wellposedness of \eqref{cp2-1-2}. While the regularity theory  of dispersive equations has been of much recent interest, we here require detailed properties of the solution operator $u\longmapsto w(u)$ in order to analyse the nonlinear Reynolds' equation \eqref{cp2-1-1} with abstract coefficients involving $w(u)$. For example, we prove appropriate H\"{o}lder continuity of the solution operator $u\longmapsto w(u)$ in Section \ref{SecSolnOp}. In Section \ref{section4}, we investigate the local wellposedness of \eqref{cp2-1-1} for $u$ with abstract coefficients involving $w(u)$ by using techniques for quasilinear parabolic equations.

% % % % % % % % % % % % % % % % % % % % % % % % 

\subsection{Outline}

{Note that system \eqref{cp2} can be written, as long as $w>0$ (no quenching occurs), as a coupled system in the form}
\bse\label{cp3}
\be\label{cp3-1}
\frac{\partial u}{\partial t}=\frac{1}{w}\nabla\cdot\left(w^3u\nabla u\right)-\frac{v}{w}u,\quad x\in\Omega,\ t\geq 0;
\ee
\be\label{cp3-2}
\frac{\partial v}{\partial t}=\Delta w-\Delta^2w-\frac{\beta_F}{w^2}+\beta_p(u-1),\quad x\in\Omega,\ t\geq 0;
\ee
\be\label{cp3-3}
\frac{\partial w}{\partial t}=v,\quad x\in\Omega,\ t\geq 0;
\ee
\ese
with the initial values $u(x,0)=u_0(x)$, $v(x,0)=v_0(x)$, $w(x,0)=w_0(x)$, $x\in {\Omega}$ and boundary values $u(x,t)=\theta_1$,  $w(x,t)=\theta_2$,  $\Delta w(x,t)=0$, $x\in\partial\Omega$, $t\geq 0$, where the initial values are compatible with the boundary conditions, i.e. $u_0(x)=\theta_1$, $w_0(x)=\theta_2$ and $\Delta w_0(x)=0$ for all $x\in\partial\Omega$, moreover, $u_0\in H^{2+\sigma}(\Omega)$  with $\sigma\in (0, 1)$, $v_0\in H^2(\Omega)\cap H_0^1(\Omega)$ and $w_0\in H^4(\Omega)$, then look for a unique strict solution $(u, v, w)$ of the  coupled system \eqref{cp3} for short time.
Section \ref{4th-order problem}  shows that there exists a unique solution $(v, w)$ of the sub-system \eqref{cp3-2}, \eqref{cp3-3} for arbitrarily given but appropriately regular $u$, initial values $v(x,0)=v_0(x)$, $w(x,0)=w_0(x)$, $x\in {\Omega}$ and boundary values $w(x,t)=\theta_2$,  $\Delta w(x,t)=0$, $x\in\partial\Omega,\ t\geq 0$, then Section \ref{SecSolnOp} establishes relevant properties of solution operators $u\longmapsto v=v(u)$, $u\longmapsto w=w(u)$ for short time $T$ such as: 
\begin{thm}\label{t1}
The solution operator
\[W:\ C\left(\left[0,  T\right], B_{H^2}(u_0,  r)\right)\longrightarrow C\left(\left[0,  T\right], B_{L^2}(v_0,  r)\times B_{H^2}(w_0,  r)\right)\] \[u\longmapsto W(u)=\left(v, w\right)=\left(v(u), w(u)\right)\]
is Lipschitz continuous with respect to $u$, i.e.
\begin{align}
\left\|W({u}_1)-W({u}_2)\right\|_{C\left([0, T]; L^{2}(\Omega)\times H^{2}(\Omega)\right)}\leq L_{W}\|{u}_1-{u}_2\|_{C\left([0, T]; H^2(\Omega)\right)},
\end{align}
where $r>0$ is sufficiently small, $L_{W}>0$ is a Lipschitz constant,
\[B_{H^2}(U,  r)=\left\{f\in H^2(\Omega):\ f|_{\partial\Omega}=U|_{\partial\Omega},\ \|f-U\|_{H^2(\Omega)}\leq r\right\},\] 
\[ B_{L^2}(V,  r)=\left\{f\in L^2(\Omega):\ \|f-V\|_{L^2(\Omega)}\leq r\right\}.\]
\end{thm}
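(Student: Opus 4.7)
The plan is to derive the Lipschitz estimate from a Duhamel representation based on the plate-equation semigroup underlying Section~\ref{4th-order problem}. First I would reduce to homogeneous boundary conditions by subtracting a fixed lift $\bar w$ of the boundary data (harmonic and compatible with $\Delta w=0$), setting $\tilde w := w-\bar w$, and recast \eqref{cp3-2}--\eqref{cp3-3} as a first-order abstract Cauchy problem on $X := \bigl(H^2(\Omega)\cap H_0^1(\Omega)\bigr)\times L^2(\Omega)$:
\begin{equation*}
\frac{d}{dt}\binom{\tilde w}{v}=\mathcal{A}\binom{\tilde w}{v}+\mathcal{F}(\tilde w,u),\qquad \mathcal{F}(\tilde w,u)=\binom{0}{\,-\beta_F/(\tilde w+\bar w)^2+\beta_p(u-1)+g\,},
\end{equation*}
where $g=\Delta\bar w-\Delta^2\bar w$ is a fixed source and $\mathcal{A}$ generates the strongly continuous (in fact unitary) group $\{S(t)\}$ associated with the hinged plate operator $\Delta^2-\Delta$. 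Duhamel's formula then reads $(\tilde w,v)(t)=S(t)(\tilde w_0,v_0)+\int_0^t S(t-s)\mathcal{F}(\tilde w(s),u(s))\,ds$, and the local existence result of Section~\ref{4th-order problem} guarantees, for $r$ and $T$ small enough, that $W$ lands in the stated target ball.

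Given two inputs $u_1,u_2\in C([0,T];B_{H^2}(u_0,r))$ with solutions $(v_i,w_i)=W(u_i)$, I would subtract the corresponding Duhamel formulas. Uniform boundedness of $S(t)$ on $[0,T]$ yields
\begin{equation*}
\|(w_1-w_2,v_1-v_2)(t)\|_X\leq C_S\int_0^t\|\mathcal{F}(\tilde w_1(s),u_1(s))-\mathcal{F}(\tilde w_2(s),u_2(s))\|_{L^2(\Omega)}\,ds.
\end{equation*}
Since $w_0\geq c_0>0$ on $\Omega$ and $H^2(\Omega)\hookrightarrow L^\infty(\Omega)$ for $n\leq 2$, every $w\in B_{H^2}(w_0,r)$ is bounded below by $c_0/2$ once $r$ is small enough, so the identity $w_1^{-2}-w_2^{-2}=-(w_1-w_2)(w_1+w_2)/(w_1^2 w_2^2)$ gives $\|w_1^{-2}-w_2^{-2}\|_{L^2}\leq C\|w_1-w_2\|_{L^2}\leq C\|w_1-w_2\|_{H^2}$. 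The pressure contribution is controlled directly by $\|\beta_p(u_1-u_2)\|_{L^2}\leq \beta_p\|u_1-u_2\|_{H^2}$. Combining the two bounds produces
\begin{equation*}
\|(w_1-w_2)(t)\|_{H^2}+\|(v_1-v_2)(t)\|_{L^2}\leq C\int_0^t\bigl(\|w_1-w_2\|_{H^2}+\|u_1-u_2\|_{H^2}\bigr)\,ds,
\end{equation*}
after which a standard Gronwall argument yields the asserted Lipschitz estimate with $L_W=L_W(r,T,\beta_F,\beta_p,c_0,C_S)$.

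The main obstacle I anticipate is the interaction between the nonhomogeneous boundary data and the semigroup framework: the generator $\mathcal{A}$ is naturally defined with zero Dirichlet/Navier data on $\partial\Omega$, so the lift $\bar w$ and induced source $g$ must be chosen so as to be absorbed into $\mathcal{F}$ without spoiling the uniform $L^2$ Lipschitz bound on the nonlinearity. A secondary, routine check is that the closed ball $B_{L^2}(v_0,r)\times B_{H^2}(w_0,r)$ is actually invariant under $W$ on the chosen time interval; this follows by applying the same Duhamel estimate with $u_2\equiv u_0$, $(v_2,w_2)\equiv(v_0,w_0)$ together with the strong continuity of $t\mapsto S(t)(\tilde w_0,v_0)$, upon taking $T$ and $r$ sufficiently small.
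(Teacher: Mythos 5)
Your proposal follows essentially the same route as the paper: Duhamel's formula for the plate semigroup on the product state space, a uniform pointwise lower bound on $w$ from the $H^2\hookrightarrow L^\infty$ embedding, a Lipschitz bound on the electrostatic nonlinearity $w\mapsto w^{-2}$, and Gronwall's lemma (compare Theorem \ref{4th-cpl} and equations \eqref{W estimate-11}--\eqref{w estimates2-11}, Lemma \ref{Lip-G-Lem}). The one obstacle you flag -- absorbing a nontrivial lift $\bar w$ and source $g=\Delta\bar w-\Delta^2\bar w$ into $\mathcal F$ -- is in fact a non-issue here, because the boundary datum $\theta_2$ in \eqref{cp2-3} is a \emph{constant}, so the lift is $\bar w\equiv\theta_2$, $g\equiv0$, and the paper simply uses the translation $\tilde w=w-\theta_2$ (and $\tilde u=u-\theta_1$).
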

\begin{cor}\label{c2}
For $u\in C\left([0, T]; B_{H^2}(u_0,  r)\right)$ and a small radius $r>0$,  the Fr\'{e}chet derivative $W'(u)$ of $W(u)$, given by
\[{W}{'}(u): C\left([0,T]; H^2(\Omega)\cap H_0^1(\Omega)\right)\longrightarrow C\left([0,T]; L^2(\Omega)\times \left\{H^2(\Omega)\cap H_0^1(\Omega)\right\}\right),\]
\[q\longmapsto W'(u)q=\left(v'(u)q, w'(u)q\right)\]
is Lipschitz continuous with respect to $u$, i.e. for $\left\|q\right\|_{C\left([0,T]; H^2(\Omega)\right)}\leq 1$,
\be
\left\|{W}'(u_1)q-{W}'(u_2)q\right\|_{C\left([0, T]; L^{2}(\Omega)\times H^{2}(\Omega)\right)}\leq L_{F}\left\|{u}_1-{u}_2\right\|_ {C\left([0, T]; H^2(\Omega)\right)}.
\ee
Here $L_F$ is a Lipschitz constant.
\end{cor}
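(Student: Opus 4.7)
My plan is to first derive the linearized problem satisfied by $(\tilde v, \tilde w) := W'(u)q = (v'(u)q, w'(u)q)$ by formally differentiating \eqref{cp3-2}--\eqref{cp3-3} in the direction $q$. Only the terms $-\beta_F/w^2$ and $\beta_p(u-1)$ contribute nontrivially, so the linearization gives
\be
\partial_t \tilde w = \tilde v,\qquad \partial_t \tilde v = \Delta \tilde w - \Delta^2 \tilde w + \frac{2\beta_F}{w(u)^3}\tilde w + \beta_p\, q,
\ee
with zero initial data and homogeneous boundary conditions $\tilde w = \Delta \tilde w = 0$ on $\partial\Omega$ (consistent with $\tilde w \in H^2(\Omega) \cap H_0^1(\Omega)$).

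For fixed $u$, this is a linear plate problem with the same principal part used in Section \ref{4th-order problem} and forcing $\beta_p q + \frac{2\beta_F}{w(u)^3}\tilde w$. Since $w(u)$ is bounded uniformly away from zero and regular in $(x, t)$ by the construction of $W$, the coefficient $2\beta_F/w(u)^3$ acts as a bounded multiplier on $H^2(\Omega)$, using the Banach-algebra property of $H^2(\Omega)$ for $n \le 2$. Existence and uniqueness of $W'(u)q$ in the stated spaces, and the a priori bound $\|W'(u)q\|_{C([0, T]; L^2 \times H^2)} \le C \|q\|_{C([0,T]; H^2)}$, then follow by the same semigroup/contraction argument that underlies Theorem \ref{t1}. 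Fr\'{e}chet (not merely G\^{a}teaux) differentiability is verified by applying Theorem \ref{t1} to the remainder $R(u,q) := W(u+q) - W(u) - W'(u)q$, which solves the same linear plate system with a forcing that is quadratic in $q$ after Taylor-expansion of $1/w^2$, so that $\|R(u,q)\| = o(\|q\|)$ uniformly.

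For the Lipschitz dependence of $W'$ on $u$, I would set $(\tilde v_i, \tilde w_i) = W'(u_i)q$, $w_i = w(u_i)$, and form the difference $(\delta v, \delta w) := (\tilde v_1 - \tilde v_2, \tilde w_1 - \tilde w_2)$, which satisfies
\be
\partial_t \delta w = \delta v,\qquad \partial_t \delta v = \Delta \delta w - \Delta^2 \delta w + \frac{2\beta_F}{w_1^3}\delta w + 2\beta_F\left(\frac{1}{w_1^3} - \frac{1}{w_2^3}\right)\tilde w_2 + \beta_p\cdot 0,
\ee
with vanishing initial and boundary data. The inhomogeneous forcing term is controlled in $L^2(\Omega)$ by $C \|w_1 - w_2\|_{H^2}\|\tilde w_2\|_{H^2}$, via Sobolev embedding and the uniform positive lower bound on $w_i$. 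Theorem \ref{t1} then gives $\|w_1 - w_2\|_{H^2} \le L_W \|u_1 - u_2\|_{H^2}$, while the a~priori estimate above yields $\|\tilde w_2\|_{H^2} \le C \|q\|_{H^2} \le C$ since $\|q\|_{C([0,T]; H^2)} \le 1$. Applying the same semigroup estimate to the linear problem for $(\delta v, \delta w)$ produces the desired bound with Lipschitz constant $L_F = C L_W$.

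The main obstacle, I expect, is justifying Fr\'{e}chet rather than merely directional differentiability, which requires controlling the nonlinear remainder coming from $1/w^2$ uniformly in $q$ on the ball of radius $r$. All the analytic tools needed -- the semigroup estimates from Section \ref{4th-order problem}, Banach-algebra properties of $H^2(\Omega)$ for $n \le 2$, and the Lipschitz bound of Theorem \ref{t1} -- are already in place, so the argument is essentially a careful assembly of these ingredients applied to the two linear plate problems above.
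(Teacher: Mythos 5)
Your proposal is correct and follows essentially the same route as the paper: differentiate the perturbed plate system to identify $W'(u)q$ as the solution of a linear plate problem with zeroth-order coefficient $2\beta_F/w(u)^3$ and forcing $\beta_p q$, write the difference $W'(u_1)q-W'(u_2)q$ so that the forcing carries the factor $1/w_1^3-1/w_2^3$, bound it via the $H^2$-algebra and the Lipschitz estimate on $u\mapsto w(u)$, and close with a Gronwall-type semigroup estimate. The only surface difference is that you work with the PDE form of the linearized system while the paper manipulates the mild (Duhamel) integral equation directly via the semigroup block $T_{21}$; your extra care distinguishing Fréchet from Gâteaux differentiability, via the remainder $R(u,q)$, is in fact slightly more scrupulous than the paper's brief appeal to the Lipschitz estimate \eqref{Lip-W-I}.
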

\begin{cor}\label{c3}
If $r>0$ is small and $u\in C^\alpha\left([0, T]; B_{H^2}(u_0,r)\right)$, setting $\tilde{u}_0=u_0-\theta_1$,
then there exists a Lipschitz constant $L_M>0$, such that
\begin{align}
\sup_{0\leq t<t+h\leq T}\left\|[{W}'(u)q](t+h)-[{W}'(u)q](t)\right\|_{L^{2}(\Omega)\times H^{2}(\Omega)}\leq&h^\alpha L_{M}\left\|q\right\|_{C\left([0,T]; H^2(\Omega)\right)}\notag\\
+&h^\alpha TL_{M}\left\|q\right\|_{C^\alpha\left([0, T]; H^2(\Omega)\right)}\notag
\end{align}
holds for all $q\in C^\alpha\left([0, T]; B_{H^2}(\tilde{u}_0,r)\right)$.
\end{cor}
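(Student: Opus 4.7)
The plan is to exploit the explicit linear PDE for $W'(u)q$, pass to a mild formulation with the plate semigroup, and analyse the time-increment via a Duhamel-type splitting that is closed off by a Gronwall argument. Linearising the system \eqref{cp3-2}--\eqref{cp3-3} in $u$ in the direction $q$, as in the derivation of Corollary \ref{c2}, the pair $(V,W):=W'(u)q$ satisfies
\[
\partial_t V = \Delta W - \Delta^2 W + \tfrac{2\beta_F}{w(u)^3}\,W + \beta_p\,q, \qquad \partial_t W = V, \qquad V(0)=W(0)=0.
\]
Letting $\{S(t)\}_{t\ge 0}$ be the $C_0$-semigroup on $L^2(\Omega)\times(H^2(\Omega)\cap H^1_0(\Omega))$ generated by the plate operator of Section \ref{4th-order problem}, this becomes
\[
(V,W)(t) = \int_0^t S(t-s)\,G(s)\,ds,\qquad G(s) := \Bigl(\tfrac{2\beta_F}{w(u(s))^3}\,W(s) + \beta_p\,q(s),\ 0\Bigr),
\]
and a Gronwall argument on this identity yields the background bound $\|(V,W)\|_{C([0,T];L^2\times H^2)}\le C\|q\|_{C([0,T];H^2)}$.

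For $0<h<T$ and $t\in[0,T-h]$, the substitution $s\mapsto s-h$ in the integral formula for $(V,W)(t+h)$ produces the splitting
\[
(V,W)(t+h)-(V,W)(t) = \int_{-h}^{0} S(t-s)\,G(s+h)\,ds + \int_0^t S(t-s)\,[G(s+h)-G(s)]\,ds =: I_1 + I_2.
\]
With $M:=\sup_{0\le s\le T}\|S(s)\|$, the first integral satisfies $\|I_1\|\le Mh\|G\|_\infty \le Ch^\alpha T^{1-\alpha}\|q\|_{C([0,T];H^2)}$. In $I_2$ we decompose
\[
G(s+h)-G(s) = \tfrac{2\beta_F}{w(s+h)^3}[W(s+h)-W(s)] + \Bigl[\tfrac{2\beta_F}{w(s+h)^3}-\tfrac{2\beta_F}{w(s)^3}\Bigr] W(s) + \beta_p[q(s+h)-q(s)]
\]
in the first component (zero in the second). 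Using the time-regularity of $w(u)$ established in Section \ref{4th-order problem} -- in particular $w(u)$ is Lipschitz in time with $L^\infty$ values via Sobolev embedding for $n\le 2$, and $w$ is bounded away from $0$ -- the middle term is bounded by $Ch\|q\|_{C([0,T];H^2)}$ in $L^2$, while the last is bounded by $\beta_p h^\alpha[q]_{C^\alpha([0,T];H^2)}$ from the hypothesis on $q$.

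Setting $\phi(h):=\sup_{0\le t\le T-h}\|(V,W)(t+h)-(V,W)(t)\|_{L^2\times H^2}$ and bounding the self-referential first term of $I_2$ by $C\phi(h)$ under the integral yields
\[
\phi(h) \le C_1 h^\alpha\|q\|_{C([0,T];H^2)} + C_2 T h^\alpha[q]_{C^\alpha([0,T];H^2)} + C_3 T\phi(h).
\]
Choosing $T$ small enough that $C_3 T<\tfrac12$, consistent with the short-time setting of Theorem \ref{coupled system}, we absorb $C_3 T\phi(h)$ onto the left-hand side and obtain the claimed bound with $L_M:=2\max(C_1,C_2)$. The principal obstacle is precisely the self-referential term $\tfrac{2\beta_F}{w^3}[W(s+h)-W(s)]$ in $I_2$, which must be closed off by a Gronwall argument rather than estimated directly; a secondary subtlety is that the plate semigroup is a unitary $C_0$-group with no smoothing effect, so every bit of Hölder-in-time regularity of $(V,W)$ has to be inherited from the data $q$ and $u$ (through $w(u)$).
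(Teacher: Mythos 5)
Your decomposition reproduces the paper's proof of this corollary: shift the time variable in the implicit Duhamel formula for $W'(u)q$ to produce the boundary term $I_1$, the $q$-difference term, and the self-referential term, bound the explicit pieces via the $H^2$ algebra (Lemma \ref{alg}, Corollary \ref{alg-1}) and the H\"{o}lder continuity of $[w(u)]^{-3}$ (via Lemma \ref{estimates} and Corollary \ref{Holdercontinuity}), and close the self-referential piece by iteration. Two technical points need adjustment. First, under the hypothesis $u\in C^\alpha([0,T];B_{H^2}(u_0,r))$ the mild solution $w(u)$ is only H\"{o}lder continuous of exponent $\alpha$ in time (Corollary \ref{Holdercontinuity}); the Lipschitz-in-time bound of Corollary \ref{Lip-mild-solu} requires the additional hypothesis $u\in C^1([0,T];L^2(\Omega))$, which is not available here. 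So your middle term in $I_2$ contributes $Ch^\alpha\|q\|_{C([0,T];H^2(\Omega))}$ rather than $Ch\|q\|_{C([0,T];H^2(\Omega))}$; this is still sufficient for the claimed bound, but your stated justification for that step is wrong and should be corrected to cite Corollary \ref{Holdercontinuity}. Second, you close the self-referential term by taking $\sup_t$ first and then absorbing $C_3 T\phi(h)$ under the side condition $C_3 T<\tfrac{1}{2}$. The paper instead keeps the pointwise inequality
\[
\left\|[w'(u)q](t+h)-[w'(u)q](t)\right\|_{H^2(\Omega)}\le \bigl(\text{explicit }h^\alpha\text{ terms}\bigr)+2\beta_F M_0 C_1^3\int_0^t\left\|[w'(u)q](s+h)-[w'(u)q](s)\right\|_{H^2(\Omega)}\,ds
\]
and applies Gronwall's inequality, yielding the multiplicative constant $e^{2\beta_F M_0 C_1^3 T_0}$ with no further smallness of $T$. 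Your absorption therefore establishes the corollary only for $T$ below an additional threshold depending on $C_3$, whereas the statement is for every $T\in(0,T_0)$. Since your closing remark already identifies Gronwall as the right mechanism for the self-referential term, the fix is simply to carry it out: retain the pointwise integral inequality and apply Gronwall before taking the supremum, rather than absorbing after.
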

Section \ref{section4} shows  an existence result for the coupled system \eqref{cp2}. The strategy of proof is to reformulate the system \eqref{cp2} as the quasilinear parabolic equation with abstract coefficients involving $v(u)$ and $w(u)$
\bse\label{1QPE}
\be\label{1QPE-1}
\frac{\partial u}{\partial t}=\frac{1}{w(u)}\nabla\cdot\left([w(u)]^3u\nabla u\right)-\frac{v(u)}{w(u)}u,\quad (x,t)\in\Omega\times(0, T),
\ee
\be\label{1QPE-2}
u(x,0)=u_0(x),\quad x\in\Omega,\quad u(x,t)=\theta_1,\quad (x,t)\in\partial\Omega\times[0, T],
\ee
\ese
and then show the solution of \eqref{1QPE} exists as long as $\left(v(u), w(u)\right)\in C\left([0, T]; B_{L^2}(v_0, r)\times B_{H^2}(w_0, r)\right)$ for small $r>0$ and $T>0$ by using a contraction mapping argument.

We set $\tilde{u}=u-\theta_1$, where $\tilde{u}(t): \Omega\longrightarrow \mathds{R} $, $x\longmapsto [\tilde{u}(t)](x)=\tilde{u}(x,t) $, $\forall\ t\in[0, T]$,
\[F(\tilde{u})=\frac{1}{w(\tilde{u}+\theta_1)}\nabla\cdot\left([w(\tilde{u}+\theta_1)]^3(\tilde{u}+\theta_1)\nabla \tilde{u}\right)-\frac{v(\tilde{u}+\theta_1)}{w(\tilde{u}+\theta_1)}(\tilde{u}+\theta_1),\]
and start the argument with the definition of linearization $\mathcal{P}^*$ of $F(\tilde{u})$ around $\tilde{u}_0$,
\[\mathcal{P}^*:\ D\left(\mathcal{P}^*\right)\subseteq H^2(\Omega)\cap H_0^1(\Omega)\longrightarrow L^2(\Omega),\] \[\mathcal{P}^*\psi=\frac{1}{w_0}\nabla\cdot\left\{w_0^3u_0\nabla\psi+\left(w_0^3\nabla u_0\right)\psi\right\}-\frac{v_0}{w_0}\psi,\]
then we show that the operator $\mathcal{P}^*$ generates an analytic semigroup $\left\{e^{t\mathcal{P}^*}:\ t\geq 0\right\}$ and rewrite \eqref{1QPE} in the form of
\be\label{2QPE}
\tilde{u}'(t)=\mathcal{P}^*\tilde{u}(t)+[F(\tilde{u})](t)-\mathcal{P}^*\tilde{u}(t),\quad t\in(0, T),\quad \tilde{u}(0)=\tilde{u}_0.
\ee
In order to prove the existence result for the nonlinear problem \eqref{2QPE}, we shall need the following H\"{o}lder result which is deduced from Theorem \ref{t1}, Corollary \ref{c2} and Corollary \ref{c3}.
\begin{lem}\label{1}
If $\tilde{u},\ q\in C^\alpha\left([0, T]; B_{H^2}\left(\tilde{u}_0, r\right)\right)$, then there exist postive constants $L_A$ and $L_B$, such that for all $ 0\leq t<t+h\leq T$,
\be
\left\|\left[F(\tilde{u})\right](t+h)-\left[F(\tilde{u})\right](t)\right\|_{L^2(\Omega)}\leq \left\{[\tilde{u}+\theta_1]_{C^\alpha\left(\left[0, T\right]; H^2(\Omega)\right)}+L_U\right\}L_Ah^\alpha,
\ee
\begin{align}
&\left\|\left[F'(\tilde{u})q\right](t+h)-\left[F'(\tilde{u})q\right](t)-\mathcal{P}^*\left[q(t+h)-q(t)\right]\right\|_{L^2(\Omega)}\notag\\
\leq& h^\alpha T^\alpha L_{B}\left\|q\right\|_{C^\alpha([0, T]; H^2(\Omega))}+h^\alpha T^\alpha L_{B}\left\|\tilde{u}+\theta_1\right\|_{C^\alpha([0, T]; H^2(\Omega))}\left\|q\right\|_{C^\alpha([0, T]; H^2(\Omega))}\notag\\
+&h^\alpha L_B\left\|q\right\|_{C([0, T]; H^2(\Omega))}+h^\alpha L_B\left\|\tilde{u}+\theta_1\right\|_{C^\alpha([0, T]; H^2(\Omega))}\left\|q\right\|_{C([0, T]; H^2(\Omega))}.
\end{align}
\end{lem}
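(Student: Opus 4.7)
The plan is to reduce both inequalities to H\"{o}lder-in-time control of the building blocks $u = \tilde{u}+\theta_1$, $w(u)$, $v(u)$, using the Lipschitz result of Theorem \ref{t1}, the Fr\'{e}chet-Lipschitz bound of Corollary \ref{c2}, and the H\"{o}lder increment of $W'(u)q$ from Corollary \ref{c3}. Together with the Banach-algebra structure of $H^2(\Omega)$ for $n=1,2$ and the embedding $H^2(\Omega)\hookrightarrow L^\infty(\Omega)$, this converts each estimate into finitely many multilinear telescoping bounds. Since the arguments stay in balls of radius $r$ around $u_0$ (and correspondingly around $v_0,w_0$) with $r$ small, $w$ stays uniformly bounded away from zero and $1/w$ inherits $H^2$-bounds from $w$; the initial condition $\tilde u(0)=\tilde u_0$, inherited from the fixed-point problem in which the lemma is applied, moreover lets us estimate $\|u(t)-u_0\|_{H^2}\leq T^\alpha[\tilde u+\theta_1]_{C^\alpha([0,T];H^2)}$, and analogously for $w(u)-w_0$ and $v(u)-v_0$ via Theorem \ref{t1}.

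Expanding (using $\nabla\tilde{u}=\nabla u$) yields
\[
F(\tilde{u}) \;=\; w^2 u\,\Delta u \;+\; 3wu\,(\nabla w\cdot\nabla u) \;+\; w^2|\nabla u|^2 \;-\; \tfrac{vu}{w},
\]
a finite sum of multilinear products of functions from $\{u,\nabla u,\Delta u,w,\nabla w,v,1/w\}$. The first estimate follows by applying the telescoping identity $AB(t{+}h)-AB(t)=[A(t{+}h)-A(t)]B(t{+}h)+A(t)[B(t{+}h)-B(t)]$ inside each summand. Increments of factors depending only on $u$ contribute $h^\alpha[\tilde u+\theta_1]_{C^\alpha([0,T];H^2)}$, while the increment of $w(u)$ in $H^2$ and of $v(u)$ in $L^2$ is controlled by $L_W h^\alpha[\tilde u+\theta_1]_{C^\alpha([0,T];H^2)}$ through Theorem \ref{t1}. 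The remaining factors are bounded uniformly, since $u,w,v$ live in balls of radius $r$, and the constants coming from these and from the initial data are collected into $L_U$.

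For the second estimate, one first computes the Fr\'{e}chet derivative
\[
F'(\tilde{u})\,q \;=\; \mathcal{P}(\tilde{u})\,q \;+\; G\!\left(\tilde u,\, W'(\tilde u+\theta_1)\,q\right),
\]
where $\mathcal{P}(\tilde{u})$ is the pointwise analogue of $\mathcal{P}^*$ with $(u_0,w_0,v_0)$ replaced by $(u,w(u),v(u))$, and $G$ collects the terms arising from differentiating $w(u)$ and $v(u)$ in the direction $q$. The target quantity then decomposes as
\[
\bigl\{[\mathcal{P}(\tilde u)-\mathcal{P}^*]\,q(t{+}h)-[\mathcal{P}(\tilde u)-\mathcal{P}^*]\,q(t)\bigr\} \;+\; \bigl\{G(\tilde u,W'(u)q)(t{+}h)-G(\tilde u,W'(u)q)(t)\bigr\}.
\]
Because $\mathcal{P}(\tilde u)-\mathcal{P}^*$ has coefficients built from $u-u_0$, $w(u)-w_0$, $v(u)-v_0$, each of size $O(T^\alpha\|\tilde u+\theta_1\|_{C^\alpha})$, a further telescoping of the first brace yields on one side an increment of $q$ (size $h^\alpha\|q\|_{C^\alpha}$) against a coefficient of size $T^\alpha$ (giving the $h^\alpha T^\alpha$ contributions) and on the other side a coefficient-increment of size $h^\alpha\|\tilde u+\theta_1\|_{C^\alpha}$ against $q$ (giving the $h^\alpha\|\tilde u+\theta_1\|_{C^\alpha}\|q\|_C$ contribution). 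The second brace is treated with Corollary \ref{c3} applied to the $W'(u)q$ increment, which already supplies the $h^\alpha\|q\|_C$ and $h^\alpha T\|q\|_{C^\alpha}$ structure (on short time intervals $T\leq 1$ the latter is majorised by $h^\alpha T^\alpha\|q\|_{C^\alpha}$), combined with Corollary \ref{c2} to carry the $u$-dependent coefficients in $G$ across the time-step.

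The main obstacle is the bookkeeping in the second step: $F'(\tilde u)q$ contains several distinct product terms, and their differences with $\mathcal{P}^*q$ must be grouped so that each resulting piece exhibits an explicit factor $u-u_0$, $w(u)-w_0$ or $v(u)-v_0$ paired with a norm of $q$ compatible with the target space $L^2(\Omega)$. Once this grouping is chosen the proof reduces to repeated applications of the same multilinear telescoping argument used for the first estimate, together with the Sobolev algebra inequality on $H^2(\Omega)$.
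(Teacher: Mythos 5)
Your overall strategy -- expand $F(\tilde u)$ in multilinear products, telescope time increments, use the algebra structure of $H^2(\Omega)$ and the Sobolev embedding, and track three sources of H\"older regularity (from $u$, from $W(u)$, from $W'(u)q$) -- is the same as the paper's proof of this lemma (Lemma \ref{RHSMax}, Appendix \ref{AppA2}). Your expansion of $F$ is correct, and your decomposition of $F'(\tilde u)q-\mathcal P^*q$ into a ``$\mathcal P(\tilde u)-\mathcal P^*$'' part and a derivative-of-the-solution-operator part is a legitimate regrouping of the terms the paper handles directly from its expression \eqref{Fre-F-4}.

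However, there is a genuine gap in the way you control the H\"older-in-time increments of $v(u)$ and $w(u)$. You claim that ``the increment of $w(u)$ in $H^2$ and of $v(u)$ in $L^2$ is controlled by $L_W h^\alpha[\tilde u+\theta_1]_{C^\alpha}$ through Theorem \ref{t1},'' and similarly that Theorem \ref{t1} estimates $w(u)-w_0$, $v(u)-v_0$. This does not follow. Theorem \ref{t1} is Lipschitz continuity of the full map $u\mapsto W(u)$ between spaces of \emph{curves} on $[0,T]$, comparing $W(u_1)(t)$ with $W(u_2)(t)$ at the \emph{same} time for two different inputs. Because $W$ is the solution operator of an initial-value problem, $[W(u)](t)$ depends on the whole history $u\vert_{[0,t]}$; you cannot obtain $\bigl\|[W(u)](t+h)-[W(u)](t)\bigr\|$ from a Lipschitz estimate in $u$, nor is $[W(u_0^{\mathrm{const}})](t)$ equal to the pair $(v_0,w_0)$ for $t>0$. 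The time-H\"older regularity of $(v(u),w(u))$ is a separate result: Corollary \ref{Holdercontinuity} of the paper, which gives $\|w(u)(t+h)-w(u)(t)\|_{H^2}\le L_U h^\alpha$ with $L_U$ depending on the initial data $(\tilde v_0,\tilde w_0)\in D(\mathcal A)$, $M_0$, $\kappa$, etc. That additive constant $L_U$ is precisely what appears in the right-hand side of the first inequality of the lemma and cannot be recovered from your argument, which would instead produce a bound proportional to $[\tilde u+\theta_1]_{C^\alpha}$ alone. The same issue recurs in your treatment of the ``second brace'': the time increments of the $u$-dependent coefficients in $G$ cannot be carried by Corollary \ref{c2} (Lipschitz of $W'$ in $u$); they again require the time-H\"older of $w(u),v(u)$ from Corollary \ref{Holdercontinuity}, alongside the time increment of $W'(u)q$ from Corollary \ref{c3}.
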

We are going to show that the existence of a unique strict solution of \eqref{2QPE} by proving there exists $T_{\max}>0$, such that the nonlinear map $\Gamma$ defined by
\be\label{0}
\Gamma(\tilde{u}(t))=e^{t\mathcal{P}^*}\tilde{u}_0+\displaystyle\int_0^te^{(t-s)\mathcal{P}^*}\left\{[F(\tilde{u})](s)-\mathcal{P}^*\tilde{u}(s)\right\}ds,\quad t\in [0, T],
\ee
is a contractive map and has a unique fixed point in $C^\alpha\left([0, T]; H^2(\Omega)\cap H_0^1(\Omega)\right)$ for $T\in[0, T_{\max})$ and $\alpha\in(0, 1)$. To prove the assertion, we define
\[Y=\left\{\tilde{u}\in C^\alpha\left([0, T]; H^2(\Omega)\cap H_0^1(\Omega)\right):\ \tilde{u}(0)=\tilde{u}_0,\ \left\|\tilde{u}(\cdot)-\tilde{u}_0\right\|_{C^\alpha([0, T]; H^2(\Omega))}\leq r\right\},\]
with small $r>0$, by using the H\"{o}lder results in Lemma \ref{1}, we deduce that, there is $T_{\max}>0$, such that, for $T\in(0, T_{\max})$,  small $r>0$ and $\alpha\in(0, 1)$, $\Gamma$ is a contractive map which maps $Y$ to itself, i.e. $\tilde{u}_1$, $\tilde{u}_2\in Y$,
\[\left\|\Gamma\tilde{u}_1-\Gamma\tilde{u}_2\right\|_{C^\alpha\left([0, T]; H^2(\Omega)\right)}\leq\frac{1}{2}\left\|\tilde{u}_1-\tilde{u}_2\right\|_{C^\alpha\left([0, T]; H^2(\Omega)\right)},\quad\Gamma(Y)\subseteq Y.\]
By the Banach fixed point theorem, we conclude the existence of a unique fixed point in $Y$ which is a unique strict solution of \eqref{2QPE} belonging to \[C^\alpha\left([0, T]; H^2(\Omega)\cap H_0^1(\Omega)\right)\cap C^{\alpha+1}\left([0, T]; L^2(\Omega)\right),\]
by the regularity results of the evolution equation of parabolic type from \cite{AH, LS1, SE}.

Combining the existence and regularity results from Section \ref{4th-order problem}  with the existence of a unique strict solution of \eqref{2QPE}, we conclude the proof of Theorem \ref{coupled system}.

%%%%%%%%%%%%%%%%%%%%%%%%%%%%%%%%%%%%

\section{Preliminaries} \label{Sec:prerequisite}

%%%%%%%%%%%%%%%%%%%%%%%%%%%%%%%%%%%%

In this section,  we first formulate  some auxiliary results which will be useful in the proof of the main theorem, with the proofs of Lemma \ref{estimates},  Lemma \ref{Lip-G-Lem} and Lemma \ref{Lip-nonlinearity} being found in Appendix \ref{AppA}. We then state a general existence result for  evolution equations and the regularity in time without proof. %We refer to Theorem 6.9 in \cite{schnaubelt} for the proof of Lemma \ref{IEE-S} and Lemma 8.9 in \cite{schnaubelt} for the proof of Lemma \ref{time-derivative-continuity}.

% % % % % % % % % % % % % % % % % % % % % % % % 

\subsection{Notations}\label{SecNotations} 

Recall that $\beta_F$, $\beta_p$, $\theta_1$ and $\theta_2$ given positive constants. Let $\Omega$ be an open and bounded subset of $\mathds{R}^n$ with smooth boundary $\partial\Omega$, $n=1,\ 2$. Denote by $C=C(\Omega)$ a positive constant which may vary from line to line below but only depends on $\Omega$. 
\begin{defn}
Denote by $X$  a Banach space, with norm $\|\cdot\|_{X}$, $k\in\mathbb{N}$ and $T\in(0, \infty)$. $\mathcal{B}(X)$ denotes the space of bounded linear operators on $X$. In the following, we shall be particularly interested in $X=L^2(\Omega)$, $L^\infty(\Omega)$,  $H^k(\Omega)$, etc. The space $\mathcal{B}([0,T];X)$ consists of all  measurable, almost everywhere bounded functions $u: [0,T]\longrightarrow X$, $ t\longmapsto u(t) $, with norm $\|u\|_{\mathcal{B}([0,T];X)}=\sup_{t\in[0, T]}\|u(t)\|_{X}$. If $X$ is a function space as above, we write $u(t): \Omega\longrightarrow \mathds{R}$ with $x\longmapsto [u(t)](x)=u(x,t)$. The closed subspace of continuous functions is denoted by $C([0,T];X)$, and
\[C^k([0,T];X)=\left\{u:[0,T]\rightarrow X:  \tfrac{d^ju}{dt^j}\in C([0,T];X), j\in[0, k]\right\},  \|u\|_{C^k([0,T];X)}=\hspace*{-0.2cm}\sup_{t\in[0, T]}\sum_{j=0}^{k}\left\|\tfrac{d^ju(t)}{dt^j}\right\|_{X}.\]
The definition extends to non-integer order $k+\alpha$, $\alpha\in(0, 1)$, by setting 
\[C^\alpha([0,T];X)=\left\{u:[0,T]\rightarrow X: [u]_{C^\alpha([0, T];X)}=\sup_{0\leq t<t+h\leq T}\tfrac{\|u(t+h)-u(t)\|_X}{|h|^\alpha}<\infty\right\},\]
\[\|u\|_{C^{\alpha}([0,T];X)}=\|u\|_{C([0,T];X)}+[u]_{C^{\alpha}([0, T];X)}.\]
\[C^{\alpha+k}([0,T];X)=\left\{u\in C^k([0,T];X): \quad \tfrac{d^ku}{dt^k}\in C^{\alpha}([0,T];X)\right\},\]
\[\|u\|_{C^{\alpha+k}([0,T];X)}=\|u\|_{C^k([0,T];X)}+\left[\tfrac{d^ku}{dt^k}\right]_{C^{\alpha}([0, T];X)}.\]
\noindent Note that  $C\left([0, T]; B_{L^2}\left(V, r\right)\right)=\Big\{v\in C\left([0, T]; L^2\left(\Omega\right)\right):  \displaystyle\sup_{t\in[0, T]}\left\|v(t)-V\right\|_{H^2(\Omega)}\leq r\Big\}$, $V\in L^2(\Omega)$,  $C\left([0, T]; B_{H^2}\left(U, r\right)\right)=\Big\{u\in C\left([0, T]; H^2\left(\Omega\right)\right):\ u|_{\partial\Omega}=U|_{\partial\Omega},\ \displaystyle\sup_{t\in[0, T]}\left\|u(t)-U\right\|_{H^2(\Omega)}\leq r\Big\}$, $C^\alpha\left([0, T]; B_{H^2}\left(U, r\right)\right)=\Big\{u\in C^\alpha\left([0, T]; H^2\left(\Omega\right)\right):\ u|_{\partial\Omega}=U|_{\partial\Omega},\ \displaystyle\sup_{t\in[0, T]}\left\|u(t)-U\right\|_{H^2(\Omega)}\leq r\Big\}$\\ with $U\in H^2(\Omega)$. 

If $\mathcal{P}: D(\mathcal{P}) \subset X \to X$ is an unbounded linear operator which generates an analytic semigroup $e^{\mathcal{P}t}$, we define intermediate space $D_{\mathcal{P}}(\alpha, \infty)$ as follows:
\[D_{\mathcal{P}}(\alpha, \infty)=\left\{v\in X:\ \|v\|_{\alpha}=\sup_{t>0}\|t^{1-\alpha}\mathcal{P}e^{\mathcal{P}t}v\|_{X}<\infty\right\}.\]
It is a Banach space with respect to the norm
$\|v\|_{D_{\mathcal{P}}(\alpha, \infty)}=\|v\|_{X}+\|v\|_{\alpha}$.
Its closed subspace
$D_{\mathcal{P}}(\alpha)=\left\{v\in X:\ \lim_{t\rightarrow 0}t^{1-\alpha}\mathcal{P}e^{\mathcal{P}t}v=0\right\}$
inherits the norm of $D_{\mathcal{P}}(\alpha, \infty)$. 
\end{defn}
Our main results on the wellposedness of the semilinear dispersive equation \eqref{cp2-1-2} will be shown by constructing a Picard iteration in the complete metric space  $\mathcal{Z}(T)$, given by
\begin{align}
\mathcal{Z}(T):&=\bigg\{(\tilde{v},\tilde{w})\in C\left([0,T]; L^2(\Omega)\times H^2(\Omega)\right):\ \left(\tilde{v}(0), \tilde{w}(0)\right)=\left(\tilde{v}_0, \tilde{w}_0\right),\quad  \tilde{w}|_{\partial\Omega}=\tilde{w}_0|_{\partial\Omega},\notag\\
&\sup_{t\in[0, T]}\|(\tilde{v}(t)-\tilde{v}_0,\tilde{w}(t)-\tilde{w}_0)\|_{L^2(\Omega)\times H^{2}(\Omega)}\leq r\bigg\},\quad \text{where}\quad \tilde{v}_0=v_0,\quad \tilde{w}_0=w_0-\theta_2\label{ini-NBD}.
\end{align}
\subsection{Useful Estimates}
The estimates in this subsection are proven in Appendix \ref{AppA}.
\begin{lem}\label{estimates}
There exists a constant $C=C(\Omega)>0$, such that for all
\be\label{r-def-I}
r\in\left(0,\ \frac{\kappa}{2C}\right),
\ee
$w\in C\left([0, T]; B_{H^2}\left(w_0,r\right)\right)$ has the lower bound such as
\be\label{w-lower-bound}
w(t)\geq\frac{\kappa}{2},\quad \forall\ t\in[0, T].
\ee
Moreover, for all $w_1$, $w_2\in C\left([0, T]; B_{H^2}(w_0, r)\right)$, there exist  positive constants $C_k$, $k=1,\ 2,\ 3$, depending on $\Omega$, $\kappa$ and $\left\|w_0\right\|_{H^2(\Omega)}$, such that
\be\label{C-a}
\sup_{t\in[0, T]}\left\|\frac{1}{[w_1(t)]^{k}}\right\|_{H^2(\Omega)}\leq {C_1^k},\quad k=1,\ 2,\ 3,
\ee
\be\label{C-d}
\sup_{t\in[0, T]}\left\|\frac{1}{[w_1(t)]^{k}}-\frac{1}{[w_2(t)]^{k}}\right\|_{H^2(\Omega)}\leq C_k\sup_{t\in[0, T]}\left\|w_1(t)-w_2(t)\right\|_{H^2(\Omega)},\ k=2,\ 3.
\ee
\end{lem}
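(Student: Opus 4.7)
The plan rests on two Sobolev facts specific to low dimensions $n\leq 2$: the embedding $H^2(\Omega)\hookrightarrow L^\infty(\Omega)$, yielding a constant $C=C(\Omega)$ with $\|f\|_{L^\infty}\leq C\|f\|_{H^2}$, and the Banach-algebra property $\|fg\|_{H^2}\leq C\|f\|_{H^2}\|g\|_{H^2}$. The constant $\kappa$ should be read as a positive pointwise lower bound for $w_0$, which is legitimate since $w_0\in H^4(\Omega)\subset C(\overline\Omega)$ and $w_0>0$ on the compact set $\overline\Omega$.

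For the pointwise lower bound \eqref{w-lower-bound}, for any $w\in C([0,T];B_{H^2}(w_0,r))$ with $r<\kappa/(2C)$, the embedding gives $\|w(t)-w_0\|_{L^\infty}\leq C\|w(t)-w_0\|_{H^2}\leq Cr<\kappa/2$, so $w(t,x)\geq w_0(x)-\kappa/2\geq \kappa/2$ pointwise. For \eqref{C-a}, this bound first yields $\|w_1(t)^{-1}\|_{L^\infty}\leq 2/\kappa$ uniformly in $t$. A direct computation gives
\[
\nabla(w_1^{-k})=-k\,w_1^{-k-1}\nabla w_1,\qquad \partial^2_{ij}(w_1^{-k})=-k\,w_1^{-k-1}\partial^2_{ij}w_1+k(k+1)\,w_1^{-k-2}\partial_i w_1\,\partial_j w_1.
\]
Each term can then be placed in $L^2$ by combining the $L^\infty$-bound on $w_1^{-1}$, the estimate $\|w_1\|_{H^2}\leq \|w_0\|_{H^2}+r$, and the embedding $H^1(\Omega)\hookrightarrow L^4(\Omega)$ (for $n\leq 2$) to handle the quadratic gradient term $\partial_i w_1\,\partial_j w_1$. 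Collecting the powers of $2/\kappa$ arising from the negative exponents of $w_1$ and absorbing the remaining factors into a single $C_1=C_1(\Omega,\kappa,\|w_0\|_{H^2})$ gives the bound in the form $C_1^k$.

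For the difference estimate \eqref{C-d}, I would use the telescoping identity
\[
\frac{1}{w_1^k}-\frac{1}{w_2^k}=\frac{w_2^k-w_1^k}{w_1^k\,w_2^k}=\left(\sum_{j=0}^{k-1}w_1^{j}w_2^{k-1-j}\right)\frac{w_2-w_1}{w_1^k\,w_2^k},
\]
apply the Banach-algebra property of $H^2(\Omega)$, and then invoke the uniform bounds from \eqref{C-a} on $1/(w_1^k w_2^k)$ together with the elementary bound $\|w_i\|_{H^2}\leq\|w_0\|_{H^2}+r$ on each factor in the sum. The main technical obstacle is purely bookkeeping: one must verify that every constant depends only on $\Omega$, $\kappa$ and $\|w_0\|_{H^2}$, and not on the individual functions $w_1$, $w_2$. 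The strict pointwise lower bound $w_i\geq\kappa/2$, established in the first step, is exactly what enables this uniformity in the denominators.
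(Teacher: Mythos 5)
Your proposal is correct and follows essentially the same route as the paper: the Sobolev embedding $H^2\hookrightarrow L^\infty$ for the pointwise lower bound, explicit differentiation of $w^{-k}$ combined with the $H^1\hookrightarrow L^4$ embedding and the algebra property for \eqref{C-a}, and the factorization $w_2^k-w_1^k=(w_2-w_1)\sum_{j=0}^{k-1}w_1^j w_2^{k-1-j}$ together with the uniform bounds on $1/(w_1^k w_2^k)$ for \eqref{C-d}. The only cosmetic difference is that the paper carries out the derivative computation only for $k=1$ and then appeals to the $H^2$-algebra property, which makes the $C_1^k$ form in \eqref{C-a} fall out automatically, whereas you differentiate $w^{-k}$ directly; both reduce to the same ingredients.
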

\begin{lem}\label{Lip-G-Lem}
The nonlinear operator $G$, defined  by
\[G:\ C\left([0, T]; B_{H^2}\left(\tilde{w}_0, r\right)\right)\longrightarrow C([0, T]; H^2(\Omega)),\quad \tilde{w}\longmapsto  G(\tilde{w})\]
\[[G(\tilde{w})](t)=G(\tilde{w}(t))=-\frac{\beta_F}{[\tilde{w}(t)+\theta_2]^2}+\beta_p(\theta_1-1),\]
has the following properties:
\be \label{Holdercontinuous}
\sup_{0\leq t<t+h\leq T}\left\|[G(\tilde{w})](t+h)-[G(\tilde{w})](t)\right\|_{H^{2}(\Omega)}\leq L_G\sup_{0\leq t<t+h\leq T}\left\|\tilde{w}(t+h)-\tilde{w}(t)\right\|_{H^{2}(\Omega)},%\quad\forall\ .
\ee
\be\label{Lip-G}
\sup_{t\in[0, T]}\left\|[G(\tilde{w}_1)](t)-[G(\tilde{w}_2)](t)\right\|_{H^{2}(\Omega)}\leq L_G\sup_{t\in[0, T]}\left\|\tilde{w}_1(t)-\tilde{w}_2(t)\right\|_{H^{2}(\Omega)},
\ee
\be\label{Lip-G-1}
\sup_{t\in[0, T]}\left\|[G(\tilde{w}_1)](t)-G(\tilde{w}_0)\right\|_{H^{2}(\Omega)}\leq L_G r.
\ee
Here $L_G=L_G\left(\Omega,\ \kappa,\ \|w_0\|_{H^{2}(\Omega)},\ \beta_F\right)$ is a constant.\\
Furthermore, the Fr\'{e}chet derivative $G'(\tilde{w})$ of $G(\tilde{w})$ on  $\tilde{w}\in C\left([0, T]; B_{H^2}\left(\tilde{w}_0, r\right)\right)$, defined by
\[G'\left(\tilde{w}\right):\ C\left([0, T]; H^2(\Omega)\cap H_0^1(\Omega)\right)\longrightarrow C\left([0, T]; H^2(\Omega)\cap H^1_0(\Omega)\right),\quad  q\longmapsto  G'\left(\tilde{w}\right)q,\]
\[\left[G'\left(\tilde{w}\right)q\right](t)=\left[G'\left(\tilde{w}(t)\right)\right]q(t)=\frac{2\beta_F}{\left(\tilde{w}(t)+\theta_2\right)^3}q(t),\]
satisfies
\be\label{Lip-G-2}
\sup_{t\in[0, T]}\left\|\left[G'\left(\tilde{w}\right)q\right](t)\right\|_{H^2(\Omega)}\leq L_G\sup_{t\in[0, T]}\left\|q(t)\right\|_{H^2(\Omega)},
\ee
and  $G'(\tilde{w}(t)): \left\{H^2(\Omega)\cap H_0^1(\Omega)\right\}\longrightarrow \left\{H^2(\Omega)\cap H_0^1(\Omega)\right\}$ satisfies
\be\label{uniformly-continuous-Fre-G}
\lim_{h\rightarrow0}\sup_{\begin{smallmatrix}0\leq t\leq t+ h\leq T\\ 0\leq\tau\leq 1\end{smallmatrix}}\left\|G'\left(\tilde{w}(t)+\tau\left[\tilde{w}(t+h)-\tilde{w}(t)\right]\right)-G'\left(\tilde{w}(t)\right)\right\|_{\mathcal{B}\left(H^2(\Omega)\right)}=0.
\ee
\end{lem}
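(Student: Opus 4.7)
The plan is to reduce all five assertions to the reciprocal estimates already collected in Lemma \ref{estimates}, exploiting the Banach algebra structure $H^2(\Omega)\cdot H^2(\Omega)\hookrightarrow H^2(\Omega)$ (valid since $n\le 2$) together with the elementary observation that the additive constant $\beta_p(\theta_1-1)$ cancels in every difference. The first move is to note that if $\tilde w\in C([0,T];B_{H^2}(\tilde w_0,r))$ with $\tilde w_0=w_0-\theta_2$, then $w(t):=\tilde w(t)+\theta_2$ lies in $C([0,T];B_{H^2}(w_0,r))$, so that for $r$ chosen as in \eqref{r-def-I} the pointwise lower bound \eqref{w-lower-bound} and the $H^2$ reciprocal bounds \eqref{C-a}, \eqref{C-d} all apply to $w$.

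For the three Lipschitz-type bounds \eqref{Holdercontinuous}, \eqref{Lip-G} and \eqref{Lip-G-1}, I would write
\[
[G(\tilde w_1)](t)-[G(\tilde w_2)](t)=-\beta_F\left[\frac{1}{(\tilde w_1(t)+\theta_2)^2}-\frac{1}{(\tilde w_2(t)+\theta_2)^2}\right]
\]
and apply \eqref{C-d} with $k=2$ to get an $H^2$-bound of the form $\beta_F C_2\|\tilde w_1(t)-\tilde w_2(t)\|_{H^2}$. Taking the supremum over $t$ yields \eqref{Lip-G}; specialising to $\tilde w_1(\cdot)=\tilde w(\cdot+h)$, $\tilde w_2(\cdot)=\tilde w(\cdot)$ gives \eqref{Holdercontinuous}; and specialising to $\tilde w_2\equiv \tilde w_0$ gives \eqref{Lip-G-1}.

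For the Fr\'echet-derivative bound \eqref{Lip-G-2}, I would use the explicit formula
\[
[G'(\tilde w)q](t)=\frac{2\beta_F}{(\tilde w(t)+\theta_2)^3}\,q(t),
\]
control the multiplier in $H^2$ via \eqref{C-a} with $k=3$, and multiply by $q(t)$ using the Banach algebra inequality $\|fg\|_{H^2}\le C\|f\|_{H^2}\|g\|_{H^2}$; since $q(t)$ vanishes on $\partial\Omega$ and the multiplier is bounded, the product remains in $H^2(\Omega)\cap H^1_0(\Omega)$ as required.

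The most delicate assertion is the uniform operator-norm continuity \eqref{uniformly-continuous-Fre-G}. Setting $w(t)=\tilde w(t)+\theta_2$, I would write
\[
\bigl[G'\bigl(\tilde w(t)+\tau[\tilde w(t+h)-\tilde w(t)]\bigr)-G'(\tilde w(t))\bigr]q=2\beta_F\!\left[\frac{1}{(w(t)+\tau[w(t+h)-w(t)])^3}-\frac{1}{w(t)^3}\right]q,
\]
observe by convexity that the intermediate point still lies in $B_{H^2}(w_0,r)$ so Lemma \ref{estimates} applies, and invoke \eqref{C-d} with $k=3$ to bound the bracketed difference in $H^2$ by $C_3\tau\|w(t+h)-w(t)\|_{H^2}\le C_3\|w(t+h)-w(t)\|_{H^2}$, uniformly in $\tau\in[0,1]$. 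The Banach algebra inequality then converts this into an $\mathcal{B}(H^2(\Omega))$ bound of the same form; finally, uniform continuity of $t\mapsto w(t)$ on the compact interval $[0,T]$ forces the supremum to vanish as $h\to 0$. The main conceptual obstacle here is really only bookkeeping--keeping track that the intermediate point remains in the ball where the reciprocal estimates are valid--rather than any new analytic ingredient.
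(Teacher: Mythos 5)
Your proposal is correct and follows essentially the same route as the paper: reduce everything to the $H^2$-reciprocal estimates \eqref{C-a}, \eqref{C-d} of Lemma \ref{estimates} and the Banach-algebra bound, use convexity of the ball $B_{H^2}(w_0,r)$ for the intermediate point in \eqref{uniformly-continuous-Fre-G}, and finish with uniform continuity of $t\mapsto w(t)$ on the compact interval. The only stylistic divergence is in \eqref{Lip-G-2}, where you bound $\|2\beta_F\,[w(t)]^{-3}\,q(t)\|_{H^2}$ directly via the algebra inequality and \eqref{C-a}, while the paper instead deduces the same bound by applying the already-established Lipschitz estimate \eqref{Lip-G} to the difference quotient $\lambda^{-1}[G(\tilde w+\lambda q)-G(\tilde w)]$ and passing to the limit $\lambda\to0$; both arguments yield a constant of the stated dependence, so the discrepancy is immaterial.
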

\begin{lem}\label{Lip-nonlinearity}
Assume the operators $u\longmapsto v(u)$ and $u\longmapsto w(u)$, respectively given by
\[u\longmapsto v(u):\quad  C\left([0, T]; B_{H^2}\left(u_0, r\right)\right)\longrightarrow C\left([0, T]; B_{L^2}\left(v_0, r\right)\right),\] \[u\longmapsto w(u):\quad  C\left([0, T]; B_{H^2}\left(u_0, r\right)\right)\longrightarrow C\left([0, T]; B_{H^2}\left(v_0, r\right)\right),\]
satisfy, for all $u_1,\ u_2\in C\left([0, T]; B_{H^2}\left(u_0, r\right)\right)$,
\begin{equation*}
\sup_{t\in[0, T]}\|[v(u_1)](t)-[v(u_2)](t)\|_{L^2(\Omega)}\leq L_W \sup_{t\in[0, T]}\|{u}_1(t)-{u}_2(t)\|_{H^2(\Omega)},
\end{equation*}
\begin{equation*}
\sup_{t\in[0, T]}\|[w(u_1)](t)-[w(u_2)](t)\|_{H^2(\Omega)}\leq L_W \sup_{t\in[0, T]}\|{u}_1(t)-{u}_2(t)\|_{H^2(\Omega)}.
\end{equation*}
Then $f(u)$, defined by
\begin{equation*}
u\longmapsto f(u): C\left([0, T]; B_{H^2}\left(u_0, r\right)\right)\longrightarrow C([0, T]; L^2(\Omega)),\quad f(u)=\frac{1}{w(u)}\nabla\cdot\left([w(u)]^3u\nabla u\right)-\frac{v(u)}{w(u)}u,
\end{equation*}
is Lipschitz continuous in $u$,
\be\label{nonlinearity-Lip-0}
\sup_{t\in[0, T]}\|[f(u_1)](t)-[f(u_2)](t)\|_{L^2(\Omega)}\leq L_e \sup_{t\in[0, T]}\|{u}_1(t)-{u}_2(t)\|_{H^2(\Omega)}.
\ee
Here $L_W$ and $L_e$ are  Lipschitz constants, and $L_e$ depends on $L_W$, $\Omega$, $\kappa$, $\left\|u_0\right\|_{H^2(\Omega)}$, $\left\|v_0\right\|_{L^2(\Omega)}$, $\left\|w_0\right\|_{H^2(\Omega)}$.
\end{lem}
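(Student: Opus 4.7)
The plan is to split the nonlinearity into its Reynolds-divergence part and its reaction part,
\[
f(u) = \frac{1}{w(u)}\nabla\cdot\!\left([w(u)]^3 u\,\nabla u\right) - \frac{v(u)}{w(u)}\,u \;=:\; A(u) - B(u),
\]
and prove Lipschitz continuity of each piece separately by telescoping differences so that each term carries exactly one factor of the form $u_1-u_2$, $v(u_1)-v(u_2)$, or $w(u_1)-w(u_2)$, multiplied by factors which we can uniformly bound in $H^2(\Omega)$ via Lemma \ref{estimates} and the hypotheses $w(u_i)\in B_{H^2}(w_0,r)$, $v(u_i)\in B_{L^2}(v_0,r)$.

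For $A(u)$ I would first expand the divergence and cancel $1/w(u)$:
\[
A(u) = 3\,w(u)\,u\,\nabla w(u)\cdot \nabla u + [w(u)]^2|\nabla u|^2 + [w(u)]^2\,u\,\Delta u.
\]
Then $A(u_1)-A(u_2)$ is decomposed by iterated use of $a_1b_1-a_2b_2 = (a_1-a_2)b_1 + a_2(b_1-b_2)$, so that every summand contains either $w(u_1)-w(u_2)$ (hence, by hypothesis, $\le L_W\|u_1-u_2\|_{H^2}$) or $u_1-u_2$. Each summand is then bounded in $L^2(\Omega)$ by exploiting: the Banach algebra property of $H^2(\Omega)$ (valid for $n\le 2$), the embeddings $H^2(\Omega)\hookrightarrow L^\infty(\Omega)$ and $H^1(\Omega)\hookrightarrow L^4(\Omega)$, and the uniform bound $\|1/[w(u_i)]^k\|_{H^2}\le C_1^k$ from Lemma \ref{estimates}. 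For a mixed quadratic-gradient term such as $[w(u_1)-w(u_2)]\,u_1\,\nabla w(u_1)\cdot\nabla u_1$, the two gradient factors go into $L^4$ while the $H^2$ factors go into $L^\infty$; for the $\Delta u$ term, $\Delta u_1\in L^2$ is paired with $L^\infty$ coefficients.

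For $B(u)$ I telescope as
\[
B(u_1)-B(u_2) = \frac{v(u_1)-v(u_2)}{w(u_1)}\,u_1 + v(u_2)\!\left[\frac{1}{w(u_1)}-\frac{1}{w(u_2)}\right]\!u_1 + \frac{v(u_2)}{w(u_2)}\,(u_1-u_2),
\]
and bound in $L^2(\Omega)$ by putting the scalar factors $u_1$, $1/w(u_i)$, and $\bigl[1/w(u_1)-1/w(u_2)\bigr]u_1$ into $L^\infty$ (via $H^2\hookrightarrow L^\infty$ together with Lemma \ref{estimates}), while $v(u_2)\in L^2$ is controlled by $\|v_0\|_{L^2}+r$, and the Lipschitz assumptions on $u\mapsto v(u)$ and $u\mapsto w(u)$ convert the remaining difference factors into $L_W\|u_1-u_2\|_{H^2}$.

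The principal technical obstacle is the divergence term $A(u)$: one must carefully distribute each factor of a triple product into the correct Lebesgue space ($L^\infty$, $L^4$, or $L^2$) so that all contributions combine into a clean $L^2$ bound by $\|u_1-u_2\|_{H^2}$, and this accounting depends crucially on $n\le 2$ through the algebra property of $H^2(\Omega)$ and the embedding $H^1(\Omega)\hookrightarrow L^4(\Omega)$. In $B(u)$, the analogous delicate point is that $v(u)$ is only controlled in $L^2$, so its partner factor must always be placed in $L^\infty$; the telescoping above is chosen precisely to make this possible. Collecting all estimates yields a constant $L_e$ depending only on $L_W$, $\Omega$, $\kappa$, and the $H^2$/$L^2$ norms of the initial data, completing \eqref{nonlinearity-Lip-0}.
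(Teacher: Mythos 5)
Your proposal is correct, and the reaction part $B(u)$ is handled essentially as the paper does (a three-step telescoping, placing $v$-factors in $L^2$ against $L^\infty$ partners). Where you diverge is in the treatment of the divergence term $A(u)$. You expand the divergence explicitly into $3wu\,\nabla w\cdot\nabla u + w^2|\nabla u|^2 + w^2u\,\Delta u$ and telescope each of the three (multi-factor) products, distributing factors among $L^\infty$, $L^4$ and $L^2$. This works, but generates a fairly long list of terms (the first product alone has four factors, hence four telescoping contributions). The paper instead exploits the identity $u\nabla u=\tfrac12\nabla(u^2)$ to rewrite $A(u)=\tfrac{1}{2w}\nabla\cdot\left(w^3\nabla(u^2)\right)$, i.e.\ as a single trilinear form $g_1\nabla\cdot(g_2\nabla g_3)$ with $g_1=w^{-1}$, $g_2=w^3$, $g_3=u^2$, and then telescopes on just those three factors, bounding each resulting term by the single estimate $\|g_1\nabla\cdot(g_2\nabla g_3)\|_{L^2}\le 2C\|g_1\|_{H^2}\|g_2\|_{H^2}\|g_3\|_{H^2}$ from Corollary~\ref{alg-1}, together with the Lipschitz bounds \eqref{C-d} for $w^{-1}$ and $w^3$. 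The payoff of the paper's route is that all the Lebesgue-exponent accounting you describe is done once and for all inside Corollary~\ref{alg-1}, so the difference $A(u_1)-A(u_2)$ is controlled by only three clean terms; your version reaches the same constant but with more bookkeeping and a slightly more fragile dependence on getting the $L^\infty/L^4/L^2$ split right in each of the many summands.
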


\subsection{Properties of Evolution Equations}
We recall standard notions and results for abstract evolution equations.
\begin{defn}
Let $\mathfrak{X}$ be a Banach space, $\mathcal{A}: D(\mathcal{A}) \subset \mathfrak{X} \to \mathfrak{X}$ a linear, unbounded operator which  generates a strongly continuous semigroup ($C_0$-semigroup) $\{T(t): t\geq 0\}$. Further, let $T\in(0, \infty)$, $\mathcal{G}\in C([0, T]; \mathfrak{X})$ and $\Phi_0\in \mathfrak{X}$. A function $\Phi$ is called a mild solution of the  inhomogeneous evolution equation
\be\label{IEE}
\Phi'(t)=\mathcal{A}\Phi(t)+\mathcal{G}(t), \quad t\in [0, T],\quad \Phi(0)=\Phi_0,
\ee
if $\Phi\in C([0, T]; \mathfrak{X})$ is given by the integral formulation
\be\label{linear solu}
\Phi(t)=T(t)\Phi_0+\int_0^tT(t-s)\mathcal{G}(s)ds,\quad t\in[0, T].
\ee
A function $\Phi$ is said to be a strict solution of \eqref{IEE}, if $\Phi\in C([0, T]; D(\mathcal{A}))\cap C^1([0, T]; \mathfrak{X})$ is given by the integral formulation \eqref{linear solu} and satisfies \eqref{IEE}.
\end{defn}

\begin{lem}\label{IEE-S}
Let the linear operator $\mathcal{A}$ defined on a Banach space $\mathfrak{X}$ generate the $C_0$-semigroup $\{T(t): t\geq 0\}$, $T\in(0, \infty)$, and $\Phi_0\in D(\mathcal{A})$. If $\mathcal{G}\in C([0, T]; \mathfrak{X})$ and $\Phi$ is a solution of the inhomogeneous evolution equation \eqref{IEE}, then $\Phi$ is given by the integral formulation \eqref{linear solu}.

Assume either that $\mathcal{G}\in C([0, T]; D(\mathcal{A}))$ or that $\mathcal{G}\in C^1([0, T]; \mathfrak{X})$. Then the mild solution $\Phi$ defined by \eqref{linear solu} uniquely solves the inhomogeneous evolution equation \eqref{IEE} on $[0, T]$, and
\[\Phi\in C([0, T]; D(\mathcal{A}))\cap C^1([0, T]; \mathfrak{X}).\]
\end{lem}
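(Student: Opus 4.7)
The plan splits into two implications. For the forward direction, I fix $t \in (0,T]$ and consider the auxiliary function $\psi(s) = T(t-s)\Phi(s)$ on $[0,t]$. Since $\Phi(s) \in D(\mathcal{A})$ and $\mathcal{A}$ commutes with $T(t-s)$ on $D(\mathcal{A})$, the product rule for $C_0$-semigroups gives
$$\psi'(s) = -T(t-s)\mathcal{A}\Phi(s) + T(t-s)\Phi'(s) = T(t-s)\mathcal{G}(s),$$
which is continuous in $s$ by the hypothesis $\mathcal{G}\in C([0,T];\mathfrak{X})$. Integrating from $0$ to $t$ then recovers $\Phi(t) - T(t)\Phi_0 = \int_0^t T(t-s)\mathcal{G}(s)\,ds$, i.e.\ the formula \eqref{linear solu}. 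Uniqueness of strict solutions is an immediate consequence: the difference of any two satisfies \eqref{IEE} with $\Phi_0 = 0$ and $\mathcal{G} = 0$, hence vanishes.

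For the converse, I would study the convolution $v(t) = \int_0^t T(t-s)\mathcal{G}(s)\,ds$ and show that, under either alternative hypothesis on $\mathcal{G}$, $v \in C([0,T]; D(\mathcal{A})) \cap C^1([0,T]; \mathfrak{X})$ with $v'(t) = \mathcal{A}v(t) + \mathcal{G}(t)$. Combined with the fact that $t \mapsto T(t)\Phi_0$ lies in the same regularity class because $\Phi_0 \in D(\mathcal{A})$, this promotes the mild solution \eqref{linear solu} to a strict one. When $\mathcal{G} \in C([0,T]; D(\mathcal{A}))$, closedness of $\mathcal{A}$ combined with the commutation $\mathcal{A}T(t-s)\mathcal{G}(s) = T(t-s)\mathcal{A}\mathcal{G}(s)$ lets me pull $\mathcal{A}$ inside the integral, so $v(t) \in D(\mathcal{A})$ with continuous-in-$t$ value. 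Differentiating via the splitting $v(t+h) - v(t) = \int_t^{t+h} T(t+h-s)\mathcal{G}(s)\,ds + \int_0^t T(t-s)[T(h) - I]\mathcal{G}(s)\,ds$ (the second integral obtained using $T(t+h-s) = T(t-s)T(h)$), dividing by $h$ and letting $h \to 0^+$, then yields $v'(t) = \mathcal{G}(t) + \mathcal{A}v(t)$.

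The $\mathcal{G} \in C^1([0,T]; \mathfrak{X})$ case is the main obstacle, because $\mathcal{G}(s)$ need not lie in $D(\mathcal{A})$, so closedness alone cannot insert $\mathcal{A}$ under the integral. The standard remedy is to trade $t$-derivatives against $s$-derivatives via the change of variables $\tau = t-s$, rewriting $v(t) = \int_0^t T(\tau)\mathcal{G}(t-\tau)\,d\tau$. Direct differentiation of this expression yields $v'(t) = T(t)\mathcal{G}(0) + \int_0^t T(\tau)\mathcal{G}'(t-\tau)\,d\tau$, which is continuous in $t$. To identify $v(t) \in D(\mathcal{A})$, I would compute the semigroup difference quotient $h^{-1}[T(h) - I]v(t)$ directly: splitting the resulting integral into the three pieces $\int_t^{t+h}$, $\int_0^h$, $\int_h^t$ and exploiting the $C^1$ regularity of $\mathcal{G}$, its limit as $h \to 0^+$ exists and equals $T(t)\mathcal{G}(0) - \mathcal{G}(t) + \int_0^t T(\tau)\mathcal{G}'(t-\tau)\,d\tau = v'(t) - \mathcal{G}(t)$, which simultaneously shows $v(t) \in D(\mathcal{A})$ and the identity $\mathcal{A}v(t) = v'(t) - \mathcal{G}(t)$. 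Continuity of $t\mapsto \mathcal{A}v(t)$ in $\mathfrak{X}$ then follows from the continuity of $v'$ and $\mathcal{G}$.
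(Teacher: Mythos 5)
The paper gives no proof of this lemma; it is stated with a pointer to \cite{schnaubelt}, Theorem~6.9, so there is no internal argument to compare against. Your blind reconstruction is the standard textbook proof of exactly that result, and it is correct.

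For the forward direction, the auxiliary function $\psi(s)=T(t-s)\Phi(s)$ is the canonical device, and your computation of $\psi'$ and subsequent integration is right. One caveat you elide is that the ``product rule'' for $s\mapsto T(t-s)\Phi(s)$ with a strongly continuous semigroup is not automatic in operator norm; the standard justification is a direct difference-quotient calculation using $\Phi(s)\in D(\mathcal{A})$ together with local boundedness of $\|T(\cdot)\|$, but this is routine and is how every reference, including the one the paper cites, handles it. Uniqueness via linearity is immediate from the integral formula you just derived.

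For the converse, your two-case treatment ($\mathcal{G}\in C([0,T];D(\mathcal{A}))$: pull $\mathcal{A}$ through the integral by closedness and commutation; $\mathcal{G}\in C^1([0,T];\mathfrak{X})$: change variables $\tau = t-s$ to shift the $t$-dependence onto the smooth argument, then compute the semigroup difference quotient by a three-piece split) is exactly the classical argument, and your limit identity
$\lim_{h\to 0^+}h^{-1}[T(h)-I]v(t)=v'(t)-\mathcal{G}(t)$
is correct once one invokes dominated convergence for the third piece. The only small omission is that you compute right derivatives throughout; to conclude $\Phi\in C^1$ you need either to repeat the estimates for $h\to 0^-$ or, more economically, invoke Lemma~\ref{time-derivative-continuity} (continuous functions with continuous right derivative are $C^1$), which the paper provides for precisely this purpose. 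With that one clause added, the argument is complete and matches the cited source in spirit and technique.
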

The proof of Lemma \ref{IEE-S} is given  in \cite{schnaubelt}, Theorem 6.9.

\begin{lem}\label{time-derivative-continuity}
Let $\mathfrak{X}$ be a Banach space and $\Phi\in C([0, T]; \mathfrak{X})$ be differentiable from the right with right derivative $\Psi\in C\left([0, T]; \mathfrak{X}\right)$. Then $\Phi\in C^1\left([0, T]; \mathfrak{X}\right)$ and $\Phi'=\Psi$.
\end{lem}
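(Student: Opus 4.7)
\medskip

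The plan is to reduce the statement to the fundamental theorem of calculus for Bochner integrals plus a Hahn--Banach reduction to the scalar case. First I would set
\[
\tilde\Phi(t):=\Phi(0)+\int_0^t\Psi(s)\,ds,\qquad t\in[0,T],
\]
where the integral is taken in the sense of Bochner. Since $\Psi\in C([0,T];\mathfrak{X})$, standard properties of the Bochner integral give $\tilde\Phi\in C^1([0,T];\mathfrak{X})$ with $\tilde\Phi'=\Psi$; in particular the right derivative of $\tilde\Phi$ at every point of $[0,T)$ equals $\Psi$. Setting $h:=\Phi-\tilde\Phi$, one then has $h\in C([0,T];\mathfrak{X})$, $h(0)=0$, and $h$ admits right derivative $\Psi-\Psi=0$ at every $t\in[0,T)$. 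The whole claim therefore reduces to showing that such an $h$ must be identically zero, for then $\Phi=\tilde\Phi$ gives both the $C^1$ regularity and the identity $\Phi'=\Psi$.

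To prove $h\equiv 0$ I would reduce to the scalar case by Hahn--Banach. Pick an arbitrary $\ell\in\mathfrak{X}^{\,*}$ and consider $\varphi:=\ell\circ h:[0,T]\to\mathds{R}$. Continuity of $\ell$ gives $\varphi\in C([0,T];\mathds{R})$, while linearity and continuity of $\ell$ allow the right difference quotient to be pulled through $\ell$, yielding that $\varphi$ has right derivative zero at every point of $[0,T)$. The classical scalar fact that a real-valued continuous function on an interval with identically zero right derivative must be constant -- proved, e.g., by the mean value inequality applied to $t\mapsto \varphi(t)-\varepsilon t$ and $t\mapsto -\varphi(t)-\varepsilon t$ and letting $\varepsilon\downarrow 0$ -- then gives $\varphi\equiv\varphi(0)=0$. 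Since $\ell\in\mathfrak{X}^{\,*}$ was arbitrary and the dual space separates points of $\mathfrak{X}$, it follows that $h(t)=0$ for every $t\in[0,T]$.

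I expect the only nontrivial ingredient to be the scalar statement ``continuous plus zero right derivative implies constant''. This is standard but genuinely needs an argument, since the usual mean value theorem requires differentiability, not just one-sided differentiability; the clean way is the Dini-style estimate sketched above, which works with right derivatives alone. Everything else is routine: Bochner's fundamental theorem of calculus for continuous integrands, linearity of $\ell$ with respect to limits of difference quotients, and the separating-points property of $\mathfrak{X}^{\,*}$ supplied by Hahn--Banach.
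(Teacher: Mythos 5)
Your argument is correct and complete: the reduction $\tilde\Phi(t)=\Phi(0)+\int_0^t\Psi(s)\,ds$ is well-defined by the Bochner fundamental theorem of calculus for continuous integrands, the auxiliary function $h=\Phi-\tilde\Phi$ is continuous with vanishing right derivative, the Hahn--Banach reduction to scalars is legitimate because continuous linear functionals commute with the right-limit defining the Dini derivative, and the $\varepsilon$-perturbation argument does establish that a continuous real function with identically zero right Dini derivative on $[0,T)$ is constant on $[0,T]$. Note, however, that the paper does not actually prove this lemma -- it simply refers to Schnaubelt \cite{schnaubelt}, Lemma~8.9 -- so there is no in-text proof to compare against; your argument is a self-contained and standard substitute for that citation.
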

The proof of Lemma \ref{time-derivative-continuity} is given in \cite{schnaubelt}, Lemma 8.9.

%%%%%%%%%%%%%%%%%%%%%%%%%%%%%%%%%%

%\newpage
\section{Wellposedness of the Dispersive Equation}\label{4th-order problem}

%%%%%%%%%%%%%%%%%%%%%%%%%%%%%%%%%

% % % % % % % % % % % % % % % % % % % % % % % % 

\subsection*{Refined Analysis of the  Dispersive Equation}

Denote by $H_*^2(\Omega):=H^2(\Omega)\cap H_0^1(\Omega)$, $H_*^4(\Omega):=\left\{\chi\in H^{4}(\Omega): \ \chi|_{\partial\Omega}=\Delta \chi|_{\partial\Omega}=0\right\}$.
Take $T\in(0, \infty)$ to be specified below. We first introduce a state $\mathbf{a}=\left(a_1,\ a_2\right)$ and a state space
\be\label{state-space}
\mathfrak{X}=L^2(\Omega)\times H_*^2(\Omega)
\ee
 with its norm $\|\cdot\|_{\mathfrak{X}}=\left\|\cdot\right\|_{L^2(\Omega)\times H^2(\Omega)}$ and its scalar product
\[\langle \mathbf{a}, \mathbf{b}\rangle_{\mathfrak{X}}=\displaystyle\int_\Omega a_1\cdot{b_1}+\nabla a_2\cdot\nabla{b_2}+\Delta a_2\cdot\Delta{b_2}\ {d}x,\quad \mathbf{a}=\left(a_1, a_2\right)\in\mathfrak{X}, \ \mathbf{b}=\left(b_1, b_2\right)\in\mathfrak{X}.\]
We then define a linear operator $\mathbb{A}$ by
\begin{align}
 D(\mathbb{A}):=&\left\{\phi\in H_*^2(\Omega): \exists\ f\in L^2(\Omega),\ \forall\ \psi\in H_*^2(\Omega),\ \text{s.t.}\ \int_\Omega \nabla\phi\cdot\nabla{\psi}+\Delta\phi\cdot\Delta{\psi}dx=\int_\Omega f\cdot{\psi}dx \right\},\notag\\
\mathbb{A}\phi:=&-f,\ \text{where} \ f \ \text{is given by} \ D(\mathbb{A}), \quad \left\|\phi\right\|_{D(\mathbb{A})}:=\left\|\phi\right\|_{L^2(\Omega)}+\left\|\mathbb{A}\phi\right\|_{L^2(\Omega)}.\label{domain2}
\end{align}
It is easy to see that  $\Delta \phi|_{\partial\Omega}=0$ for all $\phi\in D(\mathbb{A})$, and from elliptic regularity theory, it follows that
\begin{equation}\label{domain3}
	D(\mathbb{A})=\left\{\chi\in H^{4}(\Omega): \ \chi|_{\partial\Omega}=\Delta \chi|_{\partial\Omega}=0\right\}=H_*^4(\Omega),\quad\left\|\chi\right\|_{D(\mathbb{A})}\simeq\|\chi\|_{H^4(\Omega)}.
\end{equation}
We further define the linear operator $\mathcal{A}$ with its domain $D(\mathcal{A})$ and its graph norm $\|\cdot\|_{D(\mathcal{A})}$ by
\bse\label{A-1}
\be\label{A-1-1}
\mathcal{A}=\begin{pmatrix}0\ &\mathbb{A}\\ 1\ &0\end{pmatrix}, \quad D(\mathcal{A})= H_*^2(\Omega)\times H_*^4(\Omega),
\ee
\be\label{A-1-2}
\|\mathbf{a}\|_{D(\mathcal{A})}:=\|\mathbf{a}\|_{\mathfrak{X}}+\|\mathcal{A}\mathbf{a}\|_{\mathfrak{X}}\simeq\|a_1\|_{H^2(\Omega)}+\|a_2\|_{H^4(\Omega)}, \quad\mathbf{a}=\left(a_1, a_2\right)\in D(\mathcal{A}).
\ee
\ese
We now consider the initial-boundary problem of  semilinear fourth-order equation \eqref{cp2-1-2} on the unknown function $w$ with an arbitrarily given but fixed $u\in C\left([0, T]; B_{H^2}\left(u_0, r\right)\right)$,   initial values
\be\label{4th-semilinear-hyperbolic-equation-2}
w(x,0)=w_0(x),\quad\frac{\partial w}{\partial t}(x,0)=v_0(x), \quad x\in\Omega,
\ee
and pinned boundary conditions
\be\label{4th-semilinear-hyperbolic-equation-3}
w(x,t)=\theta_2, \quad \Delta w(x,t)=0, \quad (x,t)\in\partial\Omega\times[0, T].
\ee
We set $\tilde{w}(x,t)=w(x,t)-\theta_2$, where $\tilde{w}(t): \Omega\longrightarrow \mathds{R}$, $x\longmapsto [\tilde{w}(t)](x)=\tilde{w}(x,t)$. Note that the operator $\mathbb{A}$, defined in  \eqref{domain2}, is a realisation of the differential expression $\Delta-\Delta^2$ from equation \eqref{cp2-1-2} for the pinned boundary conditions $\tilde{w}(x,t)=0$,  $\Delta\tilde{w}(x,t)=0$,  $x\in\partial\Omega$, $t\in[0, T]$. Using the definition of $\mathbb{A}$, we rewrite \eqref{cp2-1-2} with \eqref{4th-semilinear-hyperbolic-equation-2} and \eqref{4th-semilinear-hyperbolic-equation-3} as the equation \eqref{4th-LWE-1} for the unknown function $\tilde{w}$:
\be\label{4th-LWE-1}
\tilde{w}^{''}(t)=\mathbb{A}\tilde{w}(t)-\frac{\beta_F}{(\tilde{w}(t)+\theta_2)^2}+\beta_p(\tilde{u}(t)+\theta_1-1),\ t\in[0, T],\
\tilde{w}(0)=\tilde{w}_0,\ \tilde{w}'(0)=\tilde{v}_0,
\ee
where $\tilde{w}'$ and $\tilde{w}^{''}$ respectively denote the first and second derivative of the unknown function $\tilde{w}$ with respect to $t\in [0, T]$,  $\tilde{u}=u-\theta_1$  is given in  $C\left([0, T]; B_{H^2}\left(\tilde{u}_0, r\right)\right)$ with  $\tilde{u}_0=u_0-\theta_1$, $\tilde{v}_0=v_0$,  $\tilde{w}_0=w_0-\theta_1$ and $\tilde{u}_0(x)=\tilde{w}_0(x)=0$ for all  $x\in \partial\Omega$.
We further introduce a new time-dependent state $\Phi(t)=\left(\varphi_1(t),\ \varphi_2(t)\right)$, $t\in[0, T]$, and set
\be\label{G-2-2}
[G(\varphi_2)](t)=-\frac{\beta_F}{(\varphi_2(t)+\theta_2)^2}+\beta_p(\theta_1-1),\quad  \mathcal{G}=\left[\mathcal{G}\left(\Phi\right)\right](t)=\left(\left[G\left({\varphi}_2\right)\right](t)+\beta_p\tilde{u}(t), 0\right),
\ee
\vspace*{-0.5cm}
\be\label{ini-Phi}
\Phi_0=\left(\tilde{v}_0, \tilde{w}_0\right)\in D(\mathcal{A}).
\ee
We are going to prove that the  semilinear fourth-order equation \eqref{4th-LWE-1} has a unique strict solution by showing Lemma \ref{equivalence-1}, Lemma \ref{generator}, Theorem \ref{4th-solu-thm}, Corollary \ref{Lip-mild-solu}, Corollary \ref{Holdercontinuity} and Theorem \ref{4th-mild-solution-cor} in Appendix \ref{AppA1}. This would conclude the wellposedness of the dispersive equation \eqref{cp2-1-2}.
\begin{lem}\label{equivalence-1}
Let given $\tilde{u}\in C\left([0, T]; B_{H^2}\left(\tilde{u}_0, r\right)\right)\cap C^1\left([0, T]; L^2(\Omega)\right)$, $\mathcal{A}$ and $\mathcal{G}$ be defined by \eqref{A-1} and \eqref{G-2-2} respectively.  The semilinear fourth-order equation \eqref{4th-LWE-1} has a unique solution
\[\tilde{w}\in C^2\left([0, T]; L^2(\Omega)\right)\cap C^1\left([0, T]; H_*^2(\Omega)\right)\cap C\left([0, T]; H_*^4(\Omega)\right)\]
if and only if the semilinear  evolution equation
\be\label{4th-IEE}
\Phi'(t)=\mathcal{A}\Phi(t)+\left[\mathcal{G}\left(\Phi\right)\right](t), \ t\in [0, T],\ \Phi(0)=\Phi_0,
\ee
has a unique solution
\[\Phi\in C([0, T]; D(\mathcal{A}))\cap C^1([0, T]; \mathfrak{X}).\]
If this is the case, we have $\Phi=\left(\tilde{w}', \tilde{w}\right)$.
\end{lem}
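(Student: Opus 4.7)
The plan is to prove the equivalence by direct unpacking of \eqref{4th-IEE} into components using the substitution $\Phi = (\tilde{w}', \tilde{w})$, so that the content of the lemma reduces to matching the regularity classes on each side. Writing $\Phi = (\varphi_1, \varphi_2)$ and using the block form of $\mathcal{A}$ in \eqref{A-1-1} together with the definition of $\mathcal{G}$ in \eqref{G-2-2}, the system \eqref{4th-IEE} reads
\[
\varphi_1'(t) = \mathbb{A}\varphi_2(t) + [G(\varphi_2)](t) + \beta_p \tilde{u}(t), \qquad \varphi_2'(t) = \varphi_1(t),
\]
with $\varphi_1(0) = \tilde{v}_0$ and $\varphi_2(0) = \tilde{w}_0$. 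Eliminating $\varphi_1 = \varphi_2'$ gives exactly \eqref{4th-LWE-1} for $\tilde{w} := \varphi_2$, so the two problems are formally identical.

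For the forward direction, given a solution $\tilde{w}$ of \eqref{4th-LWE-1} in $C^2([0,T];L^2(\Omega)) \cap C^1([0,T];H_*^2(\Omega)) \cap C([0,T];H_*^4(\Omega))$, I would set $\Phi := (\tilde{w}', \tilde{w})$ and verify the two memberships in \eqref{4th-IEE} against the definitions of $\mathfrak{X}$ and $D(\mathcal{A})$ in \eqref{state-space}--\eqref{A-1-2}. Concretely, $\Phi \in C([0,T];D(\mathcal{A}))$ follows from $\tilde{w}' \in C([0,T];H_*^2(\Omega))$ and $\tilde{w} \in C([0,T];H_*^4(\Omega))$, while $\Phi \in C^1([0,T];\mathfrak{X})$ follows from $\tilde{w}'' \in C([0,T];L^2(\Omega))$ and $\tilde{w}' \in C([0,T];H_*^2(\Omega))$. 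The two rows of \eqref{4th-IEE} are then satisfied by construction.

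For the reverse direction, from a solution $\Phi \in C([0,T];D(\mathcal{A})) \cap C^1([0,T];\mathfrak{X})$ of \eqref{4th-IEE} I would define $\tilde{w} := \varphi_2$ and read off $\tilde{w}' = \varphi_2' = \varphi_1$ from the second row of the system. Then $\tilde{w} \in C([0,T];H_*^4(\Omega))$ because $D(\mathcal{A}) = H_*^2(\Omega) \times H_*^4(\Omega)$; $\tilde{w}' = \varphi_1 \in C([0,T];H_*^2(\Omega))$ is the first component of $\Phi \in C([0,T];D(\mathcal{A}))$; and $\tilde{w}'' = \varphi_1' \in C([0,T];L^2(\Omega))$ is the first component of $\Phi' \in C([0,T];\mathfrak{X})$. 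Substituting $\varphi_1 = \tilde{w}'$ into the first row recovers \eqref{4th-LWE-1} together with the initial conditions.

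Uniqueness transfers along the bijection $\tilde{w} \longleftrightarrow (\tilde{w}', \tilde{w})$, whose inverse is projection onto the second coordinate, so two distinct solutions on one side would produce two distinct solutions on the other. I do not anticipate a real obstacle: the proof is essentially bookkeeping, and the only point demanding care is ensuring that the product-space regularity of $\Phi$ maps term-by-term onto the three separate regularity statements for $\tilde{w}$, without needing any density, approximation, or mollification argument.
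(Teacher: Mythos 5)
Your proof is correct and follows essentially the same route as the paper: unpack the first-order system into its two components, identify $\Phi = (\tilde{w}', \tilde{w})$, and transfer each of the three regularity memberships term by term between $D(\mathcal{A}) = H_*^2(\Omega)\times H_*^4(\Omega)$, $\mathfrak{X} = L^2(\Omega)\times H_*^2(\Omega)$, and the scalar spaces. The uniqueness transfer via the bijection is also the argument the paper gives.
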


\begin{lem}\label{generator}
Let $\Omega$ be an open and bounded subset of $\mathds{R}^n$ with smooth boundary $\partial\Omega$, $n=1,\ 2$. Then the linear operator $\mathcal{A}$, defined by \eqref{A-1}, generates a strongly continuous semigroup ($C_0$-semigroup) $\left\{T(t)\in\mathcal{B}\left(\mathfrak{X}\right): t\in[0, \infty)\right\}$.
\end{lem}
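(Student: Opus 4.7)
The plan is to apply the Lumer--Phillips theorem in the Hilbert space $\mathfrak{X}$ equipped with its energy-type inner product from \eqref{state-space}. The inner product has been designed precisely so that $\mathcal{A}$ is skew-symmetric, reflecting conservation of the natural energy $\frac{1}{2}\int_\Omega |w_t|^2+|\nabla w|^2+|\Delta w|^2\,dx$ for the linear plate equation $w_{tt}=\Delta w-\Delta^2 w$ with pinned boundary conditions. Density of $D(\mathcal{A})=H_*^2(\Omega)\times H_*^4(\Omega)$ in $\mathfrak{X}=L^2(\Omega)\times H_*^2(\Omega)$ is immediate since $H_*^4(\Omega)$ is dense in $H_*^2(\Omega)$ and $H_*^2(\Omega)$ is dense in $L^2(\Omega)$.

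For skew-symmetry, I will take $\mathbf{a}=(a_1,a_2)$, $\mathbf{b}=(b_1,b_2)\in D(\mathcal{A})$ and expand
\[
\langle \mathcal{A}\mathbf{a},\mathbf{b}\rangle_\mathfrak{X}=\int_\Omega (\mathbb{A}a_2)\,b_1\,dx+\int_\Omega \nabla a_1\cdot\nabla b_2+\Delta a_1\,\Delta b_2\,dx.
\]
The variational definition \eqref{domain2} of $\mathbb{A}$, with test function $b_1\in H_*^2(\Omega)$, rewrites the first integral as $-\int_\Omega \nabla a_2\cdot\nabla b_1+\Delta a_2\,\Delta b_1\,dx$. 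The analogous expansion of $\langle \mathbf{a},\mathcal{A}\mathbf{b}\rangle_\mathfrak{X}$ differs only by a swap of $\mathbf{a}$ and $\mathbf{b}$, yielding $\langle \mathcal{A}\mathbf{a},\mathbf{b}\rangle_\mathfrak{X}=-\langle \mathbf{a},\mathcal{A}\mathbf{b}\rangle_\mathfrak{X}$. In particular $\mathrm{Re}\,\langle \mathcal{A}\mathbf{a},\mathbf{a}\rangle_\mathfrak{X}=0$, so both $\mathcal{A}$ and $-\mathcal{A}$ are dissipative.

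The main step, and the only one requiring real work, is the range condition $\mathrm{Range}(I-\mathcal{A})=\mathfrak{X}$. Given $\mathbf{h}=(h_1,h_2)\in \mathfrak{X}$, the system $(I-\mathcal{A})\mathbf{a}=\mathbf{h}$ reads $a_1-\mathbb{A}a_2=h_1$ and $a_2-a_1=h_2$; eliminating $a_1$ reduces it to the scalar equation $(I-\mathbb{A})a_2=h_1+h_2\in L^2(\Omega)$. I will apply the Lax--Milgram lemma to the continuous, coercive bilinear form
\[
B(\phi,\psi)=\int_\Omega \phi\psi+\nabla\phi\cdot\nabla\psi+\Delta\phi\,\Delta\psi\,dx
\]
on $H_*^2(\Omega)$ with right-hand side $\int_\Omega(h_1+h_2)\psi\,dx$, obtaining a unique weak solution $a_2\in H_*^2(\Omega)$. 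Reading off from this weak formulation that $a_2$ satisfies the condition in \eqref{domain2} with $f=a_2-(h_1+h_2)\in L^2(\Omega)$, the elliptic-regularity identification \eqref{domain3} upgrades $a_2$ to $H_*^4(\Omega)$; setting $a_1=a_2-h_2\in H_*^2(\Omega)$ then yields $\mathbf{a}\in D(\mathcal{A})$ solving $(I-\mathcal{A})\mathbf{a}=\mathbf{h}$.

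Combining density, dissipativity and surjectivity of $I-\mathcal{A}$, the Lumer--Phillips theorem provides a $C_0$-semigroup of contractions generated by $\mathcal{A}$ on $\mathfrak{X}$; since $-\mathcal{A}$ is also $m$-dissipative by the same argument, Stone's theorem in fact gives a unitary $C_0$-group, consistent with the time-reversibility of the underlying conservative plate equation. The only delicate point is the regularity jump from $H_*^2(\Omega)$ to $H_*^4(\Omega)$ in the range argument, which is precisely the elliptic-regularity content of \eqref{domain3} for the biharmonic operator with pinned boundary conditions.
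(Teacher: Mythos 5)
Your proof is correct and follows essentially the same path as the paper's: both establish density of $D(\mathcal{A})$, skew-symmetry of $\mathcal{A}$ in the energy inner product on $\mathfrak{X}$, and solvability via the Lax--Milgram lemma combined with the elliptic identification \eqref{domain3}. The only difference is cosmetic---the paper inverts $\mathbb{A}$ to obtain $0\in\rho(\mathcal{A})$, upgrades skew-symmetry to skew-adjointness by the spectral criterion, and invokes Stone's theorem, whereas you invert $I-\mathbb{A}$ to verify the Lumer--Phillips range condition directly; both routes deliver the same $C_0$-group.
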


\begin{thm}\label{4th-solu-thm}
For $r\in\left(0, \frac{\kappa}{2C}\right)$,  there exist $T_0>0$,  such that for $T\in(0, T_0)$ and given function $\tilde{u}\in C\left([0, T]; B_{H^2}\left(\tilde{u}_0, r\right)\right)$, the semilinear evolution equation \eqref{4th-SWE-1} on $(\tilde{v},\tilde{w})$,
\be\label{4th-SWE-1}
\begin{pmatrix}\tilde{v}'(t) \\ \tilde{w}'(t) \end{pmatrix}=\mathcal{A}\begin{pmatrix}\tilde{v}(t) \\ \tilde{w}(t) \end{pmatrix}+\begin{pmatrix}[G(\tilde{w})](t)+\beta_p\tilde{u}(t), \\ 0 \end{pmatrix}, \ t\in[0, T],\ \begin{pmatrix}\tilde{v}(0) \\ \tilde{w}(0) \end{pmatrix}=\begin{pmatrix}\tilde{v}_0 \\ \tilde{w}_0\end{pmatrix},
\ee
{has} a unique mild solution $\left(\tilde{v}, \tilde{w}\right)\in  C\left([0, T]; L^2(\Omega)\times H_*^2(\Omega)\right)$ defined by
\be\label{mild-solu-form}
\begin{pmatrix}\tilde{v}(t)\\ \tilde{w}(t)\end{pmatrix}=T(t)\begin{pmatrix}\tilde{v}_0\\ \tilde{w}_0\end{pmatrix}+
\displaystyle\int_0^t\left\{T(t-s)\begin{pmatrix}[G(\tilde{w})](s)+\beta_p\tilde{u}(s)\\ 0\end{pmatrix}\right\}ds.
\ee
\end{thm}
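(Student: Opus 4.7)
My plan is to apply the Banach fixed-point theorem on the complete metric space $\mathcal{Z}(T)$ defined in \eqref{ini-NBD}, viewed as a closed subset of $C([0,T];\mathfrak{X})$ with the supremum norm; the constraint $\tilde{w}|_{\partial\Omega} = \tilde{w}_0|_{\partial\Omega}=0$ ensures the second component stays in $H_*^2(\Omega)$, consistent with the state space. By Lemma \ref{generator}, $\mathcal{A}$ generates a $C_0$-semigroup $\{T(t)\}_{t\geq 0}$, hence $\|T(t)\|_{\mathcal{B}(\mathfrak{X})}\leq M$ uniformly for $t\in[0,T_0]$. Define
\[
\Gamma(\tilde{v},\tilde{w})(t) = T(t)\Phi_0 + \int_0^t T(t-s)\bigl([G(\tilde{w})](s)+\beta_p\tilde{u}(s),\, 0\bigr)\,ds,
\]
where $\Phi_0=(\tilde{v}_0,\tilde{w}_0)\in D(\mathcal{A})\subset\mathfrak{X}$. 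A fixed point of $\Gamma$ in $\mathcal{Z}(T)$ is precisely a mild solution of \eqref{4th-SWE-1} of the form \eqref{mild-solu-form}.

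\textbf{Self-mapping.} For $(\tilde{v},\tilde{w})\in\mathcal{Z}(T)$, I split
\[
\Gamma(\tilde{v},\tilde{w})(t)-\Phi_0 = (T(t)-I)\Phi_0 + \int_0^t T(t-s)\bigl([G(\tilde{w})](s)+\beta_p\tilde{u}(s),\, 0\bigr)\,ds.
\]
Strong continuity of $T(t)$ at $t=0$ yields $\|(T(t)-I)\Phi_0\|_{\mathfrak{X}}\to 0$, so this term is bounded by $r/2$ for $T$ small enough. For the Duhamel term, using \eqref{Lip-G} and \eqref{Lip-G-1} from Lemma \ref{Lip-G-Lem} together with $\tilde{w}\in C([0,T];B_{H^2}(\tilde{w}_0,r))$, I obtain $\|[G(\tilde{w})](s)\|_{H^2}\leq \|G(\tilde{w}_0)\|_{H^2}+L_G r$, which is bounded uniformly on $\mathcal{Z}(T)$. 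A similar uniform bound holds for $\|\tilde{u}(s)\|_{L^2}$ because $\tilde{u}\in C([0,T];B_{H^2}(\tilde{u}_0,r))$. Consequently the integral is controlled by $MT\cdot C_0$ with $C_0$ depending only on the initial data, $r$, and the constants in Lemma \ref{Lip-G-Lem}; shrinking $T$ further gives the full bound $r$. The validity of the pinned boundary condition $\tilde{w}|_{\partial\Omega}=0$ for $\Gamma(\tilde{v},\tilde{w})$ follows from $T(t):\mathfrak{X}\to\mathfrak{X}$.

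\textbf{Contraction and conclusion.} Given $(\tilde{v}_j,\tilde{w}_j)\in\mathcal{Z}(T)$, $j=1,2$, the initial-value and $\tilde{u}$-dependent contributions cancel, leaving
\[
\Gamma(\tilde{v}_1,\tilde{w}_1)(t)-\Gamma(\tilde{v}_2,\tilde{w}_2)(t) = \int_0^t T(t-s)\bigl([G(\tilde{w}_1)](s)-[G(\tilde{w}_2)](s),\, 0\bigr)\,ds.
\]
Combining the semigroup bound with the Lipschitz estimate \eqref{Lip-G} yields
\[
\sup_{t\in[0,T]}\|\Gamma(\tilde{v}_1,\tilde{w}_1)(t)-\Gamma(\tilde{v}_2,\tilde{w}_2)(t)\|_{\mathfrak{X}} \leq M T L_G \sup_{t\in[0,T]}\|\tilde{w}_1(t)-\tilde{w}_2(t)\|_{H^2(\Omega)}.
\]
Choosing $T_0>0$ such that $MT_0 L_G<\tfrac12$ and both self-mapping thresholds above are satisfied, for every $T\in(0,T_0)$ the map $\Gamma$ is a $\tfrac12$-contraction of $\mathcal{Z}(T)$ into itself. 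The Banach fixed-point theorem produces a unique fixed point, i.e.\ the unique mild solution $(\tilde{v},\tilde{w})\in C([0,T];L^2(\Omega)\times H_*^2(\Omega))$.

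\textbf{Main obstacle.} The essential difficulty is controlling the singular nonlinearity $-\beta_F/(\tilde{w}+\theta_2)^2$ in the $H^2$-norm and verifying its Lipschitz dependence on $\tilde{w}$; this is why the restriction $r<\kappa/(2C)$ from Lemma \ref{estimates} is imposed, guaranteeing the pointwise lower bound $w\geq\kappa/2$ that prevents blow-up of $G(\tilde{w})$ and its derivatives and enables Lemma \ref{Lip-G-Lem}. Once this nonlinear estimate is in hand, the remainder of the argument is standard semigroup machinery.
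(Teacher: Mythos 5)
Your proof is correct and takes essentially the same approach as the paper's: set up the Duhamel map on the complete metric space $\mathcal{Z}(T)$, use the Lipschitz estimate \eqref{Lip-G} for $G$ together with the uniform semigroup bound $M_0$ to verify contraction and self-mapping for small $T$, and conclude via the Banach fixed-point theorem; the only omitted detail (also trivial) is passing from $\|\tilde w_1-\tilde w_2\|_{H^2}$ to the full product norm on $\mathfrak{X}$ in the contraction estimate, which the paper does in \eqref{difference-1}.
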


\begin{cor}\label{Lip-mild-solu}
Let $T\in(0, T_0)$ and $\tilde{u}\in C\left([0, T]; B_{H^2}\left(\tilde{u}_0,  r\right)\right)\cap C^1\left([0, T]; L^2(\Omega)\right)$. Then the mild solution of the semilinear evolution equation \eqref{4th-SWE-1},
$\left(\tilde{v}, \tilde{w}\right): [0, T]\longrightarrow L^2(\Omega)\times H_*^2(\Omega)$,
defined by the integral form \eqref{mild-solu-form}, is locally Lipschitz continuous with respect to $t\in[0, T]$, i.e. $\forall\ h\in(0, T]$,
\be\label{Lip-mild-solu-inquality}
\sup_{0\leq t<t+h\leq T}\left\|\begin{pmatrix}\tilde{v}(t+h)-\tilde{v}(t)\\ \tilde{w}(t+h)-\tilde{w}(t)\end{pmatrix}\right\|_{L^2(\Omega)\times H^2(\Omega)}\leq L_Vh.
\ee
Here $L_V$ is a Lipschitz constant depending on $\beta_F$, $\beta_p$, $T_0$, $\kappa$, $\Omega$, $\left\|\tilde{u}_0\right\|_{H^{2}(\Omega)}$, $\|\tilde{w}_0\|_{H^2(\Omega)}$,\\ $\left\|\left(\tilde{v}_0, \tilde{w}_0\right)\right\|_{D(\mathcal{A})}$, $M_0=\displaystyle\sup_{t\in[0, \infty)}\left\|T(t)\right\|_{\mathcal{B}(L^2(\Omega)\times H^2(\Omega))}$, $\left\|\tilde{u}\right\|_{C^1\left([0, T_0); L^2(\Omega)\right)}$.
\end{cor}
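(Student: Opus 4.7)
The plan is to work directly from the mild-solution formula \eqref{mild-solu-form}, estimating the increment
\[
\Delta_h(t) := \left\|\bigl(\tilde v(t+h)-\tilde v(t),\ \tilde w(t+h)-\tilde w(t)\bigr)\right\|_{\mathfrak{X}}
\]
as the sum of a homogeneous piece $T(t+h)\Phi_0 - T(t)\Phi_0$ and an inhomogeneous piece $I(t+h)-I(t)$, where $I(t):=\int_0^t T(t-s)\,g(s)\,ds$ with $g(s):=\bigl([G(\tilde w)](s) + \beta_p\tilde u(s),\ 0\bigr)$. I will bound each piece linearly in $h$, up to a term of the form $\int_0^t \Delta_h(s)\,ds$, and close the estimate by Gronwall's inequality.

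The homogeneous piece is straightforward: compatibility of the data ensures $\Phi_0=(\tilde v_0,\tilde w_0)\in D(\mathcal{A})=H_*^2(\Omega)\times H_*^4(\Omega)$. The standard $C_0$-semigroup identity $T(t+h)\Phi_0-T(t)\Phi_0=\int_t^{t+h} T(s)\,\mathcal{A}\Phi_0\,ds$ (see Lemma \ref{generator}), together with the norm equivalence \eqref{A-1-2}, yields the bound $\leq h\,M_0\,\|\mathcal{A}\Phi_0\|_{\mathfrak{X}}$ with a constant controlled by $\|(\tilde v_0,\tilde w_0)\|_{D(\mathcal{A})}$.

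The inhomogeneous piece is the main obstacle. The naive splitting $T(t+h-s)-T(t-s)=T(t-s)(T(h)-I)$ does not close, because $g(s)$ need not lie in $D(\mathcal{A})$: on $\partial\Omega$ one has $\tilde w\equiv 0$, so $[G(\tilde w)](s)+\beta_p\tilde u(s)$ reduces to $-\beta_F\theta_2^{-2}+\beta_p(\theta_1-1)\neq 0$ generically, violating the Dirichlet requirement of $H_*^2(\Omega)$. To bypass this, I translate the integration variable in $I(t+h)$ and obtain
\[
I(t+h)-I(t) = \int_0^h T(t+h-s)\,g(s)\,ds \; + \; \int_0^t T(t-s)\,\bigl[g(s+h)-g(s)\bigr]\,ds,
\]
so that the semigroup acts only on elements of $\mathfrak{X}$. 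The first integral is bounded by $h\,M_0\,\|g\|_{C([0,T];\mathfrak{X})}$, with $\|g\|_{C([0,T];\mathfrak{X})}$ in turn controlled via \eqref{Lip-G-1} and the ball condition $\|\tilde u-\tilde u_0\|_{H^2(\Omega)}\le r$. For the second integral, estimate \eqref{Holdercontinuous} gives
\[
\|[G(\tilde w)](s+h)-[G(\tilde w)](s)\|_{H^2(\Omega)} \le L_G\,\|\tilde w(s+h)-\tilde w(s)\|_{H^2(\Omega)} \le L_G\,\Delta_h(s),
\]
while the hypothesis $\tilde u\in C^1([0,T];L^2(\Omega))$ supplies $\|\tilde u(s+h)-\tilde u(s)\|_{L^2(\Omega)}\le h\,\|\tilde u'\|_{C([0,T];L^2(\Omega))}$.

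Collecting the bounds produces an inequality
\[
\Delta_h(t) \le C_*\,h + M_0 L_G \int_0^t \Delta_h(s)\,ds,
\]
in which $C_*$ depends precisely on the parameters announced for $L_V$. Gronwall's inequality then delivers $\Delta_h(t)\le C_*\,e^{M_0 L_G T_0}\,h$, which is exactly \eqref{Lip-mild-solu-inquality}. The essential technical point is the translation-of-variable that transfers the time-shift from the semigroup onto $g$ itself, permitting a Gronwall closure that avoids requiring any $D(\mathcal{A})$-regularity of $g(s)$.
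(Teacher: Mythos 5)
Your proposal is correct and follows essentially the same route as the paper: the same decomposition of the increment into a homogeneous term (bounded via $\int_0^h T(t+s)\mathcal{A}\Phi_0\,ds$), a short-integral term over $[0,h]$ (bounded via \eqref{Lip-G-1}), and a shifted-difference integral over $[0,t]$ (bounded via \eqref{Holdercontinuous}, \eqref{Lip-G} and $\tilde u\in C^1$), closed by Gronwall. Your added remark about why the naive $T(t-s)(T(h)-I)$ splitting would require $D(\mathcal{A})$-regularity of the inhomogeneity is a useful motivating observation, but the substance of the argument coincides with the paper's.
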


\begin{cor}\label{Holdercontinuity}
If $\alpha\in(0, 1)$, $T\in(0, T_0)$ and given
$\tilde{u}\in C^\alpha\left([0, T]; B_{H^2}(\tilde{u}_0, r)\right)$,
then the mild solution of the semilinear evolution equation \eqref{4th-SWE-1},
$\left(\tilde{v}, \tilde{w}\right): [0, T]\longrightarrow L^2(\Omega)\times H_*^2(\Omega)$,
defined by the integral formulation \eqref{mild-solu-form}, is locally H\"{o}lder continuous with exponent $\alpha$ with respect to $t\in[0, T]$, i.e. 
\be\label{Holdercontinuityformular}
\sup_{0\leq t<t+h\leq T}\left\|\begin{pmatrix}\tilde{v}(t+h)-\tilde{v}(t)\\ \tilde{w}(t+h)-\tilde{w}(t)\end{pmatrix}\right\|_{L^2(\Omega)\times H^2(\Omega)}\leq L_Uh^\alpha,\quad \forall\ h\in(0, T].
\ee
Here $L_U$ is a Lipschitz constant depending on $\alpha$,  $T_0$, $\kappa$, $\Omega$, $\beta_p$, $\beta_F$, $\left\|\tilde{u}_0\right\|_{H^2(\Omega)}$, $\left\|\tilde{w}_0\right\|_{H^{2}(\Omega)}$,\\ $\left\|\left(\tilde{v}_0, \tilde{w}_0\right)\right\|_{D(\mathcal{A})}$ and  $M_0=\displaystyle\sup_{t\in[0, \infty)}\left\|T(t)\right\|_{\mathcal{B}\left(L^2(\Omega)\times H^2(\Omega)\right)}$.
\end{cor}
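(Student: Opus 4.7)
The plan is to insert the integral formulation \eqref{mild-solu-form} into the difference $(\tilde v, \tilde w)(t+h) - (\tilde v, \tilde w)(t)$ and, via the substitution $s\mapsto s+h$ inside the integral running up to $t+h$, decompose this difference as
\begin{align*}
(\tilde v, \tilde w)(t+h) - (\tilde v, \tilde w)(t)
&= T(t)[T(h) - I]\Phi_0 + \int_0^h T(t+h-s) F(s)\, ds \\
&\quad + \int_0^t T(t-s)\bigl[F(s+h) - F(s)\bigr]\, ds,
\end{align*}
where $F(s) = \bigl([G(\tilde w)](s) + \beta_p \tilde u(s),\, 0\bigr)$ and $\Phi_0 = (\tilde v_0, \tilde w_0) \in D(\mathcal A)$.

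The first two summands produce contributions that are $O(h)$ and hence $O(h^\alpha)$ on $(0, T_0]$, using $h \leq T_0^{1-\alpha} h^\alpha$. For the first, the standard $C_0$-semigroup identity $[T(h)-I]\Phi_0 = \int_0^h T(\sigma)\mathcal A \Phi_0\,d\sigma$ (valid because $\Phi_0\in D(\mathcal A)$ by \eqref{ini-Phi}) gives $\|T(t)[T(h)-I]\Phi_0\|_{\mathfrak{X}} \leq h\, M_0^2 \|\Phi_0\|_{D(\mathcal A)}$; for the second, Lemma \ref{estimates} provides a uniform $L^2$-bound on $F(s)$ (the lower bound $\tilde w(s) + \theta_2 \geq \kappa/2$ controls $G(\tilde w)$), so the integral is bounded by $h M_0 \sup_s \|F(s)\|_{L^2}$.

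The third summand is the heart of the argument. Combining the H\"older-type estimate \eqref{Holdercontinuous} of Lemma \ref{Lip-G-Lem} for $G(\tilde w)$ with the H\"older hypothesis on $\tilde u$ and the embedding $H^2 \hookrightarrow L^2$, one obtains
\[
\|F(s+h) - F(s)\|_{L^2} \leq \beta_p\, [\tilde u]_{C^\alpha([0,T];H^2)}\, h^\alpha + L_G\, \|\tilde w(s+h) - \tilde w(s)\|_{H^2}.
\]
Introducing $\eta(h) := \sup_{0 \leq t < t+h \leq T}\|(\tilde v, \tilde w)(t+h) - (\tilde v, \tilde w)(t)\|_{\mathfrak X}$, the three bounds collapse to the self-referential inequality
\[
\eta(h) \leq C_1 h^\alpha + M_0 L_G\, T\, \eta(h),
\]
where the constant $C_1$ depends on $\|\Phi_0\|_{D(\mathcal A)}$, $[\tilde u]_{C^\alpha([0,T];H^2)}$, $\beta_p$, $\beta_F$, $\kappa$, $\Omega$, $\|\tilde u_0\|_{H^2}$, $\|\tilde w_0\|_{H^2}$, $M_0$ and $T_0$, but not on $T$ itself.

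Shrinking $T_0$ (if necessary) so that $M_0 L_G T_0 \leq 1/2$, the recursive term is absorbed into the left-hand side and one obtains $\eta(h) \leq 2 C_1 h^\alpha =: L_U h^\alpha$ on $(0, T_0]$. The main obstacle in this scheme is precisely the self-referential appearance of $\|\tilde w(s+h) - \tilde w(s)\|_{H^2}$ under the integral on the right-hand side, which forces the sought H\"older modulus of continuity of the solution back onto the estimate; this is overcome by closing the bound through the smallness condition on $T$, which may further restrict the time of existence compared with that produced in Theorem \ref{4th-solu-thm}.
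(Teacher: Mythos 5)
Your proposal follows the paper's decomposition of the difference $(\tilde v,\tilde w)(t+h)-(\tilde v,\tilde w)(t)$ into three pieces exactly, but diverges from the paper in the final closing step. The paper keeps the self-referential term under the time integral, namely $M_0 L_G \int_0^t \|(\tilde v,\tilde w)(s+h)-(\tilde v,\tilde w)(s)\|_{\mathfrak X}\,ds$, and invokes Gronwall's inequality to close the bound, producing an exponential constant $e^{M_0 L_G T_0}$ with no further restriction on $T_0$. You instead pass the supremum over $t$ inside the integral, obtain $\eta(h)\leq C_1 h^\alpha + M_0 L_G T\,\eta(h)$, and absorb the term $M_0 L_G T\,\eta(h)$ into the left-hand side under the smallness condition $M_0 L_G T_0\leq 1/2$. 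Both routes close the estimate; yours trades a smallness assumption for avoiding Gronwall entirely and yields a slightly cleaner constant ($2C_1$ rather than an exponential factor). You flag that the smallness assumption may ``further restrict the time of existence compared with that produced in Theorem \ref{4th-solu-thm},'' but if you inspect the definition of $T_0$ in \eqref{T-0}, it already enforces $T_0\leq \tfrac{1}{2 M_0 L_G}$, so your absorption goes through without any shrinking at all. The argument is thus correct and requires no new restriction; the only other cosmetic point is that, like the paper, you implicitly use the fact that $\eta(h)$ is finite for each fixed $h$ (which follows from $(\tilde v,\tilde w)\in C([0,T];\mathfrak X)$) before dividing through by $1-M_0 L_G T$, and that the constant $C_1$ depends on $[\tilde u]_{C^\alpha}$, which on the ball $C^\alpha([0,T];B_{H^2}(\tilde u_0,r))$ the paper subsequently bounds in terms of $r$ and $\|\tilde u_0\|_{H^2}$ to match the dependence list in the statement.
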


\begin{thm}\label{4th-mild-solution-cor}
For given $\tilde{u}\in C\left([0, T]; B_{H^2}\left(\tilde{u}_0,  r\right)\right)\cap C^1\left([0, T]; L^2(\Omega)\right)$ and $T\in(0, T_0)$, the mild solution $\left(\tilde{v}, \tilde{w}\right)$ of the semilinear evolution equation \eqref{4th-SWE-1}, defined by the integral form \eqref{mild-solu-form}, is the strict solution of   equation \eqref{4th-SWE-1} and
\[\left(\tilde{v}, \tilde{w}\right)\in C^1\left([0, T]; L^2(\Omega)\times H_*^2(\Omega)\right)\cap C\left([0, T]; H_*^2(\Omega)\times H_*^4(\Omega)\right).\]
\end{thm}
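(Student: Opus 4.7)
The plan is to invoke Lemma \ref{IEE-S} to promote the mild solution produced by Theorem \ref{4th-solu-thm} into a strict one. The initial value $\Phi_0 = (\tilde{v}_0, \tilde{w}_0)$ already lies in $H_*^2(\Omega) \times H_*^4(\Omega) = D(\mathcal{A})$ by the assumed compatibility, so it suffices to verify one of the alternative hypotheses of Lemma \ref{IEE-S} on the inhomogeneity $\mathcal{G} = (G(\tilde{w}) + \beta_p\tilde{u},\, 0)$. The condition $\mathcal{G} \in C([0, T]; D(\mathcal{A}))$ is not available since $G(\tilde{w})|_{\partial \Omega} = -\beta_F/\theta_2^2 + \beta_p(\theta_1 - 1) \ne 0$ in general, so $G(\tilde{w}) \notin H_*^2(\Omega)$. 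I therefore aim to establish the alternative hypothesis $\mathcal{G} \in C^1([0, T]; \mathfrak{X})$.

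The central step is to prove $G(\tilde{w}) \in C^1([0, T]; L^2(\Omega))$. By Corollary \ref{Lip-mild-solu}, the mild solution $(\tilde{v}, \tilde{w})$ is Lipschitz in $L^2 \times H^2$, and the block structure of $\mathcal{A}$ in \eqref{A-1-1} encodes $\tilde{w}' = \tilde{v}$. To promote this into a pointwise identity in $L^2(\Omega)$, I would use that $-\mathbb{A}$ is self-adjoint and strictly positive, so $B := \sqrt{-\mathbb{A}}$ is well-defined and the semigroup $T(t)$ decomposes through $\cos(tB)$ and $B^{-1}\sin(tB)$; differentiating the mild representation of $\tilde{w}$ under the Duhamel integral then recovers the mild representation of $\tilde{v}$, giving $\tilde{w} \in C^1([0, T]; L^2(\Omega))$ with $\tilde{w}' = \tilde{v}$. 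The chain rule for the smooth nonlinearity $z \mapsto -\beta_F(z+\theta_2)^{-2}$ -- justified by the uniform lower bound $\tilde{w}+\theta_2 \ge \kappa/2$ from Lemma \ref{estimates} -- then yields
\[
\bigl(G(\tilde{w})\bigr)'(t) = \frac{2\beta_F}{(\tilde{w}(t)+\theta_2)^3}\,\tilde{v}(t)
\]
in $L^2(\Omega)$. Continuity in $t$ follows by splitting at two times $t_1, t_2$ into a piece controlled by $\|\tilde{v}(t_1)-\tilde{v}(t_2)\|_{L^2}$ and a piece controlled by $\|(\tilde{w}(t_1)+\theta_2)^{-3} - (\tilde{w}(t_2)+\theta_2)^{-3}\|_{L^\infty}\|\tilde{v}(t_2)\|_{L^2}$, using the embedding $H^2(\Omega) \hookrightarrow L^\infty(\Omega)$ for $n \le 2$ together with Lemma \ref{estimates}. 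Combined with the hypothesis $\tilde{u} \in C^1([0, T]; L^2)$, this produces $\mathcal{G} \in C^1([0, T]; \mathfrak{X})$.

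Finally, Lemma \ref{IEE-S} applied with $\Phi_0 \in D(\mathcal{A})$ and $\mathcal{G} \in C^1([0, T]; \mathfrak{X})$ yields that $\Phi = (\tilde{v}, \tilde{w})$ is a strict solution of \eqref{4th-SWE-1} with $\Phi \in C([0, T]; D(\mathcal{A})) \cap C^1([0, T]; \mathfrak{X})$; using $D(\mathcal{A}) = H_*^2(\Omega) \times H_*^4(\Omega)$ and the graph norm equivalence in \eqref{A-1-2}, this is exactly the claimed regularity. The hardest part will be producing the pointwise identity $\tilde{w}' = \tilde{v}$ in $L^2(\Omega)$ from only the Lipschitz regularity of the mild solution: merely Lipschitz behaviour of the inhomogeneity would not suffice to invoke Lemma \ref{IEE-S}, so the specific semigroup structure of $T(t)$ must be used to upgrade the differentiability of $\tilde{w}$ from a.e.\ to everywhere in $t$.
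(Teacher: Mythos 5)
Your proposal is correct, but it follows a genuinely different route from the paper's own proof, so the comparison is worth spelling out. The paper proves the theorem by an explicit construction of the candidate time derivative: it sets up the auxiliary linear non-autonomous integral equation \eqref{LNP} for $(\tilde{p},\tilde{q})$, solves it by the Banach fixed-point theorem (again using the Lipschitz estimate \eqref{Lip-G-2} on $G'(\tilde w)$), and then shows by a careful decomposition of the difference quotient $E(h,t)$ combined with Gronwall's inequality, the uniform continuity bound \eqref{uniformly-continuous-Fre-G}, and the Lipschitz continuity of the mild solution in time from Corollary~\ref{Lip-mild-solu}, that $(\tilde{p},\tilde{q})$ is the right derivative of $(\tilde v,\tilde w)$; Lemma~\ref{time-derivative-continuity} upgrades this to $(\tilde v,\tilde w)\in C^1([0,T];\mathfrak{X})$, after which Lemma~\ref{IEE-S} finishes the argument. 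Your route instead extracts the pointwise identity $\tilde{w}'=\tilde v$ in $L^2(\Omega)$ directly from the mild formula by using the cosine-family decomposition $T(t)=\begin{pmatrix}\cos(tB) & -B\sin(tB)\\ B^{-1}\sin(tB) & \cos(tB)\end{pmatrix}$ with $B=\sqrt{-\mathbb A}$: the off-diagonal $T_{21}$-block and the Duhamel term differentiate in $L^2$ to give exactly the $T_{11}$-block formula for $\tilde v$, using $T_{21}(0)=0$ for the boundary term. From there the pointwise mean-value argument, the uniform lower bound $\tilde w+\theta_2\ge \kappa/2$ from Lemma~\ref{estimates}, and $H^2(\Omega)\hookrightarrow L^\infty(\Omega)$ give $G(\tilde w)\in C^1([0,T];L^2(\Omega))$, so $\mathcal G\in C^1([0,T];\mathfrak X)$ and Lemma~\ref{IEE-S} applies. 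What the paper's approach buys is that it uses nothing beyond the abstract $C_0$-semigroup $T(t)$ (it never exhibits the spectral $\cos/\sin$ structure, only skew-adjointness from Lemma~\ref{generator}) and would transfer verbatim to more general group generators of this block type; the price is the auxiliary fixed-point problem and a fairly long Gronwall computation involving the four residual terms $\Lambda_1,\dots,\Lambda_4$. Your approach is shorter and more transparent once the cosine-family formula is in hand, but does require you to actually justify that formula (standard for skew-adjoint $\mathcal A$ with $\mathcal A^2=\mathrm{diag}(\mathbb A,\mathbb A)$ and $-\mathbb A$ positive via Lax--Milgram, yet not stated anywhere in the paper), and to be careful that the Fr\'echet-derivative chain rule is not applied directly --- since $\tilde w$ is only $C^1$ into $L^2$, not into $H^2$ --- but replaced by the pointwise mean-value argument you sketch. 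Both difficulties are surmountable; just make them explicit when you write the argument up.
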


\section{Solution Operators}\label{SecSolnOp}
\begin{thm}\label{4th-cpl}
Let $T_0$ be given by Theorem \ref{4th-solu-thm} and $T\in (0, T_0)$. Then a solution operator, given by
\[ W_1:\ C\left([0, T]; B_{H^2}\left(\tilde{u}_0, r\right)\right)\longrightarrow \mathcal{Z}(T),\quad \tilde{u}\longmapsto  W_1(\tilde{u})=\left(\tilde{v}, \tilde{w}\right)=\left(\tilde{v}(\tilde{u}), \tilde{w}(\tilde{u})\right),\quad r\in\left(0, \frac{\kappa}{2C}\right)\]
with
\[[W_1(\tilde{u})](t)=\displaystyle T(t)\begin{pmatrix}\tilde{v}_0\\  \tilde{w}_0\end{pmatrix}+
\int_0^t\left\{T(t-s)
\begin{pmatrix}
[G(\tilde{w})](s)+\beta_p\tilde{u}(s)\\
0
\end{pmatrix}
 \right\}ds,\quad t\in[0, T].\]
has Lipschitz continuity, i.e.
\be\label{Lip-wrt-u}
\displaystyle\sup_{t\in[0, T]}\left\|[W_1(\tilde{u}_1)](t)-[W_1(\tilde{u}_2)](t)\right\|_{L^{2}(\Omega)\times H^{2}(\Omega)}\leq L_{W}\sup_{t\in[0, T]}\|\tilde{u}_1(t)-\tilde{u}_2(t)\|_{H^2(\Omega)}.
\ee
Here $L_{W}$ is a Lipschitz constant depending on $T_0$, $M_0=\displaystyle\sup_{t\in[0,\infty)}\left\|T(t)\right\|_{\mathcal{B}\left(L^2(\Omega)\times H^2(\Omega)\right)}$, $\kappa$, $\|w_0\|_{H^{2}(\Omega)}$, $\Omega$, and the coefficients $\beta_p$ and $\beta_F$. Furthermore, let
\[ W_2: C\left([0, T]; B_{H^2}\left(\tilde{u}_0,  r\right)\right) \longrightarrow  C([0,T]; L^2(\Omega)),\quad \tilde{u}\longmapsto \frac{\tilde{v}}{\tilde{w}+\theta_2}.\]
Then $W_2(\tilde{u})$ also depends Lipschitz-continuously on $\tilde{u}\in C\left([0, T]; B_{H^2}\left(\tilde{u}_0,  r\right)\right)$, i.e.
\be\label{Lip-frac-W}
\displaystyle\sup_{t\in[0, T]}\left\|[W_2(\tilde{u}_1)](t)-[W_2(\tilde{u}_2)](t)\right\|_{L^{2}(\Omega)}\leq L_{W_2}\displaystyle\sup_{t\in[0, T]}\|\tilde{u}_1(t)-\tilde{u}_2(t)\|_{H^2(\Omega)},
\ee
where $L_{W_2}$ is a Lipschitz constant depending on above $L_{W}$ and $\left\|\tilde{v}_0\right\|_{L^2(\Omega)}$.
\end{thm}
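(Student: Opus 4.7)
The plan for \eqref{Lip-wrt-u} is to subtract the two mild-solution representations supplied by Theorem \ref{4th-solu-thm} and close the resulting estimate via Gronwall's inequality. Given $\tilde{u}_1, \tilde{u}_2 \in C([0,T]; B_{H^2}(\tilde{u}_0, r))$ with $(\tilde{v}_i, \tilde{w}_i) = W_1(\tilde{u}_i)$, the common initial datum $(\tilde{v}_0, \tilde{w}_0)$ cancels, leaving
\[
[W_1(\tilde{u}_1)](t) - [W_1(\tilde{u}_2)](t) = \int_0^t T(t-s) \begin{pmatrix} [G(\tilde{w}_1)](s) - [G(\tilde{w}_2)](s) + \beta_p(\tilde{u}_1(s) - \tilde{u}_2(s)) \\ 0 \end{pmatrix} ds.
\]
Using the uniform semigroup bound $M_0$ from Lemma \ref{generator}, the Lipschitz estimate \eqref{Lip-G} of Lemma \ref{Lip-G-Lem}, and the embedding $H^2(\Omega) \hookrightarrow L^2(\Omega)$, I would set $\eta(t) := \|[W_1(\tilde{u}_1)](t) - [W_1(\tilde{u}_2)](t)\|_{L^2 \times H^2}$ and derive
\[
\eta(t) \leq M_0 L_G \int_0^t \eta(s)\,ds + M_0 T \beta_p \sup_{s \in [0,T]} \|\tilde{u}_1(s) - \tilde{u}_2(s)\|_{H^2(\Omega)}.
\]
Gronwall's inequality then produces \eqref{Lip-wrt-u} with $L_W = M_0 T_0 \beta_p \exp(M_0 L_G T_0)$.

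For \eqref{Lip-frac-W} the plan is to reduce the claim to \eqref{Lip-wrt-u} through the algebraic identity
\[
\frac{\tilde{v}_1}{\tilde{w}_1 + \theta_2} - \frac{\tilde{v}_2}{\tilde{w}_2 + \theta_2} = \frac{\tilde{v}_1 - \tilde{v}_2}{\tilde{w}_1 + \theta_2} + \tilde{v}_2 \cdot \frac{\tilde{w}_2 - \tilde{w}_1}{(\tilde{w}_1 + \theta_2)(\tilde{w}_2 + \theta_2)}.
\]
Lemma \ref{estimates} provides the uniform lower bound $\tilde{w}_i + \theta_2 \geq \kappa/2$, so the $L^2$-norm of the first summand is at most $(2/\kappa)\|\tilde{v}_1 - \tilde{v}_2\|_{L^2}$. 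For the second summand, H\"older's inequality together with the Sobolev embedding $H^2(\Omega) \hookrightarrow L^\infty(\Omega)$ (valid for $n \leq 2$) yields a bound of $(4C/\kappa^2) \|\tilde{v}_2\|_{L^2} \|\tilde{w}_1 - \tilde{w}_2\|_{H^2}$. Since $(\tilde{v}_2, \tilde{w}_2) \in \mathcal{Z}(T)$, one has $\|\tilde{v}_2\|_{L^2} \leq \|\tilde{v}_0\|_{L^2} + r$, so both summands are controlled by the $L^2 \times H^2$ norm of $W_1(\tilde{u}_1) - W_1(\tilde{u}_2)$. Inserting \eqref{Lip-wrt-u} then produces \eqref{Lip-frac-W} with a constant depending on $L_W$, $\kappa$, $\|\tilde{v}_0\|_{L^2}$, $r$ and $\Omega$.

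The only mild obstacle is careful bookkeeping of the constants through the Gronwall step and through the algebraic identity, making sure that only the estimates already provided by Lemmas \ref{estimates} and \ref{Lip-G-Lem}, together with the $n \leq 2$ Sobolev embedding, are invoked. No new ingredient beyond the mild-solution representation of Theorem \ref{4th-solu-thm} and the uniform lower bound on $w$ is required.
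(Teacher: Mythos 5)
Your proposal is correct and follows essentially the same route as the paper's own proof: for \eqref{Lip-wrt-u}, subtracting the two mild-solution formulas so the initial data cancel, invoking the semigroup bound $M_0$ and the Lipschitz estimate \eqref{Lip-G} for $G$, and closing via Gronwall to obtain $L_W = T_0 M_0 \beta_p e^{M_0 L_G T_0}$; for \eqref{Lip-frac-W}, splitting the difference of quotients into the two summands you wrote, using the lower bound $w\geq\kappa/2$ from Lemma \ref{estimates} and the Sobolev embedding $H^2(\Omega)\hookrightarrow L^\infty(\Omega)$, bounding $\|\tilde{v}_2\|_{L^2}\leq\|\tilde{v}_0\|_{L^2}+r$, and then inserting \eqref{Lip-wrt-u}. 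The only cosmetic discrepancy is that you list $r$ among the dependencies of $L_{W_2}$; the paper removes it by replacing $r$ with its upper bound $\kappa/(2C)$, yielding a constant depending only on $L_W$ and $\|\tilde{v}_0\|_{L^2}$ as announced.
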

%\vspace*{.6cm}
\begin{proof}
For $T\in(0, T_0)$, $\tilde{u}_1$, $\tilde{u}_2\in C\left([0, T]; B_{H^2}\left(\tilde{u}_0,  r\right)\right)$,  $W_1(\tilde{u}_1)=\left(\tilde{v}_1, \tilde{w}_1\right)$, $W_1(\tilde{u}_2)=\left(\tilde{v}_2, \tilde{w}_2\right)$ belong to $\mathcal{Z}(T)$, then it folows that 
\[[G(\tilde{w}_1)](t)-[G(\tilde{w}_2)](t)+\beta_p\tilde{u}_1(t)-\beta_p\tilde{u}_2(t)\in H^2(\Omega),\quad\forall\ t\in[0, T],\]
and one can have the following estimates
\begin{align}
&\|[W_1(\tilde{u}_1)](t)-[W_1(\tilde{u}_2)](t)\|_{L^2(\Omega)\times H^{2}(\Omega)}\notag\\
=&\left\|\int_0^t\displaystyle T(t-s)\begin{pmatrix}[G(\tilde{u}_1,\tilde{w}_1)](s)-[G(\tilde{u}_2,\tilde{w}_2)](s)\\ 0\end{pmatrix}ds\right\|_{L^2(\Omega)\times H^{2}(\Omega)}\notag\\
\leq&M_0\int_0^t\left\|[G(\tilde{w}_1)](s)-[G(\tilde{w}_2)](s)+\beta_p\left[\tilde{u}_1(s)-\tilde{u}_2(s)\right]\right\|_{L^{2}(\Omega)}ds\notag\\
\leq&M_0\int_0^t\left\|[G(\tilde{w}_1)](s)-[G(\tilde{w}_2)](s)+\beta_p\left[\tilde{u}_1(s)-\tilde{u}_2(s)\right]\right\|_{H^{2}(\Omega)}ds,\label{W estimate-11}
\end{align}
where $M_0$ is a operator norm of $\left\{T(t)\in\mathcal{B}\left(L^2(\Omega)\times H_*^2(\Omega)\right):\ t\in[0, \infty)\right\}$. We notice that,
\begin{align}
\|\tilde{w}_1(t)-\tilde{w}_2(t)\|_{H^{2}(\Omega)}&\leq\left\|\begin{pmatrix}\tilde{v}_1(t)-\tilde{v}_2(t)\\ \tilde{w}_1(t)-\tilde{w}_2(t)\end{pmatrix}\right\|_{L^{2}(\Omega)\times H^{2}(\Omega)}\notag\\
&=\|[W_1(\tilde{u}_1)](t)-[W_1(\tilde{u}_2)](t)\|_{L^{2}(\Omega)\times H^{2}(\Omega)},\quad \forall\ t\in [0, T].\label{w estimates2-11}
\end{align}
Hence combining \eqref{w estimates2-11} with the Lipschitz continuity estimate \eqref{Lip-G} of $G$ from Lemma \ref{Lip-G-Lem} gives
\begin{align}
&\left\|[G(\tilde{w}_1)](s)-[G(\tilde{w}_2)](s)+\beta_p\left[\tilde{u}_1(s)-\tilde{u}_2(s)\right]\right\|_{H^{2}(\Omega)}\notag\\
\leq& L_G \|[W_1(\tilde{u}_1)](s)-[W_1(\tilde{u}_2)](s)\|_{L^{2}(\Omega)\times H^{2}(\Omega)}+\beta_p\left\|\tilde{u}_1(s)-\tilde{u}_2(s)\right\|_{H^2(\Omega)}\notag,\ \forall\ 0\leq s\leq t\leq T.
\end{align}
Hence
\begin{align}
&\|[W_1(\tilde{u}_1)](t)-[W_1(\tilde{u}_2)](t)\|_{L^2(\Omega)\times H^{2}(\Omega)}\notag\\
\leq&T_0M_0\beta_p\sup_{t\in[0,T]}\left\|\tilde{u}_1(t)-\tilde{u}_2(t)\right\|_{H^2(\Omega)}+M_0L_G\displaystyle\int_0^t\|[W_1(\tilde{u}_1)](s)-[W_1(\tilde{u}_2)](s)\|_{L^{2}(\Omega)\times H^{2}(\Omega)}ds\notag.
\end{align}
Gronwall's inequality implies
\[\sup_{t\in[0,T]}\|[W_1(\tilde{u}_1)](t)-[W_1(\tilde{u}_2)](t)\|_{L^{2}(\Omega)\times H^{2}(\Omega)}\leq\displaystyle T_0M_0\beta_pe^{M_0L_GT_0}\sup_{t\in[0,T]}\|\tilde{u}_1(t)-\tilde{u}_2(t)\|_{H^2(\Omega)}.\]
Consequently we conclude \eqref{Lip-wrt-u} by setting
\[L_{W}=T_0M_0\beta_pe^{M_0L_GT_0}.\]
Since $L_G$ is the Lipschitz constant depending on $\kappa$, $\|w_0\|_{H^{2}(\Omega)}$, $\Omega$, the coefficient $\beta_F$,  thus $L_W$ depends on $T_0$, $M_0$, $\kappa$, $\|w_0\|_{H^{2}(\Omega)}$,  $\Omega$, and the coefficients $\beta_p$ and $\beta_F$,  that is
\[L_W=L_W\left(T_0,\ M_0,\ \kappa,\ \|w_0\|_{H^{2}(\Omega)},\ \Omega,\  \beta_p,\ \beta_F\right).\]
From the conclusion \eqref{w-lower-bound} from Lemma \ref{estimates}, we conclude that there exists a constant $C=C(\Omega)$, such that for all $r\in\left(0, \frac{\kappa}{2C}\right)$,
\[\displaystyle{\tilde{w}_1(t)+\theta_2}\geq \frac{\kappa}{2},\quad {\tilde{w}_2(t)+\theta_2}\geq \frac{\kappa}{2}\]
holds for all $t\in[0, T]$. Then for above constant $C=C(\Omega)$ and all $t\in [0, T]$, we obtain
\begin{align}
\left\|\frac{\tilde{v}_1(t)-\tilde{v}_2(t)}{\tilde{w}_1(t)+\theta_2}\right\|_{L^2(\Omega)}&=\left[\int_\Omega\left|\frac{\tilde{v}_1(t)-\tilde{v}_2(t)}{\tilde{w}_1(t)+\theta_2}\right|^2dx\right]^{\frac{1}{2}}\notag\\
&\leq \frac{2}{\kappa}\left\|\tilde{v}_1(t)-\tilde{v}_2(t)\right\|_{L^2(\Omega)},\notag
\end{align}
and therefore
\begin{align}
\left\|\frac{\tilde{v}_2(t)}{\tilde{w}_1(t)+\theta_2}-\frac{\tilde{v}_2(t)}{\tilde{w}_2(t)+\theta_2}\right\|^2_{L^2(\Omega)}&=\int_\Omega\left|\displaystyle\tilde{v}_2(t)\right|^2\left|\frac{1}{\tilde{w}_1(t)+\theta_2}-\frac{1}{\tilde{w}_2(t)+\theta_2}\right|^2dx\notag\\
&=\int_\Omega\left|\displaystyle\tilde{v}_2(t)\right|^2\frac{\left|\tilde{w}_1(t)-\tilde{w}_2(t)\right|^2}{\left|\tilde{w}_1(t)+\theta_2\right|^2\left|\tilde{w}_2(t)+\theta_2\right|^2}dx\notag\\
&\leq\frac{2^4}{\kappa^4}\int_\Omega\left|\displaystyle\tilde{v}_2(t)\right|^2\left|\tilde{w}_1(t)-\tilde{w}_2(t)\right|^2dx\notag\\
&\leq\frac{2^4}{\kappa^4}\left\|\tilde{w}_1(t)-\tilde{w}_2(t)\right\|_{L^\infty(\Omega)}^2\int_\Omega\left|\displaystyle\tilde{v}_2(t)\right|^2dx\notag\\
&\leq\frac{2^4C^2}{\kappa^4}\left\|\tilde{w}_1(t)-\tilde{w}_2(t)\right\|_{H^2(\Omega)}^2\int_\Omega\left|\displaystyle\tilde{v}_2(t)\right|^2dx\notag.
\end{align}
This shows
\begin{align}
&\left\|[W_2(\tilde{u}_1)](t)-[W_2(\tilde{u}_2)](t)\right\|_{L^2(\Omega))}\notag\\
\leq&\left\|\frac{\tilde{v}_1(t)-\tilde{v}_2(t)}{\tilde{w}_1(t)+\theta_2}\right\|_{L^2(\Omega)}+\left\|\frac{\tilde{v}_2(t)}{\tilde{w}_1(t)+\theta_2}-\frac{\tilde{v}_2(t)}{\tilde{w}_2(t)+\theta_2}\right\|_{L^2(\Omega)}\notag\\
\leq&\frac{2}{\kappa}\left\|\tilde{v}_1(t)-\tilde{v}_2(t)\right\|_{L^2(\Omega)}+\frac{4C}{\kappa^2}\left\|\tilde{w}_1(t)-\tilde{w}_2(t)\right\|_{H^2(\Omega)}\left\|\displaystyle\tilde{v}_2(t)\right\|_{L^2(\Omega)}\notag\\
\leq&\frac{2}{\kappa}\left\|\tilde{v}_1(t)-\tilde{v}_2(t)\right\|_{L^2(\Omega)}+\frac{4C}{\kappa^2}\left\|\tilde{w}_1(t)-\tilde{w}_2(t)\right\|_{H^2(\Omega)}\left(\left\|\displaystyle\tilde{v}_0\right\|_{L^2(\Omega)}+r\right)\notag\\
\leq&\frac{2}{\kappa}\left\|\tilde{v}_1(t)-\tilde{v}_2(t)\right\|_{L^2(\Omega)}+\frac{4C}{\kappa^2}\left\|\tilde{w}_1(t)-\tilde{w}_2(t)\right\|_{H^2(\Omega)}\left(\left\|\displaystyle\tilde{v}_0\right\|_{L^2(\Omega)}+\frac{\kappa}{2C}\right)\label{final estimate-12}.
\end{align}
We set
\[L_{W_2}=L_{W}\cdot\max\left\{\frac{2}{\kappa},\ \frac{4C}{\kappa^2}\left(\left\|\displaystyle\tilde{v}_0\right\|_{L^2(\Omega)}+\frac{\kappa}{2C}\right)\right\},\]
and $L_{W_2}$ depends on above $L_{W}$ and  $\left\|\tilde{v}_0\right\|_{L^2(\Omega)}$,  that is
\[L_{W_2}=L_{W_2}\left(L_{W_1},\ \left\|\tilde{v}_0\right\|_{L^2(\Omega)}\right).\]
Using  estimates \eqref{Lip-wrt-u} and \eqref{final estimate-12}, we conclude \eqref{Lip-frac-W}.
\end{proof}

\subsection*{Fr\'{e}chet derivative}
For $T\in(0, T_0)$.  Recall that  $u=\tilde{u}+\theta_1$, $v=\tilde{v}$ and $w=\tilde{w}+\theta_2$. According to the integral form \eqref{mild-solu-form},  the solution operator given by
\bse\label{def-W}
\be\label{def-W-01}
\begin{split}
 W: C\left([0, T]; B_{H^2}(u_0,  r)\right)\longrightarrow C\left([0, T]; B_{L^2}(v_0,  r)\times B_{H^2}(w_0,  r)\right),\\
   u\longmapsto W(u)=\left(v, w\right)=\left(v(u), w(u)\right),\qquad\qquad\qquad\quad
   \end{split}
\ee
with
\be\label{def-W-02}
\left[W(u)\right](t)=\begin{pmatrix}0\\ \theta_2\end{pmatrix}+ T(t)\begin{pmatrix}v_0\\ w_0-\theta_2\end{pmatrix}+
\int_0^t\left\{T(t-s)\begin{pmatrix}[G(w-\theta_2)](s)+\beta_p(u(s)-\theta_1)\\ 0 \end{pmatrix}\right\}ds,
\ee
\ese
similarly satisfies the following Lipschitz continuity
\begin{align}
\displaystyle\sup_{t\in[0, T]}\left\|[W({u}_1)](t)-[W({u}_2)](t)\right\|_{L^{2}(\Omega)\times H^{2}(\Omega)}\leq L_{W}\sup_{t\in[0, T]}\|{u}_1(t)-{u}_2(t)\|_{H^2(\Omega)},\label{Lip-W-I}
\end{align}
where $L_{W}$ is a Lipschitz constant depending on $T_0,\ M_0,\ \kappa,\ \|w_0\|_{H^{2}(\Omega)},\  \Omega,\  \beta_p$ and $\beta_F$.

Let $\lambda\in\mathds{R}$ be small such that, for any $q\in C\left([0, T]; H_*^2(\Omega)\right)$, $u+\lambda q\in C\left([0, T]; B_{H^2}(u_0, r)\right)$,  then according to the definition of the Fr\'{e}chet derivative ${W}{'}(u)$ of ${W}(u)$ on $u$,
\be\label{Fre-W}
{W}{'}(u)q=\displaystyle\lim_{\lambda\rightarrow 0}\frac{1}{\lambda}\left[{W}({u}+\lambda q)-{W}({u})\right],
\ee
${W}{'}(u)$ is a map defined by
\bse\label{Fre-W-1}
\be
{W}{'}(u): C\left([0,T]; H_*^2(\Omega)\right)\longrightarrow C\left([0,T]; L^2(\Omega)\times H_*^2(\Omega)\right),
\ee
with
\be
q\longmapsto W'(u)q=\left(v'(u)q, w'(u)q\right).
\ee
\ese
\eqref{Lip-W-I} implies that the Fr\'{e}chet derivative $W{'}(u)$ of $W(u)$ with respect to $u$ exists and
\bse\label{bound-Fre-W}
\be\label{bound-Fre-W-01}
\sup_{t\in[0, T]}\left\|\left[W'(u)q\right](t)\right\|_{L^{2}(\Omega)\times H^{2}(\Omega)}\leq L_W\sup_{t\in[0, T]}\left\|q(t)\right\|_{H^2(\Omega)},
\ee
\be\label{bound-Fre-W-02}
\sup_{t\in[0, T]}\left\|[W'(u)q](t)\right\|_{L^{2}(\Omega)\times H^{2}(\Omega)}\leq L_W,\quad  \forall\ q\in C\left([0, T]; B_{H^2}(0,1)\right).
\ee
\ese
\begin{cor}\label{Lip-Frechet-W}
For any given $q\in C\left([0, T]; B_{H^2}(0,1)\right)$ and $u_1$, $u_2\in C\left([0, T]; B_{H^2}(u_0, r)\right)$ with $T\in(0, T_0)$,  the Fr\'{e}chet derivative $W'(u)$ of $W(u)$ satisfies
\be\label{Lip-Frechet-W-I}
\displaystyle\sup_{t\in[0, T]}\left\|[{W}'(u_1)q](t)-[{W}'(u_2)q](t)\right\|_{L^{2}(\Omega)\times H^{2}(\Omega)}\leq L_{F}\displaystyle\sup_{t\in[0, T]}\left\|{u}_1(t)-{u}_2(t)\right\|_ {H^2(\Omega)}.
\ee
Here $L_F$ is a Lipschitz constant depending on $T_0$, $\Omega$, $\beta_p$, $\beta_F$, $M_0$, $\kappa$, $\|w_0\|_{H^{2}(\Omega)}$ and $\|v_0\|_{L^2(\Omega)}$.
\end{cor}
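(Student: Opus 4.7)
The plan is to start from the integral equation satisfied by $W(u)$ in \eqref{def-W-02} and differentiate in $u$ along the direction $q$. Since $G(\tilde w) = -\beta_F(\tilde w + \theta_2)^{-2} + \beta_p(\theta_1 - 1)$ is smooth on the set where $\tilde w + \theta_2 \geq \kappa/2$ (guaranteed by Lemma \ref{estimates}) and the forcing $\beta_p u$ is linear in $u$, the chain rule applied to \eqref{def-W-02} yields the implicit integral identity
\[
[W'(u)q](t) = \int_0^t T(t-s)\begin{pmatrix} G'(w(u)(s) - \theta_2)\,[w'(u)q](s) + \beta_p q(s) \\ 0 \end{pmatrix} ds,
\]
where $w'(u)q$ denotes the second component of $W'(u)q$ and $G'(\tilde w)p = 2\beta_F(\tilde w + \theta_2)^{-3}p$.

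Writing $w_i := w(u_i)$ and subtracting the two integral representations for $u_1$ and $u_2$, the $\beta_p q$ terms cancel. I would then insert and subtract $G'(w_1 - \theta_2)[w'(u_2)q]$ in the integrand to split the difference into
\[
G'(w_1 - \theta_2)\bigl([w'(u_1)q - w'(u_2)q](s)\bigr) + \bigl[G'(w_1 - \theta_2) - G'(w_2 - \theta_2)\bigr][w'(u_2)q](s).
\]
The first piece is controlled in $H^2$ by $L_G\,\|[W'(u_1)q - W'(u_2)q](s)\|_{L^2 \times H^2}$ via \eqref{Lip-G-2}. For the second piece, I would substitute the explicit formula for $G'$ so that the difference of operators reduces to multiplication by $2\beta_F(w_1^{-3} - w_2^{-3})$ acting on $[w'(u_2)q]$, and then apply \eqref{C-d} of Lemma \ref{estimates} with $k=3$ together with the $H^2$-algebra inequality to obtain
\[
\bigl\|[G'(w_1 - \theta_2) - G'(w_2 - \theta_2)]\,[w'(u_2)q](s)\bigr\|_{H^2} \leq 2\beta_F C_3\,\|w_1(s) - w_2(s)\|_{H^2}\,\|[w'(u_2)q](s)\|_{H^2}.
\]
At this point \eqref{bound-Fre-W-02} gives $\|w'(u_2)q\|_{H^2} \leq L_W$ uniformly on $[0,T]$ for $q \in B_{H^2}(0,1)$, and the Lipschitz estimate \eqref{Lip-W-I} for $W$ gives $\|w_1(s) - w_2(s)\|_{H^2} \leq L_W \sup_{[0,T]}\|u_1 - u_2\|_{H^2}$.

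Combining both contributions with the semigroup bound $M_0$ produces a Gronwall-type inequality
\[
\|[W'(u_1)q - W'(u_2)q](t)\|_{L^2 \times H^2} \leq 2\beta_F C_3 L_W^2 M_0 T_0 \sup_{[0,T]}\|u_1 - u_2\|_{H^2} + M_0 L_G \int_0^t \|[W'(u_1)q - W'(u_2)q](s)\|_{L^2 \times H^2}\,ds,
\]
from which Gronwall's inequality followed by the supremum over $t\in[0,T]$ closes the estimate with $L_F = 2\beta_F C_3 L_W^2 M_0 T_0 \exp(M_0 L_G T_0)$; the dependencies claimed in the corollary follow from those of $L_W$, $L_G$ and $C_3$. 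The main technical subtlety I anticipate lies in the second summand above: Lemma \ref{Lip-G-Lem} supplies only the uniform continuity statement \eqref{uniformly-continuous-Fre-G} for $G'$ in its $w$-argument, so a genuine Lipschitz bound has to be extracted by hand from the explicit formula for $G'$ together with the powers-of-$w$ Lipschitz estimate in Lemma \ref{estimates}; once this is in place the remainder is a standard Gronwall argument.
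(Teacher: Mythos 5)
Your proposal follows essentially the same route as the paper's proof: differentiate the solution operator's integral representation in $u$, note the $\beta_p q$ terms cancel, split the product $G'(\tilde w_i)[w'(u_i)q]$ by adding and subtracting a cross term, bound the difference-of-coefficients piece via the powers-of-$w$ Lipschitz estimate \eqref{C-d} and the uniform bound \eqref{bound-Fre-W-02}, feed the remaining piece into a Gronwall inequality, and close. The paper works component-wise (first $w'(u)q$ via $T_{21}$, then $v'(u)q$ via $T_{11}$) and groups the splitting the other way around (keeping $w'(u_1)q$ fixed rather than $w'(u_2)q$), but since $L_G = 2\beta_F C_1^3$ your Gronwall exponent $M_0 L_G T_0$ coincides with the paper's $2\beta_F M_0 C_1^3 T_0$, so the two arguments and constants are equivalent.
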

%\vspace*{.6cm}
\begin{proof}
From Lemma \ref{generator}, the linear operator $\mathcal{A}$ generates a $C_0$-semigroup
\[\left\{T(t)=\begin{pmatrix}T_{11}(t) & T_{12}(t)\\ T_{21}(t) & T_{22}(t) \end{pmatrix}\in \mathcal{B}\left(L^2(\Omega)\times H_*^2(\Omega)\right):\quad t\in[0,\infty)\right\}.\]
Because $[G(w-\theta_2)](s)+\beta_p(u(s)-\theta_1)=-\beta_F[w(s)]^{-2}+\beta_p(u(s)-1)$, the integral form \eqref{def-W-02} then implies the second component $w$ of $W(u)=\left(v, w\right)=\left(v(u), w(u)\right)$ is given by
\begin{align}
w(t)=&\left[w(u)\right](t)\notag\\
=&\theta_2+T_{21}(t)v_0+T_{22}(t)\left(w_0-\theta_2\right)+\displaystyle\int_0^tT_{21}(t-s)\left(\beta_p(u(s)-1)-\frac{\beta_F}{[w(u)]^2(s)}\right)ds.\notag
\end{align}
Following this definition and the definitions \eqref{Fre-W} and \eqref{Fre-W-1} of the Fr\'{e}chet derivative $W'(u)=\left(v'(u), w'(u)\right)$, the Fr\'{e}chet derivative $w'(u)$ of $w(u)$ on $u$, which is also the second component of the Fr\'{e}chet derivative $W'(u)$, is written as follows:
\bse\label{Fre-w}
\be\label{Fre-w1}
w'(u): C\left([0, T]; H_*^2(\Omega)\right)\longrightarrow C\left([0, T]; H_*^2(\Omega)\right),
\ee
where
\be\label{Fre-w2}
[w'(u)q](t)=\displaystyle\int_0^t T_{21}(t-s)\left\{\beta_p q(s)+2\beta_F\frac{[w'(u)q](s)}{[w(u)]^3(s)}\right\}ds.
\ee
\ese
We next show that there exists a Lipschitz constant $L_{F_2}$ depending on $T_0$,  $M_0$, $\kappa$, $\|w_0\|_{H^2(\Omega)}$, $\Omega$, $\beta_p$ and $\beta_F$, such that
\be\label{Lip-Fre-w}
\displaystyle\sup_{t\in[0, T]}\|[w'(u_1)q](t)-[w'(u_2)q](t)\|_{H^2(\Omega)}\leq L_{F_2}\sup_{t\in[0, T]}\|{u}_1(t)-{u}_2(t)\|_{H^2(\Omega)}.
\ee
Letting $u_1,\ u_2\in C\left([0, T]; B_{H^2}(u_0,  r)\right)$, the definiton \eqref{def-W} of the solution operator $W$ implies that  $(v_1, w_1)=(v(u_1), w(u_1))$, $(v_2, w_2)=(v(u_2), w(u_2))\in C\left([0, T]; B_{L^2}(v_0,  r)\times B_{H^2}(w_0,  r)\right)$, then one obtains $w_1(t)$, $w_2(t)\in H^2(\Omega)$.
Because of the definitions \eqref{Fre-W} and \eqref{Fre-W-1} of Fr\'{e}chet derivative $W^{'}(u)$,
\[W'(u_1)q=\left(v'(u_1)q, w'(u_1)q\right),\quad  W'(u_2)q=\left(v'(u_2)q, w'(u_2)q\right) \in C\left([0, T]; L^2(\Omega)\times H_*^2(\Omega)\right).\]
%\[W'(u_2)q=\left(v'(u_2)q, w'(u_2)q\right)\in C\left([0, T]; L^2(\Omega)\times H_*^2(\Omega)\right),\]
Hence with
\[w_I(t):=[w'(u_1)q](t)\in H_*^2(\Omega),\quad  w_J(t):=[w'(u_2)q](t)\in H_*^2(\Omega),\quad \forall\ t\in [0, T],\]
we find
\[\frac{[w'(u_1)q](t)}{[w(u_1)]^3(t)}-\frac{[w'(u_2)q](t)}{[w(u_2)]^3(t)}=\frac{w_I(t)}{[w_1(t)]^3}-\frac{w_J(t)}{[w_2(t)]^3}\in H_*^2(\Omega),\quad \forall\ t\in [0, T].\]
According to inequality \eqref{bound-Fre-W},
\[\sup_{t\in[0, T]}\left\|w_I(t)\right\|_{H^2(\Omega)}=\sup_{t\in[0, T]}\left\|[w'(u_1)q](t)\right\|_{H^2(\Omega)}\leq L_W.\]
The Lipschitz continuity estimate \eqref{Lip-W-I} implies that
\be\label{Lip-w-II}
\left\|w_1(t)-w_2(t)\right\|_{H^2(\Omega)}=\left\|[w(u_1)](t)-[w(u_2)](t)\right\|_{H^2(\Omega)}\leq L_W\left\|u_1(t)-u_2(t)\right\|_{H^2(\Omega)}.
\ee
The algebraic properties of $H^2(\Omega)$, i.e. Lemma \ref{alg}, estimates \eqref{C-a} and \eqref{C-d} of Lemma \ref{estimates}, and above \eqref{Lip-w-II} imply
\begin{align}
\left\|\frac{w_I(t)}{[w_1(t)]^3}-\frac{w_J(t)}{[w_2(t)]^3}\right\|_{H^2(\Omega)}\leq&\left\|w_I(t)\right\|_{H^2(\Omega)}\cdot \left\|\frac{1}{[w_1(t)]^3}-\frac{1}{[w_2(t)]^3}\right\|_{H^2(\Omega)}\notag\\
+&\left\|w_I(t)-w_J(t)\right\|_{H^2(\Omega)}\cdot\left\|\frac{1}{[w_2(t)]^3}\right\|_{H^2(\Omega)}\notag\\
\leq&L_W C_3\left\|w_1(t)-w_2(t)\right\|_{H^2(\Omega)}+C_1^3\left\|w_I(t)-w_J(t)\right\|_{H^2(\Omega)}\notag\\
\leq&L^2_WC_3\left\|u_1(t)-u_2(t)\right\|_{H^2(\Omega)}+C_1^3\left\|w_I(t)-w_J(t)\right\|_{H^2(\Omega)}\label{w-cubic}.
\end{align}
Combining \eqref{w-cubic} with the form \eqref{Fre-w} of the Fr\'{e}chet derivative $w'(u)q$ of $w(u)$ on $u$ gives
\begin{align}
\displaystyle\|w_I(t)-w_J(t)\|_{H^2(\Omega)}=&\left\|2\beta_F\int_0^tT_{21}(t-s)\left(\frac{w_I(s)}{[w_1(s)]^3}-\frac{w_J(s)}{[w_2(s)]^3}\right)ds\right\|_{H^2(\Omega)}\notag\\
\leq&2\beta_F \int_0^t \sup_{0\leq s\leq t}\left\|T_{21}(t-s)\right\|_{\mathcal{B}\left(L^2(\Omega), H^2(\Omega)\right)}\left\|\frac{w_I(s)}{[w_1(s)]^3}-\frac{w_J(s)}{[w_2(s)]^3}\right\|_{L^2(\Omega)}ds\notag\\
\leq&2\beta_F  M_0\int_0^t\left\|\frac{w_I(s)}{[w_1(s)]^3}-\frac{w_J(s)}{[w_2(s)]^3}\right\|_{H^2(\Omega)}ds\notag\\
\leq&2\beta_F M_0L^2_{W}C_3T_0\|{u}_1(t)-{u}_2(t)\|_{H^2(\Omega)}\notag\\
+&2\beta_FM_0C_1^3\int_0^t\left\|w_I(s)-w_J(s)\right\|_{H^2(\Omega)}ds\notag.
\end{align}
Consequently, according to Gronwall's inequality,
\[\displaystyle\|w_I(t)-w_J(t)\|_{H^2(\Omega)}\leq 2\beta_F M_0L^2_{W}C_3T_0e^{2\beta_FM_0C_1^3 T_0}\|{u}_1(t)-{u}_2(t)\|_{H^2(\Omega)}.\]
Thus the estimate \eqref{Lip-Fre-w} holds by setting
\[L_{F_2}=2\beta_F M_0L^2_{W}C_3T_0e^{2\beta_FM_0C_1^3 T_0}.\]
%and $L_{F_2}$ depends on $T_0,\  M_0,\ \kappa,\ \|w_0\|_{H^2(\Omega)},\ \Omega,\ \beta_p,\ \beta_F $.
Similarly,  there exists a Lipschitz constant $L_{F_1}=L_{F_1}\left(L_{F_2},\ \|{v}_0\|_{L^2(\Omega)}\right)>0$, such that the Frech\'{e}t derivative $v{'}(u)$ of the first component $v(u)$ of $W(u)$ on $u$, given by
\be\label{Fre-v}
[v{'}(u)q](t)=\displaystyle\int_0^t T_{11}(t-s)\left\{\beta_p q(s)+2\beta_F\frac{[w{'}(u)q](s)}{[w(u)]^3(s)}\right\}ds,
\ee
is a map $t\longrightarrow[v{'}(u)](t)$ from $[0, T]$ to  $C([0, T];\mathcal{B}\left(H_*^2(\Omega), L^2(\Omega))\right)$ and satisfies
\be\label{Lip-Fre-v}
\displaystyle\sup_{t\in[0, T]}\|[v'(u_1)q](t)-[v'(u_2)q](t)\|_{L^2(\Omega)}\leq L_{F_1}\sup_{t\in[0, T]}\|{u}_1(t)-{u}_2(t)\|_{H^2(\Omega)}.
\ee
Let $L_F=\max\left\{L_{F_1},L_{F_2}\right\}$, and the assertion \eqref{Lip-Frechet-W-I} follows from \eqref{Lip-Fre-w} and \eqref{Lip-Fre-v}.
\end{proof}

%\newpage
\begin{cor}\label{Holder-Frechet-W-I-Cor}
Take $T\in (0, T_0)$, $r\in\left(0, \frac{\kappa}{2C}\right)$, $u_0\in H^2(\Omega)$ compatible with boundary condition, $u_0\mid_{\partial\Omega}=\theta_1$, and $\tilde{u}_0=u_0-\theta_1\in H_*^2(\Omega)$. If
$u\in C^\alpha\left([0, T]; B_{H^2}(u_0,r)\right)$,
then there exists a Lipschitz constant $L_M$ depending on
$\alpha$, $M_0$, $T_0$, $\Omega$, $\beta_F$, $\beta_p$, $\kappa$, $\|v_0\|_{L^2(\Omega)}$ and $\|w_0\|_{H^2(\Omega)}$,
such that
\begin{align}
\sup_{0\leq t<t+h\leq T}\left\|[{W}'(u)q](t+h)-[{W}'(u)q](t)\right\|_{L^{2}(\Omega)\times H^{2}(\Omega)}\leq&h^\alpha L_{M}\sup_{t\in [0, T]}\left\|q(t)\right\|_{H^2(\Omega)}\notag\\
+&h^\alpha TL_{M}\left\|q\right\|_{C^\alpha\left([0, T]; H^2(\Omega)\right)}\label{Holder-Frechet-W-I}
\end{align}
holds for all $q\in C^\alpha([0, T]; B_{H^2}(\tilde{u}_0, r))$.
\end{cor}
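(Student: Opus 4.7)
The plan is to exploit the Volterra integral representation \eqref{Fre-w} of $w'(u)q$ (and analogously \eqref{Fre-v} for $v'(u)q$) and estimate the time increment directly. Writing $\Phi(t):=[w'(u)q](t)$ and $G(s):=\beta_p q(s)+2\beta_F [w(u)(s)]^{-3}\Phi(s)$, formula \eqref{Fre-w2} reads $\Phi(t)=\int_0^t T_{21}(t-s)G(s)\,ds$. The change of variables $s\mapsto \sigma+h$ in the representation of $\Phi(t+h)$ yields the standard decomposition
\begin{equation*}
\Phi(t+h)-\Phi(t)=\int_0^h T_{21}(t+h-s)G(s)\,ds+\int_0^t T_{21}(t-\sigma)\bigl[G(\sigma+h)-G(\sigma)\bigr]\,d\sigma.
\end{equation*}
The first summand is controlled in $H^2(\Omega)$ by $M_0 h\sup_{s}\|G(s)\|_{L^2(\Omega)}$. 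Combining \eqref{bound-Fre-W-01}, \eqref{C-a} and the algebra property of $H^2(\Omega)$, one obtains $\sup_{s}\|G(s)\|_{H^2(\Omega)}\leq C\|q\|_{C([0,T];H^2(\Omega))}$, and since $h\leq T_0^{1-\alpha}h^\alpha$ this contribution is already of the form $h^\alpha L_M\|q\|_{C([0,T];H^2(\Omega))}$.

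For the integrand of the second summand I split $G(\sigma+h)-G(\sigma)$ into the $\beta_p$-multiple of $q(\sigma+h)-q(\sigma)$ and the difference $2\beta_F\bigl\{[w(\sigma+h)]^{-3}\Phi(\sigma+h)-[w(\sigma)]^{-3}\Phi(\sigma)\bigr\}$, abbreviating $w=w(u)$. The $q$-increment is bounded by $h^\alpha[q]_{C^\alpha([0,T];H^2(\Omega))}$, and after integration in $\sigma\in[0,t]$ produces the $h^\alpha T L_M\|q\|_{C^\alpha([0,T];H^2(\Omega))}$ term. The second piece is handled by adding and subtracting $[w(\sigma+h)]^{-3}\Phi(\sigma)$: the first resulting contribution is estimated by $C\,C_1^3\|\Phi(\sigma+h)-\Phi(\sigma)\|_{H^2(\Omega)}$ using \eqref{C-a}, while the second is estimated by $C\,C_3\|w(\sigma+h)-w(\sigma)\|_{H^2(\Omega)}\|\Phi(\sigma)\|_{H^2(\Omega)}$ using \eqref{C-d}. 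The factor $\|w(\sigma+h)-w(\sigma)\|_{H^2(\Omega)}$ is dominated by $L_U h^\alpha$ thanks to Corollary \ref{Holdercontinuity} (the hypothesis $u\in C^\alpha$ is what activates it), while $\|\Phi(\sigma)\|_{H^2(\Omega)}\leq L_W\|q\|_{C([0,T];H^2(\Omega))}$ by \eqref{bound-Fre-W-01}.

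Setting $f(t):=\|\Phi(t+h)-\Phi(t)\|_{H^2(\Omega)}$ and collecting all estimates yields an inequality of the form
\begin{equation*}
f(t)\leq A\,h^\alpha+B\int_0^t f(\sigma)\,d\sigma,\qquad t\in[0,T],
\end{equation*}
where $A$ depends linearly on $\|q\|_{C([0,T];H^2(\Omega))}$ and on $T\|q\|_{C^\alpha([0,T];H^2(\Omega))}$, while $B$ is determined by $M_0$, $\beta_F$ and $C_1$. Gronwall's inequality then produces $f(t)\leq A h^\alpha e^{BT_0}$, which is the required Hölder bound for the $w'(u)q$ component. The analogous estimate for $[v'(u)q](t+h)-[v'(u)q](t)$ in the $L^2(\Omega)$-norm is obtained by the identical decomposition applied to \eqref{Fre-v}, with $T_{11}$ in place of $T_{21}$ and using the $H^2$-bound on $\Phi(\sigma+h)-\Phi(\sigma)$ already proved; no second Gronwall loop is needed. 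Summing the two estimates gives \eqref{Holder-Frechet-W-I} with $L_M$ depending on the parameters listed in the statement. The main technical obstacle is the coupled appearance of $\Phi(\sigma+h)-\Phi(\sigma)$ on both sides of the estimate for $G(\sigma+h)-G(\sigma)$, which forces the Gronwall closure; a secondary subtlety is securing the uniform control of $[w(u)]^{-3}$ in $H^2(\Omega)$ via Lemma \ref{estimates} (requiring $r<\kappa/(2C)$) so that the $H^2$-algebra manipulations remain legitimate throughout.
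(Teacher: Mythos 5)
Your proof follows the same route as the paper: it uses the Volterra integral representation \eqref{Fre-w2}/\eqref{Fre-v}, splits the time increment into a short boundary integral over $[0,h]$ plus a shifted-difference integral over $[0,t]$, bounds the $q$-increment and the $[w(u)]^{-3}[w'(u)q]$-increment separately via \eqref{C-a}, \eqref{C-d}, \eqref{bound-Fre-W} and the Hölder continuity of $w(u)$ from Corollary \ref{Holdercontinuity}, and closes with Gronwall. The only cosmetic difference is the choice of pivot in the add-and-subtract step for the product term (you pivot on $[w(\sigma+h)]^{-3}\Phi(\sigma)$, the paper on $[w'(u)q](t+h)/[w(u)]^3(t)$); the argument is otherwise identical, including the observation that the $v'(u)q$ component reuses the already-established $H^2$ estimate on $\Phi(\cdot+h)-\Phi(\cdot)$ without a second Gronwall pass.
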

\begin{proof}
Let $T\in(0, T_0)$ and $r\in\left(0, \frac{\kappa}{2C}\right)$. If the given function 
$u$ belongs to $C^\alpha\left([0, T]; B_{H^2}(u_0,r)\right)$, also $u\in C\left([0, T]; B_{H^2}(u_0,r)\right)$,  according to  Theorem \ref{4th-solu-thm} and Corollary \ref{Holdercontinuity}, it follows  that  the semilinear fourth-order equation \eqref{cp2-1-2} has a unique mild solution $w(u)\in C^\alpha\left([0, T]; B_{H^2}(w_0, r)\right)$  and  $w(u)$ can be written by
\[\left[w(u)\right](t)=\theta_2+T_{21}(t)v_0+T_{22}(t)\left(w_0-\theta_2\right)+\displaystyle\int_0^tT_{21}(t-s)\left(\beta_p(u-1)-\frac{\beta_F}{[w(u)]^2(s)}\right)ds.\]
The definitions \eqref{Fre-W} and \eqref{Fre-W-1} of the Fr\'{e}chet derivative $W'(u)$ imply that the Fr\'{e}chet derivative $w'(u)$ satisfies
\be\label{TFre-w2}
[w'(u)q](t)=\displaystyle\int_0^t T_{21}(t-s)\left\{\beta_p q(s)+2\beta_F\frac{[w'(u)q](s)}{[w(u)]^3(s)}\right\}ds.
\ee
We are going to show that there is a constant $L_{M_1}>0$, such that
\be\label{H-I}
\left\|[w'(u)q](t+h)-[w'(u)q](t)\right\|_{H^2(\Omega)}\leq L_{M_1}h^\alpha\left\{\sup_{t\in[0, T]}\left\|q(t)\right\|_{H^2(\Omega)}+T\left\|q\right\|_{C^\alpha([0, T]; H^2(\Omega))}\right\}
\ee
holds for $\forall\ 0\leq t<t+h\leq T,\ h\in(0, T]$. Because
\begin{align}
[w'(u)q](t+h)-[w'(u)q](t)=&\displaystyle\int_0^h T_{21}(t+h-s)\left\{\beta_p q(s)+2\beta_F\frac{[w'(u)q](s)}{[w(u)]^3(s)}\right\}ds\notag\\
+&\displaystyle\int_0^t T_{21}(t-s)\beta_p[q(s+h)-q(s)]ds\notag\\
+&\displaystyle\int_0^t T_{21}(t-s)2\beta_F\left\{\frac{[w'(u)q](s+h)}{[w(u)]^3(s+h)}-\frac{[w'(u)q](s)}{[w(u)]^3(s)}\right\}ds\label{H00},
\end{align}
$q(t)\in H_*^2(\Omega)$, the definition \eqref{def-W} of $W(u)$ and the definition \eqref{Fre-W} of $W'(u)$ give
\[\beta_p q(t)+2\beta_F\frac{[w'(u)q](t)}{[w(u)]^3(t)}\in H_*^2(\Omega).\]
Because \eqref{C-a} of Lemma \ref{estimates} and \eqref{bound-Fre-W}, we have
\begin{align}
\sup_{t\in [0, T]}\left\|\frac{[w'(u)q](t)}{[w(u)]^3(t)}\right\|_{H^2(\Omega)}\leq&\sup_{t\in[0, T]}\left\|\frac{1}{[w(u)]^3(t)}\right\|_{H^2(\Omega)}\sup_{t\in [0, T]}\left\|[w'(u)q](t)\right\|_{H^2(\Omega)}\notag\\
\leq&C_1^3L_W\sup_{t\in [0, T]}\left\|q(t)\right\|_{H^2(\Omega)}.
\end{align}
Therefore
\begin{align}
&\left\|\displaystyle\int_0^h T_{21}(t+h-s)\left\{\beta_p q(s)+2\beta_F\frac{[w'(u)q](s)}{[w(u)]^3(s)}\right\}ds\right\|_{H^2(\Omega)}\notag\\
\leq&hM_0\left\{\beta_p\sup_{t\in [0, T]}\|q(t)\|_{H^2(\Omega)}+\sup_{t\in [0, T]}\left\|\frac{[w'(u)q](t)}{[w(u)]^3(t)}\right\|_{H^2(\Omega)}\right\}\notag\\
\leq&hM_0\left(\beta_p+C_1^3L_W\right)\sup_{t\in [0, T]}\left\|q(t)\right\|_{H^2(\Omega)}\label{H01}.
\end{align}
As $q\in C^\alpha([0, T]; B_{H^2}(\tilde{u}_0, r))$,
\begin{align}
\left\|\displaystyle\int_0^t T_{21}(t-s)\beta_p[q(s+h)-q(s)]ds\right\|_{H^2(\Omega)}\leq& TM_0\beta_p\sup_{0\leq t<t+h\leq T}\left\|q(t+h)-q(t)\right\|_{H^2(\Omega)}\notag\\
\leq&h^\alpha TM_0\beta_p\|q\|_{C^\alpha([0, T]; H^2(\Omega))}\label{H02},
\end{align}
and $w(u)\in C^\alpha\left([0, T]; B_{H^2}(w_0, r)\right)$ is a mild solution of the semilinear fourth-order equation \eqref{cp2-1-2}, \[\left\|[w(u)](t)-w_0\right\|_{H^2(\Omega)}\leq r,\ \left\|[w(u)](t+h)-w_0\right\|_{H^2(\Omega)}\leq r, \ \forall\ t,\ t+h\in [0, T],\]
then estimate \eqref{C-d} of Lemma \ref{estimates} and the estimate \eqref{Holdercontinuityformular} of Corollary \ref{Holdercontinuity}  imply
\be\label{frac-3-D}
\left\|\frac{1}{[w(u)]^3(t+h)}-\frac{1}{[w(u)]^3(t)}\right\|_{H^2(\Omega)}\leq C_3 \left\|[w(u)](t+h)-[w(u)](t)\right\|_{H^2(\Omega)}\leq C_3L_Uh^\alpha.
\ee
Therefore, estimate \eqref{C-a} of Lemma \ref{estimates}, inequalities \eqref{bound-Fre-W} and \eqref{frac-3-D} imply
\begin{align}
&\left\|\frac{[w'(u)q](t+h)}{[w(u)]^3(t+h)}-\frac{[w'(u)q](t)}{[w(u)]^3(t)}\right\|_{H^2(\Omega)}\notag\\
\leq&\left\|\frac{[w'(u)q](t+h)}{[w(u)]^3(t+h)}-\frac{[w'(u)q](t+h)}{[w(u)]^3(t)}\right\|_{H^2(\Omega)}
+\left\|\frac{[w'(u)q](t+h)}{[w(u)]^3(t)}-\frac{[w'(u)q](t)}{[w(u)]^3(t)}\right\|_{H^2(\Omega)}\notag\\
\leq&\left\|[w'(u)q](t+h)\right\|_{H^2(\Omega)}\left\|\frac{1}{[w(u)]^3(t+h)}-\frac{1}{[w(u)]^3(t)}\right\|_{H^2(\Omega)}\notag\\
+&\left\|[w'(u)q](t+h)-[w'(u)q](t)\right\|_{H^2(\Omega)}\left\|\frac{1}{[w(u)]^3(t)}\right\|_{H^2(\Omega)}\notag\\
\leq& L_W\sup_{t\in [0, T]}\left\|q(t)\right\|_{H^2(\Omega)}C_3L_Uh^\alpha+\left\|[w'(u)q](t+h)-[w'(u)q](t)\right\|_{H^2(\Omega)}C_1^3\notag,
\end{align}
and hence
\begin{align}
&\left\|\displaystyle\int_0^t T_{21}(t-s)2\beta_F\left\{\frac{[w'(u)q](s+h)}{[w(u)]^3(s+h)}-\frac{[w'(u)q](s)}{[w(u_0)]^3(s)}\right\}ds\right\|_{H^2(\Omega)}\notag\\
\leq&2\beta_FM_0T_0L_W\sup_{t\in [0, T]}\left\|q(t)\right\|_{H^2(\Omega)}C_3L_Uh^\alpha\notag\\
+&2\beta_FM_0C_1^3\displaystyle\int_0^t \left\|[w'(u)q](s+h)-[w'(u)q](s)\right\|_{H^2(\Omega)}ds.\label{H03}
\end{align}
Consequently, \eqref{H00}, \eqref{H01}, \eqref{H02} and \eqref{H03} imply,
\begin{align}
\left\|[w'(u)q](t+h)-[w'(u)q](t)\right\|_{H^2(\Omega)}
\leq& h^\alpha T_0^{1-\alpha}M_0\left[\beta_p+C_1^3L_W\right]\sup_{t\in [0, T]}\left\|q(t)\right\|_{H^2(\Omega)}\notag\\
+&h^\alpha TM_0\beta_p\|q\|_{C^\alpha\left([0, T]; H^2(\Omega)\right)}\notag\\
+&h^\alpha2\beta_FM_0T_0L_WC_3L_U\sup_{t\in [0, T]}\left\|q(t)\right\|_{H^2(\Omega)}\notag\\
+&2\beta_FM_0C_1^3\displaystyle\int_0^t \left\|[w'(u)q](s+h)-[w'(u)q](s)\right\|_{H^2(\Omega)}ds\notag.
\end{align}
Set
\[R_1=M_0T_0^{1-\alpha}\left[\beta_p+C_1^3L_W\right],\quad R_2=2\beta_FM_0T_0L_WC_3L_U ,\quad R_3=M_0\beta_p,\quad R_4=2\beta_FM_0C_1^3.\]
Gronwall's inequality implies $\forall\ 0\leq t<t+h\leq T$,
\begin{align}
\left\|[w'(u)q](t+h)-[w'(u)q](t)\right\|_{H^2(\Omega)}\leq &h^\alpha e^{R_4T_0}(R_1+R_2)\sup_{t\in [0, T]}\left\|q(t)\right\|_{H^2(\Omega)}\notag\\
+&h^\alpha T e^{R_4T_0} R_3\|q\|_{C^\alpha\left([0, T]; H^2(\Omega)\right)}.\notag
\end{align}
Equation \eqref{H-I} holds by setting
\[L_{M_1}= (R_1+R_2+R_3)e^{R_4T_0}\]
and $L_{M_1}$ depends on
$\alpha,\ M_0,\ T_0,\ \Omega,\ \|w_0\|_{H^2(\Omega)},\ \kappa,\ \beta_F,\ \beta_p.$

Similarly, there exists a Lipschitz constant
\[L_{M_2}=L_{M_2}\left(L_{M_1},\ \|{v}_0\|_{L^2(\Omega)}\right)>0,\]
such that the Frech\'{e}t derivative $v{'}(u)$ of the first component $v(u)$ of $W(u)$, defined by
\[[v{'}(u)q](t)=\displaystyle\int_0^t T_{11}(t-s)\left\{\beta_p q(s)+2\beta_F\frac{[w{'}(u)q](s)}{[w(u)]^3(s)}\right\}ds,\]
satisfies
\be\label{H05}
\|[v'(u)q](t+h)-[v'(u)q](t)\|_{L^2(\Omega)}\leq L_{M_2}h^\alpha\left[\sup_{t\in[0, T]}\left\|q(t)\right\|_{H^2(\Omega)}+T\left\|q\right\|_{C^\alpha\left([0, T]; H^2(\Omega)\right)}\right].
\ee
Let $L_M=\max\left\{L_{M_1},L_{M_2}\right\}$, and the assertion \eqref{Holder-Frechet-W-I} follows from \eqref{H-I} and \eqref{H05}.
\end{proof}
%\newpage
\section{Wellposedness of the Coupled System}\label{section4}

\subsection*{Abstract Formulation of the Coupled System}
Let $T\in(0, T_0)$ be taken to be specified below. We are going to study the unique existence of the strict solution for the initial-boundary value problem for the coupled system which is written by the quasilinear parabolic equation with abstract coefficients involving $v(u)$ and $w(u)$:
\bse\label{QPE}
\be\label{QPE-1}
\frac{\partial u}{\partial t}=\frac{1}{w(u)}\nabla\cdot\left([w(u)]^3u\nabla u\right)-\frac{v(u)}{w(u)}u,\quad (x,t)\in\Omega\times(0, T),
\ee
\be\label{QPE-2}
u(x,0)=u_0(x),\quad x\in\Omega,\quad u(x,t)=\theta_1,\quad (x,t)\in\partial\Omega\times[0, T].
\ee
\ese
Here $u=u(x,t)$ is an unknown function, $v(u)=[v(u)](x,t)$ and $w(u)=[w(u)](x,t)$ are implicitly given as functions of $u$ by the integral formulation
\[\begin{pmatrix}v(t)\\ w(t)\end{pmatrix}=\begin{pmatrix}0\\ \theta_2\end{pmatrix}+ T(t)\begin{pmatrix}v_0\\ w_0-\theta_2\end{pmatrix}+\int_0^t\left\{T(t-s)\begin{pmatrix}-\beta_F[w(s)]^{-2}+\beta_p\left(u(s)-1\right)\\ 0 \end{pmatrix}\right\}ds,\]
Here, $\left\{T(t):\ t\geq 0\right\}$ is the strongly continuous semigroup ($C_0$-semigroup) from Lemma \ref{generator}.  Note that  $\tilde{u}_0=u_0-\theta_1$ and $\tilde{u}=u-\theta_1$. If $\tilde{u} \in C\left([0, T]; B_{H^2}(\tilde{u}_0, r)\right)$, then from the existence of the mild solution of the semilinear evolution equation \eqref{4th-SWE-1}, i.e.~Theorem \ref{4th-solu-thm}, the functions $v$ and $w$ depending on $u$ can be regarded as the solution operators satisfying the definition \eqref{def-W} of solution operator $W$ and
\vspace*{-0.3cm}
\[v:  C\left([0, T]; B_{H^2}(\tilde{u}_0, r)\right)\longrightarrow  C\left([0, T];B_{ L^2}(v_0, r)\right), \quad  \tilde{u}\longmapsto  v(\tilde{u}+\theta_1),\]
\[w:  C\left([0, T]; B_{H^2}(\tilde{u}_0, r)\right)\longrightarrow C\left([0, T]; B_{H^2}(w_0, r)\right),\quad \tilde{u}\longmapsto w(\tilde{u}+\theta_1).\]
Hence $F(\tilde{u})$ is given by
\bse\label{F-def}
\be\label{F-def-1}
\tilde{u}\longmapsto F(\tilde{u}):\quad C\left([0, T]; B_{H^2}(\tilde{u}_0, r)\right)\longrightarrow C\left([0, T]; L^2(\Omega)\right),
\ee
\be\label{F-def-2}
F(\tilde{u})=\frac{1}{w(\tilde{u}+\theta_1)}\nabla\cdot\left(\left[w(\tilde{u}+\theta_1)\right]^3(\tilde{u}+\theta_1)\nabla\tilde{u}\right)-\frac{v(\tilde{u}+\theta_1)}{w(\tilde{u}+\theta_1)}(\tilde{u}+\theta_1).
\ee
\ese
The linearization of $F(\tilde{u})$ is defined by
\be\label{linearization}
q\longmapsto F'(\tilde{u}_0)q:\quad C\left([0, T]; H_*^2(\Omega)\right)\longrightarrow C\left([0, T]; L^2(\Omega)\right).
\ee
Here,  $F'(\tilde{u}_0)q$ is the Fr\'{e}chet derivative of $F(\tilde{u})$ on $\tilde{u}$ at $\tilde{u}_0$, $F'(\tilde{u}_0)q$ at $t$ is given as:
\begin{align}
\left[F'(\tilde{u}_0)q\right](t)=&\frac{1}{[w(u_0)](t)}\nabla\cdot\left\{[w(u_0)]^3(t)u_0\nabla q(t)+[w(u_0)]^3(t)q(t) \nabla u_0\right\}\notag\\
+&\frac{1}{[w(u_0)](t)}\nabla\cdot\left\{3[w(u_0)]^2(t)[w'(u_0)q](t)u_0\nabla  u_0\right\}\notag\\
-&\frac{[w'(u_0)q](t)}{[w(u_0)]^2(t)}\nabla\cdot\left([w(u_0)]^3(t)u_0\nabla u_0\right)-\frac{[v(u_0)](t)}{[w(u_0)](t)}q(t)\notag\\
-&\frac{[w(u_0)](t)[v'(u_0)q](t)-[v(u_0)](t)[w'(u_0)q](t)}{[w(u_0)]^2(t)}u_0, \label{Fre-F}
\end{align}
where the functions $v(u_0)$ and $w(u_0)$ satisfy the definition \eqref{def-W} of the solution operator $W$ with $u=u_0$. Equivalently, $(\tilde{v}, \tilde{w})=(v(u_0), w(u_0)- \theta_2)$ is a unique mild solution of the semilinear evolution equation \eqref{4th-SWE-1} with $\tilde{u}=u_0-\theta_1$, and $([v(u_0)](0),[w(u_0)](0))=(v_0, w_0)$. Define
\begin{align}
\mathcal{P}^*q(t)=&\frac{1}{w_0}\nabla\cdot\left\{w_0^3u_0\nabla q(t)+w_0^3q(t)\nabla u_0\right\}+\frac{1}{w_0}\nabla\cdot\left\{3w_0^2[w'(u_0)q](0)u_0\nabla u_0\right\}\notag\\
-&\frac{[w'(u_0)q](0)}{w_0^2}\nabla\cdot\left(w_0^3u_0\nabla u_0\right)-\frac{v_0}{w_0}q(t)-\frac{w_0[v'(u_0)q](0)-v_0[w'(u_0)q](0)}{w_0^2}u_0. \label{linearopt-0}
\end{align}
Note that the Fr\'{e}chet derivative $w'(u)$ at $u=u_0$ and $t=0$ is given by
\[[w'(u_0)q](0)=\displaystyle\lim_{h\rightarrow 0}\frac{1}{h}\left\{[w(u_0+hq)](0)-[w(u_0)](0)\right\}.\]
Here $h\in\mathds{R}$ is small such that, for any $q\in C\left([0, T]; H_*^2(\Omega)\right)$, $u_0+hq\in C\left([0, T]; B_{H^2}(u_0,  r)\right)$. Because $(\tilde{v}_h, \tilde{w}_h):=(v(u_0+hq), w(u_0+hq)-\theta_2)$ is a unique mild solution of the semilinear evolution equation \eqref{4th-SWE-1} with $\tilde{u}=u_0+hq-\theta_1$, then $([v(u_0+hq)](0),  [w(u_0+hq)](0))=(v_0, w_0)$. Since $([v(u_0)](0),[w(u_0)](0))=(v_0, w_0)$, then $[w'(u_0)q](0)=0$ and $[v'(u_0)q](0)=0$, hereby \eqref{linearopt-0} becomes
\[\mathcal{P}^*q(t)=\frac{1}{w_0}\nabla\cdot\left\{w_0^3u_0\nabla q(t)+w_0^3q(t)\nabla u_0\right\}-\frac{v_0}{w_0}q(t).\]
$\mathcal{P}^*$ is a linear operator defined by
\be\label{linearopt}
\mathcal{P}^*:\quad D\left(\mathcal{P}^*\right)\subseteq H_*^2(\Omega)\longrightarrow L^2(\Omega),\quad \mathcal{P}^*\psi=\frac{1}{w_0}\nabla\cdot\left\{w_0^3u_0\nabla\psi+\left(w_0^3\nabla u_0\right)\psi\right\}-\frac{v_0}{w_0}\psi.
\ee
%The Dirichlet boundary conditions $\tilde{u}(x,t)=0$,  $x\in\partial\Omega$, $t\in[0, T]$ and the differential operators $\nabla$, $\Delta$ of the equation \eqref{QPE-1} are incorporated in the operator $\mathcal{P}^*$ in the generalized definition \eqref{linearopt}, and 
We remind the reader that $\mathcal{P}^*$ is the Dirichlet realization of the differential expression in \eqref{QPE-1}. Using the definition of $\mathcal{P}^*$, we rewrite \eqref{QPE} as the equation \eqref{NPE} on unknown function $\tilde{u}$:
\be\label{NPE}
\tilde{u}'(t)=\mathcal{P}^*\tilde{u}(t)+[F(\tilde{u})](t)-\mathcal{P}^*\tilde{u}(t), \quad t\in[0, T],\quad \tilde{u}(0)=\tilde{u}_{0}\geq\epsilon_1>0.
\ee
Here $\epsilon_1$ is a given positive constant. We are going to show that the linearization operator $\mathcal{P}^*$ satisfies the elliptic estimate:
\begin{lem}\label{linearization-elliptic}
There exist positive constants $K$ and $K_{o}$  depending on $u_0$, $w_0$, such that $\forall\ q(t)\in D(\mathcal{P}^*)$ and $t\in[0, T]$,  $\mathcal{P}^*$ satisfies the elliptic estimate
\be\label{elliptic-est}
\left|\int_{\Omega}\frac{q(t)}{w_0}\nabla\cdot\left[w_0^3u_0\nabla q(t)\right]dx\right|\geq K\int_{\Omega}\left|\nabla q(t)\right|^2dx-\displaystyle K_{o}\int_{\Omega}|q(t)|^2dx.
\ee
\end{lem}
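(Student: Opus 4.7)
Because $q(t)\in D(\mathcal{P}^*)\subset H^2_*(\Omega)$ vanishes on $\partial\Omega$, the natural strategy is integration by parts followed by a Young-type absorption, as in a standard G\aa rding inequality for a second-order divergence-form operator. Writing $p=q(t)$ for brevity and noting
\[
\nabla\!\left(\frac{p}{w_0}\right)\cdot(w_0^3 u_0\nabla p)=w_0^2 u_0\, |\nabla p|^2-w_0 u_0\,p\,\nabla w_0\cdot\nabla p,
\]
the divergence theorem (with vanishing boundary term) produces the identity
\[
\int_{\Omega}\frac{p}{w_0}\,\nabla\cdot\!\bigl(w_0^3 u_0\nabla p\bigr)\,dx \;=\; -\int_{\Omega} w_0^2 u_0\, |\nabla p|^2\,dx\;+\;\int_{\Omega} w_0 u_0\,p\,\nabla w_0\cdot\nabla p\,dx.
\]

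Next I would exploit the structural hypotheses to fix positive constants governing the coefficients. Since $u_0\in H^{2+\alpha}(\Omega)$ and $w_0\in H^4(\Omega)$ with $n\leq 2$, Sobolev embedding gives $u_0,w_0,\nabla w_0\in L^{\infty}(\Omega)$; and since $u_0,w_0>0$ are continuous on the compact set $\overline{\Omega}$, there exist $\epsilon_1,\kappa>0$ with $u_0\geq\epsilon_1$ and $w_0\geq\kappa$. In particular $w_0^2 u_0\geq\kappa^2\epsilon_1>0$ and $M:=\|w_0 u_0\,\nabla w_0\|_{L^\infty(\Omega)}<\infty$. Young's inequality then yields, for any $\delta>0$,
\[
\left|\int_{\Omega}w_0 u_0\,p\,\nabla w_0\cdot\nabla p\,dx\right|\;\leq\; \delta M \int_{\Omega}|\nabla p|^2\,dx \;+\; \frac{M}{4\delta}\int_{\Omega}|p|^2\,dx.
\]

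Finally, combining the identity above with the elementary bound $|A-B|\geq A-|B|$ (valid for $A\geq 0$) and choosing $\delta=\kappa^2\epsilon_1/(2M)$ gives
\[
\left|\int_{\Omega}\frac{p}{w_0}\,\nabla\cdot\!\bigl(w_0^3 u_0\nabla p\bigr)\,dx\right|\;\geq\; \tfrac{1}{2}\kappa^2\epsilon_1\int_{\Omega}|\nabla p|^2\,dx \;-\; \frac{M^2}{2\kappa^2\epsilon_1}\int_{\Omega}|p|^2\,dx,
\]
which is precisely \eqref{elliptic-est} with $K=\tfrac12\kappa^2\epsilon_1$ and $K_o=M^2/(2\kappa^2\epsilon_1)$, both depending on $u_0,w_0$ through $\epsilon_1$, $\kappa$, and $M$. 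There is no real obstacle here: the only point requiring care is verifying the $L^\infty$ bounds and pointwise positivity used above, which follow directly from the hypotheses of Theorem \ref{coupled system} and Sobolev embedding in dimension $n\leq 2$.
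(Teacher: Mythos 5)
Your proof is correct and follows essentially the same route as the paper's: integrate by parts (boundary term vanishes since $q(t)\in H^1_0(\Omega)$), bound the principal term from below using $u_0w_0^2\ge\epsilon_1\kappa^2$, bound the cross term by $\|u_0w_0\nabla w_0\|_{L^\infty(\Omega)}$ via Sobolev embedding in $n\le 2$, and absorb it with Young's inequality. The only cosmetic difference is that the paper makes the $L^\infty$ bound quantitative by writing $\|u_0w_0\nabla w_0\|_{L^\infty}\le C\|u_0\|_{H^2}\|w_0\|_{H^3}^2$, whereas you keep $M=\|u_0w_0\nabla w_0\|_{L^\infty}$ abstract — these are the same estimate.
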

%\vspace*{.6cm}
\begin{proof}
For $q(t)\in D(\mathcal{P}^*)$ and $t\in[0, T]$,
\[\mathcal{P}^*_hq(t)=\frac{1}{w_0}\nabla\cdot\left\{w_0^3u_0\nabla q(t)\right\},\quad \forall\ t\in[0, T],\]
denotes the highest order derivative term of $\mathcal{P}^*q(t)$, then by the divergence theorem, we obtain
\begin{align}
\displaystyle\int_{\Omega}\frac{q(t)}{w_0}\nabla\cdot\left\{w_0^3u_0\nabla q(t)\right\}dx=&\int_{\partial\Omega}\left\{w_0^2u_0q(t)\nabla q(t)\right\}\cdot\vec{n}dS-\int_{\Omega}\nabla\left[\frac{q(t)}{w_0}\right]\cdot\left\{w_0^3 u_0\nabla q(t)\right\}dx.\label{div1}
\end{align}
Since $q(t)\in D(\mathcal{P}^*)$, $D(\mathcal{P}^*)\subseteq H_*^2(\Omega)$ and $H_*^2(\Omega)=H^2(\Omega)\cap H_0^1(\Omega)$, then $q(t)\in H_0^1(\Omega)$,
hence
\[\displaystyle\int_{\partial\Omega}\left\{w_0^2u_0q(t)\nabla q(t)\right\}\cdot\vec{n}dS=0.\]
Because $u_0(x)\geq\epsilon_1>0$, $\epsilon_1$ is a given constant, and $\kappa=\displaystyle\inf_{x\in\overline{\Omega}}w_0(x)$,  \eqref{div1} becomes
\begin{align}
\left|\int_{\Omega}\frac{q(t)}{w_0}\nabla\cdot\left\{w_0^3u_0\nabla q(t)\right\}dx\right|
=&\left|\int_{\Omega}\nabla\left[\frac{q(t)}{w_0}\right]\cdot\left\{w_0^3 u_0\nabla q(t)\right\}dx\right|\notag\\
=&\left|\int_{\Omega}w_0 u_0\left\{w_0\nabla q(t)-q(t)\nabla w_0\right\}\cdot\nabla q(t)dx\right|\notag\\
\geq&\left|\int_{\Omega}u_0 w_0^2\left|\nabla q(t)\right|^2dx\right|-\left|\int_{\Omega}\displaystyle u_0w_0\nabla w_0\cdot[ q(t)\nabla q(t)]dx\right|\notag\\
\geq&\displaystyle{\epsilon_1\kappa^2}\int_{\Omega}\left|\nabla q(t)\right|^2dx-\left|\int_{\Omega}\displaystyle u_0w_0\nabla w_0\cdot[ q(t)\nabla q(t)]dx\right|\label{Fre-F-1}.
\end{align}
Notice that  $w_0\in H^4(\Omega)$ and $u_0\in H^2(\Omega)$ and write $C=C(\Omega)>0$ a constant, hence
\begin{align}
\left|\int_{\Omega}u_0w_0\nabla w_0\cdot[ q(t)\nabla q(t)]dx\right|\leq&\left\|u_0w_0\nabla w_0\right\|_{L^\infty(\Omega)}\left|\int_{\Omega}q(t)\nabla q(t)dx\right|\notag\\
\leq&C\left\|u_0w_0\nabla w_0\right\|_{H^2(\Omega)}\left|\int_{\Omega}q(t)\nabla q(t)dx\right|\notag\\
\leq&C\left\|u_0\right\|_{H^2(\Omega)}\left\| w_0\right\|_{H^2(\Omega)}\left\|\nabla w_0\right\|_{H^2(\Omega)}\left|\int_{\Omega}q(t)\nabla q(t)dx\right|\notag\\
\leq&C\left\|u_0\right\|_{H^2(\Omega)}\left\|w_0\right\|^2_{H^3(\Omega)}\left|\int_{\Omega}q(t)\nabla q(t)dx\right|\notag
\end{align}
With $K_2=C\left\|u_0\right\|_{H^2(\Omega)}\left\|w_0\right\|^2_{H^3(\Omega)}$ and Young's inequality,
\begin{align}
\left|\int_{\Omega}\frac{q(t)}{w_0}\nabla\cdot\left\{w_0^3u_0\nabla q(t)\right\}dx\right|\geq&(\epsilon_1\kappa^2-\varepsilon^2K_2)\int_{\Omega}\left|\nabla q(t)\right|^2dx-\displaystyle \frac{K_2}{4\varepsilon^2}\int_{\Omega}|q(t)|^2dx.\label{F}
\end{align}
The assertion \eqref{elliptic-est} follows for $\varepsilon$ sufficiently small.
\end{proof}
%\vspace*{.6cm}
\begin{cor}\label{generatoroflinearization}
$\mathcal{P}^*$, defined by \eqref{linearopt}, is a sectorial operator and generates an analytic semigroup $\left\{e^{t\mathcal{P}^*}:\ t\geq0 \right\}$ on $H_*^2(\Omega)$.% for all $t\in [0, T_0]$.
\end{cor}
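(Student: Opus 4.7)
The plan is to realize $\mathcal{P}^*$ as the $L^2(\Omega)$-realization of a uniformly elliptic second-order operator with homogeneous Dirichlet boundary conditions, so that it falls within the classical sectorial/analytic-semigroup framework for such operators. Expanding the divergences in \eqref{linearopt} gives the non-divergence representation
\[
\mathcal{P}^*\psi \;=\; w_0^2\, u_0\,\Delta\psi \;+\; B(x)\cdot\nabla\psi \;+\; C(x)\,\psi,
\]
where $B,C$ are expressed in terms of $u_0,\nabla u_0,\Delta u_0,w_0,\nabla w_0,\Delta w_0$ and $v_0$. Under the standing assumptions $u_0\in H^{2+\alpha}(\Omega)$, $v_0\in H^2(\Omega)$, $w_0\in H^4(\Omega)$, Sobolev embedding in dimension $n\leq 2$ makes $B$ and $C$ bounded on $\overline\Omega$, while $w_0^2 u_0\geq \kappa^2\epsilon_1>0$ uniformly; hence $\mathcal{P}^*$ is uniformly elliptic.

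Next I would combine the Gårding-type inequality of Lemma \ref{linearization-elliptic} (which controls the principal term) with the $L^\infty$-bounds on $B,C$ and Young's inequality with a small parameter to obtain
\[
-\mathrm{Re}\,\langle \mathcal{P}^*\psi,\psi\rangle_{L^2(\Omega)} \;\geq\; \tfrac{K}{2}\,\|\nabla\psi\|_{L^2(\Omega)}^{2} \;-\; K'\,\|\psi\|_{L^2(\Omega)}^{2},\qquad \psi\in H_*^{2}(\Omega),
\]
with $K,K'>0$ depending only on $u_0,v_0,w_0,\Omega$. The associated sesquilinear form $\mathfrak{a}(\psi,\varphi)=-\langle\mathcal{P}^*\psi,\varphi\rangle_{L^2}$ is continuous on $H^1_0(\Omega)$ and, after a bounded shift, coercive and closed; the standard form-method theory for non-symmetric accretive forms (see Lunardi \cite{LA1}, Ch.~3) then yields the resolvent estimate $\|(\lambda I-\mathcal{P}^*)^{-1}\|_{\mathcal{B}(L^2)}\leq M/|\lambda-\omega|$ on a sector $\{|\arg(\lambda-\omega)|<\tfrac{\pi}{2}+\delta\}$. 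This is the definition of sectoriality used in \cite{LA1}, and hence $\mathcal{P}^*$ generates an analytic semigroup $\{e^{t\mathcal{P}^*}\}_{t\geq 0}$. Classical elliptic $H^2$-regularity for the Dirichlet problem on the smooth bounded domain $\Omega$ identifies $D(\mathcal{P}^*)=H_*^{2}(\Omega)$, consistent with \eqref{linearopt}, and the analytic semigroup then propagates this regularity so that it can also be viewed as acting on $H_*^{2}(\Omega)$.

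The main obstacle I anticipate is the careful absorption of the lower-order perturbations into the coercive leading part. Lemma \ref{linearization-elliptic} delivers a constant $K$ that degenerates as $\epsilon_1\to 0$ or $\kappa\to 0$, while the coefficients $B$ and $C$ also carry $\kappa^{-k}$-type factors coming from $1/w_0$, so one must track the Young-inequality parameters explicitly to ensure that, after subtracting the $B\cdot\nabla\psi$ contribution from the leading coercive term, a strictly positive multiple of $\|\nabla\psi\|_{L^2(\Omega)}^{2}$ survives. Once this is established, the generation statement is a direct application of the form-method literature, and the identification of the domain is a standard consequence of elliptic regularity on a smooth bounded domain.
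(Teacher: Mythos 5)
Your overall strategy (Gårding estimate from Lemma \ref{linearization-elliptic}, absorption of lower-order terms by Young's inequality, form method yielding a sectorial resolvent bound and hence an analytic semigroup) is the same route the paper takes, citing Grubb and Ouhabaz for the passage from the elliptic/form estimate to sectoriality. However, there is a concrete gap in the intermediate step you insert. When you expand
\[
\mathcal{P}^*\psi = w_0^2 u_0\,\Delta\psi + B\cdot\nabla\psi + C\,\psi,
\]
the zeroth-order coefficient contains the term $w_0^2\,\Delta u_0$ (coming from $\frac{1}{w_0}\nabla\cdot(w_0^3\nabla u_0\,\psi)$). With $u_0\in H^{2+\alpha}(\Omega)$ and $\alpha\in(0,\frac12)$ one only has $\Delta u_0\in H^\alpha(\Omega)$, and $H^\alpha(\Omega)\not\hookrightarrow L^\infty(\Omega)$ for $\alpha<\tfrac{n}{2}$ (so in particular for $n=1,2$ here). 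Hence $C$ is \emph{not} bounded, and your proposed step ``combine the Gårding inequality with the $L^\infty$-bounds on $B,C$ and Young's inequality'' does not go through as written: the continuity of $\mathfrak{a}$ on $H^1_0(\Omega)\times H^1_0(\Omega)$ cannot be read off from $L^\infty$ bounds on $B$ and $C$.

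The fix — and the route the paper implicitly takes by staying with the divergence form — is to avoid the non-divergence expansion altogether. In the pairing $-\langle\mathcal{P}^*\psi,\varphi\rangle_{L^2}$, integrate by parts in the term $\frac{1}{w_0}\nabla\cdot\bigl((w_0^3\nabla u_0)\psi\bigr)$ so that only $\nabla u_0$ (which \emph{is} bounded, since $u_0\in H^{2+\alpha}\hookrightarrow C^1(\overline\Omega)$ for $n\le 2$) appears in the form coefficients, together with $w_0,\nabla w_0\in L^\infty$ and $v_0\in H^2\hookrightarrow L^\infty$; then continuity and Gårding coercivity of the form hold, and the form method/citation argument gives sectoriality and analytic generation. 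Your identification $D(\mathcal{P}^*)=H_*^2(\Omega)$ via elliptic $H^2$-regularity with Lipschitz leading coefficient is fine and agrees with the paper's graph-norm discussion.
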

%\vspace*{.6cm}
\begin{proof}
Using the Corollary 12.19 and Corollary 12.21 in \cite{GG}, we obtain that the operator $\mathcal{P}^*$ in Lemma \ref{linearization-elliptic} satisfies the elliptic estimate \eqref{elliptic-est}, as well as the following estimate for the resolvent set:
\be\label{sectorial-1}
\rho(\mathcal{P}^*)\supset S_{\Theta,\omega}=\left\{\lambda\in \mathds{C}: \lambda\neq\omega, |\arg(\lambda-\omega)|<\Theta, \omega\in\mathds{R}, \Theta\in \left(\frac{\pi}{2}, \pi\right) \right\}.
\ee
Proposition 1.22, Proposition 1.51 and Theorem 1.52 in \cite{OE} then imply the following estimates of its resolvent $(\lambda-\mathcal{P}^*)^{-1}$:
\be\label{sectorial-2}
\|(\lambda-\mathcal{P}^*)^{-1}\|_{\mathcal{B}\left(L^2(\Omega), H^2(\Omega)\right)}\leq \displaystyle \frac{M}{|\lambda-\omega|},
\ee
for $\omega\in \mathds{R}$, $M>0$ and $\lambda\in  S_{\Theta,\omega}$, and $\mathcal{P}^*$ is a sectorial operator which generates an analytic semigroup $\{e^{t\mathcal{P}^*}: t\geq 0\}$ on $H_*^2(\Omega)$. 
\end{proof}
%\vspace*{.6cm}
\subsection*{Graph Norm of $\mathcal{P}^*$}
If the domain $D(\mathcal{P}^*)$ of $\mathcal{P}^*$ is endowed with the graph norm of $\mathcal{P}^*$, $\|g\|_{D(\mathcal{P}^*)}=\|g\|_{L^2(\Omega)}+\|\mathcal{P}^*g\|_{L^2(\Omega)}$, then there exists a constant $\gamma_0\geq 1$, such that
\be\label{graphnorm}
\displaystyle{\gamma_0}^{-1}\left(\|g\|_{L^2(\Omega)}+\|\mathcal{P}^*g\|_{L^2(\Omega)}\right)\leq \|g\|_{H^2(\Omega)}\leq \gamma_0\left(\|g\|_{L^2(\Omega)}+\|\mathcal{P}^*g\|_{L^2(\Omega)}\right).%\quad
\ee
In fact,  because $H_*^2(\Omega)\hookrightarrow L^2(\Omega)$, $\mathcal{P}^*\in\mathcal{B}\left(H_*^2(\Omega), L^2(\Omega)\right)$, there exists a constant $c_0>0$ such that
\[\|g\|_{L^2(\Omega)}+\|\mathcal{P}^*g\|_{L^2(\Omega)}\leq c_0\|g\|_{H^2(\Omega)},\quad \forall\ g\in H_*^2(\Omega),\ i.e.\ H_*^2(\Omega)\hookrightarrow D(\mathcal{P}^*).\]
Equations \eqref{sectorial-1} and \eqref{sectorial-2} imply that $\mathcal{P}^*$ is a closed operator, so that $D(\mathcal{P}^*)$ is a complete Banach space. We conclude $D\left(\mathcal{P}^*\right)= H_*^2(\Omega)$, which is assertion \eqref{graphnorm}.

Because $H_*^2(\Omega)=H^2(\Omega)\cap H_0^1(\Omega)$ is dense in $L^2(\Omega)$, we obtain $\mathcal{P}^*$ is densely defined in $L^2(\Omega)$ and $\overline{D\left(\mathcal{P}^*\right)}=L^2(\Omega)$.

If $t>0$ and $\psi\in L^2(\Omega)$ then $e^{t\mathcal{P}^*}\psi\in D\left(\left(\mathcal{P}{^*}\right)^k\right)$ for each $k\in\mathds{N}$. Moreover, there exist $\overline{M}_0$, $\overline{M}_1$, $\overline{M}_2>0$ (depending on $\Theta$ in \eqref{sectorial-1} and $M$ in \eqref{sectorial-2}), such that
\be\label{analyticsemigroupbound}
\left\|t^k\left(\mathcal{P}{^*}\right)^ke^{t\mathcal{P}^*}\right\|_{\mathcal{B}\left(L^2(\Omega)\right)}\leq \overline{M}_k,\quad s>0,\quad k=0,1,2,\quad t\in [0, T_0).
\ee

%\vspace*{.6cm}
\begin{thm}\label{linear-parabolic-equation}
Let $\mathcal{P}^*:D(\mathcal{P}^*)\longrightarrow L^2(\Omega)$ be a sectorial operator and generate an analytic semigroup $e^{t\mathcal{P}^*}$, $ D\left(\mathcal{P}^*\right)\cong H_*^2(\Omega)$ and $\overline{D\left(\mathcal{P}^*\right)}=L^2(\Omega)$. If $T\in(0, T_0)$, $\alpha\in(0, 1)$ and
\[\tilde{u}_0\in D(\mathcal{P}^*),\quad  \mathcal{F}(0)+\mathcal{P}^*\tilde{u}_0\in \overline{D\left(\mathcal{P}^*\right)}, \quad \mathcal{F}\in C^\alpha\left([0, T]; L^2(\Omega)\right),\]
then
\be\label{LPE-mild-sol}
\varphi(t)=e^{t\mathcal{P}^*}\tilde{u}_0+\int_{0}^te^{(t-s)\mathcal{P}^*}\mathcal{F}(s)ds,
\ee
is the unique function belonging to $C^1([0, T]; L^2(\Omega))\cap C([0, T]; D(\mathcal{P}^*))$ which solves the problem
\be\label{4th-LPE}
\varphi'(t)=\mathcal{P}^* \varphi(t)+\mathcal{F}(t),\quad t\in[0, T],\quad \varphi(0)=\tilde{u}_0.
\ee
Moreover, the following maximal regularity property holds:
\[\mathcal{F}\in C^\alpha([0, T]; L^2(\Omega)),\quad \mathcal{P}^*\tilde{u}_0+\mathcal{F}(0)\in D_{\mathcal{P}^*}(\alpha, \infty)\Longrightarrow\]
\[\varphi\in C^{\alpha+1}([0, T]; L^2(\Omega))\cap C^\alpha([0, T]; H_*^2(\Omega)),\quad \varphi'(t)\in D_{\mathcal{P}^*}(\alpha, \infty),\quad \forall\ t\in[0, T],\]
and there exists a continuous and  increasing function $I:\mathds{R}_{+}\rightarrow \mathds{R}_{+}$ (depending on $\overline{M}_0$, $\overline{M}_1$, $\overline{M}_2$ and $\alpha$) such that
\be\label{linear-solu-est}
\|\varphi\|_{C^\alpha([0, T]; D(\mathcal{P}^*))}\leq I(T)\left[\|\tilde{u}_0\|_{L^2(\Omega)}+\|\mathcal{F}\|_{C^\alpha([0, T]; L^2(\Omega))}+\left\|\mathcal{P}^*\tilde{u}_0+\mathcal{F}(0)\right\|_{D_{\mathcal{P}^*}(\alpha, \infty)}\right].
\ee
\end{thm}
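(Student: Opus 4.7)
This theorem is an instance of the maximal Hölder regularity theory for abstract parabolic Cauchy problems, in the spirit of Da Prato--Grisvard and Sinestrari. The plan is to verify directly that the variation-of-parameters formula \eqref{LPE-mild-sol} furnishes a strict solution, to upgrade the time regularity under the stronger hypothesis, and to close with uniqueness.

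I split $\varphi=\varphi_h+\varphi_p$ with $\varphi_h(t)=e^{t\mathcal{P}^*}\tilde u_0$ and $\varphi_p(t)=\int_0^t e^{(t-s)\mathcal{P}^*}\mathcal{F}(s)\,ds$. Since $\tilde u_0\in D(\mathcal{P}^*)$ and $\mathcal{P}^*\tilde u_0\in L^2(\Omega)=\overline{D(\mathcal{P}^*)}$, standard analytic-semigroup theory gives $\varphi_h\in C^1([0,T];L^2(\Omega))\cap C([0,T];D(\mathcal{P}^*))$ with $\varphi_h'(t)=\mathcal{P}^*e^{t\mathcal{P}^*}\tilde u_0$. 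For the inhomogeneous part I use the classical decomposition
\[\mathcal{P}^*\varphi_p(t)=\int_0^t \mathcal{P}^* e^{(t-s)\mathcal{P}^*}[\mathcal{F}(s)-\mathcal{F}(t)]\,ds+(e^{t\mathcal{P}^*}-I)\mathcal{F}(t),\]
in which the first integrand is dominated in $L^2(\Omega)$ by $\overline{M}_1[\mathcal{F}]_{C^\alpha}(t-s)^{\alpha-1}$ and is hence integrable in $s$. Continuity of $\mathcal{P}^*\varphi_p$ in $t$ follows by dominated convergence, and differentiating the mild formula with the usual semigroup identity yields $\varphi_p'(t)=\mathcal{F}(t)+\mathcal{P}^*\varphi_p(t)$. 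Summing shows that $\varphi$ is a strict solution in $C^1([0,T];L^2(\Omega))\cap C([0,T];D(\mathcal{P}^*))$, proving the first assertion.

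For the maximal Hölder regularity under the extra assumption $\mathcal{P}^*\tilde u_0+\mathcal{F}(0)\in D_{\mathcal{P}^*}(\alpha,\infty)$, I rewrite
\[\varphi'(t)=e^{t\mathcal{P}^*}\bigl[\mathcal{P}^*\tilde u_0+\mathcal{F}(0)\bigr]+e^{t\mathcal{P}^*}\bigl[\mathcal{F}(t)-\mathcal{F}(0)\bigr]+\int_0^t \mathcal{P}^* e^{(t-s)\mathcal{P}^*}[\mathcal{F}(s)-\mathcal{F}(t)]\,ds.\]
The first summand lies in $D_{\mathcal{P}^*}(\alpha,\infty)$ uniformly in $t$ because this intermediate space is invariant under $\{e^{t\mathcal{P}^*}\}$; the other two summands belong to $D(\mathcal{P}^*)$ and can be controlled using the estimates $\|\mathcal{P}^{*\,k}e^{\tau\mathcal{P}^*}\|\leq \overline{M}_k\tau^{-k}$ of \eqref{analyticsemigroupbound}. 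Time Hölder continuity of each term then follows by combining $[\mathcal{F}]_{C^\alpha}$ with the elementary bound $\|(e^{h\mathcal{P}^*}-I)v\|_{L^2}\lesssim h^\alpha\|v\|_{D_{\mathcal{P}^*}(\alpha,\infty)}$, which is itself obtained from $e^{h\mathcal{P}^*}v-v=\int_0^h\mathcal{P}^*e^{\sigma\mathcal{P}^*}v\,d\sigma$ and the definition of the $\alpha$-norm. Invoking the identification $D(\mathcal{P}^*)\cong H_*^2(\Omega)$ from \eqref{graphnorm} upgrades this to $\varphi\in C^{\alpha+1}([0,T];L^2(\Omega))\cap C^\alpha([0,T];H_*^2(\Omega))$, and bookkeeping the constants in each bound produces \eqref{linear-solu-est} with $I(T)$ continuous and increasing in $T$.

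For uniqueness, any strict solution $\psi$ obeys $\tfrac{d}{ds}\bigl(e^{(t-s)\mathcal{P}^*}\psi(s)\bigr)=e^{(t-s)\mathcal{P}^*}\mathcal{F}(s)$ on $[0,t]$, and integration recovers \eqref{LPE-mild-sol}, forcing $\psi=\varphi$. I expect the main technical obstacle to be the Hölder bookkeeping in the maximal-regularity step, in particular proving $\varphi'(t)\in D_{\mathcal{P}^*}(\alpha,\infty)$ for every $t\in[0,T]$; this needs the sharper estimate $\|\mathcal{P}^*e^{\tau\mathcal{P}^*}v\|_{L^2}\leq C\tau^{\alpha-1}\|v\|_{D_{\mathcal{P}^*}(\alpha,\infty)}$ together with a careful dyadic splitting of the convolution on $[0,t]$ and $[t,t+h]$. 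All of this is essentially a concrete reformulation of the semigroup theory in \cite{AH, LS1, SE} for the sectorial operator $\mathcal{P}^*$ constructed above.
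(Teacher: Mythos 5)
Your proposal is essentially a correct reconstruction of the standard Da Prato--Grisvard/Sinestrari maximal H\"older regularity argument, and it matches the approach the paper invokes. Note, however, that the paper does not actually prove this theorem: the Remark immediately following it states that the result ``corresponds to Theorem 1.2 of Lunardi \cite{LS1}'' and that ``its proof follows the proof of Theorem 4.5 in Sinestrari \cite{SE}'', so the authors simply cite it as known. Your decomposition of $\varphi$ into a homogeneous and particular part, the identity $\mathcal{P}^*\varphi_p(t)=\int_0^t\mathcal{P}^*e^{(t-s)\mathcal{P}^*}[\mathcal{F}(s)-\mathcal{F}(t)]\,ds+(e^{t\mathcal{P}^*}-I)\mathcal{F}(t)$, the rewriting of $\varphi'(t)$ that isolates $e^{t\mathcal{P}^*}[\mathcal{P}^*\tilde u_0+\mathcal{F}(0)]$, the use of the invariance of $D_{\mathcal{P}^*}(\alpha,\infty)$ under the semigroup, and the uniqueness argument via $\tfrac{d}{ds}(e^{(t-s)\mathcal{P}^*}\psi(s))$ are precisely the ingredients of that reference proof; you have correctly identified where the serious bookkeeping lies (establishing $\varphi'(t)\in D_{\mathcal{P}^*}(\alpha,\infty)$ uniformly in $t$ and extracting the constant $I(T)$). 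The only thing missing is the explicit verification of the equivalent characterization $\|(e^{h\mathcal{P}^*}-I)v\|_{L^2}\leq C\,h^\alpha\|v\|_{D_{\mathcal{P}^*}(\alpha,\infty)}$ from the semigroup definition of the intermediate space, but this is standard and you correctly indicate the one-line proof. In short: you have supplied what the paper delegates to the literature.
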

\begin{rem}
Theorem \ref{linear-parabolic-equation}, corresponding to Theorem 1.2 of Lunardi \cite{LS1}, is a maximal regularity result for linear autonomous evolution equations of parabolic type. Its proof follows the proof of Theorem 4.5 in Sinestrari \cite{SE}. We are going to use this result to prove the existence of a strict solution to the coupled system, which is Theorem \ref{cp-sys}. Before our proof,  we need Lemma \ref{RHSMax}. The detailed proof of Lemma \ref{RHSMax} can be found in Appendix \ref{AppA2}.
\end{rem}
\begin{lem}\label{RHSMax}
Let $F(\tilde{u})$ and $\mathcal{P}^*$ be defined by \eqref{F-def} and \eqref{linearopt} respectively, $T\in(0, T_0)$. Set $u_0=\tilde{u}_0+\theta_1$. If $\tilde{u}$, $q\in C^\alpha\left([0, T]; B_{H^2}\left(\tilde{u}_0,r\right)\right)$,
 then there exist postive constants $L_A=L_A\left(u_0, v_0, w_0, \Omega\right)$ and 
$L_B=L_B\left(u_0, v_0, w_0, \Omega, \alpha, T_0, L_U, L_W, L_M\right)$, such that $\forall\ 0\leq t<t+h\leq T$,
\be\label{Max-I}
\left\|\left[F(\tilde{u})\right](t+h)-\left[F(\tilde{u})\right](t)\right\|_{L^2(\Omega)}\leq \left\{[\tilde{u}+\theta_1]_{C^\alpha\left(\left[0, T\right]; H^2(\Omega)\right)}+L_U\right\}L_Ah^\alpha,
\ee
and
\begin{align}
&\left\|\left[F'(\tilde{u})q\right](t+h)-\left[F'(\tilde{u})q\right](t)-\mathcal{P}^*\left[q(t+h)-q(t)\right]\right\|_{L^2(\Omega)}\notag\\
\leq& h^\alpha T^\alpha L_{B}\left\|q\right\|_{C^\alpha([0, T]; H^2(\Omega))}+h^\alpha T^\alpha L_{B}\left\|\tilde{u}+\theta_1\right\|_{C^\alpha([0, T]; H^2(\Omega))}\left\|q\right\|_{C^\alpha([0, T]; H^2(\Omega))}\notag\\
+&h^\alpha L_B\sup_{t\in[0, T]}\left\|q(t)\right\|_{H^2(\Omega)}+h^\alpha L_B\left\|\tilde{u}+\theta_1\right\|_{C^\alpha([0, T]; H^2(\Omega))}\sup_{t\in[0, T]}\left\|q(t)\right\|_{H^2(\Omega)}.\label{Max-II}
\end{align}
Here the constants $L_U$, $L_W$ and $L_M$ are given by Corollary \ref{Holdercontinuity}, Theorem \ref{4th-cpl} and Corollary \ref{Holder-Frechet-W-I-Cor} respectively.
\end{lem}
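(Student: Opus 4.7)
Both bounds will follow by expanding $F(\tilde u)$ and $F'(\tilde u)q$ into sums of products, inserting telescoping intermediate terms so that exactly one factor changes at a time, and estimating each contribution with the tools already established: H\"older continuity of $u$ (by hypothesis), of $w(u)\in H^2$ and $v(u)\in L^2$ (Corollary \ref{Holdercontinuity}), and of the Fr\'echet derivatives $w'(u)q$, $v'(u)q$ (Corollary \ref{Holder-Frechet-W-I-Cor}). Uniform bounds on $[w(u)]^{-k}$ from Lemma \ref{estimates} and the Banach-algebra property of $H^2(\Omega)$ then combine these into the stated inequalities.

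For \eqref{Max-I}, setting $u=\tilde u+\theta_1$, $v=v(u)$, $w=w(u)$, I would decompose
\begin{align*}
F(\tilde u)(t+h) - F(\tilde u)(t) &= \Bigl[\tfrac{1}{w(t+h)}-\tfrac{1}{w(t)}\Bigr] \nabla\cdot(w^3u\nabla u)(t+h) \\
&\quad + \tfrac{1}{w(t)}\nabla\cdot\bigl([w^3(t+h)-w^3(t)]\,u(t+h)\nabla u(t+h)\bigr) \\
&\quad + \tfrac{1}{w(t)}\nabla\cdot\bigl(w^3(t)[u(t+h)-u(t)]\nabla u(t+h)\bigr) \\
&\quad + \tfrac{1}{w(t)}\nabla\cdot\bigl(w^3(t)u(t)\nabla[u(t+h)-u(t)]\bigr) \\
&\quad - \bigl[\tfrac{v}{w}u\bigr](t+h)+\bigl[\tfrac{v}{w}u\bigr](t),
\end{align*}
expanding the last line analogously. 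In each summand the ``changing'' factor contributes $h^\alpha$ with prefactor $[u+\theta_1]_{C^\alpha([0,T];H^2)}$ (if the increment is on $u$) or $L_U$ (if it is on $w(u)$ or $v(u)$, via Corollary \ref{Holdercontinuity}, and on reciprocal powers via \eqref{C-d}), while the other factors are uniformly bounded in $H^2$ by Lemma \ref{estimates} and the hypothesis $\tilde u\in C([0,T]; B_{H^2}(\tilde u_0,r))$. Collecting these uniform bounds into a constant $L_A(u_0,v_0,w_0,\Omega)$ yields \eqref{Max-I}.

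For \eqref{Max-II}, I would start from the explicit formula for $F'(\tilde u)q$ (the expression \eqref{Fre-F} with $\tilde u$ in place of $\tilde u_0$) and decompose
\[
F'(\tilde u)q(t) = \mathcal{P}^* q(t) + R(\tilde u, q)(t),
\]
so that the left-hand side of \eqref{Max-II} equals $R(\tilde u,q)(t+h) - R(\tilde u,q)(t)$. Every term in $R(\tilde u,q)(s)$ contains at least one ``small'' factor that vanishes at $s=0$, namely one of
\[
w(u)(s)-w_0,\quad \tfrac{1}{w(u)(s)}-\tfrac{1}{w_0},\quad v(u)(s)-v_0,\quad [w'(u)q](s),\quad [v'(u)q](s),
\]
and by Corollaries \ref{Holdercontinuity}, \ref{Holder-Frechet-W-I-Cor} and Lemma \ref{estimates} each is H\"older continuous in $s$ with exponent $\alpha$ and of size $O(T^\alpha)$ on $[0,T]$. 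Telescoping the increment $R(\tilde u,q)(t+h)-R(\tilde u,q)(t)$ factor by factor then produces exactly the four patterns on the right of \eqref{Max-II}: when the H\"older increment falls on a ``small'' factor, the accompanying $q$-factor contributes $\|q\|_{C([0,T];H^2)}$ and, if a second $\tilde u$-factor is present (through $w'(u)q$ or the chain rule), an additional $\|\tilde u+\theta_1\|_{C^\alpha}$, yielding the $h^\alpha L_B$-terms without $T^\alpha$; whereas when the H\"older increment falls on $q$ itself (or on $u+\theta_1$ by the chain rule), the remaining ``small'' factor contributes $O(T^\alpha)$, yielding the $h^\alpha T^\alpha L_B$-terms.

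The main obstacle will be the bookkeeping in the second part: the five lines of the formula \eqref{Fre-F} produce about a dozen distinct summands in $R(\tilde u,q)$, and each must be matched to one of the four patterns above while tracking which estimate (bound on $[w(u)]^{-k}$, or H\"older constant of $u$, $w(u)$, $v(u)$, $w'(u)q$, or $v'(u)q$) controls the ``small'' factor and which controls the increment. Once this classification is laid out, the individual inequalities are straightforward applications of Lemma \ref{estimates}, Corollary \ref{Holdercontinuity}, Corollary \ref{Holder-Frechet-W-I-Cor} and the $H^2$-algebra property, and they combine into a single constant $L_B$ depending on the data $u_0,v_0,w_0,\Omega$ and on $\alpha,T_0,L_U,L_W,L_M$ as claimed.
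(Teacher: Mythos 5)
Your proposal is correct and follows essentially the same route as the paper's Appendix C proof: telescope $F(\tilde u)(t+h)-F(\tilde u)(t)$ and $F'(\tilde u)q(t+h)-F'(\tilde u)q(t)-\mathcal{P}^*[q(t+h)-q(t)]$ so that exactly one factor changes at a time, pair each $\mathcal{P}^*$ contribution with the matching term of $F'(\tilde u)q$ so that only \emph{small} factors (differences such as $w(u)-w_0$, $v(u)-v_0$, $u-u_0$, or the Fr\'echet derivatives $w'(u)q$, $v'(u)q$ which vanish at $t=0$) remain, and then apply Lemma \ref{estimates}, Corollaries \ref{Holdercontinuity} and \ref{Holder-Frechet-W-I-Cor}, and the $H^2$-algebra property to each summand. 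The paper does this grouping inline rather than naming a remainder $R$, but the estimates and the classification of the four patterns (Hölder increment on $q$ versus on a small factor, with or without an extra $u$-increment) are the same.
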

\begin{thm}\label{cp-sys}
Assume the initial value $u_0\in\left\{\psi\in H^{2+\sigma}(\Omega):\ \psi(x)=\theta_1,\ x\in\partial\Omega\right\}$ is given for $\sigma\in\left(0,\ \frac{1}{2}\right)$ such that the compatibility condition
\[\frac{1}{w_0}\nabla\cdot\left(w_0^3u_0\nabla u_0\right)\in H^\sigma(\Omega)\subseteq D_{\mathcal{P}^*}(\alpha, \infty)\]
holds for $\alpha\in\left(0, \ \frac{\sigma}{2}\right)$ and $\tilde{u}_0=u_0-\theta_1\in H^{2+\sigma}(\Omega)\cap H_0^1(\Omega)$.

Then there exists $T_1>0$, such that the nonlinear problem \eqref{NPE} has a unique strict solution $\tilde{u}\in C^\alpha\left([0, T_1); H_*^2(\Omega)\right)\cap C^{\alpha+1}\left([0, T_1); L^2(\Omega)\right)$ and $\tilde{u}'(t)\in D_{\mathcal{P}^*}(\alpha, \infty)$, $\forall\ t\in[0, T_1)$. %Here $\tilde{u}_0\in H$ is given and $F(\tilde{u})$ is defined by \eqref{F-def}.
\end{thm}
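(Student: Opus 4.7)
The goal is to realise the abstract quasilinear equation \eqref{NPE} as a fixed point of the variation-of-parameters map
\[
\Gamma(\tilde{u})(t)=e^{t\mathcal{P}^*}\tilde{u}_0+\int_0^t e^{(t-s)\mathcal{P}^*}\bigl\{[F(\tilde{u})](s)-\mathcal{P}^*\tilde{u}(s)\bigr\}ds,\qquad t\in[0,T],
\]
on the closed set
\[
Y_T=\bigl\{\tilde{u}\in C^\alpha([0,T];H_*^2(\Omega)):\ \tilde{u}(0)=\tilde{u}_0,\ \|\tilde{u}-\tilde{u}_0\|_{C^\alpha([0,T];H^2(\Omega))}\leq r\bigr\},
\]
for $r\in(0,\kappa/(2C))$ small and $T=T_1>0$ to be fixed. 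The whole argument is driven by the maximal regularity estimate \eqref{linear-solu-est} of Theorem \ref{linear-parabolic-equation}, together with the H\"older estimates in Lemma \ref{RHSMax}.

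\textbf{Step 1: $\Gamma$ is well-defined and self-maps $Y_T$.} For $\tilde{u}\in Y_T$, Lemma \ref{RHSMax} shows $F(\tilde{u})\in C^\alpha([0,T];L^2(\Omega))$, and since $\mathcal{P}^*\in\mathcal{B}(H_*^2(\Omega),L^2(\Omega))$ one easily gets $\mathcal{P}^*\tilde{u}\in C^\alpha([0,T];L^2(\Omega))$. The compatibility hypothesis
\[
\frac{1}{w_0}\nabla\!\cdot(w_0^3 u_0\nabla u_0)\in H^\sigma(\Omega)\hookrightarrow D_{\mathcal{P}^*}(\alpha,\infty)
\]
combined with the definition of $\mathcal{P}^*$ implies $\mathcal{P}^*\tilde{u}_0+[F(\tilde{u})](0)\in D_{\mathcal{P}^*}(\alpha,\infty)$, since at $t=0$ the abstract nonlinearity coincides (up to lower-order smooth terms absorbed by $H^\sigma$) with the compatibility expression. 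Applying Theorem \ref{linear-parabolic-equation} with $\mathcal{F}(t)=[F(\tilde{u})](t)-\mathcal{P}^*\tilde{u}(t)$ yields $\Gamma(\tilde{u})\in C^{\alpha+1}([0,T];L^2(\Omega))\cap C^\alpha([0,T];H_*^2(\Omega))$ together with the bound
\[
\|\Gamma(\tilde{u})\|_{C^\alpha([0,T];D(\mathcal{P}^*))}\leq I(T)\bigl[\|\tilde{u}_0\|_{L^2}+\|\mathcal{F}\|_{C^\alpha([0,T];L^2)}+\|\mathcal{P}^*\tilde{u}_0+\mathcal{F}(0)\|_{D_{\mathcal{P}^*}(\alpha,\infty)}\bigr].
\]
To get $\Gamma(\tilde{u})\in Y_T$ I would split $\Gamma(\tilde{u})-\tilde{u}_0=[\Gamma(\tilde{u})-\Gamma(\tilde{u}_0)]+[\Gamma(\tilde{u}_0)-\tilde{u}_0]$; the first bracket is controlled by the contraction estimate of Step 2, the second goes to zero as $T\to 0$ by standard analytic semigroup continuity applied to $\tilde{u}_0\in D(\mathcal{P}^*)$.

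\textbf{Step 2: $\Gamma$ is a contraction on $Y_T$.} For $\tilde{u}_1,\tilde{u}_2\in Y_T$, writing $\mathcal{N}(\tilde{u}):=F(\tilde{u})-\mathcal{P}^*\tilde{u}$ and $\tilde{u}_\tau:=\tilde{u}_2+\tau(\tilde{u}_1-\tilde{u}_2)$, the fundamental theorem of calculus in Fr\'echet form gives
\[
\mathcal{N}(\tilde{u}_1)(t)-\mathcal{N}(\tilde{u}_2)(t)=\int_0^1\bigl\{[F'(\tilde{u}_\tau)(\tilde{u}_1-\tilde{u}_2)](t)-\mathcal{P}^*[\tilde{u}_1(t)-\tilde{u}_2(t)]\bigr\}d\tau.
\]
Since $\tilde{u}_1(0)=\tilde{u}_2(0)=\tilde{u}_0$, the integrand vanishes at $t=0$, so Lemma \ref{RHSMax} (second estimate) applied with $q=\tilde{u}_1-\tilde{u}_2$ evaluated between $0$ and $t$ shows
\[
\sup_{t\in[0,T]}\|\mathcal{N}(\tilde{u}_1)(t)-\mathcal{N}(\tilde{u}_2)(t)\|_{L^2}\leq C_1\, T^\alpha(1+r)\|\tilde{u}_1-\tilde{u}_2\|_{C^\alpha([0,T];H^2)},
\]
and an analogous bound holds for the $C^\alpha$ H\"older seminorm of $\mathcal{N}(\tilde{u}_1)-\mathcal{N}(\tilde{u}_2)$ with an extra $T^\alpha$ factor. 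Feeding this into \eqref{linear-solu-est} (applied to the linear problem satisfied by $\Gamma(\tilde{u}_1)-\Gamma(\tilde{u}_2)$, with zero initial datum) yields
\[
\|\Gamma(\tilde{u}_1)-\Gamma(\tilde{u}_2)\|_{C^\alpha([0,T];H_*^2(\Omega))}\leq I(T)\,C_2\,T^\alpha\,\|\tilde{u}_1-\tilde{u}_2\|_{C^\alpha([0,T];H^2(\Omega))}.
\]
Choosing $T=T_1$ small enough that $I(T_1)C_2 T_1^\alpha\leq 1/2$ makes $\Gamma$ a strict contraction; the same estimate with $\tilde{u}_2=\tilde{u}_0$ closes the self-map property in Step 1 after possibly shrinking $T_1$ further.

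\textbf{Step 3: conclusion via Banach's theorem and regularity.} The Banach fixed point theorem produces a unique $\tilde{u}\in Y_{T_1}$ with $\Gamma(\tilde{u})=\tilde{u}$. Uniqueness of the fixed point in $Y_{T_1}$ upgrades to uniqueness among strict solutions by standard continuation arguments (any strict solution enters $Y_{T_1}$ for $T_1$ small). Finally, applying Theorem \ref{linear-parabolic-equation} once more to $\mathcal{F}=\mathcal{N}(\tilde{u})\in C^\alpha([0,T_1];L^2(\Omega))$, with $\mathcal{P}^*\tilde{u}_0+\mathcal{F}(0)\in D_{\mathcal{P}^*}(\alpha,\infty)$ guaranteed by the compatibility hypothesis, yields $\tilde{u}\in C^{\alpha+1}([0,T_1);L^2(\Omega))\cap C^\alpha([0,T_1);H_*^2(\Omega))$ with $\tilde{u}'(t)\in D_{\mathcal{P}^*}(\alpha,\infty)$ for every $t\in[0,T_1)$.

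\textbf{Main obstacle.} The technical heart is verifying that the contraction constant $I(T)C_2 T^\alpha$ really is small for small $T$. This hinges on the delicate cancellation in Lemma \ref{RHSMax}: although $F(\tilde{u})$ itself does not have a small Lipschitz constant, the defect $F(\tilde{u})-\mathcal{P}^*\tilde{u}$ does, because $\mathcal{P}^*$ is precisely the linearization of $F$ at $(\tilde{u}_0,t=0)$ and because $\tilde{u}_1(0)=\tilde{u}_2(0)$ makes the $t=0$ contribution vanish. Tracking all the constants through the integral-in-$\tau$ representation, through the dependence of the coefficients on $v(\tilde{u})$, $w(\tilde{u})$ via Theorem \ref{4th-cpl}, Corollary \ref{Lip-Frechet-W}, and Corollary \ref{Holder-Frechet-W-I-Cor}, and finally through the maximal regularity estimate, is the bulk of the work but is already encoded in Lemma \ref{RHSMax}.
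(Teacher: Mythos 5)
Your proposal follows essentially the same route as the paper's proof: the same fixed-point set $Y$, the same variation-of-parameters map $\Gamma$, the same maximal regularity estimate from Theorem~\ref{linear-parabolic-equation}, and the same use of Lemma~\ref{RHSMax} together with $\tilde{u}_1(0)=\tilde{u}_2(0)$ to extract the crucial $T^\alpha$ factor in the contraction. One small slip worth noting: in Step~1 you should require $\mathcal{P}^*\tilde{u}_0+\mathcal{F}(0)=[F(\tilde{u})](0)\in D_{\mathcal{P}^*}(\alpha,\infty)$ (the $\mathcal{P}^*\tilde{u}_0$ terms cancel, since $\mathcal{F}=F-\mathcal{P}^*\tilde u$), rather than $\mathcal{P}^*\tilde{u}_0+[F(\tilde{u})](0)$ as written; this is precisely where the compatibility hypothesis enters, and it is what makes the subsequent application of the maximal regularity estimate legitimate.
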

%\vspace*{.6cm}
\begin{proof}
We set $\sigma\in\left(0,\ \frac{1}{2}\right)$, $\alpha\in\left(0, \ \frac{\sigma}{2}\right)$ and divide the proof into three parts.\\
\noindent\textbf{H\"{o}lder Continuity.}
Let us first state some refinements of the results in Section \ref{SecSolnOp}. They concern the H\"{o}lder continuity of the solution operators
$\tilde{u}\longrightarrow v(\tilde{u}+\theta_1)$ and $\tilde{u}\longrightarrow w(\tilde{u}+\theta_1)$ needed later.
%by using Corollary \ref{Holdercontinuity}, Theorem \ref{4th-cpl}, Corollary \ref{Lip-Frechet-W} and Corollary \ref{Holder-Frechet-W-I-Cor}.
Take $T\in(0, T_0)$ to be specified below.
According to estimate \eqref{Holdercontinuityformular} of Corollary \ref{Holdercontinuity}, $v$ and $w$ are the solution operators satisfying
\[v:\ C^\alpha\left([0, T]; B_{H^2}(\tilde{u}_0, r)\right) \longrightarrow C^\alpha\left([0, T]; B_{L^2}(v_0, r)\right),\quad  \tilde{u}\longmapsto  v(\tilde{u}+\theta_1),\]
\[w:\ C^\alpha\left([0, T]; B_{H^2}(\tilde{u}_0, r)\right) \longrightarrow C^\alpha\left([0, T]; B_{H^2}(w_0, r)\right),\quad  \tilde{u}\longmapsto  w(\tilde{u}+\theta_1).\]
Thus, following the inequality \eqref{Max-I} in Lemma \ref{RHSMax},  $F(\tilde{u})$, defined by \eqref{F-def} satisfies
\[F(\tilde{u})\in C^\alpha\left([0, T]; L^2(\Omega)\right).\]
Theorem \ref{4th-cpl} with its following discussion about Fr\'{e}chet derivative and the estimate \eqref{Lip-Frechet-W-I} of Corollary \ref{Lip-Frechet-W} imply that the Fr\'{e}chet derivative $\left(v'(\tilde{u}+\theta_1)q, w'(\tilde{u}+\theta_1)q\right)$ of the function $\left(v(\tilde{u}+\theta_1), w(\tilde{u}+\theta_1)\right)$ on $\tilde{u}\in C\left([0, T]; H_*^2(\Omega)\right)$ exists in $C\left([0, T]; L^2(\Omega)\times H_*^2(\Omega)\right)$ and depends Lipschitz continuously on $\tilde{u}\in C\left([0, T]; H_*^2(\Omega)\right)$ for $q\in C\left([0, T]; H_*^2(\Omega)\right)$. If $\tilde{u}\in C^\alpha\left([0, T]; H_*^2(\Omega)\right)\cap C\left([0, T]; B_{H^2}\left(\tilde{u}_0, r\right)\right)$,  then by using inequality \eqref{Holder-Frechet-W-I} in Corollary \ref{Holder-Frechet-W-I-Cor}, $\left(v'(\tilde{u}+\theta_1)q, w'(\tilde{u}+\theta_1)q\right)\in C^\alpha\left([0, T]; L^2(\Omega)\times H_*^2(\Omega)\right)$, $\forall\ q\in C^\alpha\left([0, T]; H_*^2(\Omega)\right)$. Thus, following  \eqref{Max-II} in Lemma \ref{RHSMax},  the Fr\'{e}chet derivative $F'(\tilde{u})q$ of $F(\tilde{u})$ and $\mathcal{P}^*q$, defined by \eqref{Fre-F} and \eqref{linearopt} respectively, satisfy
\[F'(\tilde{u})q-\mathcal{P}^*q\in C^\alpha\left([0, T]; L^2(\Omega)\right),\quad \forall\ q\in C^\alpha\left([0, T]; H_*^2(\Omega)\right).\]
$v_0\in H_*^2(\Omega)$, $w_0\in H^4(\Omega)$ with $w_0(x)=\theta_1$, $\Delta w_0(x)=0$, $\forall$ $x\in \partial\Omega$ and the compatibility assumption of Theorem \ref{cp-sys} imply
\[[F(\tilde{u})](0)=\frac{1}{w_0}\nabla\cdot\left[w_0^3u_0\nabla u_0\right]-\frac{v_0}{w_0}u_0\in H^\sigma(\Omega)\subseteq D_{\mathcal{P}^*}(\alpha, \infty),\quad \tilde{u}_0\in D(\mathcal{P}^*).\]
By the definition \eqref{linearopt} of $\mathcal{P}^*$ and \eqref{graphnorm}, we know
\[D(\mathcal{P}^*)= H_*^2(\Omega),\ \overline{D(\mathcal{P}^*)}=L^2(\Omega),\ C^\alpha\left([0, T]; H_*^2(\Omega)\right)=C^\alpha\left([0, T]; D(\mathcal{P}^*)\right),\ \forall\ T\in (0, T_0).\]
\noindent\textbf{Equivalence.}
We now study the nonlinear problem
\be\label{4th-NLP}
\tilde{u}'(t)=\mathcal{P}^*\tilde{u}(t)+[F(\tilde{u})](t)-\mathcal{P}^*\tilde{u}(t), \quad t\in[0, T],\quad \tilde{u}(0)=\tilde{u}_{0}
\ee
whose integrated form is given by
\be\label{integralform}
\tilde{u}(t)=e^{t\mathcal{P}^*}\tilde{u}_0+\displaystyle\int_0^te^{(t-s)\mathcal{P}^*}\left\{[F(\tilde{u})](s)-\mathcal{P}^*\tilde{u}(s)\right\}ds,\quad t\in [0, T].
\ee
We will prove that if $\tilde{u}\in C^\alpha\left([0, T]; H_*^2(\Omega)\right)$ satisfies \eqref{integralform} and $\tilde{u}(t)\in B_{H^2}(\tilde{u}_0, r)$, $\forall\ t\in [0, T]$, $r\in\left(0, \frac{\kappa}{2C}\right)$, $\kappa=\displaystyle\inf_{x\in\overline{\Omega}}w_0(x)>0$ and $C=C(\Omega)>0$ is a constant, then $\tilde{u}\in C^{\alpha+1}\left([0, T]; L^2(\Omega)\right)$, $\tilde{u}'(t)\in D_{\mathcal{P}^*}(\alpha, \infty)$, $\forall\ t\in[0, T]$, and $\tilde{u}$ satisfies the equation \eqref{4th-NLP}. To prove this assertion, for each $\tilde{u}\in C^\alpha\left([0, T]; B_{H^2}(\tilde{u}_0, r)\right)$, we set
\be\label{pertubation}
[\mathcal{F}(\tilde{u})](t)=[F(\tilde{u})](t)-\mathcal{P}^*\tilde{u}(t),\quad\forall\ t\in[0, T],
\ee
and prove that
\be\label{HolderRHS}
\mathcal{F}(\tilde{u})\in C^\alpha\left([0, T]; L^2(\Omega)\right).
\ee
In fact, for $0\leq t<t+h\leq T$, by  using \eqref{Max-I} in Lemma \ref{RHSMax}, we have
\begin{align}
\left\|[\mathcal{F}(\tilde{u})](t+h)-[\mathcal{F}(\tilde{u})](t)\right\|_{L^2(\Omega)}\leq&\left\|[F(\tilde{u})](t+h)-[F(\tilde{u})](t)\right\|_{L^2(\Omega)}\notag\\
+&\left\|\mathcal{P}^*\tilde{u}(t+h)-\mathcal{P}^*\tilde{u}(t)\right\|_{L^2(\Omega)}\notag\\
\leq&\left(L_A+\left\|\mathcal{P}^*\right\|_{\mathcal{B}\left(H^2(\Omega), L^2(\Omega)\right)}\right)\left\|\tilde{u}(t+h)-\tilde{u}(t)\right\|_{H^2(\Omega)},\notag
\end{align}
because $\tilde{u}\in C^\alpha\left([0, T]; H_*^2(\Omega)\right)$, we get \eqref{HolderRHS}.

In addition, if $\tilde{u}(0)=\tilde{u}_0\in H_*^2(\Omega)= D(\mathcal{P}^*)$ and $[F(\tilde{u})](0)\in D_{\mathcal{P}^*}(\alpha, \infty)$, then we obtain
\[\mathcal{P}^*\tilde{u}_0+[\mathcal{F}(\tilde{u})](0)=[F(\tilde{u})](0)\in D_{\mathcal{P}^*}(\alpha, \infty).\]
Hence, by Theorem 1.2 of \cite{LS1} and  Theorem \ref{linear-parabolic-equation} we  conclude that if $\tilde{u}\in C^\alpha\left([0, T]; B_{H^2}(\tilde{u}_0,r )\right)$ is a solution of \eqref{integralform}, then there exist $\tilde{u}'\in C^{\alpha}\left([0, T]; L^2(\Omega)\right)$, $\tilde{u}'(t)\in D_{\mathcal{P}^*}(\alpha, \infty)$, $\forall\ t\in[0, T]$, and $\tilde{u}$ satisfies \eqref{4th-NLP}.

Conversely, let $\tilde{u}\in C^\alpha\left([0, T]; B_{H^2}(\tilde{u}_0,r)\right)\cap C^{\alpha+1}\left([0, T]; L^2(\Omega)\right)$ satisfy \eqref{4th-NLP}, i.e.
\[\tilde{u}'(t)=\mathcal{P}^*\tilde{u}(t)+[\mathcal{F}(\tilde{u})](t), \quad t\in[0, T],\quad \tilde{u}(0)=\tilde{u}_{0}.\]
As we have proved that $\mathcal{F}(\tilde{u})\in C^\alpha\left([0, T]; L^2(\Omega)\right)$, we can apply again Theorem \ref{linear-parabolic-equation} and deduce that $\tilde{u}$ is a solution of the integrated form \eqref{integralform}.

In conclusion, it is sufficient to solve \eqref{integralform} in the space $C^\alpha\left([0, T]; B_{H^2}(\tilde{u}_0, r)\right)$. To this end, we take $T\in(0, T_0)$ to be fixed later and find a fixed point for the mapping $\Gamma$ defined by
\be\label{integralform2}
\Gamma: Y\longrightarrow Y,\quad [\Gamma\tilde{u}](t)=e^{t\mathcal{P}^*}\tilde{u}_0+\displaystyle\int_0^te^{(t-s)\mathcal{P}^*}\left\{[F(\tilde{u})](s)-\mathcal{P}^*\tilde{u}(s)\right\}ds,\quad t\in [0, T],
\ee
\[Y=\left\{\tilde{u}\in C^\alpha\left([0, T]; H_*^2(\Omega)\right):\ \tilde{u}(0)=\tilde{u}_0,\ \left\|\tilde{u}(\cdot)-\tilde{u}_0\right\|_{C^\alpha([0, T]; H^2(\Omega))}\leq r\right\},\ \forall\ r\in\left(0, \frac{\kappa}{2C}\right).\]
\noindent\textbf{Contraction Mapping.}
If $Y$ is endowed with the metric induced by the norm of the space $C^\alpha\left([0, T]; H_*^2(\Omega)\right)$ we will show that $\Gamma$ is a contractive mapping of $Y$ into itself provided $T$ is sufficiently small.

From the preceding results and following the proof of Theorem 4.3.1 in \cite{LA2}, we know that $\Gamma u\in C^\alpha\left([0, T]; H_*^2(\Omega)\right)$ if $\tilde{u}\in Y$, because  $Y\subset C^\alpha\left([0, T]; B_{H^2}(\tilde{u}_0, r)\right)$. Now we will show that when $T$ is sufficiently small we have
\be\label{contractivemap}
\left\|\Gamma\tilde{u}_1-\Gamma\tilde{u}_2\right\|_{C^\alpha\left([0, T]; H^2(\Omega)\right)}\leq\frac{1}{2}\left\|\tilde{u}_1-\tilde{u}_2\right\|_{C^\alpha\left([0, T]; H^2(\Omega)\right)},\quad\forall\ \tilde{u}_1,\ \tilde{u}_2\in Y.
\ee
From \eqref{pertubation} and \eqref{integralform2}, we get
\[[\Gamma\tilde{u}_1](t)-[\Gamma\tilde{u}_2](t)=\displaystyle\int_0^te^{(t-s)\mathcal{P}^*}\left\{[\mathcal{F}(\tilde{u}_1)](s)-[\mathcal{F}(\tilde{u}_2)](s)\right\}ds,\quad t\in[0, T],\]
hence, by using \eqref{graphnorm} and applying \eqref{linear-solu-est} of Theorem \ref{linear-parabolic-equation}, we obtain
\begin{align}
\left\|\Gamma\tilde{u}_1-\Gamma\tilde{u}_2\right\|_{C^\alpha\left([0, T]; H^2(\Omega)\right)}\leq& \gamma_0I(T)\left\|\mathcal{F}(\tilde{u}_1)-\mathcal{F}(\tilde{u}_2)\right\|_{C^\alpha\left([0, T]; L^2(\Omega)\right)}\notag\\
\leq&\gamma_0I(T_0)\left\|\mathcal{F}(\tilde{u}_1)-\mathcal{F}(\tilde{u}_2)\right\|_{C^\alpha\left([0, T]; L^2(\Omega)\right)}.\label{Gammacontraction}
\end{align}
Here $I(\cdot)$ is a continuous and increasing function given by Theorem \ref{linear-parabolic-equation} when applied to $\mathcal{P}^*$ which is defined by \eqref{linearopt} and satisfies Lemma \ref{linearization-elliptic} and Corollary \ref{generatoroflinearization}. As $\tilde{u}_1(t)$ and $\tilde{u}_2(t)$ belong to $B_{H^2}(\tilde{u}_0, r)$ for $t\in[0, T]$, we can use inequality \eqref{nonlinearity-Lip-0} in Lemma \ref{Lip-nonlinearity} to estimate the right hand side, obtaining for all $t\in [0, T]$,
\begin{align}
\left\|[\mathcal{F}(\tilde{u}_1)](t)-[\mathcal{F}(\tilde{u}_2)](t)\right\|_{L^2(\Omega)}\leq&\left\|[F(\tilde{u}_1)](t)-[F(\tilde{u}_2)](t)\right\|_{L^2(\Omega)}+\left\|\mathcal{P}^*\tilde{u}_1(t)-\mathcal{P}^*\tilde{u}_2(t)\right\|_{L^2(\Omega)}\notag\\
\leq&\left(L_e+\left\|\mathcal{P}^*\right\|_{\mathcal{B}\left(H^2(\Omega), L^2(\Omega)\right)}\right)\left\|\tilde{u}_1(t)-\tilde{u}_2(t)\right\|_{H^2(\Omega)}.
\end{align}
As $\tilde{u}_1(0)=\tilde{u}_2(0)=\tilde{u}_0\in H_*^2(\Omega)= D(\mathcal{P}^*)$, then we have
\be\label{u1}
\sup_{t\in[0, T]}\left\|\tilde{u}_1(t)-\tilde{u}_2(t)\right\|_{H^2(\Omega)}\leq T^\alpha\left\|\tilde{u}_1-\tilde{u}_2\right\|_{C^\alpha\left([0, T]; H^2(\Omega)\right)},
\ee
and so
\be\label{F1}
\left\|\mathcal{F}(\tilde{u}_1)-\mathcal{F}(\tilde{u}_2)\right\|_{C\left([0, T]; L^2(\Omega)\right)}\leq \left[L_e+\left\|\mathcal{P}^*\right\|_{\mathcal{B}\left(H^2(\Omega), L^2(\Omega)\right)}\right]T^\alpha\left\|\tilde{u}_1-\tilde{u}_2\right\|_{C^\alpha\left([0, T]; H^2(\Omega)\right)}.
\ee
On the other hand for $0\leq t<t+h\leq T$, by using \eqref{Max-II} in Lemma \ref{RHSMax} and \eqref{u1}, we get
\begin{align}
&\left\|[\mathcal{F}(\tilde{u}_1)](t+h)-[\mathcal{F}(\tilde{u}_2)](t+h)-[\mathcal{F}(\tilde{u}_1)](t)+[\mathcal{F}(\tilde{u}_2)](t)\right\|_{L^2(\Omega)}\notag\\
%=&\bigg\|[F(\tilde{u}_1)](t+h)-[F(\tilde{u}_2)](t+h)-[F(\tilde{u}_1)](t)+[F(\tilde{u}_2)](t)\notag\\
%-&\mathcal{P}^*[\tilde{u}_1(t+h)-\tilde{u}_2(t+h)-\tilde{u}_1(t)+\tilde{u}_2(t)]\bigg\|_{L}\notag\\
=&\bigg\|\int_0^1[F'(\gamma\tilde{u}_1+(1-\gamma)\tilde{u}_2)(\tilde{u}_1-\tilde{u}_2)](t+h)-[F'(\gamma\tilde{u}_1+(1-\gamma)\tilde{u}_2)(\tilde{u}_1-\tilde{u}_2)](t)\notag\\
-&\mathcal{P}^*\left[\tilde{u}_1(t+h)-\tilde{u}_2(t+h)-\tilde{u}_1(t)+\tilde{u}_2(t)\right]d\gamma\bigg\|_{L^2(\Omega)}\notag\\
\leq&2h^\alpha T^\alpha L_{B}\left\|\tilde{u}_1-\tilde{u}_2\right\|_{C^\alpha([0, T]; H^2(\Omega))}\notag\\
+&2h^\alpha T^\alpha L_{B}\left\|\gamma\tilde{u}_1+(1-\gamma)\tilde{u}_2+\theta_1\right\|_{C^\alpha([0, T]; H^2(\Omega))}\left\|\tilde{u}_1-\tilde{u}_2\right\|_{C^\alpha([0, T]; H^2(\Omega))}\notag\\
\leq&2h^\alpha T^\alpha L_{B}\left[\left\|\tilde{u}_1-\tilde{u}_2\right\|_{C^\alpha([0, T]; H^2(\Omega))}+ \left(\|u_0\|_{H^2(\Omega)}+\kappa(2C)^{-1}\right)\left\|\tilde{u}_1-\tilde{u}_2\right\|_{C^\alpha([0, T]; H^2(\Omega))}\right]\label{contraction-2},
\end{align}
as well as
\begin{align}
&[\mathcal{F}(\tilde{u}_1)-\mathcal{F}(\tilde{u}_2)]_{C^\alpha([0, T]; L^2(\Omega))}\notag\\
=&\sup_{0\leq t<t+h\leq T}\frac{1}{h^\alpha}\left\{{\|[\mathcal{F}(\tilde{u}_1)](t+h)-[\mathcal{F}(\tilde{u}_2)](t+h)-[\mathcal{F}(\tilde{u}_1)](t)+[\mathcal{F}(\tilde{u}_2)](t)\|_{L^2(\Omega)}}\right\}\notag\\
\leq&2T^\alpha L_B\left[1+\left(\|u_0\|_{H^2(\Omega)}+\kappa(2C)^{-1}\right)\right]\left\|\tilde{u}_1-\tilde{u}_2\right\|_{C^\alpha([0, T]; H^2(\Omega))}.\label{contraction-5}
\end{align}
Hence we can deduce from \eqref{Gammacontraction}, \eqref{F1} and \eqref{contraction-5}:
\begin{align}
&\left\|\Gamma\tilde{u}_1-\Gamma\tilde{u}_2\right\|_{C^\alpha\left([0, T]; H^2(\Omega)\right)}\notag\\
\leq&\gamma_0I(T_0)\left\|\mathcal{F}(\tilde{u}_1)-\mathcal{F}(\tilde{u}_2)\right\|_{C^\alpha\left([0, T]; L^2(\Omega)\right)}\notag\\
\leq&\gamma_0I(T_0)\left[\left\|\mathcal{F}(\tilde{u}_1)-\mathcal{F}(\tilde{u}_2)\right\|_{C\left([0, T]; L^2(\Omega)\right)}+[\mathcal{F}(\tilde{u}_1)-\mathcal{F}(\tilde{u}_2)]_{C^\alpha([0, T]; L^2(\Omega))}\right]\notag\\
\leq&\gamma_0I(T_0)\left[L_e+\left\|\mathcal{P}^*\right\|_{\mathcal{B}\left(H^2(\Omega), L^2(\Omega)\right)}+2L_B\left[1+\left(\|u_0\|_{H^2(\Omega)}+\kappa(2C)^{-1}\right)\right]\right]T^\alpha\notag\\
\cdot&\left\|\tilde{u}_1-\tilde{u}_2\right\|_{C^\alpha([0, T]; H^2(\Omega))}.\label{finalcontraction}
\end{align}
Set
\be\label{timedef-1}
T_0^*:=\displaystyle{\left[2\gamma_0I(T_0)\left(L_e+\left\|\mathcal{P}^*\right\|_{\mathcal{B}\left(H^2(\Omega), L^2(\Omega)\right)}+2L_B\left[1+\left(\|u_0\|_{H^2(\Omega)}+\kappa(2C)^{-1}\right)\right]\right)\right]^{-\frac{1}{\alpha}}}.
\ee
If $0<T\leq \min\{T_0,\ T_0^*\}$, then $\Gamma$ satisfies the contraction property \eqref{contractivemap} by using \eqref{finalcontraction}.

To prove that $\Gamma(Y)\subseteq Y$, it remains to check that
\be\label{mapinto}
\left\|\Gamma\tilde{u}-\tilde{u}_0\right\|_{C^\alpha\left([0, T]; H^2(\Omega)\right)}\leq r, \quad \forall\ \tilde{u}\in Y.
\ee
Let us observe that if $0<T<\min\{T_0,\ T_0^*\}$, then from contraction property \eqref{contractivemap}, we get
\begin{align}
\left\|\Gamma\tilde{u}-\tilde{u}_0\right\|_{C^\alpha([0, T]; H^2(\Omega))}\leq&\left\|\Gamma\tilde{u}-\Gamma\tilde{u}_0\right\|_{C^\alpha([0, T]; H^2(\Omega))}+\left\|\Gamma\tilde{u}_0-\tilde{u}_0\right\|_{C^\alpha([0, T]; H^2(\Omega))}\notag\\
\leq&\frac{1}{2}\left\|\tilde{u}-\tilde{u}_0\right\|_{C^\alpha([0, T]; H^2(\Omega))}+\left\|\Gamma\tilde{u}_0-\tilde{u}_0\right\|_{C^\alpha([0, T]; H^2(\Omega))}\notag\\
\leq&\frac{r}{2}+\left\|\Gamma\tilde{u}_0-\tilde{u}_0\right\|_{C^\alpha([0, T]; H^2(\Omega))}\notag.
\end{align}
Now $\Gamma\tilde{u}_0-\tilde{u}_0\in C^\alpha\left([0, T]; H_*^2(\Omega)\right)$ and it vanishes at $t=0$, so there exists a $\delta^*=\delta^*(r)>0$ such that, if $0<T\leq\delta^*$, then
\[\left\|\Gamma\tilde{u}_0-\tilde{u}_0\right\|_{C^\alpha\left([0, T]; H^2(\Omega)\right)}\leq \frac{r}{2},\]
and consequently \eqref{mapinto} is true by choosing $0<T\leq\min\{T_0,\ T_0^*,\ \delta^*\}$. Because $r$ is controlled by $C=C(\Omega)$ and $\kappa$, we also note $\delta^*=\delta^*(\kappa,\Omega)$.

Summing up, set
\be\label{timedef}
T_1:=\min\left\{ T_0,\ T_0^*,\ \delta^*\right\},
\ee
$\Gamma$, defined by \eqref{integralform2},  is a contractive mapping of $Y$ into itself provided
\[0<T< T_1.\]
Hereby, $\Gamma$ has a unique fixed point $\tilde{u}$ in $Y$, $\tilde{u}\in C^\alpha\left([0, T_1); H_*^2(\Omega)\right)$ is a unique solution of the integral form \eqref{integralform}, and $\tilde{u}\in C^\alpha\left([0, T_1); H_*^2(\Omega)\right)\cap C^{\alpha+1}\left([0, T_1); L^2(\Omega)\right)$ is a unique strict solution of the nonlinear problem \eqref{4th-NLP}, by  preceding results in Equivalence, Theorem \ref{linear-parabolic-equation},  Theorem 1.2 of \cite{LS1} and Theorem 4.5 of \cite{SE}.

Recall that $[F(\tilde{u})](0)\in D_{\mathcal{P}^*}(\alpha, \infty)$,  $\tilde{u}_0\in H_*^2(\Omega)= D(\mathcal{P}^*)$, $F(\tilde{u})\in C^{\alpha}\left([0, T_1); L^2(\Omega)\right)$ with $L^2(\Omega)=\overline{D(\mathcal{P}^*)}$, as $\mathcal{P}^*\tilde{u}_0+[\mathcal{F}(\tilde{u})](0)=[F(\tilde{u})](0)=\tilde{u}'(0)$, Theorem 1.2 of \cite{LS1} and Theorem \ref{linear-parabolic-equation}  state in particular if $\tilde{u}'(t)\in D_{\mathcal{P}^*}(\alpha, \infty)$ for $t=0$, then the same is true for $t>0$, i.e. $\tilde{u}'(t)\in D_{\mathcal{P}^*}(\alpha, \infty)$, $\forall\ t\in[0, T_1)$, provided \[\mathcal{F}(\tilde{u})=F(\tilde{u})-\mathcal{P}^*\tilde{u}\in C^\alpha\left([0, T_1); L^2(\Omega)\right).\]
Meanwhile, $[F(\tilde{u})](t)\in D_{\mathcal{P}^*}(\alpha, \infty)$ as $\tilde{u}'(t)=[F(\tilde{u})](t)$ and $\tilde{u}'(t)\in D_{\mathcal{P}^*}(\alpha, \infty)$ for all $t\in[0, T_1)$.

Because of Theorem \ref{4th-mild-solution-cor} and $u=\tilde{u}+\theta_1$, the initial-boundary value problem  \eqref{cp2} has a unique strict solution $(u, w)$,
\[u\in C^\alpha\left([0, T_1); H^2(\Omega)\right)\cap C^{\alpha+1}\left([0, T_1); L^2(\Omega)\right),\]
\[{w}\in C\left([0, T_1); H^4(\Omega)\right)\cap C^{1}\left([0, T_1); H^2(\Omega)\right)\cap C^{2}\left([0, T_1); L^2(\Omega)\right).\]
This concludes the proof of Theorem \ref{cp-sys}.
\end{proof}
{Theorem \ref{cp-sys} directly implies Theorem \ref{coupled system}.}

\subsection*{{Maximal Time of Existence}}
\begin{cor}\label{Continuuation}
	{Assume that there are positive constants $C_\infty$ and $\delta_{\infty}$ such that} the solution $(u,w)$ from Theorem \ref{cp-sys} satisfies
	\begin{equation}\label{Final}
		u'(t)\in D_{\mathcal{P}^*}(\alpha,\ \infty),\quad \|u(t)\|_{H^{2+\alpha}(\Omega)}<C_{\infty},\quad \inf_\Omega w(t) > \delta_{\infty},
	\end{equation}
	for all $t \in [0,T_1]$. Then there exists $T_2>T_1$ such that $(u,w)$ uniquely extends to a solution of \eqref{cp2} on the time interval $[0,T_2)$.
\end{cor}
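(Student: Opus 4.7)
The plan is to invoke Theorem \ref{cp-sys} afresh with new initial data taken at $t = T_1$, and to paste the resulting local extension onto $(u, w)$ using uniqueness of strict solutions. Concretely, I would set
\[
u_\star(x) := u(x, T_1), \quad w_\star(x) := w(x, T_1), \quad v_\star(x) := \partial_t w(x, T_1),
\]
and verify that this triple satisfies the hypotheses of Theorem \ref{cp-sys}, with $\mathcal{P}^*$ replaced by the linearization of $F$ at $u_\star - \theta_1$ (possibly with a slightly smaller H\"older exponent).

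First I would check the functional-analytic requirements. The regularity of the strict solution on $[0, T_1]$ together with the uniform bound $\|u(t)\|_{H^{2+\alpha}(\Omega)} < C_\infty$ gives $u_\star \in H^{2+\alpha}(\Omega)$, while Theorem \ref{coupled system} gives $w_\star \in H^4(\Omega)$ and $v_\star \in H^2(\Omega)$. The boundary compatibility conditions $u_\star|_{\partial\Omega} = \theta_1$, $w_\star|_{\partial\Omega} = \theta_2$, $\Delta w_\star|_{\partial\Omega} = 0$ are inherited from $(u, w)$ on $[0, T_1]$. Positivity $w_\star \geq \delta_\infty > 0$ is exactly the non-quenching assumption in \eqref{Final}, and strict positivity of $u_\star$, needed in Lemma \ref{linearization-elliptic} at the new initial time, follows from $u_0 > 0$ together with continuity of $u$ on the compact interval $[0, T_1]$.

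The most delicate step is the compatibility condition at the new initial time, namely that $\tfrac{1}{w_\star} \nabla \cdot (w_\star^3 u_\star \nabla u_\star) \in H^{\sigma'}(\Omega)$ for some $\sigma' \in (0, 1/2)$. From \eqref{NPE} evaluated at $t = T_1$, this quantity equals $u'(T_1) + \tfrac{v_\star}{w_\star} u_\star$. The hypothesis $u'(T_1) \in D_{\mathcal{P}^*}(\alpha, \infty)$, together with the standard identification of the intermediate space as the real interpolation $(L^2(\Omega), H_*^2(\Omega))_{\alpha, \infty}$, embeds $u'(T_1)$ into $H^{2\alpha - \epsilon}(\Omega)$ for every $\epsilon > 0$. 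The lower-order correction $\tfrac{v_\star}{w_\star} u_\star$ lies in $H^2(\Omega)$ by the Sobolev algebra property for $n \leq 2$ combined with the uniform lower bound on $w_\star$. Choosing $\sigma' < 2\alpha$ and $\alpha' < \sigma'/2$ small enough therefore yields the compatibility condition needed to restart the problem.

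With all hypotheses verified, Theorem \ref{cp-sys} produces a unique local strict solution $(u_{\mathrm{ext}}, w_{\mathrm{ext}})$ starting from $(u_\star, w_\star, v_\star)$ on $[T_1, T_1 + T_*)$ for some $T_* > 0$. Define the glued pair on $[0, T_1 + T_*)$ to agree with $(u, w)$ on $[0, T_1]$ and with $(u_{\mathrm{ext}}, w_{\mathrm{ext}})$ on $[T_1, T_1 + T_*)$; the values and first time-derivatives match at $t = T_1$ because both pieces solve the same PDE with the same Cauchy data at $T_1$. Uniqueness of strict solutions for \eqref{cp2} (via Theorem \ref{cp-sys} and the underlying semigroup framework) then shows that this glued pair is the unique strict solution of \eqref{cp2} on $[0, T_2)$ with $T_2 := T_1 + T_*$. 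The main obstacle is the compatibility step above: transferring the abstract condition $u'(T_1) \in D_{\mathcal{P}^*}(\alpha, \infty)$, phrased with respect to the linearization at the \emph{original} initial value, into a concrete Sobolev regularity statement aligned with the linearization at the \emph{new} initial value $u_\star$, and then handling the nonlinear products and quotient by $w_\star$ via Sobolev algebra.
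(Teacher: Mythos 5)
Your proof is correct and follows essentially the same strategy as the paper: restart Theorem \ref{cp-sys} at $t=T_1$ with the new Cauchy data $(u_\star, w_\star, v_\star)$, then glue the resulting local extension onto the original solution. The paper spells out the gluing step concretely by verifying the integral formulations \eqref{Contin1}--\eqref{Contin2} over the extended interval via the semigroup composition identity (Lemma 8.5 of \cite{schnaubelt}), whereas you argue more directly via matching Cauchy data at $T_1$ and H\"older regularity of the two pieces (which indeed glues, since $(t-T_1)^\alpha + (T_1-s)^\alpha \leq 2(t-s)^\alpha$); conversely, the paper does not spell out the verification that the restart data satisfy the hypotheses of Theorem \ref{cp-sys} --- in particular the re-expression of the compatibility condition $\tfrac{1}{w_\star}\nabla\cdot(w_\star^3 u_\star\nabla u_\star)\in H^{\sigma'}(\Omega)$ via \eqref{NPE} and the interpolation-space embedding $D_{\mathcal{P}^*}(\alpha,\infty)\hookrightarrow H^{2\alpha-\epsilon}(\Omega)$, transferred to the linearization at $u_\star$ --- which you identify correctly as the delicate point and handle by accepting a marginal loss in the H\"older exponent. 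Both proofs are sound; yours is more careful on hypothesis verification, the paper's more explicit on the semigroup-level gluing.
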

\begin{proof}
As \eqref{Final}, $u'(T_1)\in D_{\mathcal{P}^*}(\alpha,\ \infty)$, $\|u(T_1)\|_{H^{2+\alpha}(\Omega)}<C_{\infty}$, $\displaystyle\inf_\Omega w(T_1) > \delta_{\infty}$ and Theorem \ref{cp-sys} implies there exists $T_1^{'}$, such that the nonlinear coupled system \eqref{cp2}, with the admissible initial values $u(x,T_1)$,  $w(x,T_1)$, $\frac{\partial w}{\partial t}(x,T_1)$, has a solution $(u_1, w_1)$ on $[0, T_1^{'})$,
\[u_1\in C^\alpha\left([0, T_1^{'}); H^2(\Omega)\right)\cap C^{\alpha+1}\left([0, T_1^{'}); L^2(\Omega)\right),\]
\[{w}_1\in C\left([0, T_1^{'}); H^4(\Omega)\right)\cap C^{1}\left([0, T_1^{'}); H^2(\Omega)\right)\cap C^{2}\left([0, T_1^{'}); L^2(\Omega)\right).\]
Define the functions by
\[u_2(t)=\displaystyle\left\{\begin{array}{ll}
	u(t),\ &0\leq t\leq T_1,\\
	u_{1}(t-T_1),\ &T_1\leq t< T_1+T_1^{'},
\end{array}\right.
\quad w_2(t)=\displaystyle\left\{\begin{array}{ll}
	w(t),\ &0\leq t\leq T_1,\\
	w_{1}(t-T_1),\ &T_1\leq t< T_1+T_1^{'},
\end{array}\right.\]
$(u_2, w_2)$ is continuous and it is a strict solution of \eqref{cp2} for $t\in[0, T_1]$. For $t\in(T_1, T_1+T_1^{'})$,  set
\[\tilde{u}_2=u_2-\theta_1, \quad \begin{pmatrix}
	\tilde{v}_2\\ \tilde{w}_2
\end{pmatrix}=\begin{pmatrix}
\frac{\partial w_2}{\partial t}\\ w_2-\theta_2
\end{pmatrix}.\]
Based on Lemma 8.5 from \cite{schnaubelt}, we use the integral formulation \eqref{integralform} for $\tilde{u}(T_1)$ and calculate \eqref{Contin1}. Then we similarly get \eqref{Contin2} from the calculation for \eqref{Contin1} and the integral form \eqref{mild-solu-form}:
\begin{align}
	\tilde{u}_2(t)=\tilde{u}_1(t-T_1)&=e^{(t-T_1)\mathcal{P}^*}\tilde{u}(T_1)+\int_0^{t-T_1}e^{(t-T_1-s)\mathcal{P}^*}\left\{[F(\tilde{u_1})](s)-\mathcal{P}^*\tilde{u}_1(s)\right\}ds\notag\\
	&=e^{(t-T_1)\mathcal{P}^*}e^{T_1\mathcal{P}^*}\tilde{u}_0+e^{(t-T_1)\mathcal{P}^*}\int_0^{T_1}e^{(T_1-s)\mathcal{P}^*}\left\{[F(\tilde{u})](s)-\mathcal{P}^*\tilde{u}(s)\right\}ds\notag\\
	&+\int_{T_1}^{t}e^{(t-s)\mathcal{P}^*}\left\{[F(\tilde{u}_1)](s-T_1)-\mathcal{P}^*\tilde{u}_1(s-T_1)\right\}ds\notag\\
	&=e^{t\mathcal{P}^*}\tilde{u}_0+\int_{0}^{t}e^{(t-s)\mathcal{P}^*}\left\{[F(\tilde{u}_2)](s)-\mathcal{P}^*\tilde{u}_2(s)\right\}ds\label{Contin1},\\
	\begin{pmatrix}\tilde{v}_2(t)\\ \tilde{w}_2(t)\end{pmatrix}&=T(t)\begin{pmatrix}\tilde{v}_0\\ \tilde{w}_0\end{pmatrix}+
	\displaystyle\int_0^t\left\{T(t-s)\begin{pmatrix}[G(\tilde{w}_2)](s)+\beta_p\tilde{u}_2(s)\\ 0\end{pmatrix}\right\}ds.\label{Contin2}
\end{align}
Setting $T_2=T_1+T_1^{'}$, we conclude Corollary \ref{Continuuation}.
\end{proof}
Define the maximal existence time of the coupled system \eqref{cp2} by
\[T_{\max}=\sup\left\{T>0: \eqref{cp2}\ \text{has a solution}\ (u, w)\ \text{on}\ [0, T]\right\}.\]
Theorem \ref{cp-sys}, Corollary \ref{Continuuation} and Theorem 8.6 from \cite{schnaubelt} (page 80) {imply %the conclusion of Theorem \ref{coupled system}, and if 
that if $T_{\max}<\infty$, 
\[\text{either}\ \lim_{t\to T_{\max}^-} \inf_{x\in\Omega}w(x,t)=0\ \text{or}\ \limsup_{t\to T_{\max}^{-}}\left\|u(t)\right\|_{L^\infty(\Omega)}=\infty.\]}

\appendix

\renewcommand{\theequation}{\thesection.\arabic{equation}}
\textcolor{black}{\section{Proofs of Lemmas in Section \ref{Sec:prerequisite}} \label{AppA}
	Before the proofs, we recall some well-known properties of the Sobolev spaces  $H^k(\Omega)$, where $k>0$.
\subsection{Sobolev Spaces and Algebraic Properties}
The algebra property of Sobolev spaces will be crucial in this work, see \cite{TT} for a proof.
\begin{lem}\label{alg}
	$H^k(\Omega)$ is an algebra whence $k>\frac{n}{2}$. In particular, $H^1(\Omega)$ is an algebra if $\Omega\subset\mathds{R}$ and $H^2(\Omega)$ is an algebra if $\Omega\subset \mathds{R}^n$, $n=1,\ 2$.
\end{lem}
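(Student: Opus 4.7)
The plan is a standard reduction to the Leibniz rule combined with Sobolev embedding on the bounded smooth domain $\Omega$. Given $u, v \in H^k(\Omega)$ with $k > n/2$, I would show that for every multi-index $\alpha$ with $|\alpha|\leq k$,
\[
D^\alpha(uv) \;=\; \sum_{\beta\leq\alpha}\binom{\alpha}{\beta}\,D^\beta u\cdot D^{\alpha-\beta}v
\]
is bounded in $L^2(\Omega)$ by a constant depending only on $\Omega$, $k$ and $n$ times $\|u\|_{H^k(\Omega)}\|v\|_{H^k(\Omega)}$. Summing over $|\alpha|\leq k$ then gives the algebra estimate $\|uv\|_{H^k(\Omega)}\leq C\|u\|_{H^k(\Omega)}\|v\|_{H^k(\Omega)}$, which is what the lemma asserts.

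To estimate each term $\|D^\beta u\cdot D^{\alpha-\beta}v\|_{L^2(\Omega)}$, I would distinguish two regimes. When $\beta=0$ or $\beta=\alpha$, one factor is kept in $L^2$ while the other is put into $L^\infty$ via the Sobolev embedding $H^k(\Omega)\hookrightarrow L^\infty(\Omega)$, which holds on bounded smooth domains precisely when $k>n/2$. When $0<|\beta|<|\alpha|$, I would apply H\"older's inequality with conjugate exponents $1/p+1/q=1/2$ together with Gagliardo--Nirenberg interpolation to bound $\|D^\beta u\|_{L^p(\Omega)}\leq C\|u\|_{H^k(\Omega)}$ and $\|D^{\alpha-\beta}v\|_{L^q(\Omega)}\leq C\|v\|_{H^k(\Omega)}$; the assumption $k>n/2$ guarantees that admissible $(p,q)$ always exist.

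For the two concrete cases singled out in the lemma, the verification is especially short and illustrates the argument. In $n=1$ with $k=1$ one uses $H^1(\Omega)\hookrightarrow L^\infty(\Omega)$ together with $(uv)'=u'v+uv'$. In $n\in\{1,2\}$ with $k=2$, the terms $v D^2u$ and $u D^2v$ are treated via $H^2(\Omega)\hookrightarrow L^\infty(\Omega)$, while the only genuinely mixed contribution $\partial_i u\,\partial_j v$ is handled by H\"older and the embedding $H^1(\Omega)\hookrightarrow L^4(\Omega)$, available for $n\leq 2$, giving
\[
\|\partial_i u\,\partial_j v\|_{L^2(\Omega)}\leq \|\partial_i u\|_{L^4(\Omega)}\|\partial_j v\|_{L^4(\Omega)}\leq C\|u\|_{H^2(\Omega)}\|v\|_{H^2(\Omega)}.
\]

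No serious obstacle arises once the Sobolev embeddings on $\Omega$ are in hand; the one point requiring care is that all constants depend only on $\Omega$, $k$ and $n$, which is automatic since the embedding constants are fixed for the bounded smooth domain $\Omega$.
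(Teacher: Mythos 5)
Your proposal is correct and gives precisely the standard proof of the algebra property: Leibniz's rule, the Sobolev embedding $H^k(\Omega)\hookrightarrow L^\infty(\Omega)$ for $k>n/2$ on a bounded smooth domain, and H\"older together with Gagliardo--Nirenberg for the mixed derivative terms. The paper itself does not prove Lemma~\ref{alg} but simply refers to a reference for it; your treatment of the only nontrivial case in the lemma, $k=2$ and $n\leq 2$, via H\"older and $H^1(\Omega)\hookrightarrow L^4(\Omega)$ for the term $\partial_i u\,\partial_j v$, is exactly the mechanism the paper deploys in proving its own Corollary~\ref{alg-1}.
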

We deduce some immediate consequences.
\begin{cor}\label{alg-1}
	If $f_1\in H^2(\Omega)$ and $f_2\in L^2(\Omega)$, then
	\be\label{alg-1-1}
	\left\|f_1f_2\right\|_{L^2(\Omega)}\leq C\left\|f_1\right\|_{H^2(\Omega)}\left\|f_2\right\|_{L^2(\Omega)}.
	\ee
	If $g_1$, $g_2$, $g_3\in H^2(\Omega)$, then
	\be\label{alg-1-2}
	\left\|g_1\nabla\cdot\left(g_2\nabla g_3\right)\right\|_{L^2(\Omega)}\leq2C\left\|g_1\right\|_{H^2(\Omega)}\left\|g_2\right\|_{H^2(\Omega)}\left\|g_3\right\|_{H^2(\Omega)}.
	\ee
\end{cor}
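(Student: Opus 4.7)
\textbf{Proof proposal for Corollary \ref{alg-1}.} The plan is to reduce both estimates to the standard Sobolev embedding $H^2(\Omega)\hookrightarrow L^\infty(\Omega)$ and the Gagliardo--Nirenberg embedding $H^1(\Omega)\hookrightarrow L^4(\Omega)$, both of which hold uniformly on the bounded smooth domain $\Omega\subset\mathds{R}^n$ with $n=1,2$. These embeddings supply a constant $C=C(\Omega)$ of exactly the kind already used throughout the paper.

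For \eqref{alg-1-1}, I would simply apply H\"older's inequality together with the Sobolev embedding:
\[
\|f_1 f_2\|_{L^2(\Omega)} \leq \|f_1\|_{L^\infty(\Omega)}\,\|f_2\|_{L^2(\Omega)}\leq C\|f_1\|_{H^2(\Omega)}\,\|f_2\|_{L^2(\Omega)},
\]
where the second step uses $H^2(\Omega)\hookrightarrow L^\infty(\Omega)$ (valid for $n=1,2$ since $2>n/2$). This immediately gives the desired bound with a constant $C=C(\Omega)$.

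For \eqref{alg-1-2}, I would first expand the divergence by the product rule,
\[
g_1\,\nabla\!\cdot\!(g_2\nabla g_3) \;=\; g_1\,\nabla g_2\!\cdot\!\nabla g_3 \;+\; g_1\,g_2\,\Delta g_3,
\]
and estimate each term separately in $L^2(\Omega)$. For the second term, applying \eqref{alg-1-1} (or directly H\"older with two $L^\infty$ factors and one $L^2$ factor) gives
\[
\|g_1 g_2 \Delta g_3\|_{L^2(\Omega)}\leq \|g_1\|_{L^\infty(\Omega)}\,\|g_2\|_{L^\infty(\Omega)}\,\|\Delta g_3\|_{L^2(\Omega)}\leq C\|g_1\|_{H^2(\Omega)}\|g_2\|_{H^2(\Omega)}\|g_3\|_{H^2(\Omega)}.
\]
For the first term, I would use H\"older with exponents $(\infty,4,4)$ together with $H^1(\Omega)\hookrightarrow L^4(\Omega)$ (valid for $n\le 4$, in particular for $n=1,2$), applied to $\nabla g_2,\nabla g_3\in H^1(\Omega)^n$:
\[
\|g_1\,\nabla g_2\!\cdot\!\nabla g_3\|_{L^2(\Omega)}\leq \|g_1\|_{L^\infty(\Omega)}\|\nabla g_2\|_{L^4(\Omega)}\|\nabla g_3\|_{L^4(\Omega)}\leq C\|g_1\|_{H^2(\Omega)}\|g_2\|_{H^2(\Omega)}\|g_3\|_{H^2(\Omega)}.
\]
Summing the two bounds and absorbing Sobolev constants into $C=C(\Omega)$ yields \eqref{alg-1-2}.

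The only mild subtlety is the first-order cross term $g_1\nabla g_2\!\cdot\!\nabla g_3$: in two dimensions $H^2$ does not embed into $W^{1,\infty}$, so one cannot pull $\nabla g_2$ out in the $L^\infty$ norm. Hence I would explicitly use $H^1\hookrightarrow L^4$ rather than trying to apply the algebra property of $H^2$ naively on $g_2\nabla g_3$. Once that choice is made, everything reduces to H\"older's inequality and standard Sobolev embeddings, so no additional analytic input beyond Lemma \ref{alg} and the embeddings is required.
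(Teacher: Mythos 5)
Your proposal is correct and follows essentially the same route as the paper's proof: Hölder plus the embedding $H^2(\Omega)\hookrightarrow L^\infty(\Omega)$ for \eqref{alg-1-1}, and for \eqref{alg-1-2} the same product-rule expansion with the cross term $\nabla g_2\cdot\nabla g_3$ handled via $\left\|\nabla g_2\right\|_{L^4}\left\|\nabla g_3\right\|_{L^4}$ and $H^1(\Omega)\hookrightarrow L^4(\Omega)$. No changes needed.
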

\begin{proof}
	The Sobolev embedding theorem implies
	\begin{align}
		\left\|f_1f_2\right\|_{L^2(\Omega)}&\leq\left\|f_1\right\|_{L^\infty(\Omega)}\left\|f_2\right\|_{L^2(\Omega)}\leq C\left\|f_1\right\|_{H^2(\Omega)}\left\|f_2\right\|_{L^2(\Omega)}\notag.
	\end{align}
	\begin{align}
		\left\|g_1\nabla\cdot\left(g_2\nabla g_3\right)\right\|_{L^2(\Omega)}&\leq\left\|g_1\right\|_{L^\infty(\Omega)}\left(\left\|\nabla g_2\cdot\nabla g_3\right\|_{L^2(\Omega)}+\left\|g_2\Delta g_3\right\|_{L^2(\Omega)}\right)\notag\\
		&\leq C\left\|g_1\right\|_{H^2(\Omega)}\left(\left|\int_{\Omega}\left|\nabla g_2\cdot\nabla g_3\right|^2dx\right|^{\frac{1}{2}}+\left\|g_2\right\|_{L^\infty(\Omega)}\left\|\Delta g_3\right\|_{L^2(\Omega)}\right)\notag\\
		%&\leq C\left\|g_1\right\|_{H^2(\Omega)}\left[\left|\int_{\Omega}\left|\nabla g_2\right|^4dx\right|^{\frac{1}{4}}\left|\int_{\Omega}\left|\nabla g_3\right|^4dx\right|^{\frac{1}{4}}+\left\|g_2\right\|_{L^\infty(\Omega)}\left\|\Delta g_3\right\|_{L^2(\Omega)}\right]\notag\\
		&\leq C\left\|g_1\right\|_{H^2(\Omega)}\left[\left\|\nabla g_2\right\|_{L^4(\Omega)}\left\|\nabla g_3\right\|_{L^4(\Omega)}+\left\|g_2\right\|_{H^2(\Omega)}\left\|\Delta g_3\right\|_{L^2(\Omega)}\right]\notag\\
		&\leq C\left\|g_1\right\|_{H^2(\Omega)}\left[\left\|\nabla g_2\right\|_{H^1(\Omega)}\left\|\nabla g_3\right\|_{H^1(\Omega)}+\left\|g_2\right\|_{H^2(\Omega)}\left\|g_3\right\|_{H^2(\Omega)}\right]\notag\\
		&\leq2C\left\|g_1\right\|_{H^2(\Omega)}\left\|g_2\right\|_{H^2(\Omega)}\left\|g_3\right\|_{H^2(\Omega)}\notag.
	\end{align}
\end{proof}}
%\vspace*{-0.6cm}
\subsection{Proof of Lemma \ref{estimates}}
\begin{proof}
	We first prove assertion \eqref{w-lower-bound} of Lemma \ref{estimates}.
	
	Since $ w\in C\left([0, T]; B_{H^2}\left({w}_0, r\right)\right)$,  then $\|w(t)-w_0\|_{H^{2}(\Omega)}\leq r$ holds for all $t\in [0, T]$.
	
	According to the triangle inequality and the Sobolev embedding theorem, there exists a constant $C=C(\Omega)$, such that for all $r\in\left(0, \frac{\kappa}{2C}\right)$, it follows that
	\bse\label{w-lower-bound1}
	\be\label{w-lower-bound11}
	w(t)=w_0+w(t)-w_0\geq\displaystyle\kappa-\left\|w(t)-w_0\right\|_{L^{\infty}(\Omega)}\geq\displaystyle\kappa-C\left\|w(t)-w_0\right\|_{H^{2}(\Omega)}\geq\kappa-Cr\geq\frac{\kappa}{2},
	\ee
	\be\label{w-lower-bound12}
	\left\|w(t)\right\|_{H^2(\Omega)}\leq\tilde{C},
	\ee
	\ese
	hold for all $t\in [0, T]$. Here, $\tilde{C}=\frac{\kappa}{2C}+\left\|w_0\right\|_{H^2(\Omega)}$.  Hereby, we prove assertion \eqref{w-lower-bound}.
	
	According to \eqref{w-lower-bound1} and $r\in\left(0, \frac{\kappa}{2C}\right)$, we have
	\begin{align}
		\displaystyle\sup_{t\in[0, T]}\left[\int_\Omega\left|\frac{1}{w(t)}\right|^2dx\right]\leq \frac{4C}{\kappa^2},\qquad \displaystyle\sup_{t\in[0, T]}\left[\int_\Omega\left|\displaystyle\nabla\left[\frac{1}{w(t)}\right]\right|^2dx\right]&=\sup_{t\in[0, T]}\left[\int_\Omega\frac{\left|\displaystyle\nabla w(t) \right|^2}{\left|w(t)\right|^4}dx\right]\notag\\
		&\leq\frac{16}{\kappa^4}\sup_{t\in[0, T]}\left[\int_\Omega\left|\displaystyle\nabla w(t)\right|^2dx\right]\notag\\
		&\leq\frac{16}{\kappa^4}\sup_{t\in[0, T]}\left\|w(t)\right\|_{H^2(\Omega)}^2\notag\\
		&\leq\frac{16}{\kappa^4}\tilde{C}^2\notag.
	\end{align}
	\begin{align}
		\displaystyle\sup_{t\in[0, T]}\left\{\int_\Omega \displaystyle\left|\Delta\left[\frac{1}{w(t)}\right]\right|^2dx\right\}
		\leq&\displaystyle\sup_{t\in[0, T]}\left[\left\|\frac{\Delta w(t)}{[w(t)]^2}\right\|_{L^2(\Omega)}
		+\left\|\frac{2\left(\nabla w(t)\right)^2}{[w(t)]^3}\right\|_{L^2(\Omega)}\right]^2\notag\\
		\leq&\displaystyle\sup_{t\in[0, T]}\left[\frac{4}{\kappa^2}\left\|\Delta w(t)\right\|_{L^2(\Omega)}+\frac{16}{\kappa^3}\left\|\left(\nabla w(t)\right)^2\right\|_{L^2(\Omega)}\right]^2\notag\\
		\leq&\displaystyle\sup_{t\in[0, T]}\left[\frac{4}{\kappa^2}\left\|w(t)\right\|_{H^2(\Omega)}+\frac{16}{\kappa^3}\left\|\nabla w(t)\right\|^2_{L^{4}(\Omega)}\right]^2\notag\\
		\leq&\displaystyle\sup_{t\in[0, T]}\left[\frac{4}{\kappa^2}\left\|w(t)\right\|_{H^2(\Omega)}+\frac{16C}{\kappa^3}\left\|\nabla w(t)\right\|^2_{H^{1}(\Omega)}\right]^2\notag\\
		\leq&\displaystyle\sup_{t\in[0, T]}\left[\frac{4}{\kappa^2}+\frac{16C}{\kappa^3}\left\|w(t)\right\|_{H^2(\Omega)}\right]^2\left\|w(t)\right\|^2_{H^2(\Omega)}\notag\\
		\leq&\displaystyle\left[\frac{4}{\kappa^2}+\frac{16C}{\kappa^3}\tilde{C}\right]^2\tilde{C}^2\notag.
	\end{align}
	\begin{align}
		\sup_{t\in[0, T]}\left\|\frac{1}{w(t)}\right\|^2_{H^2(\Omega)}=&\displaystyle\sup_{t\in[0, T]}\left\{\int_\Omega\left|\frac{1}{w(t)}\right|^2+\left|\nabla\left[\frac{1}{w(t)}\right]\right|^2+\left|\Delta\left[\frac{1}{w(t)}\right]\right|^2dx\right\}\notag\\
		\leq&\frac{4C}{\kappa^2}+\frac{16}{\kappa^4}\tilde{C}^2+\left[\frac{4}{\kappa^2}+\frac{16C}{\kappa^3}\tilde{C}\right]^2\tilde{C}^2.\label{w-1-1}
	\end{align}
	We set $C_1^2=\frac{4C}{\kappa^2}+\frac{16}{\kappa^4}\tilde{C}^2+\left[\frac{4}{\kappa^2}+\frac{16C}{\kappa^3}\tilde{C}\right]^2\tilde{C}^2$,
	$C_1$ is a positive constant depending on $\Omega$, $\kappa$,  and $\left\|w_0\right\|_{H^2(\Omega)}$. Because $H^2(\Omega)$ is an algebra, i.e. Lemma \ref{alg}, the assertion \eqref{C-a} of Lemma \ref{estimates} holds for $t\in [0, T]$.
	With these facts, we continue on to show the assertions \eqref{C-d} of Lemma \ref{estimates}. For all $w_1$ and $w_2\in C\left([0, T]; B_{H^2}(w_0,  r)\right)$, the algebraic property of $H^2(\Omega)$ from Lemma \ref{alg} and the triangle inequality imply
	\[\sup_{t\in[0, T]}\left\|\displaystyle \frac{\left[w_1(t)+w_2(t)\right]}{[w_1(t)]^{2}[w_2(t)]^{2}}\right\|_{H^2(\Omega)}\leq2 C^3_1,\
	\sup_{t\in[0, T]}\left\|\frac{[w_1(t)]^2+[w_2(t)]^2+w_1(t) w_2(t)}{[w_1(t)]^3 [w_2(t)]^3}\right\|_{H^2(\Omega)}\leq3C_1^4.\]
	Setting $C_2=2\displaystyle C^3_1$ and $C_3=3C^4_1$, hence we deduce \eqref{C-d} of Lemma \ref{estimates}.
	
	This concludes the proof of Lemma \ref{estimates}.
\end{proof}

\subsection{Proof of Lemma \ref{Lip-G-Lem}}
\begin{proof}
	Recall that $r\in\left(0, \frac{\kappa}{2C}\right)$, $w_0=\tilde{w}_0+\theta_2$, $\kappa=\displaystyle\inf_{\overline{\Omega}}w_0>0$, $w=\tilde{w}+\theta_2$, $\tilde{w}\in C\left([0, T]; B_{H^2}(\tilde{w}_0, r)\right)$. For small $h\in(0, T)$ such that $t+h\in(0, T]$, $\left\|\tilde{w}(t+h)-\tilde{w}_0\right\|_{H^2(\Omega)}\leq r$, \eqref{C-a} and \eqref{C-d} of Lemma \ref{estimates} imply  \eqref{Holdercontinuous} and \eqref{Lip-G} of Lemma \ref{Lip-G-Lem} are valid with $L_G=\beta_FC_2$.
	
	In particular, for $\tilde{w}_1\in C\left([0, T]; B_{H^2}\left(\tilde{w}_0,  r\right)\right)$, setting  $\tilde{w}_2(t)=\tilde{w}_0$, $\forall\ t\in[0, T]$, then one can obtain $[G(\tilde{w}_2)](t)=G(\tilde{w}_0)$. Hence \eqref{Lip-G-1} of Lemma \ref{Lip-G-Lem} is valid since the assertion \eqref{Lip-G} of Lemma \ref{Lip-G-Lem}.
	
	Set $\tilde{w}_2=\tilde{w}\in C\left([0, T]; B_{H^2}\left(\tilde{w}_0, r\right)\right)$, for $q\in C\left([0, T]; H_*^2(\Omega)\right)$, choose small $\lambda\in\mathds{R}$, such that
	\[\tilde{w}_1=\tilde{w}+\lambda q\in C\left([0, T]; B_{H^2}\left(\tilde{w}_0, r\right)\right).\]
	Then the Fr\'{e}chet derivative $G'\left(\tilde{w}\right)q$ of $G$ with respect to $\tilde{w}\in C\left([0, T]; B_{H^2}\left(\tilde{w}_0, r\right)\right)$ exists as a linear operator  $G'(\tilde{w}):\ C\left([0, T]; H_*^2(\Omega)\right)\longrightarrow C\left([0, T]; H_*^2(\Omega)\right)$ given by
	\[G'\left(\tilde{w}\right)q=\displaystyle\lim_{\lambda\rightarrow0}\frac{1}{\lambda}\left[G\left(\tilde{w}+\lambda q\right)-G\left(\tilde{w}\right)\right]=\frac{2\beta_F}{\left(\tilde{w}+\theta_2\right)^3}q,\] \[\left[G'\left(\tilde{w}\right)q\right](t)=\left[G'\left(\tilde{w}(t)\right)\right]q(t)=\frac{2\beta_F}{\left(\tilde{w}(t)+\theta_2\right)^3}q(t).\]
	According to the assertion \eqref{Lip-G}, the inequality \eqref{Lip-G-2} holds by the following computation:
	\begin{align}
		\sup_{t\in[0, T]}\left\|\left[G'\left(\tilde{w}\right)q\right](t)\right\|_{H^2(\Omega)}=&\sup_{t\in[0, T]}\left\|\displaystyle\lim_{\lambda\rightarrow0}\frac{[G\left(\tilde{w}+\lambda q\right)](t)-[G\left(\tilde{w}\right)](t)}{\lambda}\right\|_{H^2(\Omega)}\notag\\
		=&\displaystyle\lim_{\lambda\rightarrow0}\frac{1}{\lambda}\sup_{t\in[0, T]}\left\|[G\left(\tilde{w}_1\right)](t)-[G\left(\tilde{w}_2\right)](t)\right\|_{H^2(\Omega)}\notag\\
		\leq&\displaystyle\lim_{\lambda\rightarrow0}\frac{1}{\lambda}L_G\sup_{t\in[0, T]}\left\| \tilde{w}_1(t)-\tilde{w}_2(t)\right\|_{H^{2}(\Omega)}\notag\\
		=&L_G\sup_{t\in[0, T]}\left\|q(t)\right\|_{H^{2}(\Omega)}\notag.
	\end{align}
	For all $t\in[0, T]$, choose small $h\in\left(0, T\right)$ and $\tau\in[0, 1]$ such that $t+h\in(0, T]$,
	\[\left\|\tilde{w}(t)+\tau[\tilde{w}(t+h)-\tilde{w}(t)]-\tilde{w}_0\right\|_{H^2(\Omega)}\leq r,\]
	then for $\psi\in H_*^2(\Omega)$ with $\|\psi\|_{H^2(\Omega)}\leq1$, $G'(\tilde{w}(t)): \psi\in H_*^2(\Omega)\longrightarrow \left[G'(\tilde{w}(t))\right]\psi\in H_*^2(\Omega)$. By the algebraic properties of $H^2(\Omega)$, i.e. \eqref{C-a} and \eqref{C-d} from Lemma \ref{estimates}, we have
	\begin{align}
		&\left\|G'(\tilde{w}(t)+\tau[\tilde{w}(t+h)-\tilde{w}(t)])-G'(\tilde{w}(t))\right\|_{\mathcal{B}\left(H^2(\Omega)\right)}\notag\\
		=&\left\|\left[G'(\tilde{w}(t)+\tau[\tilde{w}(t+h)-\tilde{w}(t)])\right]\psi-\left[G'(\tilde{w}(t))\right]\psi\right\|_{H^2(\Omega)}\notag\\
		\leq&2\beta_F\sup_{\begin{smallmatrix}0\leq t< t+h\leq T\\ 0\leq \tau\leq 1\end{smallmatrix}}\left\|\frac{1}{\left({w}(t)+\tau[{w}(t+h)-{w}(t)]\right)^3}-\frac{1}{[{w}(t)]^3}\right\|_{H^2(\Omega)}\|\psi\|_{H^2(\Omega)}\notag\\
		\leq&2\beta_FC_3\sup_{0\leq t<t+h\leq T}\|\tilde{w}(t+h)-\tilde{w}(t)\|_{H^2(\Omega)}\notag.%\label{002}.
	\end{align}
	Since  $\tilde{w}\in C\left([0, T]; H^2(\Omega)\right)$,  $\tilde{w}$ are uniformly continuous with respect to $t\in[0, T]$, hence the assertion \eqref{uniformly-continuous-Fre-G} is proved by
	\[\lim_{h\rightarrow 0^+}\sup_{0\leq t< t+ h\leq T}\|\tilde{w}(t+\tau h)-\tilde{w}(t)\|_{H^2(\Omega)}=0.\]
	This concludes the proof of Lemma \ref{Lip-G-Lem}.
	
\end{proof}

\subsection{Proof of Lemma \ref{Lip-nonlinearity}}
\begin{proof}
	Let $u_1$, $u_2\in C\left([0, T]; B_{H^2}(u_0, r)\right)$, according to the definitions of the operators $u\longmapsto v(u)$ and $u\longmapsto w(u)$,  it follows that 
	$(v_1, w_1)=(v(u_1), w(u_1))$ and $(v_2, w_2)=(v(u_2), w(u_2))$ belong to $ C\left([0, T]; B_{H^2}(v_0, r)\times B_{H^2}(w_0, r)\right)$.
	Setting that 
	$\tilde{C}=\left\|w_0\right\|_{H^2(\Omega)}+\frac{\kappa}{2C}$, $\tilde{C_1}=\left\|u_0\right\|_{H^2(\Omega)}+\frac{\kappa}{2C}$, $\tilde{C_2}=\left\|v_0\right\|_{L^2(\Omega)}+\frac{\kappa}{2C}$. Thus, $\forall\ t\in[0, T]$, $\left\|u_1(t)\right\|_{H^2(\Omega)}\leq\tilde{C_1}$, $\left\|u_2(t)\right\|_{H^2(\Omega)}\leq\tilde{C_1}$, $\left\|v_1(t)\right\|_{L^2(\Omega)}\leq\tilde{C_2}$, $\left\|v_2(t)\right\|_{L^2(\Omega)}\leq\tilde{C_2}$, $\left\|w_1(t)\right\|_{H^2(\Omega)}\leq\tilde{C}$, $\left\|w_2(t)\right\|_{H^2(\Omega)}\leq\tilde{C}$.
	
	Because the estimate $\left\|w_1(t)-w_2(t)\right\|_{H^2(\Omega)}\leq L_W\left\|u_1(t)-u_2(t)\right\|_{H^2(\Omega)}$ for all $t\in[0, T]$ and $H^2(\Omega)$ is an algebra for $\Omega\subset\mathds{R}^n$, $n=1,\ 2$, i.e. \eqref{C-a} and \eqref{C-d} in Lemma \ref{estimates}, we obtain  similar bounds for $[w_1(t)]^{-1}-[w_2(t)]^{-1}$ and $[w_1(t)]^3-[w_2(t)]^3$:
	\be\label{AppB01}
	\left\|[w_1(t)]^{-1}-[w_2(t)]^{-1}\right\|_{H^2(\Omega)}\leq C_1^2L_W\left\|u_1(t)-u_2(t)\right\|_{H^2(\Omega)},
	\ee
	\be\label{AppB02}
	\left\|[w_1(t)]^3-[w_2(t)]^3\right\|_{H^2(\Omega)}\leq3\tilde{C}^2L_W\left\|u_1(t)-u_2(t)\right\|_{H^2(\Omega)}.
	\ee
	Similarly, the algebraic property of $H^2(\Omega)$ from Lemma \ref{alg} implies
	\begin{align}
		\left\|[u_1(t)]^2-[u_2(t)]^2\right\|_{H^2(\Omega)}\leq2\tilde{C}_1\left\|u_1(t)-u_2(t)\right\|_{H^2(\Omega)}.\label{AppB1}
	\end{align}
	The algebraic properties of $H^2(\Omega)$, i.e. Lemma \ref{alg} and \eqref{alg-1-2} of Corollary \ref{alg-1}, imply
	\begin{align}
		&\frac{1}{2}\left\|\left\{[w_1(t)]^{-1}-[w_2(t)]^{-1}\right\}\nabla\cdot\left\{[w_1(t)]^3\nabla [u_1(t)]^2\right\}\right\|_{L^2(\Omega)}\notag\\
		\leq&C\displaystyle\left\|[w_1(t)]^{-1}-[w_2(t)]^{-1}\right\|_{H^2(\Omega)}\left\|[w_1(t)]^3\right\|_{H^2(\Omega)}\left\|[u_1(t)]^2\right\|_{H^2(\Omega)}\notag\\
		\leq&C\displaystyle C_1^2\tilde{C}^3\tilde{C}_1^2L_W\left\|u_1(t)-u_2(t)\right\|_{H^2(\Omega)}\label{AppB3}.
	\end{align}
	Similarly, set $\widehat{C}_1=3CC_1L_W\tilde{C}^2\tilde{C}_1^2$ and $\widehat{C}_2=2CC_1\tilde{C}^3\tilde{C}_1$, then
	\begin{align}
		\frac{1}{2}\left\|[w_2(t)]^{-1}\nabla\cdot\left\{\left([w_1(t)]^3-[w_2(t)]^3\right)\nabla[u_1(t)]^2\right\}\right\|_{L^2(\Omega)}\leq\widehat{C}_1\left\|u_1(t)-u_2(t)\right\|_{H^2(\Omega)}\notag,%\label{AppB4},
	\end{align}
	\begin{align}
		\frac{1}{2}\left\|[w_2(t)]^{-1}\nabla\cdot\left\{[w_2(t)]^3\nabla\left([u_1(t)]^2-[u_2(t)]^2\right)\right\}\right\|_{L^2(\Omega)}\leq\widehat{C}_2\left\|u_1(t)-u_2(t)\right\|_{H^2(\Omega)}\notag.%\label{AppB6},
	\end{align}
	Because of the estimates \eqref{AppB01} and $\left\|v_1(t)-v_2(t)\right\|_{L^2(\Omega)}\leq L_W\left\|u_1(t)-u_2(t)\right\|_{H^2(\Omega)}$, and $H^2(\Omega)$ is an algebra, i.e. \eqref{alg-1-1} in Corollary \ref{alg-1}, we obtain
	\begin{align}
		\left\|\frac{v_1(t)}{w_1(t)}u_1(t)-\frac{v_2(t)}{w_2(t)}u_2(t)\right\|_{L^2(\Omega)}\leq&C\left\|v_1(t)\right\|_{L^2(\Omega)}\left\|[w_1(t)]^{-1}\right\|_{H^2(\Omega)}\left\|u_1(t)-u_2(t)\right\|_{H^2(\Omega)}\notag\\
		+&C\left\|u_2(t)\right\|_{H^2(\Omega)}\left\|[w_1(t)]^{-1}\right\|_{H^2(\Omega)}\left\|v_1(t)-v_2(t)\right\|_{L^2(\Omega)}\notag\\
		+&C\left\|u_2(t)\right\|_{H^2(\Omega)}\left\|v_2(t)\right\|_{L^2(\Omega)}\left\|[w_1(t)]^{-1}-[w_2(t)]^{-1}\right\|_{H^2(\Omega)}\notag\\
		\leq&\widehat{C}_3\left\|u_1(t)-u_2(t)\right\|_{H^2(\Omega)}\label{AppB7-1}.
	\end{align}
	Here $\widehat{C}_3=C\left(\tilde{C}_2C_1+\tilde{C}_1C_1L_W+\tilde{C}_1\tilde{C}_2C_1^2L_W\right)$.
	
	Consequently, \eqref{nonlinearity-Lip-0} holds by setting $L_e=CC_1^2\tilde{C}^3\tilde{C}_1^2L_W+\widehat{C}_1+\widehat{C}_2+\widehat{C}_3$ and computing
	\begin{align}
		\|[f(u_1)](t)-[f(u_2)](t)\|_{L^2(\Omega)}\leq&\frac{1}{2}\left\|\left\{[w_1(t)]^{-1}-[w_2(t)]^{-1}\right]\nabla\cdot\left\{[w_1(t)]^3\nabla [u_1(t)]^2\right\}\right\|_{L^2(\Omega)}\notag\\
		+&\frac{1}{2}\left\|[w_2(t)]^{-1}\nabla\cdot\left\{\left([w_1(t)]^3-[w_2(t)]^3\right)\nabla[u_1(t)]^2\right\}\right\|_{L^2(\Omega)}\notag\\
		+&\frac{1}{2}\left\|[w_2(t)]^{-1}\nabla\cdot\left\{[w_2(t)]^3\nabla\left([u_1(t)]^2-[u_2(t)]^2\right)\right\}\right\|_{L^2(\Omega)}\notag\\
		+&\left\|\frac{v_1(t)}{w_1(t)}u_1(t)-\frac{v_2(t)}{w_2(t)}u_2(t)\right\|_{L^2(\Omega)}\notag\\
		\leq&L_e\left\|u_1(t)-u_2(t)\right\|_{H^2(\Omega)}.\label{AppB2}
	\end{align}
	This concludes the proof of Lemma \ref{Lip-nonlinearity}.
\end{proof}

\section{\textcolor{black}{Proofs of Results in Section \ref{4th-order problem} }}\label{AppA1}
\subsection{Proof of Lemma \ref{equivalence-1}}
\begin{proof}
	Let $\tilde{w}\in C^2\left([0, T]; L^2(\Omega)\right)\cap C^1\left([0, T]; H_*^2(\Omega)\right)\cap C\left([0, T]; H_*^4(\Omega)\right)$ solve the semilinear fourth-order equation \eqref{4th-LWE-1}. Then $\Phi:=\left(\tilde{w}', \tilde{w}\right)\in C^1([0, T]; \mathfrak{X})$, $\Phi(t)=\left(\tilde{w}'(t), \tilde{w}(t)\right)\in D(\mathcal{A})$ for all $t\in [0, T]$, $\Phi\in C([0, T]; D(\mathcal{A}))$ and
	\begin{align}
		\Phi'(t)=\begin{pmatrix}\tilde{w}^{''}(t)\\ \tilde{w}'(t)\end{pmatrix}
		=\begin{pmatrix}\mathbb{A}\tilde{w}(t)-\frac{\beta_F}{(\tilde{w}(t)+\theta_2)^2}+\beta_p(\tilde{u}(t)+\theta_1-1)\\ \tilde{w}'(t)\end{pmatrix}
		=\mathcal{A}\Phi(t)+[\mathcal{G}(\Phi)](t),\notag
	\end{align}
	holds for all $t\in [0, T]$. Moreover, $\Phi(0)=\left(\tilde{w}'(0), \tilde{w}(0)\right)=\left(\tilde{v}_0, \tilde{w}_0\right)=\Phi_0.$
	
	Therefore, $\Phi\in C([0, T]; D(\mathcal{A}))\cap C^1([0, T]; \mathfrak{X})$ solves the equation \eqref{4th-IEE}.
	
	Conversely, let $\Phi=\left(\varphi_1, \varphi_2\right)\in C^1([0, T]; \mathfrak{X})\cap C([0, T]; D(\mathcal{A}))$ solves the semilinear evolution equation \eqref{4th-IEE}. We set $\tilde{w}:=\varphi_2$, obtaining $\tilde{w}\in C^1\left([0, T]; H_*^2(\Omega)\right),\ \tilde{w}(t)\in H_*^4(\Omega)$, $\forall\ t\in [0, T]$, and $\tilde{w}\in C\left([0, T]; H_*^4(\Omega)\right)$. It further follows, $\forall\ t\in[0, T]$,
	\begin{align}
		\begin{pmatrix}\varphi'_1(t)\\ \tilde{w}'(t)\end{pmatrix}=\mathcal{A}\begin{pmatrix}\varphi_1(t)\\ \tilde{w}(t)\end{pmatrix}+[\mathcal{G}(\Phi)](t)
		=\begin{pmatrix}\mathbb{A}\tilde{w}(t)-\frac{\beta_F}{(\tilde{w}(t)+\theta_2)^2}+\beta_p(\tilde{u}(t)+\theta_1-1)\\ \varphi_1(t)\end{pmatrix}.\notag
	\end{align}
	Thus, $\tilde{w}'=\varphi_1\in C^1\left([0, T]; L^2(\Omega)\right)$ and $\Phi=\left(\tilde{w}', \tilde{w}\right)$, so that
	$\tilde{w}\in C^2\left([0, T]; L^2(\Omega)\right)$, $\left(\tilde{w}'(0), \tilde{w}(0)\right)=\left(\tilde{v}_0, \tilde{w}_0\right)$.
	So $\tilde{w}\in C^2\left([0, T]; L^2(\Omega)\right)\cap C^1\left([0, T]; H_*^2(\Omega)\right)\cap C\left([0, T]; H_*^4(\Omega)\right)$
	solves the semilinear fourth-order equation \eqref{4th-LWE-1}.
	
	This equivalence also yields that the solutions to the semilinear evolution equation \eqref{4th-IEE} are unique if and only if the solutions to the semilinear fourth-order equation \eqref{4th-LWE-1} are unique.
\end{proof}

\subsection{Proof of Lemma \ref{generator}}
\begin{proof}
	We aim to show that $\mathcal{A}$, defined by \eqref{A-1}, is skew adjoint on the Hilbert space $\mathfrak{X}$ defined by \eqref{state-space}, and thus generates a strongly continuous semigroup ($C_0$-semigroup) on $\mathfrak{X}$ by using Stone's Lemma (see 3.24 Theorem, Section 3, Chapter II, \cite{EK}).
	
	From the definition \eqref{A-1} of $\mathcal{A}$, $\mathcal{A}$ is densely defined in $\mathfrak{X}$, i.e. $\overline{D(\mathcal{A})}=\mathfrak{X}$,  then $\mathcal{A}$ is skew symmetric (i.e. $\mathbf{i}\mathcal{A}$ is symmetric) for any two $(\phi_1, \phi_2)\in D(\mathcal{A})$ and $(\psi_1, \psi_2 )\in D(\mathcal{A})$ by the following computations:
	\begin{align}
		\left\langle\mathcal{A}\begin{pmatrix}\phi_1\\ \phi_2\end{pmatrix}, \begin{pmatrix}\psi_1\\ \psi_2 \end{pmatrix}\right\rangle_{\mathfrak{X}}&=\left\langle\begin{pmatrix}\mathbb{A}\phi_2\\ \phi_1 \end{pmatrix}, \begin{pmatrix}\psi_1\\ \psi_2 \end{pmatrix}\right\rangle_{\mathfrak{X}}\notag\\
		&=\displaystyle\int_\Omega\mathbb{A}\phi_2\cdot\overline{\psi_1}+\nabla\phi_1\cdot\nabla\overline{\psi_2}+\Delta\phi_1\cdot\Delta\overline{\psi_2}dx\notag\\
		&=\displaystyle\int_\Omega -\nabla\phi_2\cdot\nabla\overline{\psi_1}-\Delta\phi_2\cdot\Delta\overline{\psi_1}+\nabla\phi_1\cdot\nabla\overline{\psi_2}+\Delta\phi_1\cdot\Delta\overline{\psi_2}dx\notag\\
		&=-\displaystyle\left[\int_\Omega \nabla\phi_2\cdot\nabla\overline{\psi_1}+\Delta\phi_2\cdot\Delta\overline{\psi_1}-\nabla\phi_1\cdot\nabla\overline{\psi_2}-\Delta\phi_1\cdot\Delta\overline{\psi_2}dx\right]\notag\\
		&=-\displaystyle\left[\int_\Omega \nabla\phi_2\cdot\nabla\overline{\psi_1}+\Delta\phi_2\cdot\Delta\overline{\psi_1}+\phi_1\cdot\overline{\mathbb{A}\psi_2}dx\right]\notag\\
		&=-\left\langle\begin{pmatrix}\phi_1\\ \phi_2 \end{pmatrix}, \begin{pmatrix}\mathbb{A}\psi_2\\ \psi_1 \end{pmatrix}\right\rangle_{\mathfrak{X}}\notag\\
		&=-\left\langle\begin{pmatrix}\phi_1\\ \phi_2 \end{pmatrix}, \mathcal{A}\begin{pmatrix}\psi_1\\ \psi_2 \end{pmatrix}\right\rangle_{\mathfrak{X}}.\notag
	\end{align}
	Furthermore, $\text{Re}\left\langle\mathcal{A}\begin{pmatrix}\psi\\ \phi\end{pmatrix}, \begin{pmatrix}\psi\\ \phi \end{pmatrix}\right\rangle_{\mathfrak{X}}=0$ for all $(\psi, \phi )\in D(\mathcal{A})$, so $\mathcal{A}$ is dissipative. By using the Lax-Milgram Theorem (Theorem 1, Section 6.2, \cite{EL}), we have the inverse $\mathbb{A}^{-1}$ of $\mathbb{A}$ exists, thus we define an operator
	\[\mathcal{R}=\begin{pmatrix}0 &1\\ \mathbb{A}^{-1} &0\end{pmatrix}.\]
	Then
	\[\mathcal{R}\mathfrak{X}\subset D(\mathcal{A}),\ \mathcal{A}\mathcal{R}=I, \ \mathcal{R}\mathcal{A}\begin{pmatrix}\psi\\ \phi \end{pmatrix}=\begin{pmatrix}\psi\\ \phi \end{pmatrix},\ \forall\ \left(\psi, \phi \right)\in D(\mathcal{A}).\]
	Therefore, $\mathbf{i}\mathcal{A}$ is invertible and the resolvent set $\rho(\mathbf{i}\mathcal{A})$ of $\mathbf{i}\mathcal{A}$ satisfies $\rho(\mathbf{i}\mathcal{A})\cap \mathds{R}\neq\emptyset$, so the spectrum $\sigma(\mathbf{i}\mathcal{A})\subseteq\mathds{R}$, consequently, $\mathbf{i}\mathcal{A}$ is selfadjoint, as a result, $\mathcal{A}$ is skew adjoint. According to Stone's Lemma, we have the linear operator $\mathcal{A}$ generates a $C_0$-semigroup
	\[\left\{T(t)\in \mathcal{B}(\mathfrak{X}):\ t\in[0, \infty)\right\}.\]
\end{proof}

\subsection{Proof of Theorem \ref{4th-solu-thm}}
\begin{proof}
	We let $T\in(0, \infty)$ be taken to be specified below. Because $\mathcal{A}$, defined by \eqref{A-1}, generates a strongly continuous semigroup ($C_0$-semigroup) $\left\{T(t)\in\mathcal{B}\left(L^2(\Omega)\times H_*^2(\Omega)\right): t\in[0, \infty)\right\}$, and $[G\left(\tilde{w}\right)](t)=-\beta_F[\tilde{w}(t)+\theta_2]^{-2}+\beta_p\left(\theta_1-1\right)$, we introduce a nonlinear operator $\Phi$ on $\mathcal{Z}(T)$ by
	\[\left[\Phi(\tilde{v},\tilde{w})\right](t):=T(t)\begin{pmatrix}\tilde{v}_0\\ \tilde{w}_0\end{pmatrix}+\displaystyle\int_0^t\left\{T(t-s)\begin{pmatrix}[G(\tilde{w})](s)+\beta_p\tilde{u}(s)\\ 0\end{pmatrix}\right\}ds,\quad \forall\ t\in [0,T].\]
	We notice that
	\[\left[\Phi(\tilde{v},\tilde{w})\right](0)=T(0)\begin{pmatrix}\tilde{v}_0\\ \tilde{w}_0\end{pmatrix}=\begin{pmatrix}\tilde{v}_0\\ \tilde{w}_0\end{pmatrix}\in D(\mathcal{A}).\]
	According to Lemma 1.3 of Chapter II in \cite{EK},
	\[T(t)\begin{pmatrix}\tilde{v}_0\\ \tilde{w}_0\end{pmatrix}\in D(\mathcal{A}),\ \forall\ t\in[0, T].\]
	Since $(\tilde{v},\tilde{w})\in \mathcal{Z}(T)$, $\tilde{u}\in C\left([0, T]; B_{H^2}\left(\tilde{u}_0, r\right)\right)$ such that
	$G(\tilde{w})+\beta_p\tilde{u}\in C([0, T]; H^2(\Omega))$, hence
	\[\begin{pmatrix}[G(\tilde{w})](t)+\beta_p\tilde{u}(t)\\ 0\end{pmatrix},\quad \int_0^t\left\{T(t-s)\begin{pmatrix}[G(\tilde{w})](s)+\beta_p\tilde{u}(s)\\ 0\end{pmatrix}\right\}ds\in L^2(\Omega)\times H_*^2(\Omega).\]
	Therefore, $\Phi$ is a nonlinear operator which maps $\mathcal{Z}(T)$ into $C\left([0,T]; L^2(\Omega)\times H_*^2(\Omega)\right)$.
	%\[\Phi: \ \mathcal{Z}(T)\rightarrow\ C\left([0,T]; L^2(\Omega)\times H_*^2(\Omega)\right). \]
	
	We next show that there exists a unique mild solution $(\tilde{v},\tilde{w})\in \mathcal{Z}(T)$ of the semilinear evolution equation \eqref{4th-SWE-1} which is a fixed point of $\Phi$ on $\mathcal{Z}(T)$.
	
	We denote by $M_0=\displaystyle\sup_{t\in [0, \infty )}\|T(t)\|_{\mathcal{B}\left(L^2(\Omega)\times H^2(\Omega)\right)}$ an operator norm of $\left\{T(t)\right\}_{0\leq t<\infty}$ on the space $L^2(\Omega)\times H_*^2(\Omega)$. For given $\tilde{u}\in C\left([0, T]; B_{H^2}\left(\tilde{u}_0, r\right)\right)$, if $(\tilde{v}_1,\tilde{w}_1),\ (\tilde{v}_2,\tilde{w}_2)\in \mathcal{Z}(T)$, then\\
	$[G(\tilde{w}_1)](t)-[G(\tilde{w}_2)](t)\in H^2(\Omega)$, $\forall\ t\in[0, T]$.
	By using the estimate \eqref{Lip-G} of Lemma \ref{estimates}, we obtain
	\begin{align}
		&\sup_{t\in[0, T]}\left\|[\Phi(\tilde{v}_1,\tilde{w}_1)](t)-[\Phi(\tilde{v}_2,\tilde{w}_2)](t)\right\|_{L^2(\Omega)\times H^{2}(\Omega)}\notag\\
		=&\sup_{t\in[0, T]}\left\|\int_0^tT(t-s)\begin{pmatrix}[G(\tilde{w}_1)](s)+\beta_p\tilde{u}(s)-[G(\tilde{w}_2)](s)-\beta_p\tilde{u}(s)\\ 0\end{pmatrix}ds\right\|_{L^{2}(\Omega)\times H^{2}(\Omega)}\notag\\
		\leq&TM_0\sup_{t\in[0, T]}\left\|[G(\tilde{w}_1)](t)-[G(\tilde{w}_2)](t)\right\|_{L^{2}(\Omega)}\notag\\
		\leq&TM_0\sup_{t\in[0, T]}\left\|[G(\tilde{w}_1)](t)-[G(\tilde{w}_2)](t)\right\|_{H^{2}(\Omega)}\notag\\
		\leq&TM_0L_G\sup_{t\in[0, T]}\left\| \tilde{w}_1(t)-\tilde{w}_2(t)\right\|_{H^{2}(\Omega)}\notag\\
		\leq&TM_0L_G\sup_{t\in[0, T]}\left\|\begin{pmatrix}\tilde{v}_1(t)-\tilde{v}_2(t)\\ \tilde{w}_1(t)-\tilde{w}_2(t)\end{pmatrix}\right\|_{L^{2}(\Omega)\times H^{2}(\Omega)}.\label{difference-1}
	\end{align}
	Because $\{T(t)\in\mathcal{B}\left(L^2(\Omega)\times H_*^2(\Omega)\right):\ t\geq 0\}$ is a strongly continuous semigroup,  according to the definition of strong continuity, for $(\tilde{v}_0, \tilde{w}_0)\in D(\mathcal{A})$ and given constant $r\in\left(0, \frac{\kappa}{2C}\right)$, there exists $\delta_o=\delta_o(r)>0$, such that if $0<t\leq\delta_o$, then
	\be\label{semigp-continuity}
	0<\left\|T(t)\begin{pmatrix}\tilde{v}_0\\ \tilde{w}_0\end{pmatrix}-\begin{pmatrix}\tilde{v}_0\\ \tilde{w}_0\end{pmatrix}\right\|_{L^2(\Omega)\times H^{2}(\Omega)}\leq\frac{r}{2}.
	\ee
	Since $r\in\left(0, \frac{\kappa}{2C}\right)$ and $C=C(\Omega)$ is a constant, $\delta_o$ depends on $\kappa$ and $\Omega$, i.e. $\delta_o=\delta_o(\kappa, \Omega)$.
	
	As $\tilde{u}\in C\left([0, T]; B_{H^2}\left(\tilde{u}_0, r\right)\right)$ and $(\tilde{v}_1,\tilde{w}_1)\in \mathcal{Z}(T)$, then $\tilde{v}_1(t)\in L^2(\Omega)$, $\tilde{w}_1(t),\ \tilde{u}(t)\in H_*^2(\Omega)$, $\forall\ t\in[0, T]$, and $\displaystyle\sup_{t\in[0, T]}\left\|\tilde{u}(t)-\tilde{u}_0\right\|_{H^2(\Omega)}\leq r$, thus $[G(\tilde{w}_1)](t)+\beta_p\tilde{u}(t)\in H^2(\Omega)$, $\forall\ t\in[0, T]$. Because $G_0=G(\tilde{w}_0)+\beta_p\tilde{u}_0\in H^2(\Omega)$, the inequality \eqref{Lip-G-1} of Lemma \ref{estimates} implies
	\begin{align}
		&\sup_{t\in[0, T]}\left\|\left[\Phi(\tilde{v}_1,\tilde{w}_1)\right](t)-\begin{pmatrix}\tilde{v}_0\\ \tilde{w}_0\end{pmatrix}\right\|_{L^2(\Omega)\times H^{2}(\Omega)}\notag\\
		=&\sup_{t\in[0, T]}\left\|T(t)\begin{pmatrix}\tilde{v}_0\\ \tilde{w}_0\end{pmatrix}-\begin{pmatrix}\tilde{v}_0\\ \tilde{w}_0\end{pmatrix}+\displaystyle\int_0^t\left\{T(t-s)\begin{pmatrix}[G(\tilde{w}_1)](s)+\beta_p\tilde{u}(s)\\ 0\end{pmatrix}\right\}ds\right\|_{L^2(\Omega)\times H^{2}(\Omega)}\notag\\
		\leq&\sup_{t\in[0, T]}\left\|T(t)\begin{pmatrix}\tilde{v}_0\\ \tilde{w}_0\end{pmatrix}-\begin{pmatrix}\tilde{v}_0\\ \tilde{w}_0\end{pmatrix}\right\|_{L^2(\Omega)\times H^{2}(\Omega)}+\displaystyle TM_0\left\|G_0\right\|_{ L^{2}(\Omega)}\notag\\
		+&\displaystyle TM_0\sup_{t\in[0, T]}\left\|[G(\tilde{w}_1)](t)-G(\tilde{w}_0)\right\|_{L^2(\Omega)}+TM_0\sup_{t\in[0, T]}\left\|\tilde{u}(t)-\tilde{u}_0\right\|_{L^2(\Omega)}\notag\\
		\leq&\sup_{t\in[0, T]}\left\|T(t)\begin{pmatrix}\tilde{v}_0\\ \tilde{w}_0\end{pmatrix}-\begin{pmatrix}\tilde{v}_0\\ \tilde{w}_0\end{pmatrix}\right\|_{L^2(\Omega)\times H^{2}(\Omega)}+\displaystyle TM_0\left\|G_0\right\|_{ H^{2}(\Omega)}\notag\\
		+&\displaystyle TM_0\sup_{t\in[0, T]}\left\|[G(\tilde{w}_1)](t)-G(\tilde{w}_0)\right\|_{H^2(\Omega)}+TM_0\sup_{t\in[0, T]}\left\|\tilde{u}(t)-\tilde{u}_0\right\|_{H^2(\Omega)}\notag\\
		\leq&\sup_{t\in[0, T]}\displaystyle\left\|T(t)\begin{pmatrix}\tilde{v}_0\\ \tilde{w}_0\end{pmatrix}-\begin{pmatrix}\tilde{v}_0\\ \tilde{w}_0\end{pmatrix}\right\|_{L^2(\Omega)\times H^{2}(\Omega)}+TM_0\left(\left\|G_0\right\|_{ H^{2}(\Omega)}+ \left(L_G+1\right) r\right)\label{estimate of phi-1}.
	\end{align}
	For fixed small $r\in\left(0, \frac{\kappa}{2C}\right)$, then there exists a number $T_0>0$,
	\be\label{T-0}
	T_0=\inf\left\{\delta_o,\ \frac{1}{2M_0L_G},\
	\frac{\kappa}{2M_0}\left[\left(L_G+1\right)\kappa+2C\left\|G_0\right\|_{ H^{2}(\Omega)}\right]^{-1}\right\},
	\ee
	such that for every $T\in (0,T_0)$, it follows that
	\[\sup_{t\in[0, T]}\left\|\left[\Phi(\tilde{v}_1,\tilde{w}_1)\right](t)-\begin{pmatrix}\tilde{v}_0\\ \tilde{w}_0\end{pmatrix}\right\|_{L^2(\Omega)\times H^{2}(\Omega)}\leq r,\]
	\[\sup_{t\in[0, T]}\|[\Phi(\tilde{v}_1,\tilde{w}_1)](t)-[\Phi(\tilde{v}_2,\tilde{w}_2)](t)\|_{L^2(\Omega)\times H^{2}(\Omega)}
	\leq\frac{1}{2}\sup_{t\in[0, T]}\left\|\begin{pmatrix}\tilde{v}_1(t)-\tilde{v}_2(t)\\ \tilde{w}_1(t)-\tilde{w}_2(t)\end{pmatrix}\right\|_{L^{2}(\Omega)\times H^{2}(\Omega)}.\]
	Hereby $\Phi(\tilde{v}_1,\tilde{w}_1)\in \mathcal{Z}(T)$ for $(\tilde{v}_1,\tilde{w}_1)\in\mathcal{Z}(T)$,  $\Phi(\tilde{v}, \tilde{w})$ is Lipschitz continuous on the bounded set $\mathcal{Z}(T)$ with Lipschitz constant smaller than or equal to $\frac{1}{2}$, and $\Phi(\tilde{v}, \tilde{w})$ is a contractive mapping of $\mathcal{Z}(T)$ into itself.
	
	According to the Banach fixed point theorem, for each $T\in(0, T_0)$, there exists a unique fixed point $(\tilde{v}_T, \tilde{w}_T)\in \mathcal{Z}(T)$, such that $(\tilde{v}_T,\tilde{w}_T)=\Phi(\tilde{v}_T,\tilde{w}_T)$ for given $\tilde{u}\in C\left([0, T]; B_{H^2}\left(\tilde{u}_0, r\right)\right)$.
	
	Hence, $(\tilde{v}_T, \tilde{w}_T)\in \mathcal{Z}(T)$ is the unique mild solution of the semilinear evolution equation \eqref{4th-SWE-1} on $[0, T]$, and $(\tilde{v}_T,\tilde{w}_T)$ satisfies the integral formulation \eqref{mild-solu-form}. Due to the uniqueness of the fixed point, we set $(\tilde{v}, \tilde{w})=(\tilde{v}_{T},\tilde{w}_{T})$ and note that $(\tilde{v}_T, \tilde{w}_T)$ is the restriction $(\tilde{v}|_{[0, T]}, \tilde{w}|_{[0, T]})\in\mathcal{Z}(T)$ of $(\tilde{v}, \tilde{w})$. As a result, the assertion is proved.
\end{proof}

\subsection{Proof of Corollary \ref{Lip-mild-solu}}
\begin{proof}
	Take $0\leq t<t+h\leq T$. Equation \eqref{mild-solu-form} leads to
	\begin{align}
		\begin{pmatrix}\tilde{v}(t+h)-\tilde{v}(t)\\ \tilde{w}(t+h)-\tilde{w}(t)\end{pmatrix}=&T(t)\left[T(h)\begin{pmatrix}\tilde{v}_0\\ \tilde{w}_0\end{pmatrix}-\begin{pmatrix}\tilde{v}_0\\ \tilde{w}_0\end{pmatrix}\right]+\displaystyle\int_0^hT(t+h-s)\begin{pmatrix}[G(\tilde{w})](s)+\beta_p\tilde{u}(s)\\ 0\end{pmatrix}ds\notag\\
		+&\displaystyle\int_0^tT(t-s)\begin{pmatrix}[G(\tilde{w})](s+h)-[G(\tilde{w})](s)+\beta_p[\tilde{u}(s+h)-\tilde{u}(s)]\\ 0\end{pmatrix}ds\notag\\
		=&\displaystyle \int_0^hT(t+s)\mathcal{A}\begin{pmatrix}\tilde{v}_0\\ \tilde{w}_0\end{pmatrix}ds+\displaystyle\int_0^hT(t+h-s)\begin{pmatrix}[G(\tilde{w})](s)+\beta_p\tilde{u}(s)\\ 0\end{pmatrix}ds\notag\\
		+&\displaystyle\int_0^tT(t-s)\begin{pmatrix}[G(\tilde{w})](s+h)-[G(\tilde{w})](s)+\beta_p[\tilde{u}(s+h)-\tilde{u}(s)]\\ 0\end{pmatrix}ds.\label{diff-mild-solution}
	\end{align}
	Notice that
	\be\label{Lip-est-1}
	M_0=\sup_{t\in[0, \infty)}\left\|T(t)\right\|_{\mathcal{B}\left(L^2(\Omega)\times H^2(\Omega)\right)}, \quad \left\|\mathcal{A}\begin{pmatrix}\tilde{v}_0\\ \tilde{w}_0\end{pmatrix}\right\|_{L^2(\Omega)\times H^2(\Omega)}\leq \left\|\left(\tilde{v}_0, \tilde{w}_0\right)\right\|_{D(\mathcal{A})}.
	\ee
	For $T\in (0, T_0)$ and $\tilde{u}\in C\left([0, T]; B_{H^2}\left(\tilde{u}_0, r\right)\right)$, the semilinear evolution equation \eqref{4th-SWE-1} has a unique mild solution $\left(\tilde{v}, \tilde{w}\right)\in \mathcal{Z}(T)$, by using the estimate \eqref{Lip-G-1} in Lemma \ref{Lip-G-Lem}, we have
	\begin{align}
		\sup_{t\in[0, T]}\left\|\begin{pmatrix}[G(\tilde{w})](t)+\beta_p\tilde{u}(t)\\ 0\end{pmatrix}\right\|_{L^2(\Omega)\times H^2(\Omega)}
		&\leq \sup_{t\in[0, T]}\left\|[G(\tilde{w})](t)+\beta_p\tilde{u}(t)\right\|_{H^2(\Omega)}\notag\\
		&\leq \sup_{t\in[0, T]}\left\|[G(\tilde{w})](t)-G(\tilde{w}_0)\right\|_{H^2(\Omega)}\notag\\
		&+\beta_p\sup_{t\in[0, T]}\left\|\tilde{u}(t)-\tilde{u}_0\right\|_{H^2(\Omega)}+\left\|G_0\right\|_{H^{2}(\Omega)}\notag\\
		&\leq \left(L_G+1\right) r+\left\|G_0\right\|_{H^{2}(\Omega)}\notag\\
		&\leq \frac{\kappa \left(L_G+1\right)}{2C}+\left\|G_0\right\|_{H^{2}(\Omega)}\label{Lip-est-2}.
	\end{align}
	Here $L_G$ is given by Lemma \ref{Lip-G-Lem} and $G_0=G(\tilde{w}_0)+\beta_p\tilde{u}_0\in H^2(\Omega)$.
	
	Moreover, $\tilde{u}(s+h)$, $\tilde{u}(s)$, $\tilde{w}(s+h)$, $\tilde{w}(s)\in H_*^2(\Omega)$, $\forall\ 0\leq s< s+h \leq t\leq T$, therefore,
	$[G(\tilde{w})](s+h)+\beta_p\tilde{u}(s+h)-[G(\tilde{w})](s)-\beta_p\tilde{u}(s)\in H^2(\Omega).$
	
	Since $\tilde{u}\in C^1\left([0, T_0); L^2(\Omega)\right)$, then, $\forall\ T\in(0, T_0)$,
	\begin{align}
		\sup_{t\in[0, T]}\displaystyle\int_0^t\left\|\tilde{u}(s+h)-\tilde{u}(s)\right\|_{L^2(\Omega)}ds\leq& T_0\sup_{\begin{smallmatrix}0\leq s\leq T,\\ 0\leq s+\sigma h\leq T\end{smallmatrix}}h\int_0^1\left\|\tilde{u}'(s+\sigma h)\right\|_{L^2(\Omega)}d\sigma\notag\\
		\leq&T_0h\left\|\tilde{u}\right\|_{C^1\left([0, T_0); L^2(\Omega)\right)}.\notag
	\end{align}
	According to  inequality \eqref{Holdercontinuous} of Lemma \ref{Lip-G-Lem}, we have
	\begin{align}
		&\left\|\displaystyle\int_0^tT(t-s)\begin{pmatrix}[G(\tilde{w})](s+h)-[G(\tilde{w})](s)+\beta_p[\tilde{u}(s+h)-\tilde{u}(s)]\\ 0\end{pmatrix}ds\right\|_{L^2(\Omega)\times H^2(\Omega)}\notag\\
		=& M_0\displaystyle\int_0^t\left\|[G(\tilde{w})](s+h)-[G(\tilde{w})](s)+\beta_p[\tilde{u}(s+h)-\tilde{u}(s)]\right\|_{L^2(\Omega)}ds\notag\\
		\leq& M_0\displaystyle\int_0^t\beta_p\left\|\tilde{u}(s+h)-\tilde{u}(s)\right\|_{L^2(\Omega)}
		+\left\|[G(\tilde{w})](s+h)-[G(\tilde{w})](s)\right\|_{H^2(\Omega)}ds\notag\\
		\leq& hM_0\beta_pT_0\left\|\tilde{u}\right\|_{C^1\left([0, T_0); L^2(\Omega)\right)}+M_0L_G\displaystyle\int_0^t\left\|\begin{pmatrix}\tilde{v}(s+h)-\tilde{v}(s)\\  \tilde{w}(s+h)-\tilde{w}(s)\end{pmatrix}\right\|_{L^2(\Omega)\times H^2(\Omega)}ds\label{diff-mild-solution-1}.
	\end{align}
	Combing \eqref{diff-mild-solution}, \eqref{Lip-est-1}, \eqref{Lip-est-2} and \eqref{diff-mild-solution-1} gives
	\begin{align}
		\left\|\begin{pmatrix}\tilde{v}(t+h)-\tilde{v}(t)\\ \tilde{w}(t+h)-\tilde{w}(t)\end{pmatrix}\right\|_{L^2(\Omega)\times H^2(\Omega)}&\leq hM_0\left\|\left(\tilde{v}_0, \tilde{w}_0\right)\right\|_{D(\mathcal{A})}+hM_0\left(\frac{\kappa\left(L_G+1\right) }{2C}+\left\|G_0\right\|_{H^{2}(\Omega)}\right)\notag\\
		&+hM_0\beta_pT_0\left\|\tilde{u}\right\|_{C^1\left([0, T_0); L^2(\Omega)\right)}\notag\\
		&+M_0L_G\displaystyle\int_0^t\left\|\begin{pmatrix}\tilde{v}(s+h)-\tilde{v}(s)\\  \tilde{w}(s+h)-\tilde{w}(s)\end{pmatrix}\right\|_{L^2(\Omega)\times H^2(\Omega)}ds\notag.
	\end{align}
	Set $V_o=\left\|\left(\tilde{v}_0, \tilde{w}_0\right)\right\|_{D(\mathcal{A})}+\left(\frac{\kappa \left(L_G+1\right)}{2C}+\left\|G_0\right\|_{H^{2}(\Omega)}\right)+\beta_pT_0\left\|\tilde{u}\right\|_{C^1\left([0, T_0); L^2(\Omega)\right)}$.
	
	Gronwall's inequality then implies that
	\[\left\|\begin{pmatrix}\tilde{v}(t+h)-\tilde{v}(t)\\ \tilde{w}(t+h)-\tilde{w}(t)\end{pmatrix}\right\|_{L^2(\Omega)\times H^2(\Omega)}\leq M_0V_o\left(e^{M_0L_GT_0}\right)h.\]
	Therefore, \eqref{Lip-mild-solu-inquality} holds for all $h\in(0, T]$ by setting $L_V=M_0V_o\left(e^{M_0L_GT_0}\right).$
\end{proof}

\subsection{Proof of Corollary \ref{Holdercontinuity}}
\begin{proof}
	{Note that}
	\begin{align}
		\sup_{t\in[0, T]}\displaystyle\int_0^t\left\|\tilde{u}(s+h)-\tilde{u}(s)\right\|_{L^2(\Omega)}ds\leq& T_0\sup_{0\leq s<s+h\leq T}\left\|\tilde{u}(s+h)-\tilde{u}(s)\right\|_{L^2(\Omega)}\notag\\
		\leq&T_0\sup_{0\leq s<s+h\leq T}\left\|\tilde{u}(s+h)-\tilde{u}(s)\right\|_{H^2(\Omega)}\notag\\
		\leq&T_0h^\alpha\left[\tilde{u}\right]_{C^\alpha\left([0, T_0); H^2(\Omega)\right)}.\label{Lip-est-3}
	\end{align}
	According to \eqref{diff-mild-solution}, \eqref{Lip-est-1}, \eqref{Lip-est-2}, \eqref{diff-mild-solution-1} and \eqref{Lip-est-3}, {and for}  $0\leq t<t+h\leq T$, $h\in(0, T]$,
	\begin{align}
		\left\|\begin{pmatrix}\tilde{v}(t+h)-\tilde{v}(t)\\ \tilde{w}(t+h)-\tilde{w}(t)\end{pmatrix}\right\|_{L^2(\Omega)\times H^2(\Omega)}&\leq hM_0\left\|\left(\tilde{v}_0, \tilde{w}_0\right)\right\|_{D(\mathcal{A})}+h^\alpha M_0\beta_pT_0\left[\tilde{u}\right]_{C^\alpha\left([0, T_0); H^2(\Omega)\right)}\notag\\
		&+hM_0\left(\frac{\kappa\left(L_G+1\right)}{2C}+\left\|G(\tilde{w}_0)+\beta_p\tilde{u}_0\right\|_{H^{2}(\Omega)}\right)\notag\\
		&+M_0L_G\displaystyle\int_0^t\left\|\begin{pmatrix}\tilde{v}(s+h)-\tilde{v}(s)\\  \tilde{w}(s+h)-\tilde{w}(s)\end{pmatrix}\right\|_{L^2(\Omega)\times H^2(\Omega)}ds\notag.
	\end{align}
	Set $P_0=\frac{\kappa\left(L_G+1\right)}{2C}+\left\|G(\tilde{w}_0)+\beta_p\tilde{u}_0\right\|_{H^{2}(\Omega)}$, and Gronwall's inequality then implies that
	\begin{align}
		\left\|\begin{pmatrix}\tilde{v}(t+h)-\tilde{v}(t)\\ \tilde{w}(t+h)-\tilde{w}(t)\end{pmatrix}\right\|_{L^2(\Omega)\times H^2(\Omega)}
		\leq& \left(e^{M_0L_GT_0}\right)M_0\left\{\left\|\left(\tilde{v}_0, \tilde{w}_0\right)\right\|_{D(\mathcal{A})}+P_0\right\}h\notag\\
		+&\left(e^{M_0L_GT_0}\right)M_0\beta_pT_0\left[\tilde{u}\right]_{C^\alpha\left([0, T_0); H^2(\Omega)\right)}h^\alpha \notag\\
		:=&I_1h+I_2h^\alpha\notag.
	\end{align}
	{Note that}
	\[I_1h\leq \left(e^{M_0L_GT_0}\right)M_0\left\{\left\|\left(\tilde{v}_0, \tilde{w}_0\right)\right\|_{D(\mathcal{A})}+P_0\right\}T_0^{1-\alpha}h^\alpha\notag,\]
{and}
	\begin{align}
		I_2\leq& \left(e^{M_0L_GT_0}\right)M_0\beta_pT_0\left\|\tilde{u}\right\|_{C^\alpha\left([0, T_0); H^2(\Omega)\right)}\notag\\
		\leq&\left(e^{M_0L_GT_0}\right)M_0\beta_pT_0\left(\left\|\tilde{u}-\tilde{u}_0\right\|_{C^\alpha\left([0, T_0); H^2(\Omega)\right)}+\left\|\tilde{u}_0\right\|_{H^2(\Omega)}\right)\notag\\
		\leq& \left(e^{M_0L_GT_0}\right)M_0\beta_pT_0\left(r+\left\|\tilde{u}_0\right\|_{H^2(\Omega)}\right)\notag\\
		\leq&\left(e^{M_0L_GT_0}\right)M_0\beta_pT_0\left(\frac{\kappa}{2C}+\left\|\tilde{u}_0\right\|_{H^2(\Omega)}\right)\notag
	\end{align}
	Set $L_U=\left(e^{M_0L_GT_0}\right)M_0\left\{\left\|\left(\tilde{v}_0, \tilde{w}_0\right)\right\|_{D(\mathcal{A})}+P_0\right\}T_0^{1-\alpha}+\left(e^{M_0L_GT_0}\right)M_0\beta_pT_0\left(\frac{\kappa}{2C}+\left\|\tilde{u}_0\right\|_{H^2(\Omega)}\right)$
	and $L_U$ is a Lipschitz constant depending on $\alpha$,  $T_0$, $\kappa$, $\Omega$, $\beta_p$, $\beta_F$, $\left\|\tilde{u}_0\right\|_{H^2(\Omega)}$, $\left\|\tilde{w}_0\right\|_{H^{2}(\Omega)}$, $\left\|\left(\tilde{v}_0, \tilde{w}_0\right)\right\|_{D(\mathcal{A})}$ and  $M_0=\displaystyle\sup_{t\in[0, \infty)}\left\|T(t)\right\|_{\mathcal{B}\left(L^2(\Omega)\times H^2(\Omega)\right)}$.
	
	Therefore, \eqref{Holdercontinuityformular} holds for all $h\in(0, T]$.
\end{proof}

\subsection{Proof of Theorem \ref{4th-mild-solution-cor}}

\begin{proof}
	Let $T\in(0, T_0)$, $G_0=G\left(\tilde{w}_0\right)+\beta_p\tilde{u}_0$ and $\left(\tilde{v}, \tilde{w}\right)$ be the mild solution of the semilinear evolution equation \eqref{4th-SWE-1} defined by \eqref{mild-solu-form}. Take $\tilde{u}\in C\left([0, T]; B_{H^2}\left(\tilde{u}_0,  r\right)\right)\cap C^1([0, T]; L^2(\Omega))$ to be given such that  $\tilde{u}'(t)\in L^2(\Omega)$ is uniformly continuous for all $t\in [0, T]$.
	
	We first prove the linear non-autonomous problem
	\be\label{LNP}
	\begin{pmatrix}\tilde{p}(t)\\ \tilde{q}(t)\end{pmatrix}=T(t)\left(\begin{pmatrix}G_0\\ 0\end{pmatrix}+\mathcal{A}\begin{pmatrix}\tilde{v}_0\\ \tilde{w}_0\end{pmatrix}\right)+\displaystyle\int_0^tT(t-s)\begin{pmatrix}[\mathcal{H}(\tilde{q})](s)+\beta_p\tilde{u}'(s)\\ 0\end{pmatrix}ds,
	\ee
	can be solved for $t\in[0, T]$. Here
	\be\label{H-nonlinearity}
	[\mathcal{H}(\tilde{q})](s)=\frac{2\beta_F\tilde{q}(s)}{\left[\tilde{w}(s)+\theta_2\right]^{3}}=[G'(\tilde{w}(s))]q(s)=[G'(\tilde{w})q](s),\quad s\in [0, t].
	\ee
	We define a nonlinear operator $\Psi$ by
	\[\Psi:\ C\left([0, T]; L^2(\Omega)\times H_*^2(\Omega)\right)\longrightarrow C\left([0, T]; L^2(\Omega)\times H_*^2(\Omega)\right),\]
	\[\left[\Psi\left(\tilde{p}, \tilde{q}\right)\right](t)=T(t)\left(\begin{pmatrix}G_0\\ 0\end{pmatrix}+\mathcal{A}\begin{pmatrix}\tilde{v}_0\\ \tilde{w}_0\end{pmatrix}\right)+\displaystyle\int_0^tT(t-s)\begin{pmatrix}[\mathcal{H}(\tilde{q})](s)+\beta_p\tilde{u}'(s)\\ 0\end{pmatrix}ds.\]
	For any $\left(\tilde{p}_1, \tilde{q}_1\right),\ \left(\tilde{p}_2, \tilde{q}_2\right)\in C\left([0, T]; L^2(\Omega)\times H_*^2(\Omega)\right),\ \tilde{w}\in C\left([0, T]; H_*^2(\Omega)\right),$
	then
	\begin{align}
		\mathcal{H}(\tilde{q}_1)-\mathcal{H}(\tilde{q}_2)=\frac{2\beta_F}{\left[\tilde{w}+\theta_2\right]^{3}}\left[\tilde{q}_1-\tilde{q}_2\right]=G'(\tilde{w})(q_1-q_2)\in C\left([0, T]; H_*^2(\Omega)\right)\notag.
	\end{align}
	Hence, according to the estimate \eqref{Lip-G-2} of Fr\'{e}chet derivative $G'\left(\tilde{w}\right)q$ from Lemma \ref{Lip-G-Lem} and the definition of $T_0$ in Theorem \ref{4th-solu-thm}, $\Psi$ is a contractive mapping on $C\left([0, T]; L^2(\Omega)\times H_*^2(\Omega)\right)$ because
	\begin{align}
		&\displaystyle\sup_{t\in[0, T]}\left\|\left[\Psi\left(\tilde{p}_1, \tilde{q}_1\right)\right](t)-\left[\Psi\left(\tilde{p}_2, \tilde{q}_2\right)\right](t)\right\|_{L^2(\Omega)\times H^2(\Omega)}\notag\\
		=&\displaystyle\sup_{t\in[0, T]}\left\|\displaystyle\int_0^tT(t-s)\begin{pmatrix}[\mathcal{H}(\tilde{q}_1)](s)-[\mathcal{H}(\tilde{q}_2)](s)\\ 0\end{pmatrix}ds
		\right\|_{L^2(\Omega)\times H^2(\Omega)}\notag\\
		\leq& T\sup_{0\leq s\leq t\leq T}\|T(t-s)\|_{\mathcal{B}\left(L^2(\Omega)\times H^2(\Omega)\right)}\sup_{t\in[0, T]}\left\|[\mathcal{H}(\tilde{q}_1)](t)-[\mathcal{H}(\tilde{q}_2)](t)\right\|_{L^2(\Omega)}\notag\\
		%\leq&TM_0\displaystyle\sup_{t\in[0, T]}\left\|\frac{2\beta_F}{\left[\tilde{w}(t)+\theta_2\right]^{3}}\right\|_{H^2(\Omega)}\displaystyle\sup_{t\in[0, T]}\left\|\tilde{q}_1(t)-\tilde{q}_2(t)\right\|_{H^2(\Omega)}\notag\\
		\leq&TM_0L_G\displaystyle\sup_{t\in[0, T]}\left\|\tilde{q}_1(t)-\tilde{q}_2(t)\right\|_{H^2(\Omega)}
		\leq\frac{1}{2}\sup_{t\in[0, T]}\left\|\begin{pmatrix}\tilde{p}_1(t)-\tilde{p}_2(t)\\ \tilde{q}_1(t)-\tilde{q}_2(t)\end{pmatrix}\right\|_{L^2(\Omega)\times H^2(\Omega)}\notag.
	\end{align}
	According to the Banach fixed point theorem, for any $\tilde{u}\in C\left([0, T]; B_{H^2}\left(\tilde{u}_0, r\right)\right)\cap C^1\left([0, T]; L^2(\Omega)\right),$ there exists a unique fixed point $(\tilde{p}, \tilde{q})\in C\left([0, T]; L^2(\Omega)\times H_*^2(\Omega)\right)$, such that $(\tilde{p}, \tilde{q})=\Psi(\tilde{p}, \tilde{q})$. Hereby, the $\mathds{R}$-linear non-autonomous problem \eqref{LNP} can be solved for $t\in[0, T]$.
	
	We next prove that $(\tilde{p}, \tilde{q})$ is the time derivative of the mild solution $(\tilde{v}, \tilde{w})$.
	
	Let $0\leq t<t+h\leq T$ for some $h\in(0, T]$, equations \eqref{mild-solu-form} and \eqref{LNP} imply that
	\begin{align}
		E(h,t)&:=\frac{1}{h}\begin{pmatrix}\tilde{v}(t+h)-\tilde{v}(t)\\ \tilde{w}(t+h)-\tilde{w}(t)\end{pmatrix}-\begin{pmatrix}\tilde{p}(t)\\ \tilde{q}(t)\end{pmatrix}\notag\\
		&=T(t)\frac{1}{h}\left(T(h)-I\right)\begin{pmatrix}\tilde{v}_0\\ \tilde{w}_0\end{pmatrix}-T(t)\mathcal{A}\begin{pmatrix}\tilde{v}_0\\ \tilde{w}_0\end{pmatrix}\notag\\
		&+\frac{1}{h}\displaystyle\int_0^h\left\{T(t+h-s)\begin{pmatrix}[G(\tilde{w})](s)+\beta_p\tilde{u}(s)\\ 0\end{pmatrix}\right\}ds-T(t)\begin{pmatrix}G_0\\ 0\end{pmatrix}\notag\\
		&+\displaystyle\int_0^tT(t-s)\begin{pmatrix}\frac{1}{h}\left\{[G(\tilde{w})](s+h)-[G(\tilde{w})](s)\right\}-[\mathcal{H}(\tilde{q})](s)\\ 0\end{pmatrix}ds\notag\\
		&+\int_0^tT(t-s)\begin{pmatrix}\beta_p\left[\frac{1}{h}\left\{\tilde{u}(s+h)-\tilde{u}(s)\right\}-\tilde{u}'(s)\right]\\ 0\end{pmatrix}ds\notag.
	\end{align}
	Let
	\[E^{(1)}(h,t):=T(t)\frac{1}{h}\left(T(h)-I\right)\begin{pmatrix}\tilde{v}_0\\ \tilde{w}_0\end{pmatrix}-T(t)\mathcal{A}\begin{pmatrix}\tilde{v}_0\\ \tilde{w}_0\end{pmatrix},\]
	\[E^{(2)}(h,t):=\frac{1}{h}\int_0^h\left\{T(t+h-s)\begin{pmatrix}[G(\tilde{w})](s)+\beta_p\tilde{u}(s)\\ 0\end{pmatrix}\right\}ds-T(t)\begin{pmatrix}G_0\\ 0\end{pmatrix},\]
	\begin{align}
		E^{(3)}(h,t)&:=\displaystyle\int_0^tT(t-s)\begin{pmatrix}\frac{1}{h}\left\{[G(\tilde{w})](s+h)-[G(\tilde{w})](s)\right\}-[\mathcal{H}(\tilde{q})](s)\\ 0\end{pmatrix}ds\notag\\
		&+\int_0^tT(t-s)\begin{pmatrix}\beta_p\left[\frac{1}{h}\left\{\tilde{u}(s+h)-\tilde{u}(s)\right\}-\tilde{u}'(s)\right]\\ 0\end{pmatrix}ds\notag.
	\end{align}
	We initially notice that
	\begin{align}
		\lim_{h\rightarrow 0^+}\left\|E^{(1)}(h,t)\right\|_{L^2(\Omega)\times H^2(\Omega)}&\leq \lim_{h\rightarrow 0^+} M_0\left\|\frac{1}{h}\left(T(h)-I\right)\begin{pmatrix}\tilde{v}_0\\ \tilde{w}_0\end{pmatrix}-\mathcal{A}\begin{pmatrix}\tilde{v}_0\\ \tilde{w}_0\end{pmatrix}\right\|_{L^2(\Omega)\times H^2(\Omega)}\notag\\
		&:=\lim_{h\rightarrow 0^+}\Lambda_1(h)=0,\notag
	\end{align}
{and}
	\[\lim_{h\rightarrow 0^+}\frac{1}{h}\displaystyle\int_0^h\left\{T(h-s)\begin{pmatrix}G_0 \\ 0\end{pmatrix}\right\}ds=\begin{pmatrix}G_0 \\ 0\end{pmatrix}.\]
	Because $ G(\tilde{w})\in C([0, T]; H^2(\Omega))$, $\tilde{u}\in C\left([0, T]; H_*^2(\Omega)\right)$, then
	\[\lim_{h\rightarrow 0^+}\sup_{0\leq s\leq h}\left\|[G(\tilde{w})](s)-[G(\tilde{w})](0)+\beta_p(\tilde{u}(s)-\tilde{u}(0))\right\|_{H^2(\Omega)}=0,\]
	hence
	\begin{align}
		&\lim_{h\rightarrow 0^+}\left\|E^{(2)}(h,t)\right\|_{L^2(\Omega)\times H^2(\Omega)}\notag\\
		=&\lim_{h\rightarrow 0^+}\left\|\frac{1}{h}\displaystyle\int_0^h\left\{T(t+h-s)\begin{pmatrix}[G(\tilde{w})](s)+\beta_p\tilde{u}(s)\\ 0\end{pmatrix}\right\}ds-T(t)\begin{pmatrix}G_0\\ 0\end{pmatrix}\right\|_{L^2(\Omega)\times H^2(\Omega)}\notag\\
		=&\lim_{h\rightarrow 0^+}\left\|T(t)\frac{1}{h}\displaystyle\int_0^h\left\{T(h-s)\begin{pmatrix}[G(\tilde{w})](s)-G(\tilde{w}_0)+\beta_p(\tilde{u}(s)-\tilde{u}_0) \\ 0\end{pmatrix}\right\}ds\right\|_{L^2(\Omega)\times H^2(\Omega)}\notag\\
		\leq&\lim_{h\rightarrow 0^+}M_0\left\|\frac{1}{h}\displaystyle\int_0^h\left\{T(h-s)\begin{pmatrix}[G(\tilde{w})](s)-G(\tilde{w}_0)+\beta_p(\tilde{u}(s)-\tilde{u}_0)\\ 0\end{pmatrix}\right\}ds\right\|_{L^2(\Omega)\times H^2(\Omega)}\notag\\
		\leq&\lim_{h\rightarrow 0^+}M_0^2\frac{h}{h}\sup_{0\leq s\leq h}\left\|[G(\tilde{w})](s)-G(\tilde{w}_0)+\beta_p(\tilde{u}(s)-\tilde{u}_0)\right\|_{L^2(\Omega)}\notag\\
		=&\lim_{h\rightarrow 0^+}M_0^2\sup_{0\leq s\leq h}\left\|[G(\tilde{w})](s)-[G(\tilde{w})](0)+\beta_p(\tilde{u}(s)-\tilde{u}(0))\right\|_{H^2(\Omega)}\notag\\
		:=&\lim_{h\rightarrow 0^+}\Lambda_2(h)=0.\notag
	\end{align}
	Define
	\[G_D(t,h):=\left[G(\tilde{w})\right](t+h)-\left[G(\tilde{w})\right](t)-\left[G'(\tilde{w})\right](t)\cdot\left[\tilde{w}(t+h)-\tilde{w}(t)\right],\]
	\[E^{(3)}_1(h,t):=\displaystyle\int_0^tT(t-s)\begin{pmatrix}\frac{1}{h}G_D(s,h)\\ 0\end{pmatrix}ds,\]
	\[E^{(3)}_2(h,t):=\displaystyle\int_0^tT(t-s)\begin{pmatrix}[G'(\tilde{w})](s)\left\{\frac{1}{h}\{\tilde{w}(s+h)-\tilde{w}(s)\}-\tilde{q}(s)\right\}\\0\end{pmatrix}ds,\]
	\[E^{(3)}_3(h,t):=\displaystyle\int_0^tT(t-s)\begin{pmatrix}\beta_p\left\{\frac{1}{h}[\tilde{u}(s+h)-\tilde{u}(s)]-\tilde{u}'(s)\right\}\\ 0\end{pmatrix}ds.\]
	We then write
	\[E^{(3)}(h,t)=E^{(3)}_1(h,t)+E^{(3)}_2(h,t)+E^{(3)}_3(h,t).\]
	{Hence}
	\begin{align}
		\left\|E^{(3)}_3(h,t)\right\|_{L^2(\Omega)\times H^2(\Omega)}=&\left\|\displaystyle\int_0^tT(t-s)\begin{pmatrix}\beta_p\left\{\frac{1}{h}[\tilde{u}(s+h)-\tilde{u}(s)]-\tilde{u}'(s)\right\}\\ 0\end{pmatrix}ds\right\|_{L^2(\Omega)\times H^2(\Omega)}\notag\\
		\leq& T_0M_0\beta_p\sup_{\begin{smallmatrix}0\leq t<t+h\leq T\end{smallmatrix}}\left\|\frac{\tilde{u}(t+h)-\tilde{u}(t)}{h}-\tilde{u}'(t)\right\|_{L^2(\Omega)}\notag\\
		=&T_0M_0\beta_p\sup_{\begin{smallmatrix}0\leq t\leq t+\sigma h\leq T\\ 0\leq\sigma\leq 1\end{smallmatrix}}\left\|\frac{1}{h}\int_0^1\frac{d}{d\sigma}\left[\tilde{u}(t+\sigma h)\right]d\sigma-\tilde{u}'(t)\right\|_{L^2(\Omega)}\notag\\
		=&T_0M_0\beta_p\sup_{\begin{smallmatrix}0\leq t\leq t+\sigma h\leq T\\ 0\leq\sigma\leq 1\end{smallmatrix}}\left\|\tilde{u}'(t+\sigma h)-\tilde{u}'(t)\right\|_{L^2(\Omega)}\notag\\
		:=&\Lambda_3(h)\rightarrow 0,\ \text{as}\ h\rightarrow 0^+, \notag
	\end{align}
	since $\tilde{u}\in C\left([0, T]; B_{H^2}\left(\tilde{u}_0, r\right)\right)\cap C^1\left([0, T]; L^2(\Omega)\right)$ is given such that the time derivative $\tilde{u}'(t)\in L^2(\Omega)$ is uniformly continuous for all $t\in [0, T]$. Using  the bound estimate \eqref{Lip-G-2} of Fr\'{e}chet derivative $G'\left(\tilde{w}\right)$ from Lemma \ref{Lip-G-Lem} again gives
	\begin{align}
		\left\|E^{(3)}_2(h,t)\right\|_{L^2(\Omega)\times H^2(\Omega)}\leq M_0L_G\displaystyle\int_0^t\left\|E(h,s)\right\|_{L^2(\Omega)\times H^2(\Omega)}ds\notag.
	\end{align}
	Because $G_D(t,h)\in H^2(\Omega)$. The estimate \eqref{Lip-mild-solu-inquality} of Corollary \ref{Lip-mild-solu} implies the function $\tilde{w}$ is Lipschitz continuous with respect to $t\in[0, T]$.  Employing this fact and the limit \eqref{uniformly-continuous-Fre-G} of Lemma \ref{Lip-G-Lem} gives
	\begin{align}
		&\left\|E^{(3)}_1(h,t)\right\|_{L^2(\Omega)\times H^2(\Omega)}\notag\\
		%\leq&T_0M_0\sup_{\begin{smallmatrix}0\leq t<t+h\leq T\end{smallmatrix}}\frac{1}{h}\left\|G_D(t,h)\right\|_{L^2(\Omega)}\notag\\
		%\leq&T_0M_0\sup_{\begin{smallmatrix}0\leq t<t+h\leq T\end{smallmatrix}}\frac{1}{h}\left\|G_D(t,h)\right\|_{H^2(\Omega)}\notag\\
		=&\frac{T_0M_0}{h}\sup_{\begin{smallmatrix}t\in[0, T]\\ t+h\in[0, T]\end{smallmatrix}}\left\|\displaystyle\int_0^1\left[G'(\tilde{w}(t)+\tau[\tilde{w}(t+h)-\tilde{w}(t)])-G'(\tilde{w}(t))\right]\left[\tilde{w}(t+h)-\tilde{w}(t)\right]d\tau\right\|_{ H^2(\Omega)}\notag\\
		\leq& \frac{T_0M_0L_V h}{h}\sup_{0\leq t< t+ h\leq T}\left\|\displaystyle\int_0^1G'(\tilde{w}(t)+\tau[\tilde{w}(t+h)-\tilde{w}(t)])-G'(\tilde{w}(t))d\tau\right\|_{\mathcal{B}\left(H^2(\Omega)\right)}\notag\\
		=&T_0M_0L_V\sup_{\begin{smallmatrix}0\leq t\leq t+\tau h\leq T\\ 0\leq\tau\leq1\end{smallmatrix}}\left\|G'(\tilde{w}(t)+\tau[\tilde{w}(t+h)-\tilde{w}(t)])-G'(\tilde{w}(t))\right\|_{\mathcal{B}\left(H^2(\Omega)\right)}\notag\\
		:=&\Lambda_4(h)\rightarrow 0 ,\ \text{as}\ h\rightarrow 0^+. \notag
	\end{align}
	Summing up, we have shown
	\[\left\|E(h,t)\right\|_{L^2(\Omega)\times H^2(\Omega)}\leq\Lambda_1(h)+\Lambda_2(h)+\Lambda_3(h)+\Lambda_4(h)+M_0L_G\displaystyle\int_0^t\left\|E(h,s)\right\|_{L^2(\Omega)\times H^2(\Omega)}ds.\]
	Gronwall's inequality thus implies the inequality
	\[\left\|E(h,t)\right\|_{L^2(\Omega)\times H^2(\Omega)}\leq\left(\Lambda_1(h)+\Lambda_2(h)+\Lambda_3(h)+\Lambda_4(h)\right)e^{tM_0L_G}{,} \]
	{which} holds for $t\in [0, T]$. Letting $h\rightarrow 0^+$, we then deduce that the $\left(\tilde{v}, \tilde{w}\right)$ is differentiable from the right and the right  derivative of $\left(\tilde{v}, \tilde{w}\right)$ coincides with $\left(\tilde{p}, \tilde{q}\right)$. Because $\left(\tilde{p}, \tilde{q}\right)$ is continuous on $[0, T]$, by using Lemma \ref{time-derivative-continuity}, we conclude $\left(\tilde{v}, \tilde{w}\right)\in C^1\left([0, T]; L^2(\Omega)\times H_*^2(\Omega)\right)$. As $\tilde{u}\in C\left([0, T]; B_{H^2}\left(\tilde{u}_0, r\right)\right)\cap C^1\left([0, T]; L^2(\Omega)\right)$, then $\left(G\left(\tilde{w}\right)+\beta_p\tilde{u}, 0\right)\in C^1\left([0, T]; L^2(\Omega)\times H_*^2(\Omega)\right)$. By Lemma \ref{IEE-S}, the mild solution $\left(\tilde{v}, \tilde{w}\right)$ , defined by \eqref{mild-solu-form}, uniquely solves the semilinear evolution equation \eqref{4th-SWE-1} on $[0, T]$, $\left(\tilde{v}, \tilde{w}\right)$ is a unique strict solution of semilinear evolution equation \eqref{4th-SWE-1}, and
	\[\left(\tilde{v}, \tilde{w}\right)\in C\left([0, T]; H_*^2(\Omega)\times H_*^4(\Omega)\right)\cap C^1\left([0, T]; L^2(\Omega)\times H_*^2(\Omega)\right), \forall \ T\in(0, T_0).\]
\end{proof}

\section{Proof of Lemma \ref{RHSMax}}\label{AppA2}
\begin{proof}
	Let $T\in(0, T_0)$. Recall that $C=C(\Omega)$ is a constant which may vary from line to line but depends on $\Omega$ only. From the discussion of the graph norm of the linear operator $\mathcal{P}^*$ and $\tilde{u}\in C^\alpha([0, T]; B_{H^2}(\tilde{u}_0, r))$, then $\tilde{u}\in C\left([0, T]; B_{H^2}\left(\tilde{u}_0, r\right)\right),\ \tilde{u}(0)=\tilde{u}_0\in{D(\mathcal{P}^*)}.$
	
	According to Theorem \ref{4th-solu-thm}, $(\tilde{v}, \tilde{w})\in \mathcal{Z}(T)$ is a unique mild solution of the semilinear evolution equation \eqref{4th-SWE-1} for all $r\in\left(0, \frac{\kappa}{2C}\right)$. Here $\kappa=\displaystyle\inf_{x\in{\Omega}}\tilde{w}_0(x)+\theta_2$. Thus from Corollary \ref{Holdercontinuity}, if $\tilde{u}\in C^\alpha\left([0, T]; B_{H^2}(\tilde{u}_0,r)\right)$.
	\[(\tilde{v}, \tilde{w})=(\tilde{v}(\tilde{u}), \tilde{w}(\tilde{u}))\in \mathcal{Z}(T)\cap C^\alpha\left(\left[0, T\right]; L^2(\Omega)\times H_*^2(\Omega)\right).\]
	Recall
	$u_0=\tilde{u}_0+\theta_1$, $v_0=\tilde{v}_0$, $w_0=\tilde{w}_0+\theta_2$, $u=\tilde{u}+\theta_1$, $v=\tilde{v}$, $w=\tilde{w}+\theta_2$, and note that
	\[v(u)=v(\tilde{u}+\theta_1)=\tilde{v}(\tilde{u}),\quad w(u)=w(\tilde{u}+\theta_1)=\tilde{w}(\tilde{u})+\theta_2.\]
	{Thus}
	\bse\label{B-Holder}
	\be\label{B-Holder-1}
	\text{if}\quad u\in C^\alpha([0, T]; B_{H^2}(u_0, r)),
	\ee
	\be\label{B-Holder-2}
	\text{then}\quad (v, w)\in C^\alpha\left(\left[0, T\right]; L^2(\Omega)\times H_2^2(\Omega)\right)\cap \left\{C\left([0, T]; B_{L^2}(v_0, r)\times B_{H^2}(w_0,  r)\right)\right\},\ i.e.
	\ee
	\be\label{B-Holder-3}
	\left\|v(t+h)-v(t)\right\|_{L^2(\Omega)}\leq L_Uh^\alpha,\quad \left\|w(t+h)-w(t)\right\|_{H^2(\Omega)}\leq L_Uh^\alpha.
	\ee
	\ese
	Hence
	\[F\left(\tilde{u}\right)=\frac{1}{w}\nabla\cdot\left(w^3(\tilde{u}+\theta_1)\nabla\tilde{u}\right)-\frac{v}{w}(\tilde{u}+\theta_1)\in C^\alpha\left([0, T]; L^2(\Omega)\right). \]
	Following  these facts, we are going to show that the assertion \eqref{Max-I} of Lemma \ref{RHSMax} holds.
	
	Let $h\in(0, T]$ be such that $0\leq t<t+h\leq T$. Recall that $\left\|u(t)\right\|_{H^2(\Omega)}\leq\tilde{C_1}$, $\left\|u(t+h)\right\|_{H^2(\Omega)}\leq\tilde{C_1}$,  $\left\|v(t)\right\|_{L^2(\Omega)}\leq\tilde{C_2}$,  $\left\|v(t+h)\right\|_{L^2(\Omega)}\leq\tilde{C_2}$,  $\left\|w(t+h)\right\|_{H^2(\Omega)}\leq\tilde{C}$,\\
	$\left\|w(t)\right\|_{H^2(\Omega)}\leq\tilde{C}$.
	Here $\tilde{C}=\left\|w_0\right\|_{H^2(\Omega)}+\frac{\kappa}{2C}$, $\tilde{C_1}=\left\|u_0\right\|_{H^2(\Omega)}+\frac{\kappa}{2C}$, $\tilde{C_2}=\left\|v_0\right\|_{L^2(\Omega)}+\frac{\kappa}{2C}$.
	
	{Because $H^2(\Omega)$ is an algebra, that is estimate \eqref{C-a} of Lemma \ref{estimates}, and estimates \eqref{Holdercontinuityformular} of Corollary \ref{Holdercontinuity} and estimate \eqref{B-Holder-3} are satisfied,} we obtain
	\be\label{B8}
	\left\|[w(t+h)]^{-1}-[w(t)]^{-1}\right\|_{H^2(\Omega)}\leq C_1^2L_Uh^\alpha,\ \
	\left\|[w(t+h)]^3-[w(t)]^3\right\|_{H^2(\Omega)}\leq3\tilde{C}^2L_Uh^\alpha.
	\ee
	Similarly, for $u$ from Corollary \ref{Holdercontinuity} and \eqref{B-Holder-1}, we get
	\begin{align}
		\left\|[u(t+h)]^2-[u(t)]^2\right\|_{H^2(\Omega)}\leq2\tilde{C}_1\left[u\right]_{C^\alpha([0, T]; H^2(\Omega))}h^\alpha.\label{B8-1}
	\end{align}
	The arguments of the proof of Lemma \ref{Lip-nonlinearity} give that \eqref{Max-I} of Lemma \ref{RHSMax} holds:
	\begin{align}
		\left\|\left[F(\tilde{u})\right](t+h)-\left[F(\tilde{u})\right](t)\right\|_{L^2(\Omega)}
		%\leq&\frac{1}{2}\left\|\left\{[w(t+h)]^{-1}-[w(t)]^{-1}\right\}\nabla\cdot\left\{[w(t+h)]^3\nabla [u(t+h)]^2\right\}\right\|_{L^2(\Omega)}\notag\\
		%+&\frac{1}{2}\left\|[w(t)]^{-1}\nabla\cdot\left\{\left([w(t+h)]^3-[w(t)]^3\right)\nabla[u(t+h)]^2\right\}\right\|_{L^2(\Omega)}\notag\\
		%+&\frac{1}{2}\left\|[w(t)]^{-1}\nabla\cdot\left\{[w(t)]^3\nabla\left([u(t+h)]^2-[u(t)]^2\right)\right\}\right\|_{L^2(\Omega)}\notag\\
		%+&\left\|\frac{v(t+h)}{w(t+h)}u(t+h)-\frac{v(t)}{w(t)}u(t)\right\|_{L^2(\Omega)}\notag\\
		\leq L_AL_Uh^\alpha+L_A[u]_{C^\alpha\left(\left[0, T\right]; H^2(\Omega)\right)}h^\alpha\label{B15}.
	\end{align}
	Here $L_A$ is a constant depending on $C$, $C_1$, $\tilde{C}$, $\tilde{C}_1$ and $\tilde{C}_2$.
	
	We next prove the assertion \eqref{Max-II} of Lemma \ref{RHSMax}.
	
	For $q\in C^\alpha([0, T]; B_{H^2}(\tilde{u}_0,r))$, $t\in [0, T]$, we note $w(t)=[w(u)](t)$, $v(t)=[v(u)](t)$. From the definition \eqref{Fre-F} of the Frech\'{e}t derivative of $F(\tilde{u})$ on $\tilde{u}$ at $t$ and the definition \eqref{linearopt} of $\mathcal{P}^*q(t)$, we have, for $h\in (0, T]$ such that $t+h\in (0, T]$,
	\begin{align}
		&\left[F'(\tilde{u})q\right](t+h)-\left[F'(\tilde{u})q\right](t)-\mathcal{P}^*\left(q(t+h)-q(t)\right)\notag\\
		=&\frac{1}{w(t+h)}\nabla\cdot\left\{[w(t+h)]^3u(t+h)\nabla q(t+h)+[w(t+h)]^3\left[\nabla u(t+h)\right]q(t+h)\right\}\notag\\
		-&\frac{1}{w(t)}\nabla\cdot\left\{[w(t)]^3u(t)\nabla q(t)+[w(t)]^3\left[\nabla u(t)\right]q(t)\right\}\notag\\
		+&\frac{3}{2w(t+h)}\nabla\cdot\left\{\left(\nabla[u(t+h)]^2\right)[w(t+h)]^2[w'(u)q](t+h)\right\}\notag\\
		-&\frac{3}{2w(t)}\nabla\cdot\left\{\left(\nabla[u(t)]^2\right)[w(t)]^2[w'(u)q](t)\right\}\notag\\
		-&\frac{[w'(u)q](t+h)}{2[w(t+h)]^2}\nabla\cdot\left([w(t+h)]^3\nabla[u(t+h)]^2\right)-\frac{v(t+h)}{w(t+h)}q(t+h)\notag\\
		+&\frac{[w'(u)q](t)}{2[w(t)]^2}\nabla\cdot\left([w(t)]^3\nabla[u(t)]^2\right)+\frac{v(t)}{w(t)}q(t)\notag\\
		-&\frac{w(t+h)[v'(u)q](t+h)-v(t+h)[w'(u)q](t+h)}{[w(t+h)]^2}u(t+h)\notag\\
		+&\frac{w(t)[v'(u)q](t)-v(t)[w'(u)q](t)}{[w(t)]^2}u(t)\notag\\
		-&\frac{1}{w_0}\nabla\cdot\left\{w_0^3u_0\nabla q(t+h)+w_0^3\left(\nabla u_0\right)q(t+h)\right\}-\frac{v_0}{w_0}q(t+h)\notag\\
		+&\frac{1}{w_0}\nabla\cdot\left\{w_0^3u_0\nabla q(t)+w_0^3\left(\nabla u_0\right)q(t)\right\}+\frac{v_0}{w_0}q(t). \label{Fre-F-4}
	\end{align}
	Because
	\begin{align}
		\left\|[w(t+h)]^3u(t+h)-[w(t)]^3u(t)\right\|_{H^2(\Omega)}
		\leq&\left\|[w(t+h)]^3-[w(t)]^3\right\|_{H^2(\Omega)}\left\|u(t+h)\right\|_{H^2(\Omega)}\notag\\
		+&\left\|[w(t)]^3\right\|_{H^2(\Omega)}\left\|u(t+h)-u(t)\right\|_{H^2(\Omega)}\notag\\
		\leq&h^\alpha3L_U\tilde{C}^2\tilde{C}_1+h^\alpha\tilde{C}^3\left\|u\right\|_{C^\alpha([0, T]; H^2(\Omega))}\label{w3u},
	\end{align}
	the algebraic properties of $H^2(\Omega)$, i.e. Lemma \ref{alg}, inequality \eqref{alg-1-2} of Corollary \ref{alg-1} and the assertion \eqref{C-a} of Lemma \ref{estimates} imply
	\begin{align}
		&\left\|\left([w(t+h)]^{-1}-[w(t)]^{-1}\right)\nabla\cdot\left\{[w(t+h)]^3u(t+h)\nabla q(t+h)\right\}\right\|_{L^2(\Omega)}\notag\\
		\leq&2C\left\|[w(t+h)]^{-1}-[w(t)]^{-1}\right\|_{H^2(\Omega)}\left\|[w(t+h)]^3u(t+h)\right\|_{H^2(\Omega)}\left\|q(t+h)\right\|_{H^2(\Omega)}\notag\\
		%\leq&h^\alpha 2CC_1^2L_U\left\|w(t+h)\right\|^3_{H^2(\Omega)}\left\|u(t+h)\right\|_{H^2(\Omega)}\left\|q(t+h)\right\|_{H^2(\Omega)}\notag\\
		\leq&h^\alpha 2CC_1^2L_U\tilde{C}^3\tilde{C}_1\sup_{t\in[0, T]}\left\|q(t)\right\|_{H^2(\Omega)}\label{B20},
	\end{align}
{and}
	\begin{align}
		&\left\|[w(t)]^{-1}\nabla\cdot\left\{\left([w(t+h)]^3u(t+h)-[w(t)]^3u(t)\right)\nabla q(t+h)\right\}\right\|_{L^2(\Omega)}\notag\\
		\leq&2C\left\|[w(t)]^{-1}\right\|_{H^2(\Omega)}\left\|[w(t+h)]^3u(t+h)-[w(t)]^3u(t)\right\|_{H^2(\Omega)}\left\|q(t+h)\right\|_{H^2(\Omega)}\notag\\
		\leq&h^\alpha\left[6CC_1L_U\tilde{C}^2\tilde{C}_1+2CC_1\tilde{C}^3\left\|u\right\|_{C^\alpha([0, T]; H^2(\Omega))}\right]\sup_{t\in[0, T]}\left\|q(t)\right\|_{H^2(\Omega)}\label{Bb20}.
	\end{align}
	Hence, we deduce the estimate:
	\begin{align}
		&\left\|\frac{1}{w(t+h)}\nabla\cdot\left\{[w(t+h)]^3u(t+h)\nabla q(t+h)\right\}-\frac{1}{w(t)}\nabla\cdot\left\{[w(t)]^3u(t)\nabla q(t+h)\right\}\right\|_{L^2(\Omega)}\notag\\
		\leq&\left\|\left([w(t+h)]^{-1}-[w(t)]^{-1}\right)\nabla\cdot\left\{[w(t+h)]^3u(t+h)\nabla q(t+h)\right\}\right\|_{L^2(\Omega)}\notag\\
		+&\left\|[w(t)]^{-1}\nabla\cdot\left\{\left([w(t+h)]^3u(t+h)-[w(t)]^3u(t)\right)\nabla q(t+h)\right\}\right\|_{L^2(\Omega)}\notag\\
		\leq&h^\alpha C\left[2C_1^2L_U\tilde{C}^3\tilde{C}_1+6C_1L_U\tilde{C}^2\tilde{C}_1+2C_1\tilde{C}^3\left\|u\right\|_{C^\alpha([0, T]; H^2(\Omega))}\right]\sup_{t\in[0, T]}\left\|q(t)\right\|_{H^2(\Omega)}.\label{B22}
	\end{align}
	According to the H\"{o}lder inequality and algebraic property of $H^2(\Omega)$, i.e. Lemma \ref{alg} and estimate \eqref{w3u}, we obtain
	\begin{align}
		&\left\|\left\{\nabla\left([w(t)]^3u(t)\right)\right\}\cdot\left\{\nabla q(t+h)-\nabla q(t)\right\}\right\|_{L^2(\Omega)}\notag\\
		\leq&\left\|\nabla\left([w(t)]^3u(t)\right)\right\|_{L^4(\Omega)}\left\|\nabla q(t+h)-\nabla q(t)\right\|_{L^4(\Omega)}\notag\\
		\leq&C\left\|\nabla\left([w(t)]^3u(t)\right)\right\|_{H^1(\Omega)}\left\|\nabla q(t+h)-\nabla q(t)\right\|_{H^1(\Omega)}\notag\\
		\leq&C\left\|[w(t)]^3u(t)\right\|_{H^2(\Omega)}\left\|q(t+h)-q(t)\right\|_{H^2(\Omega)}\notag\\
		\leq&C\left\|w(t)\right\|_{H^2(\Omega)}^3\left\|u(t)\right\|_{H^2(\Omega)}\left\|q(t+h)-q(t)\right\|_{H^2(\Omega)}\notag\\
		\leq&h^\alpha C\tilde{C}^3\tilde{C}_1\left\|q\right\|_{C^\alpha([0, T];H^2(\Omega))}\label{B25},
	\end{align}
{and}
	\begin{align}
		&\left\|\left\{\nabla\left([w(t)]^3u(t)\right)-\nabla\left(w_0^3u_0\right)\right\}\cdot\left\{\nabla q(t+h)-\nabla q(t)\right\}\right\|_{L^2(\Omega)}\notag\\
		\leq&\left\|\nabla\left([w(t)]^3u(t)\right)-\nabla\left(w_0^3u_0\right)\right\|_{L^4(\Omega)}\left\|\nabla q(t+h)-\nabla q(t)\right\|_{L^4(\Omega)}\notag\\
		\leq&C\left\|\nabla\left([w(t)]^3u(t)\right)-\nabla\left(w_0^3u_0\right)\right\|_{H^1(\Omega)}\left\|\nabla q(t+h)-\nabla q(t)\right\|_{H^1(\Omega)}\notag\\
		\leq&C\left\|[w(t)]^3u(t)-w_0^3u_0\right\|_{H^2(\Omega)}\left\|q(t+h)-q(t)\right\|_{H^2(\Omega)}\notag\\
		\leq&T^\alpha C\left[3L_U\tilde{C}^2\tilde{C}_1+\tilde{C}^3\left\|u\right\|_{C^\alpha([0, T]; H^2(\Omega))}\right]\left\|q(t+h)-q(t)\right\|_{H^2(\Omega)}\notag\\
		\leq&h^\alpha T^\alpha C\left[3L_U\tilde{C}^2\tilde{C}_1+\tilde{C}^3\left\|u\right\|_{C^\alpha([0, T]; H^2(\Omega))}\right]\left\|q\right\|_{C^\alpha([0, T];H^2(\Omega))}.\label{B26}
	\end{align}
	Combining \eqref{B25}, \eqref{B26} with \eqref{B8} gives the estimate:
	\begin{align}
		&\left\|\left\{\frac{1}{w(t)}\nabla\left([w(t)]^3u(t)\right)-\frac{1}{w_0}\nabla\left(w_0^3u_0\right)\right\}\cdot\left[\nabla q(t+h)-\nabla q(t)\right]\right\|_{L^2(\Omega)}\notag\\
		\leq&C\left\|[w(t)]^{-1}-w_0^{-1}\right\|_{H^2(\Omega)}\left\|\nabla\left([w(t)]^3u(t)\right)\cdot\left\{\nabla q(t+h)-\nabla q(t)\right\}\right\|_{L^2(\Omega)}\notag\\
		+&C\left\|{w_0}^{-1}\right\|_{H^2(\Omega)}\left\|\left\{\nabla\left([w(t)]^3u(t)\right)-\nabla\left(w_0^3u_0\right)\right\}\cdot\left\{\nabla q(t+h)-\nabla q(t)\right\}\right\|_{L^2(\Omega)}\notag\\
		\leq&h^\alpha T^\alpha C\left(C_1^2L_U\tilde{C}^3\tilde{C}_1+3L_U\tilde{C}^2\tilde{C}_1\left\|{w_0}^{-1}\right\|_{H^2(\Omega)}\right)\left\|q\right\|_{C^\alpha([0, T];H^2(\Omega))}\notag\\
		+&h^\alpha T^\alpha C\tilde{C}^3\left\|{w_0}^{-1}\right\|_{H^2(\Omega)}\left\|u\right\|_{C^\alpha([0, T]; H^2(\Omega))}\left\|q\right\|_{C^\alpha([0, T];H^2(\Omega))}\label{B27}.
	\end{align}
	{Hence}
	\begin{align}
		&\left\|\left([w(t)]^2u(t)-w_0^2u_0\right)\left(\Delta q(t+h)-\Delta q(t)\right)\right\|_{L^2(\Omega)}\notag\\
		\leq&C\left\|[w(t)]^2u(t)-w_0^2u_0\right\|_{H^2(\Omega)}\left\|\Delta q(t+h)-\Delta q(t)\right\|_{L^2(\Omega)}\notag\\
		\leq&C\left(\left\|[w(t)]^2-w^2_0\right\|_{H^2(\Omega)}\left\|u(t)\right\|_{H^2(\Omega)}+\left\|u(t)-u_0\right\|_{H^2(\Omega)}\left\|w^2_0\right\|_{H^2(\Omega)}\right)\left\|q(t+h)-q(t)\right\|_{H^2(\Omega)}\notag\\
		\leq&C\left\|w(t)-w_0\right\|_{H^2(\Omega)}\left(\left\|w_0\right\|_{H^2(\Omega)}+\tilde{C}\right)\tilde{C}_1\left\|q(t+h)-q(t)\right\|_{H^2(\Omega)}\notag\\
		+&C\left\|u(t)-u_0\right\|_{H^2(\Omega)}\|w_0\|_{H^2(\Omega)}^2\left\|q(t+h)-q(t)\right\|_{H^2(\Omega)}\notag\\
		\leq&h^\alpha T^\alpha L_UC\left(\left\|w_0\right\|_{H^2(\Omega)}+\tilde{C}\right)\tilde{C}_1\left\|q\right\|_{C^\alpha([0,T];H^2(\Omega))}\notag\\
		+&h^\alpha T^\alpha C\|w_0\|_{H^2(\Omega)}^2\left\|u\right\|_{C^\alpha([0,T];H^2(\Omega))}\left\|q\right\|_{C^\alpha([0,T];H^2(\Omega))}\label{B28}.
	\end{align}
	Denote by $V_1$ a constant which is a combination of $C$, $C_1$, $\tilde{C}$, $\tilde{C}_1$,  $L_U$ and $\left\|{w_0}^{-1}\right\|_{H^2(\Omega)}$.
	Therefore, the triangle inequality, \eqref{B22}, \eqref{B27} and \eqref{B28} imply the estimate
	\begin{align}
		&\bigg\|\frac{1}{w(t+h)}\nabla\cdot\left\{[w(t+h)]^3u(t+h)\nabla q(t+h)\right\}-\frac{1}{w(t)}\nabla\cdot\left\{[w(t)]^3u(t)\nabla q(t)\right\}\notag\\
		&-\frac{1}{w_0}\nabla\cdot\left\{w_0^3u_0\nabla q(t+h)\right\}+\frac{1}{w_0}\nabla\cdot\left\{w_0^3u_0\nabla q(t)\right\}\bigg\|_{L^2(\Omega)}\notag\\
		\leq&\left\|\frac{1}{w(t+h)}\nabla\cdot\left\{[w(t+h)]^3u(t+h)\nabla q(t+h)\right\}-\frac{1}{w(t)}\nabla\cdot\left\{[w(t)]^3u(t)\nabla q(t+h)\right\}\right\|_{L^2(\Omega)}\notag\\
		+&\left\|\left(\frac{1}{w(t)}\nabla\left\{[w(t)]^3u(t)\right\}-\frac{1}{w_0}\nabla\left(w_0^3u_0\right)\right)\cdot\left(\nabla q(t+h)-\nabla q(t)\right)\right\|_{L^2(\Omega)}\notag\\
		+&\left\|\left([w(t)]^2u(t)-w_0^2u_0\right)\left(\Delta q(t+h)-\Delta q(t)\right)\right\|_{L^2(\Omega)}\notag\\
		\leq&h^\alpha V_1\sup_{t\in[0, T]}\left\|q(t)\right\|_{H^2(\Omega)}+h^\alpha V_1\left\|u\right\|_{C^\alpha\left([0, T];H^2(\Omega)\right)}\sup_{t\in[0, T]}\left\|q(t)\right\|_{H^2(\Omega)}\notag\\
		+&h^\alpha T^\alpha V_1\left\|u\right\|_{C^\alpha([0,T];H^2(\Omega))}\left\|q\right\|_{C^\alpha\left([0,T];H^2(\Omega)\right)}+h^\alpha T^\alpha V_1\left\|q\right\|_{C^\alpha\left([0, T];H^2(\Omega)\right)}\label{B0}.
	\end{align}
	Similarly,
	\begin{align}
		&\bigg\|\frac{1}{w(t+h)}\nabla\cdot\left\{[w(t+h)]^3q(t+h)\nabla u(t+h)\right\}-\frac{1}{w(t)}\nabla \cdot\left\{[w(t)]^3q(t)\nabla u(t)\right\}\notag\\
		&-\frac{1}{w_0}\nabla\cdot\left\{w_0^3q(t+h)\nabla u_0\right\}+\frac{1}{w_0}\nabla\cdot\left\{w_0^3q(t)\nabla u_0\right\}\bigg\|_{L^2(\Omega)}\notag\\
		\leq&h^\alpha V_1\sup_{t\in[0, T]}\left\|q(t)\right\|_{H^2(\Omega)}+h^\alpha V_1\left\|u\right\|_{C^\alpha\left([0, T];H^2(\Omega)\right)}\sup_{t\in[0, T]}\left\|q(t)\right\|_{H^2(\Omega)}\notag\\
		+&h^\alpha T^\alpha V_1\left\|q\right\|_{C^\alpha([0, T]; H^2(\Omega))}+h^\alpha T^\alpha V_1\left\|u\right\|_{C^\alpha([0, T]; H^2(\Omega))}\left\|q\right\|_{C^\alpha([0, T]; H^2(\Omega))}.\label{B30-1}
	\end{align}
	Set $\tilde{C}_3=C\left[C_1L_U+\tilde{C}_2C_1^2L_U\right]$, $\tilde{C}_4=CC_1\left[L_U+\left\|v_0\right\|_{L^2(\Omega)}\left\|w_0^{-1}\right\|_{H^2(\Omega)}\right]$. The triangle inequality, algebraic properties of Sobolev spaces, i.e. \eqref{alg-1-1} of Corollary \ref{alg-1}, \eqref{C-a} of Lemma \ref{estimates}, and the assertion \eqref{Holdercontinuityformular} of Corollary \ref{Holdercontinuity} imply the estimate
	\begin{align}
		\left\|-\frac{v(t+h)}{w(t+h)}q(t+h)+\frac{v(t)}{w(t)}q(t)-\frac{v_0}{w_0}q(t+h)+\frac{v_0}{w_0}q(t)\right\|_{L^2(\Omega)}
		\leq& h^\alpha \tilde{C}_3\sup_{t\in[0, T]}\left\|q(t)\right\|_{H^2(\Omega)}\notag\\
		+&h^\alpha T^\alpha \tilde{C}_4\left\|q\right\|_{C([0, T]; H^2(\Omega))}\notag.
	\end{align}
	Denote by $V_2$ a constant which is a combination of $C_2$ $C_1$, $C$, $\tilde{C}$, $\tilde{C}_1$, $\tilde{C}_2$, $L_U$, $L_W$, $L_M$ and $T_0^{1-\alpha}$. We combine \eqref{bound-Fre-W}, \eqref{Holder-Frechet-W-I} in Corollary \ref{Holder-Frechet-W-I-Cor}, \eqref{Holdercontinuityformular} in Corollary \ref{Holdercontinuity} with above arguments for estimate \eqref{B0} and similarly deduce
	\begin{align}
		&\frac{3}{2}\bigg\|\frac{1}{w(t+h)}\nabla\cdot\left\{[w(t+h)]^2[w'(u)q](t+h)\nabla[u(t+h)]^2\right\}\notag\\
		&-\frac{1}{w(t)}\nabla\cdot\left\{[w(t)]^2[w'(u)q](t)\nabla[u(t)]^2\right\}\bigg\|_{L^2(\Omega)}\notag\\
		\leq&h^\alpha T^\alpha V_2\left\|q\right\|_{C^\alpha([0, T]; H^2(\Omega))}
		+h^\alpha V_2\left[1+\left\|u\right\|_{C^\alpha([0, T]; H^2(\Omega))}\right]\sup_{t\in[0, T]}\left\|q(t)\right\|_{H^2(\Omega)},\notag
	\end{align}
%\vspace*{-0.4cm}
	\begin{align}
		&\left\|\frac{[w'(u)q](t+h)}{2[w(t+h)]^2}\nabla\cdot\left\{[w(t+h)]^3\nabla[u(t+h)]^2\right\}-\frac{[w'(u)q](t)}{2[w(t)]^2}\nabla\cdot\left\{[w(t)]^3\nabla[u(t)]^2\right\}\right\|_{L^2(\Omega)}\notag\\
		\leq&\displaystyle h^\alpha V_2\left[1+\|u\|_{C^\alpha\left(\left[0, T\right]; H^2(\Omega)\right)}\right]\sup_{t\in[0, T]}\|q(t)\|_{H^2(\Omega)}
		+h^\alpha T^\alpha V_2\left\|q\right\|_{C^\alpha([0, T]; H^2(\Omega))},\notag
	\end{align}
%\vspace*{-0.4cm}
{and}
	\begin{align}
		&\bigg\|-\frac{w(t+h)[v'(u)q](t+h)-v(t+h)[w'(u)q](t+h)}{[w(t+h)]^2}u(t+h)\notag\\
		&+\frac{w(t)[v'(u)q](t)-v(t)[w'(u)q](t)}{[w(t)]^2}u(t)\bigg\|_{L^2(\Omega)}\notag\\
		\leq&h^\alpha V_2\left[1+\left\|u\right\|_{C^\alpha([0, T]; H^2(\Omega))}\right]\sup_{t\in[0, T]}\left\|q(t)\right\|_{H^2(\Omega)}+T^\alpha h^\alpha V_2\left\|q\right\|_{C^\alpha([0, T]; H^2(\Omega))}\notag.
	\end{align}
	Consequently, by setting $L_B=V_1+V_2+\tilde{C}_3+\tilde{C}_4$, we obtain
	\begin{align}
		&\left\|\left[F'(\tilde{u})q\right](t+h)-\left[F'(\tilde{u})q\right](t)-\mathcal{P}^*\left[q(t+h)-q(t)\right]\right\|_{L^2(\Omega)}\notag\\
		\leq&h^\alpha L_B\sup_{t\in[0, T]}\left\|q(t)\right\|_{H^2(\Omega)}+h^\alpha L_B\left\|u\right\|_{C^\alpha\left([0, T]; H^2(\Omega)\right)}\sup_{t\in[0, T]}\left\|q(t)\right\|_{H^2(\Omega)}\notag\\
		+&h^\alpha T^\alpha L_B\left\|q\right\|_{C^\alpha([0, T];H^2(\Omega))}+h^\alpha T^\alpha L_B\left\|u\right\|_{C^\alpha([0, T]; H^2(\Omega))}\left\|q\right\|_{C^\alpha([0, T]; H^2(\Omega))}\notag.
	\end{align}
	Hereby, \eqref{Max-II} is proved and this concludes the proof of Lemma \ref{RHSMax}.
\end{proof}

\vspace*{-0.4cm}
\bibliographystyle{abbrv}
\bibliography{Bibliography4}

\end{document}